\documentclass[reqno]{amsart}
\usepackage{graphicx,graphics}
\usepackage{epstopdf,epsfig}
\usepackage[all]{xy}
\usepackage{textcomp,gensymb}
\usepackage{eucal}
\usepackage[T1]{fontenc}
\usepackage{enumerate}
\usepackage{caption}
\usepackage{subcaption}
\usepackage{amssymb, amsfonts, mathrsfs}
\usepackage[normalem]{ulem}
\usepackage[latin1]{inputenc}

\setcounter{MaxMatrixCols}{10}
\DeclareGraphicsExtensions{.eps}

\newtheorem*{theorem*}{Theorem}
\pdfoptionpdfminorversion=5
\addtolength{\textwidth}{1.24in}
\addtolength{\hoffset}{-.625in}
\addtolength{\textheight}{.485in}
\addtolength{\voffset}{-.5in}
\vfuzz2pt
\hfuzz2pt
\newtheorem{theorem}{Theorem}[section]
\newtheorem{corollary}[theorem]{Corollary}
\newtheorem{lemma}[theorem]{Lemma}
\newtheorem{proposition}[theorem]{Proposition}

\theoremstyle{definition}
\newtheorem{definition}[theorem]{Definition}
\theoremstyle{remark}
\newtheorem{example}[theorem]{\bf Example}
\newtheoremstyle{mystyle}{2mm}{0mm}{}{}{\bfseries}{}{ }{\thmnumber{#2}.\thmnote{ #3}}
\theoremstyle{mystyle}
\newtheorem{fact}[theorem]{}
\newtheoremstyle{myremark}{2mm}{0mm}{}{}{\bfseries}{}{ }{\thmname{#1} \thmnumber{#2}. \thmnote{ #3}}
\theoremstyle{myremark}
\newtheorem{remark}[theorem]{Remark}
\newtheorem{remarks}[theorem]{Remarks}

\def\Ga{{T_A}}
\def\cJ{\mc{J}}\def\cR{{R}}\def\cV{{A^1}}\def\cX{{X}}\def\cY{{Y}}\def\cZ{{Z}}\def\cK{{K}}\def\D{D}\def\A{A}
\newcommand\restr[1]{\raisebox{-.5ex}{$|$}_{#1}}

\newcommand\wh[1]{\widehat{#1}}
\newcommand\ol[1]{\mathscr{#1}}
\def\ot{\otimes}

\DeclareMathOperator{\gr}{gr}
\DeclareMathOperator{\Id}{id}
\DeclareMathOperator{\id}{id}

\DeclareMathOperator{\h}{H}

\DeclareMathOperator{\Hom}{Hom}
\DeclareMathOperator{\tor}{Tor}

\DeclareMathOperator{\ext}{Ext}
\DeclareMathOperator{\im}{im}

\DeclareMathOperator{\Tor}{Tor}

\newcommand{\LH}[1]{\mathrm{LH}(#1)}

\newcommand{\ig}[1]{\langle #1\rangle}
\newcommand{\mc}[1]{\mathcal{#1}}
\newcommand{\ev}[1]{\mathrm{ev}_{#1}}
\newcommand{\an}[1]{\mathrm{ann}\left(z^{#1}\right)}

\newcommand{\otp}[1]{ #1\otimes_{\mathfrak{t}_{#1}}S}
\def\M{{\Cl{M}}}

\def\N{\mathbb N}

\def\tz{T_z}

\def\bgr{\cup}\def\bgi{\cap}\def\li{}\def\ts{}

\def\TS{{}_\mK\mathscr{M}_\mK^S}\def\TA{{}_\mK\mathscr{M}_\mK^A}
\def\ST{{_\mK{^S\!\!\!\mathscr{M}}}_\mK}\def\AT{{_\mK{^A\!\!\!\mathscr{M}}}_\mK}
\def\SMS{{}_S\M_S}

\def\mK{\mathbb{K}}\def\mk{\Bbbk}\def\mt{\mathfrak{t}}\def\bts{A\ot_\mt S}\def\ott{\ot_\mt}\def\otK{\ot_\mK}\def\otk{\ot_\mk}
\email{}
\thanks{}

\DeclareGraphicsExtensions{.pdf}
\newcommand\Cl[1]{\mathscr{#1}}

\newcommand{\varfun}[3]{#1 \colon #2 \to #3}

\IfFileExists{tcilatex.tex}{\input{tcilatex}}{}
\newcommand{\CA}{\mathcal A }
\newcommand{\CD}{\mathcal D  }
\newcommand{\CK}{\mathcal K}
\newcommand{\CP}{\mathcal P }
\newcommand{\CR}{\mathcal R }
\newcommand{\CT}{\mathcal T}
\newcommand{\CU}{\mathcal U}

\newcommand{\ov}[1]{\overline{#1}}
\newcommand{\wt}[1]{\widetilde{#1}}
\newcommand{\fd}[2]{{}_{#2}{#1}}
\newcommand{\fu}[2]{{}^{#2}{#1}}

\setlength{\parskip}{0ex}

\def\CI{\mathcal I}
\def\CJ{\mathcal J}
\usepackage[dvipsnames]{xcolor}
\usepackage[colorlinks]{hyperref}
\begin{document}

\subjclass[2010]{Primary 17C25; Secondary 16W50, 16E40.}

\keywords{$PBW$-deformation, $PBW$-theorem, Jacobi identity, Central extension, Complexity.}

\begin{abstract}
We prove in a very general framework several versions of the classical Poincar\'e-Birkhoff-Witt Theorem, which extend results from \cite{BeGi, BrGa, CS, HvOZ, WW}. Applications and examples are discussed in the last part of the paper.
\end{abstract}

\title{$PBW$-deformations of graded rings}
\author[A. Ardizzoni, P. Saracco]{Alessandro Ardizzoni, Paolo Saracco }
\address{\hspace*{-4.5mm}University of Torino, Department of Mathematics ``G. Peano", Via Carlo Alberto 10, I-10123 Torino, Italy}
\email{alessandro.ardizzoni@unito.it}
\urladdr{\url{sites.google.com/site/aleardizzonihome}}
\address{\hspace*{-4.5mm}University of Torino, Department of Mathematics ``G. Peano", Via Carlo Alberto 10, I-10123 Torino, Italy}
\email{paolo.saracco@gmail.com}
  \urladdr{\url{sites.google.com/site/paolosaracco}}
\author[D. \c Stefan]{Drago\c s \c Stefan}
\address{\hspace*{-4.5mm}University of Bucharest, Faculty of Mathematics and
Computer Science, 14 Academiei Street, Bucharest, Ro-010014, Romania}
\email{dragos.stefan@fmi.unibuc.ro}
\maketitle
\tableofcontents

\section*{Introduction.}

The Poincar\'e-Birkhoff-Witt Theorem states that, for every 
Lie algebra $L$ over a field, the graded algebra associated to the canonical filtration on the universal enveloping algebra $U(L)$ is isomorphic to the symmetric algebra $S(L)$.

Many authors established theorems like this classical result, working with notions such as: $PBW$-deformations, or $PBW$-basis. We will compare here three of them, since they are related to our work. In order to that, we need some terminology and notation. We fix an associative and  unital ring $S$. Let   $T:=T_S(V)$ denote the tensor $S$-ring generated by some $S$-bimodule $V$. By definition, $T=\oplus_{n\in \N}T^n$ is $\N$-graded. The canonical filtration on $T$, defined by the subbimodules $T^{\leq n}:=\oplus_{p\leq n}T^p$, induces an increasing and exhausting  filtration $X^{\leq n}=X\cap T^{\leq n}$ on an arbitrary set $X$.

For an $S$-subbimodule $P\subseteq T$ let $U(P)$ denote the quotient $S$-ring of $T$ by the ideal $\ig{P}$ generated by $P$.  Let $R_P$ be the graded $S$-subbimodule of $T$ which is generated by the nonzero homogeneous components of highest degree of all elements in $P$. On the other hand, for a graded $S$-subbimodule $R$ of $T$, let $A(R):=T/\ig{R}$.  If  $R\subseteq R_P$, then there exists a natural morphism of graded $S$-rings $\Phi_{P,R}:A(R)\to \gr U(P)$.

By definition, a $PBW$-deformation of $A(R)$ is an $S$-ring $U(P)$, where $P$ is an $S$-subbimodule of $T$ such that  $R\subseteq R_P$ and  $\Phi_{P,R}$ is an isomorphism. Note that, in particular, one can consider the map $\Phi_P:=\Phi_{P,R_P}$. If $U(P)$ is a $PBW$-deformation of $A(R_P)$, then one says that $P$ is of $PBW$-type. For  detailed definitions, the reader is referred to the first section.

Given $R\subseteq T^2$, the $PBW$-deformations of $A(R)$ are investigated in \cite{BrGa,Po,PP}. More precisely, in these papers, the authors consider the case when $S$ is a field, $P$ is a subspace of $T^{\leq 2}$ so that $R:=p^2(P)$, where $p^2:T\to T^2$ denotes the canonical projection. Thus $R\subseteq R_P$ and, under the assumption that $A(R)$ is Koszul, using homological methods and the theory of deformation of (graded) associative algebras, they show that $U(P)$ is a $PBW$-deformation of $A(R)$ if and only if $R_P=R$ and $P$ satisfies the condition:
\[
(PT^{1}+T^{1}P)^{\leq 2}\subseteq P.
\]
When $R$ is the subspace of commutators $[v,w]=v\ot w-w\ot v$, with $v,w\in V$, then $A(R)$ is the symmetric algebra of $V$, and the above identity is equivalent to Jacobi identity. For this reason we shall refer to it as the \emph{Jacobi condition}.

A similar characterization of $PBW$-deformations  is obtained in \cite{BeGi} for $N$-homogeneous $S$-rings over a regular Von Neumann ring $S$, which is not necessarily commutative. Therefore, now, $R\subseteq T^N$ and a $PBW$-deformation corresponds to an $S$-bimodule  $P\subseteq T^{\leq N}$ such that $R:=p^N(P)$, where $p^N$ denotes the projection onto $T^N$. The Koszulity of $A(R)$ is replaced by the assumption that $\Tor_3^{A(R)}(S,S)$, as a graded left $S$-module, is concentrated in degree $N+1$. For $P$ and $R$ as above, in \cite[Theorem 3.4]{BeGi} one shows that $U(P)$ is a $PBW$-deformation of $A(R)$ if and only if $R_P=R$ and $P$ satisfies the relation:
\[
(PT^1 + T^1 P)^{\leq N}\subseteq P,
\]
which will be thought, again, as a Jacobi-like identity. Let us note that if $S$ is a  field and $A(R)$ is $N$-Koszul (see \cite{B2} for the definition of these algebras), then the above assumption on $A(R)$ is satisfied. Since $2$-Koszul algebras and usual Koszul algebras coincide, it follows that the above mentioned theorem extends the result from \cite{BrGa}.

In both $PBW$-like theorems that we discussed above, $R$ is a pure bimodule, that is $R$ is a subbimodule of some $T^n$. To our best knowledge,  in the non-pure case, a first variant of the $PBW$-theorem was proved in \cite{CS}. The method is entirely new. Its main ingredients are central extensions, the complexity of a graded algebra  and the Jacobi condition.

Briefly, these notions are defined as follows. Given a field $S$ and a finite dimensional linear space $V$, let $R\subseteq T$ denote a finite dimensional  graded vector space. Let $R':=\{r_1,\dots,r_m\}$ be  a minimal set of homogeneous relations for $A(R)$. Let $P$ be the linear space generated by $P':=\{r_1+l_1,\dots, r_m+l_m\}$, where each $l_i$ is an element in $T^{<\deg(r_i)}$.
By definition, the central extension  $D(P)$ of $A(R)$ is the quotient of the polynomial ring $T[z]$ by the ideal generated  by $h(P'):=\{(r_1+l_1)^*,\dots, (r_m+l_m)^*\}$, where $f^*\in T[z]$ denotes the external homogenization of $f\in T$, see \cite[\S II.11]{NvO1}.  By construction, the class of $z$ in $D^1(P)$, still denoted by $z$,  is a central element and $D(P)/zD(P)\simeq A(R)$. If $z$ is not a zero divisor in $D(P)$, then one says that $z$ is regular, or that $D(P)$ is regular.

On the other hand, if $A$ is a graded $S$-algebra, with $A^0=S$, then $\ext_n^A(S,S):=\oplus_{m\in\N}\ext^{n,m}_A(S,S)$ is a graded module, with respect to the internal grading induced by that one of $A$.  Thus, the complexity of $A$ is defined by the relation:
\[
c(A):=\sup\{n\mid\ext^{3,n}_A(S,S)\neq 0\}-1.
\]
By definition $P$ satisfies the Jacobi condition if and only if, for all $1\leq k\leq c(A(R))$,
\begin{equation}\label{eq:Jacobi}
P_{k+1}\cap T^{\leq k}\subseteq P_k
\end{equation}
where $\{P_k\}_{k\geq 1}$ is constructed inductively, setting  $P_1:=P^{\leq 1}$ and, for $k>0$, using the recursion formula:
\begin{equation}\label{eq:recursion}
P_{k+1}:=T^1P_k+P_kT^1+P^{\leq k+1}.
\end{equation}
By \cite[Theorem 4.2]{CS},  $U(P)$ is a $PBW$-deformation of $A(R)$  if and only if $P_1=0$ and $P$ satisfies the Jacobi condition. The proof of this version of the $PBW$-theorem is a consequence of the following three steps. First, one demonstrates that $U(P)$ is a $PBW$-deformation of $A(R)$ if and only if $z$ is regular.  Then, one shows that $z$ is regular if and only if it is $c(A(R))$-regular, that is $za\neq 0$, for all non-zero $a\in D^{\leq c(A(R))}$. Finally, one proves that $c(A(R))$-regularity is equivalent to the Jacobi condition.

The main goal of this paper is to extend these $PBW$-type theorems, by relaxing as much as possible the conditions imposed on the rings $S$ and on the $S$-bimodules $P$ and $R$. Our strategy is similar to the one used in \cite{CS}, namely, of showing that the following properties of an $S$-bimodule $P\subseteq T$ are equivalent: $U(P)$ is a $PBW$-deformation of $A(R)$;  $D(P)$ is regular; $P$ satisfies certain Jacobi relations. However, because of the level of generality that we want to achieve, we will relate these properties in a different way. Moreover, significant changes are required, not only in the proofs of our results, but also in the definitions of several basic notions, including the Jacobi condition and the complexity of $A(R)$.

We start by characterizing $PBW$-deformations in terms of appropriate Jacobi conditions. This will be done in the first two sections. For this part we do not need that $T$ to be  a tensor $S$-rings. All definitions and constructions that we work with still make sense for any associative unital graded $S$-ring $T:=\oplus_{n\geq 0}T^n$, which is connected and strongly graded, that is  $T^0=S$ and $T^nT^m=T^{n+m}$ for all $n,m$.

Assuming that $T$  is such an $S$-ring, we first study the basic properties of $PBW$-deformations and consider some examples. For instance, if $R$ is regarded as a filtered $S$-bimodule with respect to the filtration $\{R^{\leq n}\}_{n\in\N}$, then a filtered map $\alpha:R\to T$ yields us a deformation $P:=\alpha(R)$ of $R$, see Example \ref{ex:alpha}. In this case we have  $R_P=R$ and $U(P)$ is a $PBW$-deformation if and only if $P$ is of $PBW$-type. We will say that $P$ is associated to the map $\alpha$.

It is worth to note that, in view of Theorem \ref{te:alpha}, for a bimodule $P$ such that every $P^{\leq n}$ is a direct summand of $P^{\leq n+1}$ as a subbimodule, we have $R=R_P$ if and only if $P$ is associated to a certain  $\alpha$ as above. Consequently,  by Corollary \ref{co:BG}, it follows that any deformation of an  $N$-pure bimodule $R$ as in \cite{BeGi} is of this type.

More generally, we show that $U(P)$ is a $PBW$-deformation of $A(R)$ if and only if $P$ is of $PBW$-type and $\ig{R}=\ig{R_P}$. For many examples, the latter relation can be checked by a straightforward computation, or by identifying $P$ with $\alpha(R)$, for some map $\alpha$ as above.  Therefore, in this situation, for concluding that $U(P)$ is a $PBW$-deformation it is enough to show that $P$ is of $PBW$-type.

In the view of the foregoing remark, in the second section we focus ourselves on finding necessary and sufficient conditions for a bimodule $P$ to be of $PBW$-type. One of the main results of this type is Theorem \ref{Th:PBWJac}, where we prove that $P$ is of $PBW$-type if and only if it satisfies the relation \eqref{eq:Jacobi}, for all $k\in \N$. We define the bimodules $P_k$ by the same recursion  formula \eqref{eq:recursion}, as in \cite{CS}, but we start with $P_0:=P\cap T^0=P^{\leq 0}$. Therefore, in general, the two filtrations may have different terms $P_k$, for $k>0$. Nevertheless, these bimodules coincide for all $k>0$, provided that  $P^{\leq 1}=0$. We will refer to relation \eqref{eq:Jacobi} as the Jacobi condition $(\cJ_k)$.

Furthermore, we define the central extension  of $A(R)$ by $D(P):=T[z]/\ig{P^*}$, where $P^*$ contains all elements $f^*$ obtained by external homogenization, where  $f$ is arbitrary in $P$. As in \cite{CS}, we say that the central extension $D(P)$ is regular if and only if $z$ is not a zero divisor in $D(P)$ or, equivalently, the annihilator $\an{}$ of $z$ is trivial.  By Theorem \ref{Th:JacAnn}, it follows that $P$ satisfies $(\cJ_n)$ if and only if , the $n$-degree component of $\an{}$ is trivial. In particular, $P$ is of $PBW$-type if and only if $z$ is regular.

In light of the previous description of bimodules of $PBW$-type, in the third section we study in detail the regularity of  central extensions. In fact,  for an arbitrary  strongly graded connected $S$-ring $D$ and a central element $z\in \D^1$, we will find homological obstructions which prevent $z$ of being regular.

Since, in general,  there is not a minimal projective resolution of $S$ as a left $A$-module, we  need a substitute of it, with similar properties in small degrees. In the case when $A^1$ is projective as a  left $S$-module, we can choose it as follows
\begin{equation}\label{eq:resolution}
\cdots\rightarrow  A \ot V_n\xrightarrow{d_n} A \ot V_{n-1}\xrightarrow{d_{n-2}}\cdots  \xrightarrow{d_3} A \ot V_2\xrightarrow{d_2} A \ot A^1\xrightarrow{d_1} A  \xrightarrow{d_0} S\rightarrow 0,
\end{equation}
where $V_*$ are projective graded left $S$-modules, $d_*$ are graded morphisms and $V_n^i=0$, for all $n>1$ and $i=0,1$.
Using this resolution  we produce a sequence
\[
\cdots \xrightarrow{\delta_3} M_2  \xrightarrow{\delta_2}M_1\xrightarrow{\delta_1} M_0\xrightarrow{\delta_0}S\xrightarrow{}0
\] of graded $D$-modules $M_n$ and graded $D$-linear maps $\delta_n:M_n\to M_{n-1}$; see \S\ref{ssec:3.5} for more details on the construction of this sequence. The sequence defined by the maps $\delta_0,\dots,\delta_{p+1}$ is denoted by $^p M_*$. If $p=1$ this sequence is a complex which is exact in degree $0$. In the main result of this part, Theorem \ref{th:mainth}, we prove that $z$ is regular if and only if $^2M_*$ is a complex and $\h_1({}^1M_*)=0$. Moreover, if $z$ is regular, then $(M_*,\delta_*)$ is a resolution of $S$ by projective $D$-modules.

To measure the size of a graded $S$-module $V=\oplus_{p\in\N}V^p$ we introduce the complexity of $V$, an invariant that we denoted by $c_V$. By definition, $c_V=-1$ for $V=0$, or $c_{V}:=\sup\{n\mid V^n\neq 0\}-1$, otherwise. Note that we also have $c_V=-1$, if $V$ is not trivial and $V=V^0$.  Recall that $V_3$ denotes the graded $S$-module defining the third term of the resolution \eqref{eq:resolution}. The complexity of $V_3$, plays an important role,  because $z$ is regular if and only if $z$ is $c_{V_3}$-regular and $\h_1({}^1M_*)$, cf. Theorem \ref{th:regandcompl}.

We have already noticed that a similar result was proved in \cite{CS} for central extensions. Since in that paper $T$ is a tensor algebra, for any central extension $D(P)$ of $A(R)$ the first homology group of the corresponding complex $^1M_*$ is always trivial.  This property does not hold if $T$ is not a tensor $S$-ring, even if $S$ is a field. An example, showing that the condition $\h_1({}^1M_*)=0$ cannot be dropped in general, will be presented in the last section. On the other hand, in the fourth section, we consider a special case of central extensions for which the complex $^1M_*$ is exact in degree $1$.

Let $A$ be a connected strongly graded $S$-ring, Thus, there is a surjective canonical morphism of graded $S$-rings from $T_A:=T_S(A^1)$ to $A$. Let $I_A$ denote its kernel. We assume that there exists a bimodule of relations $R$ for $A$. Therefore, by definition, $R$ generates $I_A$ and $$R\cap (A^1I_A+I_AA^1)=0.$$ Under the additional assumption that $R$ is projective as a left module, we show that in the resolution \eqref{eq:resolution} we can take $V_2=R$.

Let $D$ be a central extension associated to a filtered map $\alpha:R\to T_A$, so $D=D(P)$, where $P=\alpha(R)$. Under the above hypothesis we are able to construct a bimodule of relations $R_D$ for $D$ and to identify the complex $^1M_*$ associated to $D$ with the exact sequence
\[
D\ot R_D\xrightarrow{d'_2}D\ot D^1\xrightarrow{d'_1}D\rightarrow S\rightarrow 0,
\]
which is the beginning of the resolution of $S$ by projective left $D$-modules, similar to \eqref{eq:resolution}. Hence $^1M_*$ is exact in degree $1$, and we conclude in this situation that $z$ is regular if and only if either $c_{V_3}=-1$, or $z$ is $c_{V_3}$-regular, see Theorem \ref{th:homological_condition}.

In the last section of the paper we prove our versions of the $PBW$-theorem. For simplicity we present here only one of them. Let $A$ be a strongly graded $S$-ring such that there exists an $R$  bimodule of relations for  $A$. We suppose that $A^1$ and $R$ are projective left $S$-modules. For any filtered map $\alpha:R\to T_A$, we show in Theorem \ref{te:PBW} (b) that $P:=\alpha (R)$ is of $PBW$-type (equivalently, $U(P)$ is a PBW-deformation of $A$)  if and only if either $c(A)=-1$ or $P$ satisfies $(\cJ_1)-(\cJ_{c(A)})$.

Here $c(A)$  denotes the homological complexity of $A$, that is the complexity of the graded $S$-module $\tor_{3}^A(S,S)$. We note that $c(A)\leq c_{V_3}$ for any graded $S$-module $V_3$ that appears in the third term of a resolution as in \eqref{eq:resolution}. Thus $P$ is of $PBW$-type, provided that it satisfies the Jacobi conditions $(\cJ_1)-(\cJ_{c_{V_3}})$. In conclusion, $c(A)$ is the least number of Jacobi relations that $P$ must satisfy for being of $PBW$-type.

In the remaining part of the paper we present some examples and applications. Let $A$ be a strongly graded $S$-ring. The case when $R$ is an $N$-pure bimodule of relations is settled in Theorem \ref{te:PBW-pure}. This result is similar to \cite[Theorem 3.4]{BeGi}, but instead of working with a regular Von Neumann ring $S$, we impose that $R$ and $A^1$ are projective left $S$-modules.

The $PBW$-deformations of twisted tensor products are characterized in Theorem \ref{te:PBW_twisted} and Theorem \ref{te:PBW-winterspoon}. Specializing the first  result to the case when $S$ is a field and $A$ is a Koszul algebra we get \cite[Theorem 4.6.1]{HvOZ}. For smash products of Koszul algebras by finite dimensional Hopf algebras,  we recover \cite[Theorems 0.4 and 3.1]{WW}.

Specific forms of the $PBW$-theorem are obtained when $S$ belongs to some special classes of ring, such as: of weak dimension zero  (Theorem \ref{te:wd0}), separable algebras (Theorem \ref{PBW_separable}) and multi-Koszul algebras (Theorem \ref{te:PBW_multi_Koszul}).

Choosing in a convenient way the generators and relations,  we prove a $PBW$-type theorem for a graded $S$-ring $A:=T/I$, where $T$ is a quadratic $S$-ring, with $S$ a separable algebra (Theorem \ref{te:PBW_quadratic}). In particular, we can apply the latter result to the case when $T$ is a polynomial ring, to obtain a $PBW$-theorem for commutative algebras (Theorem \ref{co:PBW_quadratic}).

Among the examples considered in subsection \ref{ssec:examples}, we mention here split central extensions (Example \ref{rem:trivcomplex}) and trivial central extensions (Example \ref{ex:trivial_extension}), which yield us counterexamples to the vanishing of the first homology group of the corresponding complex $^1M_*$. We also show that the homological complexity of $A$ is finite, provided that the complexity of $A$ as a graded $S$-module is so and, in this case we have $c(A)\leq c_A+2$, cf. \S\ref{co:3n(A)}.

\section{Preliminary results.}\label{sec:1}
\setcounter{subsection}{1}
	Throughout this paper we fix an associative unital ring $S$  {and a field $\Bbbk$}. If $\cX=\oplus_{n\in \mathbb{Z}}\cX^n$ is a graded $S$-bimodule then the $d$-shifting of $\cX$ is the graded $S$-bimodule $\cX(d)$ which coincides with $\cX$ as a bimodule, but its grading is given by $\cX(d)^n=\cX^{n+d}$. If  $X$ and $Y$ are graded $S$-bimodules, then a morphism of $S$-bimodules $f:X\to Y$ is said to be of \emph{degree $d$} if $f\left(X^n\right)\subseteq Y^ {n+d}$. A morphism of degree zero will be simply called a graded morphism. Thus $f:X\to Y$ is is a morphism of degree $d$ if and only if $f:X\to Y(d)$ is graded (of degree $0$).

	Let $T:=\oplus_{n\geq 0}T^{n}$ be an associative, unital and positively graded $S$-ring. In addition, we assume that $T$ is \emph{connected}, that is  $T^0=S$. In this case every homogeneous component $T^n$ is an $S$-subbimodule of $T$. We shall denote by $T^{\leq m}:=\oplus_{n\leq m}T^{n}$ the $m$-th term of the standard filtration on $T$. The projection  onto the homogeneous component $T^n$ will be denoted by $p^n:T\to T^n$.

	If $U:=\left\{F^nU\right\}_{n\geq 0} $ is an increasing filtration on an algebra $U$,  then the associated graded algebra will be denoted by $\gr{U}=\oplus_{n\geq 0}\gr^{n} U$. The  $n$-th homogeneous component of $\gr U$  is defined for $n\geq 0$ by $\gr^n U :=(F^nU)/(F^{n-1}U)$, where $F^{-1}U=0$, by convention.

	Let $P$ denote an $S$-subbimodule  of $T$. Let $U(P)$ be the quotient $S$-ring of $T$ by $\ig{P}$, the ideal generated by $P$. This $S$-ring  is filtered, with respect to the filtration given by:
\begin{equation*}
	U(P) ^{\leq m}=\frac{T^{\leq m}+\ig{P} }{\ig{P}  }\cong \frac{T^{\leq m} }{T^{\leq m}\bgi\ig{P} }.
\end{equation*}%
	Let $\pi_P :T\longrightarrow U(P) $ denote the canonical projection. Taking on $T$ the standard filtration, $\pi_P$ is a filtered algebra map. The goal of this paper is to investigate the graded $S$-ring  $\gr U(P)$, by comparing it with another graded $S$-ring which will be  defined as a quotient of $T$ by a homogeneous ideal canonically associated to $P$. Basically, the homogeneous ideal that we are going to construct is generated by the highest degree homogeneous parts of all elements in $P$.

	More precisely, following \cite{Li}, for every $a\in T\setminus \left\{ 0\right\} $ we set  $\LH{a}:=a_m$, where $a=\sum_{i=0}^m a_i$, with $a_{i}\in T^{i}$ and $a_{m}\neq 0$. By definition, we set $\LH{0} =0$. This construction yields us a map $\mathrm{LH} :T\longrightarrow\bgr_{n\in \N}T^{n}$, which in general is {not linear}. For a subset $X\subseteq T$ we denote by $\LH{X}$ the set of all elements $\LH{x}$ with $x\in X$.

	In particular, if  $X\subseteq T$ is an {$S$-}subbimodule of $T$ and $n\in \mathbb{N}$, we  define $R_{X}^{n}:=\LH{X} \bgi T^{n}$ and we {set $X^{\leq n}:=X\bgi T^{\leq n}$.} The first elementary  properties of these sets are proved in the following lemma.

\begin{lemma}\label{Lemma:Sp}
	We have $R_X^{n}=p ^{n}\left( {X^{\leq n}}\right) $ for every $n\in \mathbb{N}$. Thus, $R_X^{n}$ is an $S$-subbimodule of $T$ and  $\LH{X} =\bgr_{n\in \N}R_X^{n}$. Hence, it makes sense to define the $S$-bimodule $R_{X}:=\oplus_{n\in \N}R_X^{n}$ and  we have the relation $\left\langle \mathrm{LH}\left(X\right) \right\rangle =\left\langle R_X\right\rangle $.
\end{lemma}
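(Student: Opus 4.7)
The plan is to establish the key identity $R_X^n = p^n(X^{\leq n})$ first, from which every other assertion follows by routine bookkeeping. For the inclusion $R_X^n \subseteq p^n(X^{\leq n})$, I would take a nonzero $r \in \LH{X} \cap T^n$, so that $r = \LH{x} = x_m$ for some $x = \sum_{i=0}^m x_i \in X$ with $x_m \neq 0$; since $r \in T^n$ and the graded components of $T$ are in direct sum, necessarily $m = n$, whence $x \in X^{\leq n}$ and $p^n(x) = x_n = r$. The case $r = 0$ is handled by the convention $\LH{0} = 0$. For the reverse inclusion, given $x \in X^{\leq n}$ with $p^n(x) \neq 0$, the element $p^n(x) = x_n$ is precisely $\LH{x}$ and lies in $T^n$, while the case $p^n(x) = 0$ is trivial since $0 \in \LH{X}$.

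Once this identity is established, the remaining statements follow quickly. The $S$-subbimodule assertion is automatic: $R_X^n$ is the image, under the $S$-bimodule map $p^n$, of the $S$-subbimodule $X^{\leq n} = X \cap T^{\leq n}$. The equality $\LH{X} = \bigoplus_{n \in \N} R_X^n$ then records the fact that every element of $\LH{X}$ is either zero or a single homogeneous element of some well-defined degree $n$, hence lies in the corresponding $R_X^n$; conversely each $R_X^n \subseteq \LH{X}$ by definition. The sum $\sum_n R_X^n$ inside $T$ is automatically direct because the pieces lie in the distinct homogeneous components $T^n$ of the graded ring $T$, so $R_X := \bigoplus_{n \in \N} R_X^n$ is a well-defined graded $S$-subbimodule of $T$.

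For the last claim, the inclusion $\langle \LH{X} \rangle \subseteq \langle R_X \rangle$ is immediate from $\LH{X} \subseteq R_X$. Conversely, each $R_X^n$ is contained in $\LH{X}$, hence in the ideal $\langle \LH{X} \rangle$; since this ideal is an additive subgroup, $R_X = \sum_n R_X^n \subseteq \langle \LH{X} \rangle$, and therefore $\langle R_X \rangle \subseteq \langle \LH{X} \rangle$. There is no real obstacle here: the lemma is essentially a packaging exercise that upgrades the set-theoretic, non-linear construction $\mathrm{LH}$ into a genuine graded $S$-subbimodule generating the same two-sided ideal. The one point requiring mild care is the degree bookkeeping $m = n$ forced by the direct-sum decomposition of $T$, which is what guarantees that the projection $p^n$ captures exactly the leading homogeneous part of those elements of $X$ whose top-degree component sits in $T^n$.
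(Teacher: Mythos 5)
Your proof is correct and takes essentially the same route as the paper: you establish $R_X^n=p^n(X^{\leq n})$ by comparing the leading term of an element of $X$ with the grading of $T$ (the degree bookkeeping forcing $m=n$ is exactly the paper's argument), and then read off the bimodule structure, the decomposition of $\mathrm{LH}(X)$, and the equality of the two ideals. One small caveat: where you write $\mathrm{LH}(X)=\bigoplus_{n\in\N}R_X^n$, the correct statement (and the one your justification actually proves) is $\mathrm{LH}(X)=\bigcup_{n\in\N}R_X^n$, since $\mathrm{LH}(X)$ is not closed under adding leading terms of different degrees; your subsequent ideal argument uses only the inclusions $\mathrm{LH}(X)\subseteq R_X$ and $R_X^n\subseteq\mathrm{LH}(X)$, so nothing downstream is affected.
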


\begin{proof}
	If $x\in \LH{X}\bgi T^{n}$ then there exists $y\in X$ such that $x=\mathrm{LH}\left(y\right) $. In particular, $y-x$ is an element in $T^{\leq n-1}$ and {hence} $x=p ^{n}\left(y\right) \in p ^{n}\left( {X^{\leq n}}\right)$. To prove the other inclusion, let  $x\in p ^{n}\left( {X^{\leq n}}\right)$. Thus $x=p ^{n}\left( y\right)$ for some $y \in  {X^{\leq n}}$. We can assume that $x\neq 0$. Since $x\in T^{n}$ and $y-x\in T^{\leq n-1}$ it follows that $x=\LH{y}$. Thus $x\in\LH{X} \bgi T^{n}$. Furthermore, as $\LH{X} \subseteq \bgr_{n\in \N}T^{n}$, we get:
\begin{equation*}\ts
	\bgr\li_{n\in \N}R_X^{n}=\bgr\li_{n\in \N}\big( \mathrm{LH}\left( X\right)\bgi T^{n}\big) =\LH{X} \bgi\left( \bgr\li_{n\in \N}T^{n}\right)=\LH{X}.
\end{equation*}
	Hence $\left\langle \mathrm{LH}\left( X\right) \right\rangle =\left\langle\bgr_{n\in\N}R_X^{n}\right\rangle =\left\langle \oplus_{n\in\N}R_X^{n}\right\rangle =\left\langle R_X\right\rangle $.
\end{proof}

\begin{remark}\label{rem:isos}
	Let $P$ denote an $S$-subbimodule of $T$. The bimodule $R_P$ coincides with the\emph{ internal homogenization} {$P\,\,\widetilde{}=\text{Span}_{S-S}(\mathrm{LH}(P))$} defined in \cite[\S II.1]{NvO1}. Furthermore, if we denote by $\text{gr}P$ the graded module associated to the filtration $P^{\leq n}$, then the assignment $z+P^{\leq n-1}\mapsto p^n(z)$ for all $z\in P^{\leq n}$ gives an isomorphism  $\overline{p}^n:\gr^n P\to R_P^n $ of $S$-bimodules. We will denote by $\overline{p}$ the isomorphism of graded $S$-bimodules defined by the family $\{\overline{p}^n\}_{n\in\N}$.
\end{remark}
\vspace*{1ex}

\begin{lemma}\label{lem:grsum}Let $X$ and $Y$ be $S$-subbimodules of $T$.
\begin{itemize}
  \item [(1)] Assume $R^q_X\cap R^q_Y=0$ for every $q>n$. Then $(X+Y)^{\leq n}=X^{\leq n}+Y^{\leq n}$ and $R^n_{X+Y}=R^n_X\oplus R^n_Y$.
  \item [(2)]Let $f:T\to T'$ be an injective graded map. Then $R_{f(X)}=f(R_X)$.
\end{itemize}
\end{lemma}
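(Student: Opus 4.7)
The plan is to handle the two parts separately, with Lemma \ref{Lemma:Sp} as the main bridge between the $S$-bimodules $R^n_X$ and the projections $p^n(X^{\leq n})$.

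For part (1), the first step is to prove the set equality $(X+Y)^{\leq n}=X^{\leq n}+Y^{\leq n}$. The inclusion $\supseteq$ is automatic; for $\subseteq$, I would take $z=x+y\in(X+Y)^{\leq n}$ with $x\in X$ and $y\in Y$, let $m:=\max(\deg x,\deg y)$, and argue by contradiction. Assuming $m>n$, the vanishing of $p^m(z)$ rules out $\deg x\neq\deg y$ (the larger side alone would give a nonzero top component of $z$), so $\deg x=\deg y=m$ and $p^m(x)=-p^m(y)$ lies in $R^m_X\cap R^m_Y$; by the hypothesis this forces $p^m(x)=0$, contradicting $\deg x=m$. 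Once the set equality is in hand, the identity $R^n_{X+Y}=R^n_X+R^n_Y$ follows by applying Lemma \ref{Lemma:Sp} and the linearity of $p^n$, and the directness of the sum is then extracted from $R^n_X\cap R^n_Y=0$, which is the place I would have to be careful: the explicit hypothesis concerns degrees $q>n$, so I would either invoke the same condition at $q=n$ or deduce it from context.

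For part (2), the key observation is that any injective graded map $f:T\to T'$ reflects the standard filtration: if $a\in T$ and $f(a)\in(T')^{\leq n}$, then decomposing $a$ into homogeneous components and using that $f$ is graded shows all homogeneous components of $a$ of degree $>n$ lie in $\ker f=0$, so $a\in T^{\leq n}$. This immediately gives $f(X)^{\leq n}=f(X^{\leq n})$, and since $f$ commutes with the degree-$n$ projections $p^n$ and $p'^n$, Lemma \ref{Lemma:Sp} yields
\[
R^n_{f(X)}=p'^n\bigl(f(X)^{\leq n}\bigr)=p'^n\bigl(f(X^{\leq n})\bigr)=f\bigl(p^n(X^{\leq n})\bigr)=f(R^n_X),
\]
and summing over $n\in\N$ gives $R_{f(X)}=f(R_X)$, as required.

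The main (minor) obstacle I anticipate is the directness in part (1); the rest reduces to bookkeeping with the projections $p^n$ and the elementary formulas from Lemma \ref{Lemma:Sp}. Part (2) is essentially a direct computation once the filtration-reflection property of injective graded maps is isolated.
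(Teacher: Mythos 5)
Your proposal follows essentially the same route as the paper: for (1) the paper also compares the degrees of the leading homogeneous components of $x$ and $y$ (the case $\deg x\neq\deg y$ is immediate, and the case of equal degree $q>n$ is killed by $R^q_X\cap R^q_Y=0$ since $\mathrm{LH}(x)=p^q(x)\neq0$), then gets $R^n_{X+Y}=R^n_X+R^n_Y$ by applying $p^n$ exactly as you do; for (2) the paper's proof is the one-line computation $R_{f(X)}^n=p_{T'}^n(f(X)^{\leq n})=p_{T'}^n f(X^{\leq n})=f\,p_T^n(X^{\leq n})=f(R_X^n)$, identical to yours once injectivity gives $f(X)^{\leq n}=f(X^{\leq n})$.

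On the one point where you hesitate: your caution is justified, and it points at an imprecision in the statement rather than at a defect in your argument. The hypothesis $R^q_X\cap R^q_Y=0$ for $q>n$ does \emph{not} imply $R^n_X\cap R^n_Y=0$ (take $X=Y$ a pure bimodule concentrated in degree $n$), so the directness of $R^n_X\oplus R^n_Y$ genuinely needs the vanishing of the intersection in degree $n$ as an extra input. The paper's own proof does not supply it either; it simply ends with ``the latter sum is obviously direct.'' In the places where the lemma is invoked this causes no harm: in Proposition \ref{pr:minimal'} only the equality $(X+Y)^{\leq n}=X^{\leq n}+Y^{\leq n}$ is used, and in \S\ref{fa:notation_tensor} the intersection $R_{\varsigma(P)}\cap R_{\CK_0}$ vanishes in all degrees, so directness is available there. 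Thus your proof is on par with the paper's, and resolving your worry amounts to adding the degree-$n$ vanishing to the hypotheses (or restricting the directness claim to the situations where it holds).
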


\begin{proof}
	(1). Clearly, $X^{\leq n}+Y^{\leq n}\subseteq (X+Y)^{\leq n}$. Take an element $w\in (X+Y)^{\leq n}$ such that $\mathrm{LH}(w)\in T^m$, for some $m\leq n$. We write $w=x+y$, with $x\in X $ and $y\in Y$. If $x=0$ or $y=0$ then, obviously, both belongs to $T^{m}$.  Therefore, we can  assume that $x$ and $y$ are nonzero. Let $q,r$ such that $\mathrm{LH}(x)\in T^q$ and $\mathrm{LH}(y)\in T^r$. If $r<q$, then $\mathrm{LH}(x+y)=\mathrm{LH}(x)$ and $q= m$, so that $x\in X^{\leq n}$ and $y\in Y^{\leq n}$. The case  $r>q$ can be handled in a similar way. On the other hand, if $q=r>n$ then $0=p^q(w)=p^q(x)+p^q(y)$. Thus $p^q(x),p^q(y)\in R^q_X\cap R^q_Y=0$, which is not possible since $\mathrm{LH}(x)=p^q(x)$ cannot be zero. Thus $q=r\leq n$ and hence $x\in X^{\leq n}$ and $y\in Y^{\leq n}$. We have $$R^n_{X+Y}=p^n((X+Y)^{\leq n})=p^n(X^{\leq n}+Y^{\leq n})=p^n(X^{\leq n})+p^n(Y^{\leq n})=R^n_X+ R^n_Y.$$ The latter sum is obviously direct.

(2). Since $f$ is injective we have $(f(X))^{\leq n}=f(X^{\leq n})$ so that
\[
	R_{f(X)}^n=p_{T'}^n((f(X))^{\leq n})=p_{T'}^nf(X^{\leq n})=f^np_T^n(X^{\leq n})=f^n(R_X^n)=f(R_X^n).\qedhere
\]
\end{proof}

	For any graded $S$-subbimodule $R:=\oplus_{n\in\N}R^n$ {of $T$} let $A(R)$  denote the quotient of $T$ by $\ig{R}$. Note that $A(R)$ is a graded $S$-ring such that  $A(R)^0=S/R^0$. {Following \cite{BrGa}, for an $S$-subbimodule $P$ of $T$}, we call $A(R_P) $ the \emph{homogeneous algebra associated to} $P$. On the other hand, we shall refer to $U(P) $ as the \emph{non-homogeneous algebra associated to} $P$.

	Now we can relate $A(R_P)$ and $\gr U(P)$. The $S$-bilinear maps
\begin{equation*}
	\varphi_P ^{n}:T^{n}\longrightarrow \gr^{n}U( P), \quad \varphi_P ^{n}(x)= \left( x+\langle P\rangle \right) +U( P)^{\leq n-1}.
\end{equation*}
	are the components of a morphism $\varphi_P :=\oplus_{n\geq 0}\varphi_P ^{n}$ of graded $S$-rings.  Note that $\varphi_P $ is surjective, since for every $x\in T^{\leq n}$ we can write $x=\sum_{i=0}^{n}x_{i}$, {with $x_i\in T^i$ for all $i=0,\ldots,n$,} so  we have:
\[
	\left( x+\left\langle P\right\rangle \right) +U(P) ^{\leq n-1}=\left(x_{n}+\left\langle P\right\rangle \right) +U(P) ^{\leq n-1}=\varphi_{P}^{n}\left( x_{n} \right) .
\]

\begin{remark}\label{rem: Ker_varphi}
	The kernel of the map $\varphi_P^n$ is given by the relation $\ker(\varphi_P^n)=R_{\langle P\rangle}^n$.  Indeed, for any $x\in T^n$, the element $x$ belongs to the kernel of  $\varphi_P^n$ if and only if $x+\langle P\rangle\in U(P)^{\leq n-1}$. Therefore the required property holds if and only if there exists $y\in T^{\leq n-1}$ such that $x+y$ is an element of $\ig{P} $ or, equivalently, $x$ belongs to $\LH{\ig{P}}\bigcap T^n=R^n_{\langle P\rangle}$. Obviously, $R_{\langle P\rangle}$ is a homogeneous ideal, as it is the kernel of a morphism of graded rings. Since $R_P\subseteq R_{\ig{P}}$ we deduce that $\ig{R_P}\subseteq R_{\ig{P}}$.
\end{remark}

	It follows that $\varphi_P ^{n}$ factorizes through a surjective morphism
\begin{equation*}
	\Phi _{P}^{n}:A(R_P) ^{n}\longrightarrow \gr^{n}U(P),\quad  \Phi _{P}^{n}(x+\left\langle R_P\right\rangle)= \left( x+\left\langle P\right\rangle \right) +U(P) ^{\leq n-1}.
\end{equation*}%
	The family $\{\Phi _{P}^{n}\}_{n\in\N}$  defines a  graded $S$-ring map {$\Phi_P:=\bigoplus_{n\geq 0}\Phi_P^n$} from $A(R_P)$ to $\gr U(P) $. The kernel of the map $\Phi_P^n$ is given for every $n\geq 0$ by the relation:
 \begin{equation}\label{rem: Ker_Phi}
   \ker( \Phi_P^n)=\frac{\ker(\varphi_P^n)+{\langle R_P\rangle}}{\langle R_P\rangle}=\frac{R_{\langle P\rangle}^n+{\langle R_P\rangle}}{\langle R_P\rangle}.
 \end{equation}

\begin{lemma}\label{le:Phi_functorial}
The construction of $\Phi _{P}$ is natural in $P$. More precisely, let $f:T\to T'$ denote a  morphism of connected graded $S$-ring and let $P$ and $P'$ be $S$-subbimodules of $T$ and $T'$, respectively. If $f(P)\subseteq P'$, then the following diagram is commutative:
\begin{equation}\label{dia:Phi}
\begin{array}{c}\xymatrix@C=30pt@R=20pt{
	A(R_P) \ar[d]_{\ov f} 				\ar[r]^{\Phi _{P}} & \mathrm{gr}U(P) 		\ar[d]^{\wt f }\\
    A(R_{P'})\ar[r]_{\Phi _{P'}} &   \mathrm{gr}U( P')}
\end{array}
\end{equation}
	where $\ov{f}$ and $\wt f$ are the canonical maps induced by $f$.
\end{lemma}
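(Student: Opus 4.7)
The plan is first to establish the well-definedness of the two vertical maps $\ov f$ and $\wt f$, and then to verify commutativity by a direct computation on generators of each homogeneous component of $A(R_P)$.

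For $\wt f$: since $f(P)\subseteq P'$, one has $f(\ig{P})\subseteq \ig{P'}$, so $f$ descends to a ring map $U(P)\to U(P')$. Because $f$ is graded of degree zero, $f(T^{\leq n})\subseteq (T')^{\leq n}$, hence the induced map is filtered and gives rise to the graded morphism $\wt f:\gr U(P)\to \gr U(P')$ whose $n$-th component sends $(x+\ig{P})+U(P)^{\leq n-1}$ to $(f(x)+\ig{P'})+U(P')^{\leq n-1}$. For $\ov f$: applying Lemma \ref{lem:grsum}(2)-style reasoning (using that $f$ is graded and $f(P^{\leq n})\subseteq (P')^{\leq n}$), one computes $f(R_P^n)=f\circ p^n(P^{\leq n})=(p')^n\circ f(P^{\leq n})\subseteq (p')^n((P')^{\leq n})=R_{P'}^n$. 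Thus $f(R_P)\subseteq R_{P'}$, so $f(\ig{R_P})\subseteq \ig{R_{P'}}$ and $f$ induces a graded ring morphism $\ov f:A(R_P)\to A(R_{P'})$, whose $n$-th component sends $x+\ig{R_P}$ to $f(x)+\ig{R_{P'}}$, for $x\in T^n$.

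To check commutativity, it suffices to test on elements of the form $x+\ig{R_P}\in A(R_P)^n$, with $x\in T^n$, since these span $A(R_P)^n$ as $S$-bimodules and both $\Phi_{P'}\circ \ov f$ and $\wt f\circ \Phi_P$ are graded morphisms. Chasing $x+\ig{R_P}$ around the diagram using the explicit formulas above yields
\[
\wt f\circ \Phi_P(x+\ig{R_P})=\wt f\!\left((x+\ig{P})+U(P)^{\leq n-1}\right)=(f(x)+\ig{P'})+U(P')^{\leq n-1},
\]
while
\[
\Phi_{P'}\circ \ov f(x+\ig{R_P})=\Phi_{P'}(f(x)+\ig{R_{P'}})=(f(x)+\ig{P'})+U(P')^{\leq n-1},
\]
so the two composites agree.

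The only subtle point, and the one I would be most careful about, is the well-definedness of $\ov f$: one needs that $f$ respects the internal homogenization construction, i.e.\ that $f(R_P^n)\subseteq R_{P'}^n$. This follows cleanly because $f$ is graded of degree $0$ and $f(P)\subseteq P'$ implies $f(P^{\leq n})\subseteq (P')^{\leq n}$, so that the projections $p^n$ and $(p')^n$ commute with $f$. After this, the remainder is a straightforward chase through the explicit definitions of $\Phi_P$ and $\Phi_{P'}$.
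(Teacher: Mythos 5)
Your proof is correct and follows essentially the same approach as the paper: establish $f(R_P)\subseteq R_{P'}$ so that $\ov f$ is well-defined, observe that $\wt f=\gr f'$ comes from the filtered map $U(P)\to U(P')$, and verify commutativity degree-by-degree on classes $x+\ig{R_P}$. The only difference is cosmetic: the paper checks $f(R_P^n)\subseteq R_{P'}^n$ directly via the $\mathrm{LH}$ description with a small case split on whether $f(r)=0$, whereas you use the equivalent projection formula $R_P^n=p^n(P^{\leq n})$ from Lemma \ref{Lemma:Sp}, which avoids that case split.
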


\begin{proof}
	Let $r\in R_P^n$ {with $r\neq 0$}. By definition, there exists $p\in P\bgi T^{\leq n}$ such that $p=r+q$ where $q\in T^{\leq n-1}$, i.e. $r=\mathrm{LH}(p)$ and $f(p)=f(r)+f(q)$. If  $f(r)\neq 0$, then $f(r)=\mathrm{LH}(f(p))$, as $f$ is graded. Therefore, $f(r)\in \LH{P'}=R_{P'}^n$. We deduce that  $f(R_{P})\subseteq R_{P'}$. Thus there is a graded $S$-ring map $\ov f:A(R_{P})\to A(R_{P'})$. Similarly, $f$ induces a morphism $f': U(P)\to U(P')$ of filtered algebras, so we can consider the morphism of graded $S$-rings $\wt f:\gr U(P)\to\gr U(P')$, where $\wt f:=\gr f'$. It is easy to see that $\ov f$ and  $\wt f$ make the diagram \eqref{dia:Phi} commutative.
\end{proof}

	In order  to introduce the main object of investigation of this paper, let us take $R$ to be a graded $S$-subbimodule  of $R_P$. Thus $R:=\oplus_{n\geq 0}R^{n}$ and each $R^n$ is a subbimodule of $R^n_P$. Hence, there is a graded algebra morphism $\zeta_{P,R}:A(R)\longrightarrow A(R_P)$ which maps $x+\left\langle R\right\rangle$ to $x+\left\langle R_P\right\rangle $.

\begin{fact}[Main objective.]\label{fact:mainaim}
	The main goal of this paper is to find necessary and sufficient conditions for the natural composition $\Phi _{P,R}:=\Phi_P  \zeta_{P,R}:A(R)\to \gr U(P)$ {to be} a graded $S$-ring isomorphism. In this case, following \cite[Definition 1.1]{CS}, we shall say that $U(P)$ is a \emph{PBW-deformation} of $A(R)$.
\end{fact}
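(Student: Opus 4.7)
The statement here packages a definition together with the research objective, so a ``proof'' amounts to designing a strategy that reduces the isomorphism question for $\Phi_{P,R}$ to verifiable conditions on $P$ and $R$. My plan is to first separate the two sources of failure for $\Phi_{P,R}$ to be an isomorphism, and then attack each of them with the machinery announced in the introduction.

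\textbf{Step 1: split $\Phi_{P,R}=\Phi_P\circ\zeta_{P,R}$.} Since $R\subseteq R_P$ one has $\langle R\rangle\subseteq\langle R_P\rangle$, so $\zeta_{P,R}$ is always a well-defined surjection; injectivity is equivalent to $\langle R\rangle=\langle R_P\rangle$, a condition one can usually verify directly. Consequently the first task is to show that $\Phi_{P,R}$ is an isomorphism if and only if $\langle R\rangle=\langle R_P\rangle$ and $\Phi_P$ is an isomorphism, i.e. $P$ is of $PBW$-type. This immediately reduces everything to studying $\Phi_P$, which is what I would isolate as the central object.

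\textbf{Step 2: characterize $P$ of $PBW$-type internally.} The surjectivity of $\Phi_P$ is automatic because $\varphi_P^n$ is surjective, so the real question is injectivity. By Remark~\ref{rem: Ker_varphi} and the formula \eqref{rem: Ker_Phi}, $\Phi_P$ is an isomorphism precisely when $R^n_{\langle P\rangle}\subseteq \langle R_P\rangle$ for every $n$, equivalently when the inclusion $\langle R_P\rangle\subseteq R_{\langle P\rangle}$ (already known) is an equality. Hence I would introduce the inductive filtration
\[
P_0:=P^{\leq 0},\qquad P_{k+1}:=T^1P_k+P_kT^1+P^{\leq k+1},
\]
observe that $\bigcup_k P_k$ captures $\langle P\rangle\cap T^{\leq\ast}$, and translate the condition $R_{\langle P\rangle}=\langle R_P\rangle$ into the Jacobi relations $(\mathcal J_k)$: $P_{k+1}\cap T^{\leq k}\subseteq P_k$ for all $k\in\N$. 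This is essentially Theorem~\ref{Th:PBWJac} in the paper and should follow by an induction on degree, using Lemma~\ref{Lemma:Sp} at each step to pass between $R_P$ and the homogeneous components of $\langle P\rangle$.

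\textbf{Step 3: homogenize via the central extension and use homology.} To avoid working directly with a filtered ring I would consider the central extension $D(P)=T[z]/\langle P^\ast\rangle$, where $P^\ast$ consists of the external homogenizations of elements of $P$. Since $D(P)/zD(P)\simeq A(R_P)$, the map $\Phi_P$ being an isomorphism is equivalent to $z$ being a non-zero divisor in $D(P)$, and the Jacobi condition $(\mathcal J_n)$ should correspond to the vanishing of the degree~$n$ part of $\mathrm{ann}(z)$, as foreshadowed by Theorem~\ref{Th:JacAnn}. This transforms the problem into a graded, homological one.

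\textbf{Step 4: bound the number of Jacobi conditions by homological complexity.} Finally, to make the criterion effective one wants only finitely many $(\mathcal J_k)$ to be checked. I would build the truncated projective resolution \eqref{eq:resolution} of $S$ over $A=A(R_P)$, lift it to a sequence $^{p}M_\ast$ over $D(P)$, and show that $z$ is regular iff $^{2}M_\ast$ is a complex with $\h_1({}^1M_\ast)=0$. The main obstacle I anticipate is precisely the analysis of $\h_1({}^1M_\ast)$: unlike in \cite{CS}, where $T$ is a tensor ring and this group vanishes automatically, here it can be nontrivial and must be controlled by imposing that $R$ (and $A^1$) be projective as left $S$-modules, so that $V_2$ can be taken to be $R$ itself. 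With this ingredient, regularity of $z$ reduces to $c_{V_3}$-regularity and hence to the Jacobi conditions $(\mathcal J_1),\dots,(\mathcal J_{c(A)})$, giving the desired finite characterization that $\Phi_{P,R}$ is an isomorphism.
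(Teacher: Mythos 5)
Your proposal correctly identifies that the statement is a definition plus a research objective rather than a theorem, and the four-step roadmap you outline (split $\Phi_{P,R}=\Phi_P\circ\zeta_{P,R}$ via Lemma~\ref{le:RS}; translate $PBW$-type into the Jacobi filtration via Theorem~\ref{Th:PBWJac}; homogenize to the central extension and relate Jacobi to $\mathrm{ann}(z)$ via Theorem~\ref{Th:JacAnn}; bound the number of conditions using the resolution \eqref{eq:resolution} and the complexity $c_{V_3}$) is precisely the paper's own strategy as carried out in Sections~1--5. This is a faithful reconstruction of the paper's architecture, not a genuinely different route.
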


	We start with an easy but useful lemma,  showing us that $\Phi _{P,R}$ is an isomorphism if and only if $\Phi _{P}$ is an isomorphism and $\ig{R}=\ig{R_P}$.  This will allow us to approach in a unifying way several well-known results related to the existence of $PBW$-deformations of certain types of algebras, e.g. \cite{BeGi, BrGa, CS}.

\begin{lemma}\label{le:RS}
	The map $\Phi _{P,R}$ is an isomorphism if and only if $\Phi _{P}$ and $\zeta _{P,R}$ are both isomorphisms.
\end{lemma}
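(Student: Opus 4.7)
The plan is to exploit the fact that both $\Phi_P$ and $\zeta_{P,R}$ are already \emph{surjective} by construction, so that the equivalence reduces to a short diagram chase in the category of graded $S$-rings.

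First I would record the surjectivity of each factor. For $\Phi_P$ this is proved in the excerpt (just before Remark \ref{rem: Ker_varphi}). For $\zeta_{P,R}$ surjectivity is automatic: since $R\subseteq R_P$ we have $\langle R\rangle\subseteq \langle R_P\rangle$, and $\zeta_{P,R}$ is nothing but the canonical projection $T/\langle R\rangle\twoheadrightarrow T/\langle R_P\rangle$.

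The ``if'' direction is then immediate: the composition of two graded $S$-ring isomorphisms $\Phi_P\circ\zeta_{P,R}=\Phi_{P,R}$ is again an isomorphism.

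For the ``only if'' direction, assume $\Phi_{P,R}$ is an isomorphism. Injectivity of the composition $\Phi_P\circ\zeta_{P,R}$ forces $\zeta_{P,R}$ to be injective; combined with the surjectivity already noted, $\zeta_{P,R}$ becomes a graded $S$-ring isomorphism. One can then solve for $\Phi_P$, writing
\[
\Phi_P=\Phi_{P,R}\circ\zeta_{P,R}^{-1},
\]
which exhibits $\Phi_P$ as a composition of two isomorphisms, hence itself an isomorphism of graded $S$-rings.

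There is no real obstacle here; the only substantive input is the pre-existing surjectivity of $\Phi_P$ together with the trivial observation that $\zeta_{P,R}$ is surjective whenever $R\subseteq R_P$. Everything else is formal.
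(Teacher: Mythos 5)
Your proof is correct and follows essentially the same route as the paper: both use the surjectivity of the three maps, deduce injectivity of $\zeta_{P,R}$ from injectivity of the composite $\Phi_{P,R}=\Phi_P\circ\zeta_{P,R}$, and then conclude that $\Phi_P$ is bijective as well. The only cosmetic difference is that you make the last step explicit by writing $\Phi_P=\Phi_{P,R}\circ\zeta_{P,R}^{-1}$, where the paper simply says it follows \emph{a fortiori}.
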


\begin{proof}
	By definition, $\Phi _{P,R}$,  $\Phi _{P}$ and $\zeta _{P,R}$ are surjective. If $\Phi _{P,R}$ is an isomorphism, then $\zeta _{P,R}$ is injective. Thus $\zeta _{P,R}$ is bijective and, a fortiori, $\Phi_P$ is bijective too.
\end{proof}

\begin{definition}\label{def:$PBW$-type}
	An $S$-subbimodule  $P\subseteq T$ is  of   \emph{$PBW$-type} if and only if $\Phi_P $ is an isomorphism.
\end{definition}

\begin{remark}\label{rem:large}
	 Note that, since $R\subseteq R_P$, the map $\zeta _{P,R}$ is bijective if and only if $R$ generates the ideal $\ig{R_P}$. Thus, in this case  $A(R)=A(R_P)$ and $\Phi _{P,R}=\Phi _{P}$. Therefore, $U(P)$ is a $PBW$-deformation of $A(R)$ if and only if $P$ is of $PBW$-type and $R$ generates $\ig{R_P}$.
\end{remark}

\begin{remark}\label{rem: $PBW$-type}
 	Note that the inclusion $\ig{R_P}\subseteq R_{\ig{P}}$ always holds,  as $R_{\ig{P}}$ is an ideal which contains  $R_P$. Taking into account the relation \eqref{rem: Ker_Phi}, the bimodule $P$ is of $PBW$-type if and only if  $R_{\ig{P}}\subseteq \ig{R_P}$. Moreover, in view of Remark \ref{rem:isos},  we can identify $R_X$ and $\text{gr}X$, for every submodule $X\subseteq T$. Then $P$ is of $PBW$-type if and only if $\text{gr}\langle P\rangle=\langle\text{gr}P\rangle$.
\end{remark}

\begin{example}\label{ex:alpha}
	A generic class of examples of $S$-bimodules $P$ and $R\subseteq R_P$, such that $\zeta_{P,R}$ is an isomorphism, can be constructed as follows. Let $R$ be a graded $S$-subbimodule of $T$. We can regard $R$ as a filtered $S$-bimodule with respect to the filtration $R^{\leq n}=\oplus_{m=0}^nR^m$.

	We assume that $\alpha:R\to T$ is a given morphism of filtered $S$-bimodules, that is $\alpha \left( R^{n}\right)\subseteq T^{\leq n}$ for all $n\in \mathbb{N}$.   For $i\in\N$ we define $\alpha_i:R\to T$ to be the unique $S$-bimodule map such that its restriction to $R^n$ coincides with  $p^{n-i}  \alpha$. Note that $\alpha_i(r)=0$, whenever $n<i$ and $r\in R^n$.

	By definition, $\alpha_i$ belongs to $\Hom_{S-S}(R,T)_{-i}$ , the set of $S$-bimodule map of degree $-i$. For simplicity we shall write $\alpha=\sum_{i=0}^\infty\alpha_i$, although the sum  does not make sense in general. By the above relation, we just mean that the sum $\sum_{i=0}^\infty\alpha_i(r)$ makes sense for all $r\in R$, because only a finite number of its terms are nonzero, and  it equals $\alpha(r)$. We will call $\alpha_i$ the \emph{component of degree} $-i$ of the \emph{filtered} map $\alpha$.

	Conversely, any family $\{\alpha_i\}_{i\in \N}$ with $\alpha_i\in\Hom_{S-S}(R,T)_{-i}$, defines in a unique way a filtered map $\alpha:R\to T$ by the relation:
\[
	\alpha(r)=\sum_{i=0}^\infty\alpha_i(r),
\]
	for all $r\in R$. \textit{Throughout we will  assume that the component $\alpha_0$ of a filtered map $\alpha:R\to T$ is the inclusion of $R$ into $T$.} We point out that under this assumption $\alpha$ has to be injective.

	Let $\alpha:R\to T$ be a filtered map. Let  $P:=\alpha(R)$.  We claim that $R=R_P$ so, in this case, we have  $\zeta_{P,R}=\id_{A(R)}$ as well.

	Indeed, for every $r\in R$, we have $\mathrm{LH}\left(\alpha(r)\right)=\mathrm{LH}\left(\alpha_0(r)\right) =\mathrm{LH}\left( r\right) $, since $\deg(\alpha_i)=-i$ and  $\alpha_0(r)=r$. Thus, for every $x\in R_P^n$, there is $r\in R$ so that $x=\mathrm{LH}\left(\alpha \left( r\right) \right) =\mathrm{LH} \left( r\right)$.  Then $x\in R\bgi T^{n}=R^{n}$. Conversely, for every $r\in R^{n}$, we have $\mathrm{LH}\left(\alpha \left( r\right) \right) =r$. Therefore, $r\in \mathrm{LH}\left( P\right) \bgi T^{n}=R_P^n$.

	By Remark \ref{rem: $PBW$-type}, in this  particular case, $U(P)$ is a $PBW$-deformation of $A(R)$, that is $\Phi_{P,R}$ is an isomorphism, if and only  if $P$  is of $PBW$-type.
\end{example}

\begin{example}\label{ex:CS}
 	We use the notation and the assumptions from \cite{CS}. Thus,   $T$ denotes the free $\Bbbk$-algebra generated by the indeterminates $\{X_1,\dots,X_n\}$. Note that $S=T^0=\Bbbk$. We fix two subsets $R^{\prime }:=\{r_1,\dots,r_m\}$ and $P':=\{r_1+l_1,\dots,r_m+l_m\}$ of $T$ as in the foregoing mentioned paper. We denote by $R$ and $P$ the $\Bbbk$-linear spaces spanned by $R^{\prime }$ and $P^{\prime }$, respectively. By assumption $R^{\prime }$ is a minimal set of homogeneous generators of the ideal $\ig{R'}=\ig{R}$, cf. \cite{CS}. Thus, $R'$ is a linearly independent set. Hence we can define a unique linear map $\alpha:R\to T$ such that $\alpha(r_i)=r_i+l_i$. Hence, $P=\alpha(R)$.

	By Example  \ref{ex:alpha} we have $R=R_P$, so $\Phi_{P,R}$ is an isomorphism if and only if $P$ is of $PBW$-type. The fact that $\Phi_{P,R}$ is an isomorphism means that ``the deformation $T/\ig{P'}$, of the graded $\Bbbk$-algebra $T/\ig{R'}$, is a $PBW$-deformation", cf.  \cite[Definition 1.1]{CS}.
\end{example}

\begin{example}\label{ex:BG}
 	As another instance of Example  \ref{ex:alpha} we consider the algebras introduced in \cite{BeGi}. In this paper the authors work  with quotients of the tensor algebra $T:=T_S(V)$ of an $S$-bimodule $V$,  where $S:=T^0$ is a regular Von Neumann ring. By assumption,  $P$ is an $S$-subbimodule  of $T^{\leq n}$ and one takes $R$ to be the graded $S$-bimodule with trivial homogeneous components, excepting  $R^n:=p^n(P)$, where $p^n :T\to T^{n}$ denotes the canonical projection. To refer  to this situation, we will say that $R$ in $n$-\textit{pure}. We will also say that a graded bimodule is \textit{pure} if it is $n$-pure for some $n$.

	Clearly, $R= p^n(P^{\leq n})=R_P^n\subseteq R_P$, so we can take into consideration the map $ \Phi_{P,R} :A(R)\rightarrow \mathrm{gr}U(P)$. Recall that  $ \Phi_{P,R}$ is an isomorphism if and only if $\ig{R}=\ig{R_P}$ and $P$ is of $PBW$-type. Furthermore, the ideal generated by $R$ and $R_P$ are equal if and only if $R=R_P$. We have to prove only the direct implication, the other being trivial. Let us suppose that the two ideals are equal. Thus $R_P^m\subseteq \ig{R}^m=0$, for all $m<n$. Since, $R_P\subseteq T^{\leq n}$ we get $R=R_P$.

	On the other hand, the latter relation holds if and only if $P^{\leq n-1}=0$. Indeed, if one assumes that $R=R_P$ and that there exists some nonzero $x\in P^{\leq n-1}$, then we can find $m\leq n-1$ such that $x\in P^{\leq m}$ but $x\notin P^{\leq m-1}$. Therefore, $p^m(x)$ is a nonzero element in $R_P^m$, fact that contradicts our hypothesis. The converse is obvious. We conclude that, in this example,  the map $\Phi_{P,R}$ is bijective if and only if $P^{\leq n-1}=0$ and $P$ is of $PBW$-type.
\end{example}

\begin{example}
	Consider a ring $S.$ We say that $U$ is a \emph{skew PBW extension }of $S$
	if $U$ is an $S$-ring endowed with a map $\sigma :S\rightarrow M_{n}\left(
	S\right) :s\mapsto \left( \sigma _{i,j}\left( s\right) \right) $, a matrix $%
	c=\left( c_{i,j}\right) \in M_{n}\left( S\right) $ and finite elements $%
	y_{1},\ldots ,y_{n}\in U$ fulfilling the following conditions:
	
	\begin{enumerate}
		\item[(i)] $\mathrm{Mon}\left( U\right) :=\left\{ y_{1}^{a_{1}}\cdots
		y_{n}^{a_{n}}\mid a_{1},\ldots ,a_{n}\in \mathbb{N}\right\} $ is a basis for
		$U$ as a left $S$-module;
		
		\item[(ii)] $y_{i}s-\sum_{j}\sigma _{i,j}\left( s\right) y_{j}\in S1_{U},$ for $%
		1\leq i\leq n$ and $s\in S;$
		
		\item[(iii)]  $y_{j}y_{i}-c_{i,j}y_{i}y_{j}\in S1_{U}+Sy_{1}+\ldots +Sy_{n},$ for $%
		1\leq i,j\leq n.$
	\end{enumerate}
	
	Under these conditions we will write $U=S\left\langle y_{1},\ldots
	,y_{n},\sigma ,c\right\rangle .$ If $y_{i}s-\sum_{j}\sigma _{i,j}\left(
	s\right) y_{j}=0=y_{j}y_{i}-c_{i,j}y_{i}y_{j}$ we will say that $U$ is a
	\emph{quasi-commutative skew PBW extension} of $S$ and write $U=S[
	y_{1},\ldots ,y_{n},\sigma ,c].$
	
	Note that in the case $\sigma _{i,j}\left( s\right) =0$ for $i\neq j,$ we
	recover the definition of skew PBW extension given in \cite[Definition 1]{GL}.
	
	Let $V:=Sx_{1}S\oplus \cdots \oplus Sx_{n}S$ be the free $S$-bimodule
	(equivalently left $S^{e}$-module where $S^{e}:=S\otimes _{\mathbb{Z}}S^{op}$%
	) with basis $\left\{ x_{1},\ldots ,x_{n}\right\} \ $and let $\delta
	_{i}\left( s\right) \in S,\rho _{i,j}\in S+Sx_{1}\oplus \cdots \oplus
	Sx_{n}\subseteq T_{S}\left( V\right) .$ Consider the quotients $U\left(
	P\right) $ and $A(R)$ of $T_{S}\left( V\right) $ by the two-sided ideals generated by:
	\begin{align}
	P:=&\mathrm{Span}_{S,S}\Big\{ x_{i}s-\sum_{t=1}^n\sigma _{i,t}\left( s\right)
	x_{t}-\delta _{i}\left( s\right) , x_{j}\otimes x_{i}-c_{i,j}x_{i}\otimes
	x_{j}-\rho _{i,j}\mid s\in S,i,j=1,\ldots ,n\Big\};\label{def:Pskew}\\
	R:=& \mathrm{Span}_{S,S}\Big\{ x_{i}s-\sum_{t=1}^n\sigma _{i,t}\left( s\right)
	x_{t},x_{j}\otimes x_{i}-c_{i,j}x_{i}\otimes x_{j}\mid s\in
	S,i,j=1,\ldots ,n\Big\}.\label{def:Rskew}
	\end{align}%
	Set $\mathrm{Mon}\left( U\left( P\right) \right) =\left\{ x_{1}^{\otimes
		a_{1}}\otimes \cdots \otimes x_{n}^{\otimes a_{n}}+\left\langle
	P\right\rangle \mid a_{1},\ldots ,a_{n}\in \mathbb{N}\right\} .$ It is easy
	to check that, if $\mathrm{Mon}\left( U\left( P\right) \right) $ is a basis
	for $U\left( P\right) $ as a left $S$-module, then $U\left( P\right)
	=S\left\langle y_{1},\ldots ,y_{n},\sigma ,c\right\rangle $ where $%
	y_{i}:=x_{i}+\left\langle P\right\rangle .$ Conversely every skew PBW
	extension $S\left\langle y_{1},\ldots ,y_{n},\sigma ,c\right\rangle $ is
	isomorphic to $U\left( P\right) $ where $\delta _{i}\left( s\right) $ and $%
	\rho _{i,j}$ corresponds to the elements $y_{i}s-\sum_{j=1}^n\sigma
	_{i,j}\left( s\right) y_{j}\in S1_U$ and $y_{j}y_{i}-c_{i,j}y_{i}y_{j}\in S1_{U}+Sy_{1}+\ldots +Sy_{n}$ through the
	bijection $f:S+Sx_{1}+\cdots +Sx_{n}\rightarrow S1_{U}+Sy_{1}+\cdots +Sy_{n}$
	of left $S$-modules mapping $1_{S}$ to $1_{U}$ and $x_{i}$ to $y_{i}$ for
	every $i.$ This correspondence between skew PBW extensions and $U\left(
	P\right) $ such that $\mathrm{Mon}\left( U\left( P\right) \right) $ is a
	basis for $U\left( P\right) $ as a left $S$-module induces also a
	correspondence between quasi-commutative skew PBW extensions and $A\left( R\right)$ such that $\mathrm{Mon}%
	\left( A\left( R\right) \right) $ is a basis for $A\left( R\right) $ as a
	left $S$-module.
	
	Our aim here is to show that the following are equivalent for an $S$-ring $U:$
	\begin{enumerate}
		\item $U$ is a PBW-deformation of the quasi-commutative skew PBW extension $%
		S[ y_{1},\ldots ,y_{n},\sigma ,c]$ of
		$S;$
		\item $U=S\left\langle y_{1},\ldots ,y_{n},\sigma
		,c\right\rangle $ is a skew PBW extension of $S$.
	\end{enumerate}
	Assume $\left( 1\right) $ holds true. By the foregoing $U$ is a
	PBW-deformation of $A\left( R\right) $ such that $\mathrm{Mon}\left( A\left(
	R\right) \right) $ is a left $S$-module basis. By Remark \ref{rem:large}, $%
	U=U\left( P\right) =T_{S}\left( V\right) /\left\langle P\right\rangle $
	where $P$ is an $S$-subbimodules of $T=T_{S}\left( V\right) $ such that $P$
	is of PBW-type, $R\subseteq R_{P}$ and $\left\langle R\right\rangle
	=\left\langle R_{P}\right\rangle .$ Since $x_{i}s-\sum_{j=1}^{n}\sigma
	_{i,j}\left( s\right) x_{j}\in R^{1}\subseteq R_{P}^{1},$ there is $\tau _{i}\left( s\right) \in P^{\leq 1}$ such that $%
	x_{i}s-\sum_{j=1}^{n}\sigma _{i,j}\left( s\right) x_{j}=p^1\left(
	\tau _{i}\left( s\right) \right) $. Thus $ \delta _{i}\left( s\right):=x_{i}s-\sum_{j=1}^{n}\sigma _{i,j}\left( s\right)
	x_{j}-\tau
	_{i}\left( s\right) \in T_{S}\left( V\right) ^{\leq 0}=S.$ Similarly $x_{j}\otimes
	x_{i}-c_{i,j}x_{i}\otimes x_{j}\in R^{2}\subseteq R_{P}^{2}$ so that there is some $\tau _{ij}\in P^{\leq 2}$ such that
	$x_{j}\otimes x_{i}-c_{i,j}x_{i}\otimes x_{j}=p^2\left( \tau
	_{ij}\right) .$ Therefore $\rho _{i,j}:=x_{j}\otimes x_{i}-c_{i,j}x_{i}\otimes x_{j}-\tau
	_{ij}\in T_{S}\left( V\right) ^{\leq 1}=S+V.$ By using
	the relations of the form $\tau _{i}\left( s\right) $ we can further require
	$\rho _{i,j}\in S\oplus
	Sx_{1}\oplus \cdots \oplus Sx_{n}.$ Set $y_{i}:=x_{i}+\left\langle
	P\right\rangle .$ By construction we easily get $y_{i}s-\sum_{j=1}^{n}\sigma
	_{i,j}\left( s\right) y_{j}\in S1_{U}$ and $y_{j}y_{i}-c_{i,j}y_{i}y_{j}\in
	S1_{U}+Sy_{1}+\ldots +Sy_{n}$. Moreover the canonical isomorphism $\Phi
	_{P,R}=\oplus _{t\in \mathbb{N}}\Phi _{P,R}^{t}:A\left( R\right) \rightarrow
	\mathrm{gr}U\left( P\right) $ and the fact that $\mathrm{Mon}\left( A\left(
	R\right) \right) $ is a left $S$-module basis forces $\mathrm{Mon}\left(
	U\left( P\right) \right) $ to be a left $S$-module basis. Thus $U=U\left(
	P\right) =S\left\langle y_{1},\ldots ,y_{n},\sigma ,c\right\rangle $ is a skew PBW extension of $S.$
	
	Conversely, assume $\left( 2\right) .$ By the foregoing we can take $
	U:=U\left( P\right) =T_{S}\left( V\right) /\left\langle P\right\rangle $
	where $P$ is as in \eqref{def:Pskew} and $\mathrm{Mon}\left( U\left( P\right)
	\right) $ is a left $S$-module basis. Now define $R$ as in \eqref{def:Rskew} and consider $A\left( R\right)
	=T_{S}\left( V\right) /\left\langle R\right\rangle .$ In view of the
	relations we imposed, it is clear that $\mathrm{Mon}\left( A\left( R\right)
	\right) $ generates $A\left( R\right) $ as a left $S$-module. Since $\mathrm{%
		Mon}\left( U\left( P\right) \right) $ is a left $S$-module basis and
	\begin{equation*}
		\Phi _{P,R}\left( x_{1}^{\otimes a_{1}}\otimes \cdots \otimes x_{n}^{\otimes
			a_{n}}+\left\langle R\right\rangle \right) =\left( x_{1}^{\otimes
			a_{1}}\otimes \cdots \otimes x_{n}^{\otimes a_{n}}+\left\langle
		P\right\rangle \right) +U^{\leq a_{1}+\cdots +a_{n}}
	\end{equation*}%
	we deduce that $\mathrm{Mon}\left( A\left( R\right) \right) $ is a basis for
	$A\left( R\right) $ as a left $S$-module (hence, by the foregoing, $A\left(
	R\right) $ is a quasi-commutative skew PBW extension of $S$) and that $\Phi _{P,R}$ is
	invertible (hence $U$ is a PBW-deformations of $A\left( R\right) $).
	
	The fact  that $\mathrm{Mon}\left( A\left( R\right) \right) $ is a left $S$%
	-module basis forces $\sigma :S\rightarrow M_{n}\left( S\right) $ to be a
	ring homomorphism. As a consequence $W:=Sx_{1}\oplus \cdots \oplus Sx_{n}$
	becomes an $S$-bimodule with left regular action and right action given by $%
	x_{i}\times s\mapsto \sum_{j=1}^n\sigma _{i,j}\left( s\right) x_{j}$, and $A(R)\cong T_S(W)/\ig{x_{j}\otimes x_{i}-c_{i,j}x_{i}\otimes x_{j}\mid i,j=1,\ldots,n}$.
\end{example}

\section{Bimodules of \texorpdfstring{$PBW$}{PBW}-type and the Jacobi relations }

	We keep the notation from the preceding section: $T=\oplus_{n\in\N}T^n$ is a connected graded $S$-ring  and $P$ is an $S$-subbimodule of $T$. Recall that, by definition, $R_P^n:=\LH{P}\bgi T^n=p^n(P^{\leq n})$ where $P^{\leq n}:=P\bgi T^{\leq n}$.

	Let $R$ be a graded $S$-subbimodule of $R_P$. In view of Lemma \ref{le:RS}, the canonical map $\Phi_{P,R}$ is an isomorphism  if and only if $P$ is of $PBW$-type and $R$ generates $\ig{R_P}$. We have also seen that for several important examples the latter condition is automatically satisfied. Henceforth, in order to prove that $\Phi_{P,R}$ is bijective the main step is to check that $P$ is of $PBW$-type. The purpose of this section is to find necessary and sufficient conditions for a bimodule $P$ to be of $PBW$-type.

	A first characterization of these bimodules is given in the following proposition.

\begin{proposition}\label{Th:$PBW$}
	Let $P'\subseteq P$ denote $S$-subbimodules of $T$.
\begin{enumerate}
 	\item The bimodule $P$ is of $PBW$-type if and only if $\mathrm{LH}\left( \left\langle P\right\rangle \right)\subseteq \left\langle \mathrm{LH}\left( P\right) \right\rangle $.

	\item  If $\mathrm{LH}\left( P\right) \subseteq\left\langle \mathrm{LH}\left( P^{\prime }\right) \right\rangle $, then $P^{\prime }$ is of $PBW$-type whenever $P$ is so. In this case, $\left\langle P\right\rangle=\left\langle P^{\prime }\right\rangle $.
\end{enumerate}
\end{proposition}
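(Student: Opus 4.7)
The plan for part (1) is purely a translation of definitions. By Remark \ref{rem: $PBW$-type}, $P$ is of $PBW$-type if and only if $R_{\langle P\rangle}\subseteq \langle R_P\rangle$. By Lemma \ref{Lemma:Sp}, the set $\mathrm{LH}(\langle P\rangle)$ is the union of its homogeneous components $R^n_{\langle P\rangle}$, and moreover $\langle\mathrm{LH}(P)\rangle=\langle R_P\rangle$. Consequently, $\mathrm{LH}(\langle P\rangle)\subseteq \langle \mathrm{LH}(P)\rangle$ is equivalent to $R^n_{\langle P\rangle}\subseteq \langle R_P\rangle$ for all $n$, i.e. to $R_{\langle P\rangle}\subseteq \langle R_P\rangle$, which is the characterization we want.

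For part (2), I would proceed in two steps. The first step is to prove the equality $\langle P\rangle=\langle P'\rangle$. The inclusion $\langle P'\rangle\subseteq \langle P\rangle$ is immediate from $P'\subseteq P$. For the reverse, using that $P$ is of $PBW$-type and part (1), one gets $R_{\langle P\rangle}=\langle R_P\rangle$; combining this with the hypothesis $\mathrm{LH}(P)\subseteq \langle \mathrm{LH}(P')\rangle$, which translates into $\langle R_P\rangle\subseteq\langle R_{P'}\rangle\subseteq R_{\langle P'\rangle}$, yields $R^n_{\langle P\rangle}\subseteq R^n_{\langle P'\rangle}$ for every $n$. Then I would prove by induction on $n$ that $\langle P\rangle\cap T^{\leq n}\subseteq \langle P'\rangle$. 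In the inductive step, for $x\in \langle P\rangle\cap T^{\leq n}$ with $\mathrm{LH}(x)\in T^n$, one picks $y\in \langle P'\rangle\cap T^{\leq n}$ with $p^n(y)=\mathrm{LH}(x)$, so that $x-y\in T^{\leq n-1}$; crucially, since $\langle P'\rangle\subseteq \langle P\rangle$, the difference $x-y$ remains in $\langle P\rangle\cap T^{\leq n-1}$, and the inductive hypothesis gives $x-y\in \langle P'\rangle$, hence $x\in\langle P'\rangle$. The base case $n=0$ is an immediate application of $R^0_{\langle P\rangle}\subseteq R^0_{\langle P'\rangle}$ using $T^0=S$.

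The second step is then a short calculation. Once $\langle P\rangle=\langle P'\rangle$ is established, we have
\[
R_{\langle P'\rangle}=R_{\langle P\rangle}=\langle R_P\rangle\subseteq \langle R_{P'}\rangle,
\]
and combined with the always valid inclusion $\langle R_{P'}\rangle\subseteq R_{\langle P'\rangle}$ this forces $R_{\langle P'\rangle}=\langle R_{P'}\rangle$. Applying part (1) to $P'$ yields that $P'$ is of $PBW$-type.

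The only delicate point is the induction in Step~1: one must carefully argue that, after subtracting a representative chosen from the smaller ideal $\langle P'\rangle$, the remainder is still in $\langle P\rangle$ (so that the inductive hypothesis can be invoked). This is precisely where the inclusion $\langle P'\rangle\subseteq \langle P\rangle$ is used in an essential way, and it is what makes the top-down descent by highest homogeneous component go through.
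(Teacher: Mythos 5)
Your part (1) is exactly the paper's argument: Remark \ref{rem: $PBW$-type} plus Lemma \ref{Lemma:Sp}. For part (2) your proof is correct but runs in the opposite order to the paper's and uses different tools. The paper first deduces that $P'$ is of $PBW$-type from part (1) alone, via the chain $\mathrm{LH}(\langle P'\rangle)\subseteq \mathrm{LH}(\langle P\rangle)\subseteq \langle \mathrm{LH}(P)\rangle=\langle \mathrm{LH}(P')\rangle$, and only then obtains $\langle P\rangle=\langle P'\rangle$ by invoking the naturality diagram of Lemma \ref{le:Phi_functorial} with $f=\Id$: both $\Phi_P$ and $\Phi_{P'}$ and the map $\ov f$ are isomorphisms, hence $\gr f'$ is, hence the filtered map $f':U(P')\to U(P)$ is an isomorphism, which gives the equality of ideals; this last step silently uses the standard fact that a filtered morphism between exhaustively, positively filtered objects whose associated graded map is bijective is itself bijective. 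You instead establish $\langle P\rangle=\langle P'\rangle$ first, by an elementary induction on the filtration degree (subtracting a representative from $\langle P'\rangle$ whose top component matches $\mathrm{LH}(x)$, which is legitimate by Lemma \ref{Lemma:Sp}, and noting that the remainder stays in $\langle P\rangle$ because $\langle P'\rangle\subseteq\langle P\rangle$), and then deduce the $PBW$-type of $P'$ formally from $R_{\langle P'\rangle}=R_{\langle P\rangle}=\langle R_P\rangle\subseteq\langle R_{P'}\rangle$ together with the always-valid reverse inclusion. In effect your induction re-proves by hand, in this special case, the filtered-to-graded transfer principle that the paper cites implicitly; the payoff is a self-contained argument needing only Lemma \ref{Lemma:Sp} and Remark \ref{rem: $PBW$-type}, while the paper's route is shorter given that Lemma \ref{le:Phi_functorial} is already available. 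Both proofs are complete and correct (the only cosmetic point in yours is that in the inductive step the case $\mathrm{LH}(x)\in T^{<n}$ is covered directly by the inductive hypothesis, which you leave tacit).
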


\begin{proof}
	We know, in view of Remark \ref{rem: $PBW$-type}, that $P$ is of $PBW$-type if and only if $R_{\ig{P}}\subseteq \ig{R_P}$. We conclude the proof of the first part of the proposition by using  Lemma \ref{Lemma:Sp}.

	Let us assume that $P$ is of $PBW$-type. Then $\ig{\mathrm{LH}(P)}=\ig{\mathrm{LH}(P')}$ by the standing hypothesis and the inclusion $P'\subseteq P$. By the first part,  $\mathrm{LH}\left(\left\langle P^{\prime }\right\rangle \right) \subseteq \mathrm{LH}\left(\left\langle P\right\rangle \right) \subseteq \left\langle \mathrm{LH}\left(P\right) \right\rangle =\left\langle \mathrm{LH}\left( P^{\prime }\right)\right\rangle $, so that $P'$ is of $PBW$-type too. In the diagram \eqref{dia:Phi} let us take $f:=\Id$. Since:
\[
	\left\langle R_{P^{\prime }}\right\rangle=\left\langle \mathrm{LH}\left( P^{\prime }\right) \right\rangle{=}\left\langle \mathrm{LH}\left( P\right) \right\rangle =\left\langle R_{P}\right\rangle,
\]
	the horizontal maps and  $\ov f$  are bijective. It follows that $\wt f =\text{gr}f'$ is an isomorphism. We conclude that $f'$ is an isomorphism, so $\left\langle P\right\rangle =\left\langle P^{\prime }\right\rangle $.
\end{proof}

	Let  $q^n_P:P^{\leq n}\to\gr^n P$ denote  the canonical projection. Recall from \cite[Definition 4.3]{NvO1} that a filtered morphism $f:M\to N$ is said to be \emph{strict} if $f(F^nM)=f(M)\cap F^nN$ for each $n\in\mathbb{N}$. It is easy to check that such a morphism is invertible if and only if its filtered components are all invertible.

\begin{lemma}\label{le:strict}
	 Let $P$ be an $S$-subbimodule of $T$. The $S$-subbimodule $P^{\leq{n-1}}$ is a direct summand of $P^{\leq n} $ for all $n$ if and only if there exists a strict isomorphism $i_P:\gr P\to P$ of filtered  $S$-bimodules.  
\end{lemma}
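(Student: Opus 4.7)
The plan is to prove both implications by extracting (respectively, transporting) complementary summands, degree by degree.

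For the forward direction, I would assume that for every $n\geq 0$ there is a subbimodule decomposition $P^{\leq n}=P^{\leq n-1}\oplus C_n$, where $C_0=P^{\leq 0}$. An easy induction on $n$ then gives $P^{\leq n}=C_0\oplus C_1\oplus\cdots\oplus C_n$, and the canonical projection $q_P^n$ restricts to an isomorphism of $S$-bimodules $C_n\xrightarrow{\sim}\gr^n P$; let $j_n:\gr^n P\to C_n\subseteq P$ denote its inverse. Since the filtration is exhausting ($P=\bigcup_n P^{\leq n}$), summing the inclusions $C_n\hookrightarrow P$ produces a bimodule isomorphism $\bigoplus_n C_n\xrightarrow{\sim}P$. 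Defining $i_P:=\bigoplus_n j_n:\gr P\to P$ yields a bimodule isomorphism whose image of $(\gr P)^{\leq n}=\bigoplus_{k\leq n}\gr^k P$ is exactly $\bigoplus_{k\leq n}C_k=P^{\leq n}$, which shows $i_P$ is filtered and, by the displayed equality, strict.

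For the converse, assume $i_P:\gr P\to P$ is a strict isomorphism of filtered bimodules. The grading on $\gr P$ induces the standard filtration $(\gr P)^{\leq n}=\bigoplus_{k\leq n}\gr^k P$, which clearly splits as $(\gr P)^{\leq n}=(\gr P)^{\leq n-1}\oplus\gr^n P$. Because $i_P$ is a strict isomorphism, it restricts to bimodule isomorphisms $(\gr P)^{\leq n}\xrightarrow{\sim}i_P((\gr P)^{\leq n})=i_P(\gr P)\cap P^{\leq n}=P^{\leq n}$ and $(\gr P)^{\leq n-1}\xrightarrow{\sim}P^{\leq n-1}$. Hence $i_P(\gr^n P)$ is a bimodule complement of $P^{\leq n-1}$ in $P^{\leq n}$, proving that $P^{\leq n-1}$ is a direct summand.

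I do not expect any real obstacle: the argument is a standard splitting-and-assembly, the only mildly delicate point being to double-check that taking the direct sum of the splittings $C_n$ reconstructs all of $P$ (which uses the exhaustiveness of the filtration) and that the induced map $i_P$ satisfies the strictness condition $i_P((\gr P)^{\leq n})=i_P(\gr P)\cap P^{\leq n}$. Both verifications reduce to bookkeeping with the decomposition $P^{\leq n}=\bigoplus_{k\leq n}C_k$ obtained inductively.
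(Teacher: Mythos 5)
Your proof is correct and follows the same degree-by-degree splitting strategy as the paper, but the bookkeeping differs in both directions. For the forward implication, the paper builds $i_P$ as the codiagonal of sections $i_P^n$ of $q_P^n$ and then verifies that $i_P^{\leq n}$ is bijective by an inductive application of the Short Five Lemma; you instead extract explicit complements $C_n$ with $P^{\leq n}=\bigoplus_{k\leq n}C_k$ and assemble $i_P$ as $\bigoplus_n j_n$, which makes the strictness verification a matter of reading off the decomposition rather than a Five-Lemma argument. For the converse, the paper's argument (modifying $\id_{\gr^n P}$ by an auxiliary isomorphism $j^n$ to restore commutativity of a square and then transferring the splitting) is somewhat indirect; your version, which simply unwinds strictness to conclude $i_P\big((\gr P)^{\leq n}\big)=P^{\leq n}$ and then observes that $i_P(\gr^n P)$ is the desired complement of $P^{\leq n-1}$, is more transparent and avoids the auxiliary map. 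Both are sound; yours trades the Five Lemma for explicit complements and gives a cleaner reverse direction, at the mild cost of having to note (as you do) that exhaustiveness of the filtration is what lets $\bigoplus_n C_n$ recover all of $P$.
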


\begin{proof}
 	Assume that $P^{\leq{n-1}}$ is a direct summand of $P^{\leq n}$. Then the exact sequence:
 \[
 	0\rightarrow P^{\leq n-1}\rightarrow P^{\leq n}\xrightarrow{q^n_P}\gr{^n P}\rightarrow 0
 \]
	splits.  It follows that  $q^n_P$ has a section $i^n_P$, which is a morphism of $S$-bimodules. We will regard $i_P^n$ as a morphism from $\gr ^n P$ to $P$. Let $i_P:\gr P\to P$ be the codiagonal  morphism corresponding to the family $\{i_P^n\}_{n\in\N}$.  If $\gr{^{\leq n} P}$ is the $n$-th term of the standard filtration on $\gr P$ and  $i_P^{\leq n}$ is the restriction of $i_P$ to $\gr{^{\leq n} }P$ then $i_P(\gr ^{\leq n}P)= i_P^n(\gr^{\leq n} P)\subseteq P^{\leq n}$. Thus $i_P$ is a morphism of filtered bimodules. The following diagram has commutative squares and exact sequences.
 \[\xymatrix{
	0\ar[r]&\gr{^{\leq n-1} P}\ar[r]\ar[d]^{i_P^{\leq n-1}}& \gr{^{\leq n} P} \ar[r]\ar[d]^{i_P^{\leq n}}& \gr{^n P} \ar[r]\ar@{=}[d]&0\\
	0\ar[r]&P^{\leq n-1}\ar[r]& P^{\leq n} \ar[r]^{q^n_P}& \gr{^n P} \ar[r]&0}
  \]
 	Applying the Short Five Lemma one shows inductively that $i_P^{\leq n}$ is bijective for every $n$. In conclusion, $i_P$ is a strict isomorphism of filtered bimodules.

 	Conversely, let $i_P:\gr P\to P$ be a strict isomorphism of filtered  $S$-bimodules and let $i_P^{\leq n}$  denote the $n$-th filtered component of $i_P$. The rightmost square of the above diagram is not commutative anymore. But we can make it so, by replacing $\id_{\gr ^n P}$ with an appropriate isomorphism $j^n:\gr^n P \to \gr^n P$. Thus the  sequence on the bottom of the diagram splits as the other one does.
%
%
\end{proof}

\begin{remarks}
	(1) We keep the notation from the above lemma and we assume that the  isomorphism $i_P$ exists. Then  $(i^n_P q^n_P)(x)-x\in P^{\leq {n-1}}$ for all $x\in P^{\leq n}$, since $i_P^n$ is a section of $q_P^n$ by construction.

	(2) If $\gr P$ is projective as an $S$-bimodule, then every component $\gr ^n{P}$ is so. Hence the exact sequence from the proof of the preceding lemma splits. Thus, by the previous lemma, the isomorphism  $i_P$ exists in this case as well.
\end{remarks}

\begin{theorem}\label{te:alpha}
	Let $T$ be a graded connected $S$-ring  and let $P$ be an $S$-subbimodule of $T$. If $P^{\leq n}$ is a direct summand of $P^{\leq n+1}$ as an $S$-bimodule, then there is a morphism  $\alpha:R_P\to T$ of filtered $S$-bimodules such that $ P=\alpha(R_P)$.  Such an $\alpha$ exists, provided  $\gr P$ is projective as an $S$-bimodule. In particular, if $S$ is a separable algebra over a field, then for every $P$ there exists a morphism of filtered modules $\alpha $ such that  $P=\alpha(R_P)$.
\end{theorem}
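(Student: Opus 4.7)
The plan is to construct $\alpha$ as the composition of two naturally available isomorphisms. By Remark \ref{rem:isos}, the assignment $z + P^{\leq n-1} \mapsto p^n(z)$ assembles into an isomorphism $\overline{p}: \gr P \to R_P$ of graded (hence filtered) $S$-bimodules. Under the hypothesis that each $P^{\leq n}$ is a direct summand of $P^{\leq n+1}$, Lemma \ref{le:strict} supplies a strict filtered isomorphism $i_P: \gr P \to P$ of filtered $S$-bimodules. Setting
\[
\alpha := i_P \circ \overline{p}^{-1}: R_P \longrightarrow P \hookrightarrow T,
\]
one obtains a composition of filtered $S$-bimodule morphisms, hence filtered, satisfying $\alpha(R_P) = i_P(\gr P) = P$.

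To verify that this $\alpha$ agrees with the normalization of Example \ref{ex:alpha} (that $\alpha_0$ is the inclusion $R_P \hookrightarrow T$), I would take $r \in R_P^n$ and choose $z \in P^{\leq n}$ with $p^n(z) = r$. Then $\alpha(r) = i_P^n(z + P^{\leq n-1}) \in P^{\leq n}$, and because $i_P^n$ is a section of $q_P^n$ we have $p^n(\alpha(r)) = p^n(z) = r$, whence $\mathrm{LH}(\alpha(r)) = r$.

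For the second assertion, if $\gr P$ is projective as an $S$-bimodule then each homogeneous component $\gr^n P$ is itself projective, being a direct summand of $\gr P$; hence the short exact sequence
\[
0 \to P^{\leq n-1} \to P^{\leq n} \to \gr^n P \to 0
\]
splits for every $n$, and the first part of the theorem applies. For the final assertion, when $S$ is a separable algebra over a field $\Bbbk$, a separability idempotent $e = \sum_i a_i \otimes b_i \in S \otimes_\Bbbk S$ exists, and a standard double-averaging argument produces an $S$-bimodule section for any $\Bbbk$-split short exact sequence of $S$-bimodules. Since over a field every such sequence is $\Bbbk$-split, every $S$-bimodule (in particular $\gr P$) is projective, reducing to the previous case.

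I do not expect a significant obstacle: the only substantive step is Lemma \ref{le:strict}, which packages the assumed direct-summand splittings into the filtered isomorphism $i_P$. Once this is in hand, composing with the tautological $\overline{p}$ delivers $\alpha$ immediately, and the reductions to the direct-summand hypothesis are standard homological observations about projective bimodules and separable algebras over a field.
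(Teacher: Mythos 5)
Your proposal is correct and follows essentially the same route as the paper: the paper also defines $\alpha := i_P\circ(\overline{p})^{-1}$ using Lemma \ref{le:strict} and Remark \ref{rem:isos}, checks $\alpha_0(r)=r$ via the section property of $i_P^n$, and reduces the projective-$\gr P$ and separable cases exactly as you do (the separable case resting on the standard fact that all bimodules over a separable algebra are projective).
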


\begin{proof}
By Remark \ref{rem:isos}, we have an isomorphism of graded $S$-bimodules $\overline{p}:\gr P \to R_P$ induced by the projections $p^n:T \to T^n$, for $n\in\N$.  Let $\alpha:=i_P (\overline{p})^{-1}$, where $i_P:\gr P\to P$ is the isomorphism of filtered bimodules from Lemma \ref{le:strict}. We can regard $\alpha$ as a map whose codomain is $T$. Clearly, $\alpha$ is a morphism of filtered bimodules, since $i_P$ and $(\overline{p})^{-1}$ respects the filtrations and the gradings, respectively. By construction, the image of $\alpha$ coincides with $P$.

Recall that, throughout the paper, all filtered morphisms $\alpha:R_P\to T$ must satisfy the relation $\alpha_0(r)=r$, for any $r\in R_P^n$. Given such an  $r$, set $x:=(\overline{p}^n)^{-1}(r)\in\gr{^nP}$. Then $\alpha(r)=\alpha(\overline{p}^n(x))=i_P(x)=i^n_P(x)$ so that $\alpha_0(r)= p^n(\alpha(r))=\overline{p}^nq^n_P(\alpha(r))=\overline{p}^nq^n_P(i^n_P(x))=\overline{p}^n(x)=r$. The second claim is an immediate consequence of  the previous remark.

By definition, $S$ is separable if and only if $S$ is  projective as a bimodule over itself. It is well known that $S$ is separable if and only if all $S$-bimodules are projective,  so the theorem is completely proved.
\end{proof}

\begin{corollary}\label{co:BG}
	Keep the notation and the assumptions from Example \ref{ex:BG}. The $S$-ring $U(P)$ is a $PBW$-deformation of $A(R)$ if and only if there exists  a morphism of filtered $S$-bimodules $\alpha:R\to T$ such that $P=\alpha(R)$ and $P$ is of $PBW$-type.
\end{corollary}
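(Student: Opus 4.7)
My plan is to split the equivalence into its two implications, using Examples \ref{ex:alpha} and \ref{ex:BG} as black boxes and doing essentially no new work beyond extracting a map from a bijection.

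For the ``if'' direction, I would begin by noting that in the setting of Example \ref{ex:BG} the bimodule $R$ is graded, so as soon as one is given a filtered map $\alpha\colon R\to T$ with $\alpha_0$ the inclusion and $P=\alpha(R)$, Example \ref{ex:alpha} applies verbatim and yields $R=R_P$. In particular $\zeta_{P,R}=\mathrm{id}_{A(R)}$, and by Lemma \ref{le:RS} the map $\Phi_{P,R}$ is an isomorphism if and only if $\Phi_P$ is, i.e.\ if and only if $P$ is of $PBW$-type. Since this last condition is part of the hypothesis, $U(P)$ is a $PBW$-deformation of $A(R)$.

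For the ``only if'' direction, the key is to feed the assumption into the criterion stated at the end of Example \ref{ex:BG}: if $U(P)$ is a $PBW$-deformation of $A(R)$ then $P^{\leq n-1}=0$ and $P$ is of $PBW$-type. From $P\subseteq T^{\leq n}$ together with $P^{\leq n-1}=0$, I expect the projection $p^n\colon T\to T^n$ to restrict to an injective $S$-bimodule map on $P$ (its kernel on $P$ being $P\cap T^{\leq n-1}=0$), while surjectivity onto $R^n=R$ is built into the definition of $R$. Inverting the resulting bijection $P\xrightarrow{\sim} R$ produces the candidate $\alpha\colon R\to P\hookrightarrow T$, extended by zero on the (already trivial) other graded components of $R$.

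What then remains is the routine bookkeeping demanded by Example \ref{ex:alpha}: $\alpha$ is filtered because on $R^m=0$ for $m\neq n$ there is nothing to check, while $\alpha(R^n)\subseteq P\subseteq T^{\leq n}$; $\alpha_0$ is the inclusion of $R$ into $T$ because $p^n\circ\alpha=\mathrm{id}_R$ by construction; and $\alpha(R)=P$ is immediate. I do not anticipate any serious obstacle in this argument: all the substantive content has already been absorbed into Examples \ref{ex:alpha} and \ref{ex:BG}, so the only genuine observation to be made here is that the ``highest-degree component equals the full element'' phenomenon forced by $P^{\leq n-1}=0$ makes $p^n|_P$ a bimodule isomorphism onto $R$.
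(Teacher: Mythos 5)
Your proof is correct, and it diverges from the paper's in one place worth noting. Both arguments agree on the reduction: by Example \ref{ex:BG}, $U(P)$ is a $PBW$-deformation of $A(R)$ iff $P^{\leq n-1}=0$ and $P$ is of $PBW$-type, so everything hinges on translating $P^{\leq n-1}=0$ into the existence of a filtered $\alpha$ with $P=\alpha(R)$ (your ``if'' direction, run through Example \ref{ex:alpha} instead, is just the same equivalence entered from the other side, and your computation showing $\ker(p^n|_P)=P\cap T^{\leq n-1}$ is exactly the paper's argument that $\alpha(R^n)\cap T^{\leq n-1}=0$, read backwards). Where you genuinely differ is the construction of $\alpha$: the paper observes that $P^{\leq n-1}=0$ forces each $P^{\leq m-1}$ to be a direct summand of $P^{\leq m}$ and then invokes Theorem \ref{te:alpha}, whereas you build $\alpha$ by hand as the inverse of the bimodule bijection $p^n|_P\colon P\to R$, checking directly that it is filtered with $\alpha_0$ the inclusion. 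Your route is more elementary and self-contained, exploiting the fact that in the pure case the filtration of $P$ has a single jump, so none of the $\gr P$/strict-isomorphism machinery of Lemma \ref{le:strict} is needed; the paper's route buys the conceptual point that this example is just a degenerate instance of the general direct-summand criterion of Theorem \ref{te:alpha} (indeed, unwinding $i_P\circ(\overline{p})^{-1}$ in this case recovers precisely your inverse of $p^n|_P$). Both proofs are complete; the only bookkeeping you should keep explicit is that $p^n$ is $S$-bilinear and $R=p^n(P)$ is a subbimodule, so the inverse map is again a morphism of $S$-bimodules — which you do implicitly and which holds because $T$ is connected.
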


\begin{proof}
	We have seen in  Example \ref{ex:BG} that $R=R_P$ and $U(P)$ is a $PBW$-deformation of $A(R)$ if and only if $P^{\leq n-1}=0$ and $P$ is of $PBW$-type. Henceforth, it is enough to show that the relation $P^{\leq n-1}=0$ holds if and only if there exists  a morphism of filtered $S$-bimodules $\alpha:R\to T$ such that $P=\alpha(R)$.

	If $P^{\leq n-1}=0$ then $P^{\leq m}$ either vanishes or coincides with $P$, depending on the fact that $m$ is less than $n$ or not. Hence   $P^{\leq m-1}$ is a direct summand of $P^{\leq m}$. Thus we can apply the previous theorem to prove the existence of the morphism of filtered bimodules $\alpha$ such that $P=\alpha(R)$.

	For proving the other implication note that $P^{\leq n-1}=P\cap T^{\leq n-1}=\alpha(R)\cap T^{\leq n-1}=\alpha(R^n)\cap T^{\leq n-1}$. Thus we have to prove that $\alpha(R^n)\cap T^{\leq n-1}=0$. Let $r\in R^n$ be such that $\alpha(r)\in T^{\leq n-1}$. Then $r-\alpha(r)\in T^{\leq n-1}$, since we always have $\alpha_0(r)=r$. Thus $r\in T^{\leq n-1}$ and hence $r=0$.
\end{proof}

\begin{corollary}\label{cor:largeR}
Let  $P$ be an $S$-subbimodule of a connected graded $S$-ring $T$ such that $P^{\leq n}$ is a direct summand of $P^{\leq n+1}$, for all $n$.  Let $R$ be a graded subbimodule of  $R_P$. The $S$-ring $U(P)$ is a $PBW$-deformation of $A(R)$ if and only if $\ig{R}=\ig{R_P}$ and there exists a morphism $\alpha : R \rightarrow T$ of filtered $S$-bimodules  such that $P'= \alpha(R)$ is a subbimodule of $P$ of PBW-type which generates $\langle P\rangle$.
\end{corollary}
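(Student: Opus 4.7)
The strategy is to reduce the claim to Remark \ref{rem:large} and Proposition \ref{Th:$PBW$}, using Theorem \ref{te:alpha} as the bridge that converts the direct summand hypothesis on $\{P^{\leq n}\}$ into the existence of a filtered lift of $R_P$ into $P$. By Remark \ref{rem:large}, $U(P)$ is a $PBW$-deformation of $A(R)$ if and only if $P$ is of $PBW$-type and $\ig{R}=\ig{R_P}$. Since the second condition appears literally on both sides of the biconditional I have to prove, the real content is to translate ``$P$ is of $PBW$-type'' into the existence of the desired sub-bimodule $P'\subseteq P$ of the special form $\alpha(R)$.

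For the forward implication, assume $P$ is of $PBW$-type and $\ig{R}=\ig{R_P}$. The direct summand hypothesis allows me to invoke Theorem \ref{te:alpha}, producing a filtered morphism $\beta:R_P\to T$ with $\beta(R_P)=P$. I then define $\alpha:=\beta|_R:R\to T$. This is still a filtered map whose degree-zero component is the inclusion of $R$ into $T$ (inherited from $\beta_0$), so that $P':=\alpha(R)\subseteq\beta(R_P)=P$ and, by Example \ref{ex:alpha}, $R_{P'}=R$. Combining this with the hypothesis $\ig{R}=\ig{R_P}$ and Lemma \ref{Lemma:Sp} yields $\ig{\LH{P'}}=\ig{R_P}=\ig{\LH{P}}$; in particular $\LH{P}\subseteq\ig{\LH{P'}}$. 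Proposition \ref{Th:$PBW$}(2) then delivers both required conclusions at once: $P'$ is of $PBW$-type and $\ig{P'}=\ig{P}$.

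The converse is cleaner and needs no direct summand hypothesis. Assume $\ig{R}=\ig{R_P}$ and that $P':=\alpha(R)\subseteq P$ is of $PBW$-type with $\ig{P'}=\ig{P}$. By Proposition \ref{Th:$PBW$}(1) applied to $P'$, one has $\LH{\ig{P'}}\subseteq\ig{\LH{P'}}$; using Example \ref{ex:alpha} and Lemma \ref{Lemma:Sp} this becomes $\LH{\ig{P}}=\LH{\ig{P'}}\subseteq\ig{R}=\ig{R_P}=\ig{\LH{P}}$, so $P$ is of $PBW$-type by Proposition \ref{Th:$PBW$}(1), and Remark \ref{rem:large} finishes the argument. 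No serious obstacle remains: the direct summand hypothesis is used exclusively to produce $\beta$ in the forward direction, and the only subtle check is that restricting a filtered map to a graded sub-bimodule still gives a filtered map in the sense of Example \ref{ex:alpha}, which is immediate because $\beta_0$ is the inclusion $R_P\hookrightarrow T$.
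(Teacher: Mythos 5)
Your proof is correct and follows essentially the same route as the paper: Theorem \ref{te:alpha} produces the filtered map on $R_P$, restriction to $R$ gives $P'=\alpha(R)\subseteq P$, and Proposition \ref{Th:$PBW$}(2) together with Lemma \ref{Lemma:Sp} and Example \ref{ex:alpha} handles the forward direction exactly as in the paper. The only small variation is in the converse, where the paper simply observes that $\ig{P}=\ig{P'}$ and $\ig{R}=\ig{R_P}=\ig{R_{P'}}$ force $\Phi_{P,R}=\Phi_P=\Phi_{P'}$, whereas you derive the $PBW$-type of $P$ from that of $P'$ via the $\mathrm{LH}$-criterion of Proposition \ref{Th:$PBW$}(1); both arguments are valid and rest on the same structural facts.
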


\begin{proof}
Let us assume that $U(P)$ is a $PBW$-deformation of $A(R)$. By Theorem \ref{te:alpha} there is a morphism $\alpha : R_P \rightarrow T$ of filtered $S$-bimodules such that $P=\alpha(R_P)$. We take by definition  $P':=\alpha(R)\subseteq P$. Since $\Phi_{P,R}$ is an isomorphism, by  Remark \ref{rem:large}, it follows that $P$ is of $PBW$-type and $\ig{R}=\ig{R_P}$.  For proving that $P'$ is a subbimodule of $PBW$-type which generates $\ig{P}$ we use  Proposition \ref{Th:$PBW$} (2). It is enough to check that $\langle \mathrm{LH}\left( P\right) \rangle \subseteq \left\langle \mathrm{LH}\left( P^{\prime }\right) \right\rangle $. Note that, in view of Lemma \ref{Lemma:Sp}, we have $\langle \mathrm{LH}\left( P\right) \rangle=\langle R_{P}\rangle $. Thus,
	$\langle \mathrm{LH}\left(P\right) \rangle=\langle R_P\rangle=\ig{R}=\langle R_{P'}\rangle =\langle \mathrm{LH}\left( P^{\prime}\right) \rangle$,  where for the third equation we used the fact that  $R_{P'}$ and $R$ coincide, cf. Example \ref{ex:alpha}.

	Conversely, let us assume that $\ig{R}=\ig{R_P}$ and there exists a morphism $\alpha : R \rightarrow T$ of filtered $S$-bimodules  such that $P'= \alpha(R)$ is a subbimodule of $P$ of PBW-type which generates $\langle P\rangle$. Thus $A(R)=A(R_P)$ and $U(P)=U(P')$ and $\Phi_{P,R}=\Phi_{P}=\Phi_{P'}$. Since $P'$ is of $PBW$-type it follows that $U(P)$ is a $PBW$-deformation of $A(R)$.
\end{proof}

\begin{remark}\label{re:alpha}
 	Let  $R$ be a graded $S$-subbimodule of $T$, where $T$ is a graded connected $S$-ring. Examples of  subbimodules $P\subseteq T$ such that $P^{\leq n}$ is a direct summand of $P^{\leq n+1}$, for all $n$,  and $U(P)$ is a $PBW$-deformation of $A(R)$ can be obtained us follows. We start with a graded subbimodule $R'$ of $\ig{R}$ such that $ R'$ is projective and $R\subseteq R'$, together with a morphism $\alpha:R'\to T$ of filtered $S$-bimodules. If $P:=\alpha(R')$ is of $PBW$-type, then $U(P)$ is a $PBW$-deformation of $A(R)$, which satisfies the required properties, since $R_P=R'$. Basically, Theorem   \ref{te:alpha} says that any  $PBW$-deformation $U(P)$ of $A(R)$ is of this type, provided that $P^{\leq n}$ is a direct summand of $P^{\leq n+1}$, for all $n$. Indeed, by definition of $PBW$-deformations, we have $R\subseteq R_P$, so we can take $R'=R_P$. Since $\zeta_{P,R}$ is an isomorphism, $\ig{R}=\ig{R'}$.  The existence of $\alpha$ follows by Theorem \ref{te:alpha}.
\end{remark}

\begin{definition}
	Let $P$ be a submodule of $T$ with filtration $\left\{F^nP\right\}_{ n\in\mathbb{N}}$ such that $F^nP\subseteq T^{\leq n}$. We say that the filtration  is \emph{jacobian} if the condition $F^{n+1} P\bgi T^{\leq n}\subseteq F^nP$ holds for every $n\geq 0$.
\end{definition}

\begin{lemma}\label{lemma:jacobian}
	Let $P'\subseteq P$ be submodules of $T$ with filtrations $\left\{F^nP\right\}_{ n\in\mathbb{N}}$ and $\left\{F^nP'\right\}_{ n\in\mathbb{N}}$, respectively. Assume that $F^nP'\subseteq F^nP\subseteq T^{\leq n}$ and that the  filtration of $P$ is jacobian. Then $F^nP'=F^nP$, for all $n$, if and only if $p^n(F^nP')=p^n(F^nP)$, for all $n$.
\end{lemma}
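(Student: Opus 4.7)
The plan is to note that the forward direction is immediate (apply $p^n$ to both sides) and to prove the reverse direction by induction on $n$.

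For the base case $n=0$, since $F^0P,F^0P'\subseteq T^{\leq 0}=T^0=S$, the projection $p^0$ restricts to the identity on both bimodules, so the equality $p^0(F^0P')=p^0(F^0P)$ directly yields $F^0P'=F^0P$.

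For the inductive step, assuming $F^{n-1}P'=F^{n-1}P$, I would pick an arbitrary $x\in F^nP$ and use the hypothesis $p^n(F^nP')=p^n(F^nP)$ to find $x'\in F^nP'$ with $p^n(x)=p^n(x')$. Since $x-x'\in F^nP\subseteq T^{\leq n}$ and its degree-$n$ component vanishes, it lies in $T^{\leq n-1}$. Rewriting the jacobian condition as $F^nP\cap T^{\leq n-1}\subseteq F^{n-1}P$ (which is exactly the jacobian condition with index shifted by one), I conclude $x-x'\in F^{n-1}P$. The inductive hypothesis then gives $x-x'\in F^{n-1}P'\subseteq F^nP'$, and since $x'\in F^nP'$ we obtain $x\in F^nP'$. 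This proves $F^nP\subseteq F^nP'$, and combined with the standing assumption $F^nP'\subseteq F^nP$ yields equality.

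There is no real obstacle in this argument; the only subtlety is making sure the jacobian condition is applied with the correct index (so that the containment $T^{\leq n-1}\cap F^nP\subseteq F^{n-1}P$ is used) and that the filtrations are assumed increasing so that $F^{n-1}P'\subseteq F^nP'$. One should also note that the argument only uses that $P'$ carries a filtration compatible with the inclusion $F^nP'\subseteq F^nP$; the filtration of $P'$ need not itself be jacobian.
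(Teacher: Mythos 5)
Your argument is correct and is essentially the paper's proof: both proceed by induction on $n$, use the hypothesis on $p^n$ to match top-degree components, and then invoke the jacobian condition together with the inductive hypothesis to place the difference $x-x'\in F^nP\cap T^{\leq n-1}$ inside $F^{n-1}P=F^{n-1}P'$. The only cosmetic difference is that the paper packages your element chase as a Snake Lemma argument applied to the two short exact sequences $0\to F^nP\cap T^{\leq n-1}\to F^nP\to p^n(F^nP)\to 0$ (and the analogue for $P'$).
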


\begin{proof}
	We assume that $p^n\left(F^nP'\right)=p^n\left(F^nP\right)$ for all $n\geq 0$. To show that the terms of the two filtrations coincide, we  proceed by induction on $n$. If $n=0$, then $F^0P'=p^0(F^0P')=p^0(F^0P)=F^0P$.

	Let us assume that we have $F^{n-1}P'=F^{n-1}P$, for some $n>0$. We consider the following commutative diagram with exact rows:
\[\xymatrix{
	0\ar[r]&F^nP'\bgi T^{\leq n-1}\ar@{^(->}[d]\ar[r]& F^nP' \ar@{^(->}[d]\ar[r]^{p^n}& p^n(F^nP') \ar@{^(->}[d]\ar[r]&0\\
	0\ar[r]&F^nP\bgi T^{\leq n-1}\ar[r]& F^nP \ar[r]^{p^n}& p^n(F^nP) \ar[r]&0}
\]
	By assumption, the cokernel of the rightmost vertical map is trivial, for every $n\in \mathbb{N}$. Thus, by the Snake Lemma,  we deduce that the cokernels of the first two vertical arrows are isomorphic.  By the inductive hypothesis and the fact that the filtration on $P$ is jacobian, we get the following chain of inclusions:
\begin{equation*}\ts
	F^nP'\bgi T^{\leq n-1}\subseteq F^nP\bgi T^{\leq n-1}\subseteq F^{n-1}P=F^{n-1}P'\subseteq F^nP'\bgi T^{\leq n-1}.
\end{equation*}
	Therefore $F^nP'\bgi T^{\leq n-1}=F^nP\bgi T^{\leq n-1}$, so the cokernel of the leftmost vertical arrow vanishes. Therefore, the cokernel of the middle vertical map must be trivial as well, that is $F^nP'=F^nP$.
\end{proof}

\begin{fact}[The Jacobi conditions.]\label{fa:Jacobi}
	Our next goal is to show that  $P$ is of $PBW$-type if and only if some relations that generalize the Jacobi identity are satisfied.

	For all $n\geq 0$, we set
\begin{equation}\label{eq:defPn}\ts
	P_n:= \sum\li_{i+j+k=n}T^{\leq i}\cdot P^{\leq j}\cdot T^{\leq k}
\end{equation}
	Obviously,  $P_n\subseteq P_{n+1}$ and  $T^nP_m$ and $P_mT^n$ are subbimodules of $P_{n+m}$.  Moreover, $\ig{P}=\bgr_{n\in\N}P_n$.  Thus,  $\{P_n\}_{n\in\N}$ is an exhaustive increasing filtration, compatible with the multiplication of $T$.

 	On the other hand, if $I=\oplus_{n\in\N}I^n$ denotes the graded ideal generated by $R_P$, for each $n$ we have:
\begin{equation*}\ts
	p^n\left(\sum\li_{i+j+k=n}T^{\leq i}\cdot P^{\leq j}\cdot T^{\leq k}\right)=\sum\li_{i+j+k=n}T^{i}\cdot p^j\left(P^{\leq j}\right)\cdot T^{k}=\sum\li_{i+j+k=n}T^{ i}\cdot R_P^{j}\cdot T^{ k}{=} \ig{R_P}^n=I^n.
\end{equation*}
	where the last equality follows from the fact that $\langle R_P\rangle$ is homogeneous. Therefore, $I^n=p^n(P_n)$. Note that $P$ is of $PBW$-type if and only if  $I^n=R_{\ig{P}}^n$ for all $n\geq 0$.

	Our aim now is to show that the $S$-bimodule $P$ is of $PBW$-type if and only if it satisfies a sort of Jacobi conditions. More precisely, we shall say that $P$ satisfies the \emph{Jacobi condition} $(\cJ_n)$ if and only if
\begin{equation*}\label{eq:J_k} \tag{$\cJ_n$}
	 P_{n+1}\bgi T^{\leq n}\subseteq P_{n}.
\end{equation*}
\end{fact}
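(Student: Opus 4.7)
The plan is to reduce the equivalence ``$P$ is of $PBW$-type $\Longleftrightarrow$ $(\cJ_n)$ holds for every $n$'' to the jacobian filtration Lemma \ref{lemma:jacobian}. Set $J:=\langle P\rangle$. From the computation carried out just above the definition of $(\cJ_n)$, we already have that $P$ is of $PBW$-type if and only if $p^n(P_n) = R_J^n$ for all $n \in \N$, and by Lemma \ref{Lemma:Sp} the right-hand side equals $p^n(J \cap T^{\leq n})$. So the task becomes: prove that $p^n(P_n) = p^n(J \cap T^{\leq n})$ for every $n$ if and only if all the Jacobi conditions $(\cJ_n)$ hold.

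First I would place two filtrations on the single submodule $J \subseteq T$: the standard one $F^n J := J \cap T^{\leq n}$, which is tautologically jacobian, and the ``Jacobi'' one $G^n J := P_n$, which is contained in $T^{\leq n}$ by \eqref{eq:defPn} and hence inside $F^n J$. Applying Lemma \ref{lemma:jacobian} with $P' = P = J$ carrying these two filtrations (with the standard one playing the role of the outer, jacobian filtration), the condition $p^n(G^n J) = p^n(F^n J)$ for every $n$ (which is precisely the $PBW$-type condition) becomes equivalent to the equality of filtration terms $G^n J = F^n J$ for every $n$, namely $P_n = J \cap T^{\leq n}$ for all $n$.

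It then suffices to show that the equalities $P_n = J \cap T^{\leq n}$ for all $n$ are themselves equivalent to the validity of $(\cJ_n)$ for every $n$. The forward direction is immediate: if $P_n = J \cap T^{\leq n}$ for all $n$, then
\[
P_{n+1} \cap T^{\leq n} = (J \cap T^{\leq n+1}) \cap T^{\leq n} = J \cap T^{\leq n} = P_n.
\]
For the converse, since $P_n \subseteq J \cap T^{\leq n}$ always, only the reverse inclusion must be checked: given $x \in J \cap T^{\leq n}$, one has $x \in P_m$ for some $m \geq 0$ because $J = \bigcup_{m} P_m$; if $m > n$, iterated application of $(\cJ_{m-1}), (\cJ_{m-2}), \dots, (\cJ_n)$ to the successive memberships $x \in P_k \cap T^{\leq n} \subseteq P_k \cap T^{\leq k-1}$ pushes $x$ down into $P_n$.

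The main obstacle is essentially bookkeeping: one must recognize that Lemma \ref{lemma:jacobian}, although stated for two a priori distinct submodules $P' \subseteq P$, applies just as well to the same submodule $J$ equipped with two distinct filtrations, and one must keep straight which of the two filtrations plays the role of the ``outer'' jacobian one. Once that observation is in place, the proof splits into the two clean, elementary steps above, with no further obstructions.
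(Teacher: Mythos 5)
Your proof is correct and follows essentially the same route as the paper's proof of Theorem \ref{Th:PBWJac}: you apply Lemma \ref{lemma:jacobian} to the two filtrations $\{P_n\}_{n\geq 0}$ and $\{\langle P\rangle^{\leq n}\}_{n\geq 0}$ on the single ideal $\langle P\rangle$ to pass from equality of top-degree projections (the $PBW$-type condition) to equality of the filtrations, and then establish the equivalence of the latter with the Jacobi conditions by an elementary bootstrapping argument, which is exactly the paper's chain (4) $\Leftrightarrow$ (3) $\Leftrightarrow$ (2). The only cosmetic difference is that the paper isolates condition (3) ($P_m\cap T^{\leq n}\subseteq P_n$ for $m>n$) as a named intermediate step, whereas you prove it inline when pushing $x$ down from $P_m$ to $P_n$.
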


\begin{remark}\label{rem:Qn}
	Recall that a connected graded $S$-ring $T$ is called \emph{strongly graded} provided that $T^nT^m=T^{n+m}$, for all $n,m$. Obviously, $T$ is strongly graded if and only if $T^1$ generates $T$ as an $S$-ring. Throughout the remaining part of this section we assume that $T$ is strongly graded.

	Under this condition, $P_n$ can be computed using the relation $P_0=P\bgi T^0$ and the recursive equation:
\begin{equation}\label{eq:P_n}
  	P_{n+1}=T^1P_n+P_nT^1+P^{\leq n+1},
\end{equation}
	Indeed, let $\{P'_n\}_{n\in\mathbb{N}}$ denote the sequence which one obtains by using the relation \eqref{eq:P_n}, starting with $P'_0=P_0$. Thus we have to prove that $P'_n=P_n$, for all $n\in \mathbb{N}$.

	We proceed by induction to show the inclusion $P'_n\subseteq P_n$. By definition of $P'_0$, the relation holds for $n=0$. Let us assume that the inclusion holds for some $n$. We have:
\[
	P'_{n+1}=T^1P_n'+P_n'T^1+P^{\leq n+1}\subseteq \sum\li_{i+j+k=n}T^{\leq i+1}P^{\leq j}T^{\leq k}+ \sum\li_{i+j+k=n}T^{\leq i}P^{\leq j}T^{\leq k+1}+P^{\leq n+1}=P_{n+1}.
\]
	To prove the other inclusion, we first remark that $T^iP_j'$ and $P'_jT^i$ are subbimodules of $P'_{i+j}$. Moreover, one can easily see by induction that the sequence $\left\{ P'_n\right\}_{n\in\mathbb{N}}$ is increasing. Hence, for any $p$, $q$ and $r$ we have $T^{ p}P^{\leq q} T^{r}\subseteq T^{p} P_q'  T^{r}\subseteq P_{p+q+r}'$. Thus, clearly, $P_n=\sum_{i+j+k=n}T^{\leq i}P^{\leq j} T^{\leq k}\subseteq P_n'$.

	Let $I:=\ig{R_P}$. In a similar way one proves that $\{I^n\}_{n\in\mathbb{N}}$ is uniquely determined by the relations $I^0=R_P^0=P\bigcap T^0$ and the recursive formula:
\begin{equation}\label{eq:I_recursive}
 	\ts I^{n+1}=T^1I^n+I^nT^1+R^{n+1}_P.
\end{equation}
\end{remark}

\begin{theorem}\label{Th:PBWJac}
	Let $T$ be a connected graded $S$-ring and $P\subseteq T$ an $S$-subbimodule. The following assertions are equivalent:
 \begin{enumerate}
  	\item The bimodule  $P$ is of $PBW$-type.

  	\item The filtration $\{P_n\}_{n\in\N}$ is Jacobian, that is $P$ satisfies the condition $(\cJ_n)$, for all $n\in\N$.

  	\item $P_m\bgi T^{\leq n}\subseteq P_n$, for all  $n\in\N$ and $m>n$.
  	
  	\item The relation $\ig{P}\bgi T^{\leq n}=P_n$ holds, for all $n\in\N$.
 \end{enumerate}
\end{theorem}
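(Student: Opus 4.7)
My plan is to prove the equivalences by going through the cycle (4) $\Rightarrow$ (3) $\Rightarrow$ (2) $\Rightarrow$ (4), complemented by the biconditional (1) $\Leftrightarrow$ (4). The implications (4) $\Rightarrow$ (3) and (3) $\Rightarrow$ (2) are essentially formal: every summand $T^{\leq i}P^{\leq j}T^{\leq k}$ in the definition \eqref{eq:defPn} of $P_n$ lies in $T^{\leq n}$, which yields the trivial inclusion $P_n\subseteq \langle P\rangle\cap T^{\leq n}$; together with (4) this gives (3), while (2) is the instance $m=n+1$ of (3).

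For (2) $\Rightarrow$ (4), I would exploit the description $\langle P\rangle=\bigcup_{m\in\N}P_m$ recorded in Fact \ref{fa:Jacobi}. Given $x\in \langle P\rangle \cap T^{\leq n}$, pick $m$ with $x\in P_m$; if $m>n$, successive applications of the Jacobi conditions $(\cJ_{m-1}),(\cJ_{m-2}),\dots,(\cJ_n)$ produce the descending chain $P_m\cap T^{\leq n}\subseteq P_{m-1}\cap T^{\leq n}\subseteq\cdots\subseteq P_n$, so $x\in P_n$. The reverse inclusion $P_n\subseteq \langle P\rangle\cap T^{\leq n}$ is automatic.

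The step that requires the most care, and the heart of the argument, is (1) $\Leftrightarrow$ (4). The plan is to apply Lemma \ref{lemma:jacobian} to the single bimodule $\langle P\rangle$ endowed with two nested filtrations: the \emph{small} one $F^nP':=P_n$ and the \emph{big} one $F^nP:=\langle P\rangle\cap T^{\leq n}$. The big filtration is trivially jacobian, because $\langle P\rangle\cap T^{\leq n+1}\cap T^{\leq n}=\langle P\rangle\cap T^{\leq n}$, and $F^nP'\subseteq F^nP\subseteq T^{\leq n}$ follows from the inclusion just noted. On the level of projections one has $p^n(F^nP')=p^n(P_n)=\langle R_P\rangle^n$ by the computation in Fact \ref{fa:Jacobi}, while $p^n(F^nP)=R_{\langle P\rangle}^n$ by Lemma \ref{Lemma:Sp}. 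Since $\langle R_P\rangle\subseteq R_{\langle P\rangle}$ holds unconditionally (cf.\ Remark \ref{rem: Ker_varphi}), the equality of these projections for every $n$ is equivalent to $R_{\langle P\rangle}=\langle R_P\rangle$, which by Remark \ref{rem: $PBW$-type} is precisely the characterization of $P$ being of $PBW$-type. Lemma \ref{lemma:jacobian} then converts this equality of projections into the equality of the two filtrations, namely condition (4).

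The only real subtlety is the setup of this last step: one has to use the trivially jacobian filtration $\langle P\rangle\cap T^{\leq n}$ as the big one, so that the hypothesis of Lemma \ref{lemma:jacobian} is met without circularly assuming condition (2). Once this choice is made, the four equivalences follow cleanly from the above cycle.
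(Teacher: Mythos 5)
Your proposal is correct and follows essentially the same approach as the paper: the cycle $(4)\Rightarrow(3)\Rightarrow(2)\Rightarrow(4)$ uses the description $\ig{P}=\bigcup_m P_m$ and the Jacobi descent exactly as the paper's $(4)\Leftrightarrow(3)$, $(2)\Leftrightarrow(3)$ steps, and the crucial $(1)\Leftrightarrow(4)$ step applies Lemma~\ref{lemma:jacobian} to the two filtrations $\{P_n\}$ and $\{\ig{P}^{\leq n}\}$ of $\ig{P}$ precisely as the paper does, using the computations $p^n(P_n)=\ig{R_P}^n$ from Fact~\ref{fa:Jacobi} and $p^n(\ig{P}^{\leq n})=R^n_{\ig{P}}$ from Lemma~\ref{Lemma:Sp}. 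The rearrangement of the implications is cosmetic; the key lemmas and identities match the paper's argument.
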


\begin{proof}
	Let $I:=\ig{R_P}$. Since the ideal generated by $P$ is the union of all subspaces $P_{m}$, we have
\[\ts
	\ig{P}\bgi T^{\leq n}=\bgr\li_{m\in\N}(P_m\bgi\li T^{\leq n})=P_n\bgr\li\big( \bgr\li_{m>n}(P_m\bgi T^{\leq n})\big),
\]
	where the last equality holds as $P_m \subseteq T^{\leq n}$ for $m\leq n$ and the sequence $\{P_m\}_{m\in\N}$ is increasing. Thus (4) is equivalent to the fact that $ \bgr_{m> n}(P_m\bgi T^{\leq n}) $ is a subset of $ P_n$, for all $n\in\N$. This property, in turn, is equivalent to (3).

	Obviously (3) implies (2). Let us prove the implication (2)$\Longrightarrow$(3). For every $n\in\N$ and $m> n$ we have to show that  $P_m\bgi T^{\leq n}\subseteq P_n$. We proceed by induction. The base case $m=n+1$ is true by $(\cJ_n)$. If $m\geq n+2$ we have
\[\ts
 	P_m\bgi T^{\leq n}\subseteq P_m\bgi T^{\leq m-1}\bgi T^{\leq n}\subseteq P_{m-1}\bgi T^{\leq n}\subseteq P_n,
\]
	where for the last two inclusions we used the condition $(\cJ_{m-1})$ and the induction hypothesis.

	In order to prove  that (1) and (4) are equivalent we apply Lemma \ref{lemma:jacobian} to the filtrations $\left\{P_n\right\}_{n\geq 0}$ and $\big\{\ig{P}^{\leq n}\big\}_{n\geq 0}$ of $\ig{P}$. Note that the later filtration is jacobian, and (4) is obviously equivalent to the fact that the two filtrations coincides. Thus, in view of the above mentioned lemma, it  is enough to show that (4) holds if and only if $p^n(P_n)=p^n(\ig{P}^{\leq n})$ for all $n$. Thus (4) is true if and only if $I^n=R^n_{\ig{P}}$ for all $n$,  that is if and only if $P$ is of PBW-type.
\end{proof}

	Another equivalent form of Jacobi conditions is related to a special class of bimodules that generates  a graded ideal $I$ in $T$. This new concept will be also useful in the fourth section to describe the second term of a projective resolution of $S$ as an $A$-module, where $A:=T/I$.
	
	Before to define bimodule of  relations we prove the following result.
\begin{lemma}\label{lem:foot}
A graded subbimodule $R$ of $T$ generates the graded ideal $I$ if and only if $R+\wt I=I$, where $\wt{I}:=IT^1+T^1I$.
\end{lemma}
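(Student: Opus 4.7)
The plan is to prove both implications directly, exploiting the fact that everything in sight is graded and that $T$ is strongly graded (a standing assumption of this part of the section, by Remark \ref{rem:Qn}). The key observation is that in a strongly graded connected $S$-ring, the augmentation ideal $T^{\geq 1}:=\oplus_{n\geq 1}T^n$ equals both $T\cdot T^1$ and $T^1\cdot T$, since any $t\in T^n$ with $n\geq 1$ factors as a product involving a degree-$1$ element. Neither direction should pose a serious obstacle; the whole lemma is essentially a bookkeeping step needed for the subsequent definition of bimodule of relations.

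For the forward direction, assume $R$ generates $I$. Then
\[
I=\ig{R}=SRS+T^{\geq 1}R+RT^{\geq 1}+T^{\geq 1}RT^{\geq 1},
\]
and since $R$ is an $S$-bimodule we have $SRS=R$. Using $T^{\geq 1}=T^1\cdot T=T\cdot T^1$ together with $TR,RT\subseteq I$, each of the remaining summands lies in $T^1I$ or $IT^1$. Hence $I\subseteq R+\wt I$, and the reverse inclusion is immediate from $R\subseteq I$ and $\wt I\subseteq I$.

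For the reverse direction, assume $R+\wt I=I$, set $J:=\ig{R}\subseteq I$, and prove $I^n\subseteq J$ by induction on $n$. Since $R$, $I$, and $\wt I$ are all graded (with $\wt I^n=I^{n-1}T^1+T^1I^{n-1}$), the assumed equality projects to $I^n=R^n+T^1I^{n-1}+I^{n-1}T^1$ in each degree. The base case $n=0$ gives $I^0=R^0\subseteq J$. For the inductive step, the hypothesis $I^{n-1}\subseteq J$ yields
\[
\wt I^n=T^1I^{n-1}+I^{n-1}T^1\subseteq T^1J+JT^1\subseteq J,
\]
and therefore $I^n\subseteq R+J=J$. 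Taking the union over all $n$ gives $I\subseteq J$, hence $I=J$ as required.
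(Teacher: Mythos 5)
Your proof is correct and follows essentially the same route as the paper: the forward direction unwinds the definition of $\ig{R}$ (the paper does this via the graded recursion $I^n=T^1I^{n-1}+I^{n-1}T^1+R^n$, which is what your decomposition reduces to), and the reverse direction is the same degreewise induction, with the only cosmetic difference that you establish the inclusion $I\subseteq J$ and quote $J\subseteq I$ as obvious, whereas the paper runs the induction as a chain of equalities $I^n=J^n$. No gaps.
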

	
\begin{proof}
  Obviously $\wt I$  is a graded ideal of $T$ and  $\wt{I}\subseteq I$. 	Let us assume that $R$ generates $I$. By definition of the ideal generated by a subbimodule, we have the relation $I^{n}=T^1I^{n-1}+I^{n-1}T^1+R^{n}$.  Thus $I=\wt{I}+R$. Conversely, let $J:=\ig{R}$. For $n=0$, then $J^0=R^0=I^0$, since $\wt{I}\subseteq T^{\geq 1}$. Let us suppose that $I^{n-1}=J^{n-1}$. Then, by definition of $J$ and induction hypothesis, we get:
\[
	 I^n=(\wt{I})^n+R^n=T^1I^{n-1}+I^{n-1}T^1+R^n=T^1J^{n-1}+J^{n-1}T^1+R^n=(\wt{J})^n+R^n= J^n.
\]
	We conclude that $I=J$.
\end{proof}

\begin{definition}\label{def:minimality}
	Let $I$ be a graded ideal in a strongly graded connected $S$-ring $T$. A graded  $S$-subbimodule $R$  is a \emph{bimodule of relations} for $T/I$ if and only if  $R$ is a complement subbimodule of $\wt I$ in $I$, that is $\cR\oplus \wt I=I$.
\end{definition}

Let us remark that, by Lemma \ref{lem:foot},  $R$ is a bimodule of relations for $T/I$ if and only if $R$ generates $I$ and $R\bgi \wt I=0$. Note that $R$ is isomorphic to $I/\wt I$ as $S$-bimodules.

	A few properties of  bimodules of relations are proved in the following proposition.

\begin{proposition}\label{pr:minimal}
	Let $\wt{R}$ be a graded $S$-subbimodule which generates the graded ideal $I$ in $T$.
\begin{enumerate}
	\item If $I/\wt I$ is a projective $S$-bimodule then there exists a bimodule of relations for $T/I$, which is also projective as a bimodule.
	
	\item Let us assume that there exists a graded $S$-bimodule complement $R$ of $\wt{R}\bgi \wt{I}$ in $\widetilde{\cR}$. Then  $R$ is a bimodule of relations for $T/I$.

	\item Let  $R$ be a minimal  $S$-bimodule which generates $I$. If $R\bigcap \wt{I}$ is a direct summand in $R$, then $R$ is a bimodule of relations for $T/I$.
\end{enumerate}

\end{proposition}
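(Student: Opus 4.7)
Across the three parts, the plan is to invoke Lemma \ref{lem:foot} to rephrase the condition ``$R$ is a bimodule of relations for $T/I$'' as the conjunction $R+\wt{I}=I$ and $R\bgi\wt{I}=0$, and then verify both on a case-by-case basis.

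For (1), the idea is to split the short exact sequence of graded $S$-bimodules
\[
0\longrightarrow \wt{I}\longrightarrow I\longrightarrow I/\wt{I}\longrightarrow 0
\]
in the graded category. Since $I/\wt{I}=\bigoplus_{n\in\N}(I/\wt{I})^n$, each homogeneous component is a retract of $I/\wt{I}$ and hence inherits projectivity from the hypothesis; splitting the sequence degree by degree yields $S$-bimodule sections $s^n\colon(I/\wt{I})^n\to I^n$, which assemble into a graded section $s\colon I/\wt{I}\to I$. The image $R:=s(I/\wt{I})$ is then a graded subbimodule with $R\oplus\wt{I}=I$, and the isomorphism $R\cong I/\wt{I}$ transports projectivity to $R$. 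For (2), Lemma \ref{lem:foot} applied to $\wt{R}$ gives $\wt{R}+\wt{I}=I$; substituting the decomposition $\wt{R}=R\oplus(\wt{R}\bgi\wt{I})$ and absorbing the second summand into $\wt{I}$ yields $R+\wt{I}=I$, while any $x\in R\bgi\wt{I}$ must lie in $R\bgi(\wt{R}\bgi\wt{I})=0$ by directness of the sum defining $\wt{R}$. For (3), write $R=(R\bgi\wt{I})\oplus R'$ by hypothesis. From $R+\wt{I}=I$ (Lemma \ref{lem:foot}) we obtain $R'+\wt{I}=I$, so $R'$ generates $I$ by the same lemma; minimality of $R$ then forces $R'=R$, whence $R\bgi\wt{I}=0$.

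The only genuinely non-formal step is the graded splitting in (1): the projectivity hypothesis on $I/\wt{I}$ is ungraded, so one must leverage the homogeneous decomposition to extract a graded section. Parts (2) and (3) then follow mechanically from Lemma \ref{lem:foot} together with the given direct-sum decompositions.
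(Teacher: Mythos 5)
Your proof is correct and follows essentially the same route as the paper's: part (1) splits the projection $I\to I/\wt I$ using projectivity and takes the image of a section as $R$, while parts (2) and (3) are the same direct-sum manipulations, $I=\wt R+\wt I=R+(\wt R\bgi\wt I)+\wt I=R+\wt I$ and the minimality argument forcing $R'=R$, combined with Lemma \ref{lem:foot}. Your degree-by-degree splitting in (1) is in fact slightly more careful than the paper's one-line appeal to an $S$-bilinear section, since it guarantees the section is graded and hence that $R$ is a graded subbimodule, as Definition \ref{def:minimality} requires.
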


\begin{proof}
	Since $I/\wt I$ is projective, the canonical projection $I\to I/\wt I$ has an $S$-bilinear section. Let $R$ denote its image. Hence $R\oplus \wt I=I$, which means that $R$ is a bimodule of relations for $T/I$. By construction $R$ is projective as an $S$-bimodule. Hence we have (1).
	
	Let us prove (2). Since $\wt R$ generates $I$, we have $I=\wt R + \wt I$. On the other hand, $\wt R=R\oplus (\wt R\bigcap \wt I)$. Thus
\[
	I=\wt R+\wt I=R+ (\wt R\bigcap \wt I)+\wt I= R+\wt I.
\]	
	Clearly, $R\bigcap \wt I = R\bigcap (\wt R\bigcap \wt I)=0$. Thus $R$ is a bimodule of relations for $T/I$.

	It remains to prove (3). Let us assume that $R$ is a minimal bimodule which generates $I$. Here by minimal we mean that for any submodule $R'\subseteq R$ which generates $I$ we have $R'=R$. Let us assume that $R$ is minimal and that $R'$ is a subbimodule complement of  $R\bigcap\wt I$ in $R$. Thus,
\[
	I=R+\wt I=R'+(R\bigcap\wt I)+\wt I=R'+\wt I.
\]
	We deduce that $R'$ generates $I$, so $R'=R$. Obviously, $R\bigcap \wt I=R'\bigcap (R\bigcap \wt I)=0$, as $R'$  is a subbimodule complement of $R\bigcap \wt I$. In conclusion $R$ is a bimodule of relations for $T/I$.
\end{proof}

\begin{corollary}
	Let $S$ be a separable $\Bbbk$-algebra. There exists a bimodule of relations $R$ for $T/I$, where $I$ is an arbitrary graded ideal in $T$.
\end{corollary}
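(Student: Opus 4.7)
The plan is to reduce the corollary directly to Proposition \ref{pr:minimal}(1), so the only work is to verify that the projectivity hypothesis of that proposition is automatic when $S$ is separable.

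First I would recall (as noted in the proof of Theorem \ref{te:alpha}) that $S$ is separable as a $\Bbbk$-algebra if and only if $S$ is projective as an $S$-bimodule, which is in turn equivalent to the statement that every $S$-bimodule is projective. Since $I$ is a graded ideal of $T$, both $I$ and $\wt I = IT^1+T^1I$ are $S$-subbimodules of $T$, and hence the quotient $I/\wt I$ is an $S$-bimodule in the obvious way. By the characterization of separability just recalled, $I/\wt I$ is automatically projective.

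Next I would invoke Proposition \ref{pr:minimal}(1): its hypothesis ``$I/\wt I$ is a projective $S$-bimodule'' is now verified for free, so the proposition produces a (graded) $S$-subbimodule $R \subseteq I$ such that $R \oplus \wt I = I$, which is precisely the defining property of a bimodule of relations for $T/I$ in Definition \ref{def:minimality}. This settles the claim. No obstacle is expected: the whole argument is a one-line application of a general principle of separability together with the existence result already proved in Proposition \ref{pr:minimal}(1).
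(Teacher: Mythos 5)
Your argument is correct and is essentially the paper's own proof: the paper also observes that separability of $S$ forces every $S$-bimodule (in particular $I/\wt I$) to be projective and then applies Proposition \ref{pr:minimal}(1). You have merely spelled out the intermediate step identifying $I/\wt I$ as the relevant bimodule, which is harmless and matches Proposition \ref{pr:minimal}(1) exactly.
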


\begin{proof}
 	Every $S$-bimodule is projective, since $S$ is separable. Now apply Proposition \ref{pr:minimal} (1).
\end{proof}

	Recall that a graded bimodule $R$ is called $n$-\textit{pure} if $R=R^n$.  It is \textit{pure} if it is $n$-\textit{pure} for some $n$.

\begin{corollary}\label{co:Pure}
Let $R$ be a pure bimodule which generates the ideal $I$. Then $R$ is a bimodule of relations for $T/I$.
\end{corollary}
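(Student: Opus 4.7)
The plan is to apply the characterization in the remark following Definition \ref{def:minimality}: namely, that $R$ is a bimodule of relations for $T/I$ exactly when $R$ generates $I$ and $R\cap\wt{I}=0$. The first condition is given by hypothesis, so the entire task reduces to verifying the intersection condition, which should fall out of a clean degree argument using purity.

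First, since $R=R^n$ is $n$-pure, one has $R\subseteq T^n$. Next, because $R$ generates $I$ and all generators sit in degree $n$, every element of $I$ has the form $\sum_i t_i r_i t_i'$ with $r_i\in R\subseteq T^n$ and $t_i,t_i'\in T$, so all nonzero homogeneous components of such an element live in degree $\geq n$. Hence $I\subseteq T^{\geq n}:=\oplus_{m\geq n}T^m$. Applying $T^1$ on either side and using that $T$ is positively graded, one deduces
\[
\wt{I}=IT^1+T^1I\subseteq T^{\geq n+1}.
\]
Combining these gives $R\cap\wt{I}\subseteq T^n\cap T^{\geq n+1}=0$, which is what we needed.

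I expect no substantive obstacle here: the proof is essentially a one-line degree count made possible by purity, and it can equally well be phrased as an application of Proposition \ref{pr:minimal}(2) taking the trivial decomposition $R=R\oplus (R\cap\wt{I})$. The only point deserving a word of care is the argument that $I\subseteq T^{\geq n}$, which uses the strong grading of $T$ only insofar as $T^{\geq n}$ is a two-sided ideal contained in any ideal generated by a subset of $T^n$; this is immediate from the $\mathbb{N}$-grading and requires nothing more.
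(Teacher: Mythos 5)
Your proof is correct and follows the same route as the paper: the paper's proof is exactly the observation that $\wt{I}\subseteq T^{\geq n+1}$ forces $R\cap\wt{I}\subseteq T^n\cap T^{\geq n+1}=0$, which together with the hypothesis that $R$ generates $I$ gives the result via the remark following Definition \ref{def:minimality}. You have merely spelled out the degree bookkeeping (namely $I\subseteq T^{\geq n}$) that the paper leaves implicit.
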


\begin{proof}
	Let $n$ be a nonnegative integer such that $R=R^n$. Since $\wt{I}\subseteq T^{\geq n+1}$, we have $R\bigcap\wt I=0$.
\end{proof}

\begin{proposition}\label{pr:minimal'}
	Let $P\subseteq T$ be an $S$-subbimodule such that $R:=R_P$ is a bimodule of relations of  $T/\ig{R}$.  Let $I=\ig{R}$.
\begin{enumerate}
	\item The relation $\left(T^1P_n+P_nT^1\right)\cap P\subseteq P^{\leq n}$ holds for all $n\in \N$.
	
	\item The Jacobi relation $(\cJ_n)$ holds if and only if $\left(T^1P_n+P_nT^1\right)\cap T^{\leq n}\subseteq P_n$.
\end{enumerate}
\end{proposition}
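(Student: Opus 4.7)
The plan is to use the projection $p^{n+1}\colon T\to T^{n+1}$ to isolate the top-degree component and to exploit the defining property of the bimodule of relations, namely $R\cap \wt{I}=0$, where $\wt{I}=IT^1+T^1I$. The pivotal identity, recorded earlier in the discussion of the Jacobi conditions, is $p^n(P_n)=I^n$, which converts the filtration data $\{P_n\}_{n\in\N}$ into the graded ideal $I=\ig{R}$; this is what allows the hypothesis on $R=R_P$ to bite.

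For part (1), I would take $x\in (T^1P_n+P_nT^1)\cap P$. Since $P_n\subseteq T^{\leq n}$, one has $x\in T^{\leq n+1}$ and hence $x\in P^{\leq n+1}$, so the projection $p^{n+1}(x)$ lies in $R_P^{n+1}=R^{n+1}$. On the other hand, applying $p^{n+1}$ to a typical summand $t\cdot p$ with $t\in T^1$ and $p\in P_n$ yields $t\cdot p^n(p)\in T^1\cdot I^n$ (and symmetrically for summands in $P_nT^1$), so $p^{n+1}(x)$ also lies in $(\wt{I})^{n+1}=T^1I^n+I^nT^1$. The assumption $R\cap\wt{I}=0$ then forces $p^{n+1}(x)=0$, i.e.\ $x\in T^{\leq n}$, and so $x\in P^{\leq n}$.

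For part (2), the forward direction is immediate from the inclusion $T^1P_n+P_nT^1\subseteq P_{n+1}$ provided by the recursion \eqref{eq:P_n}. For the converse, I would pick $x\in P_{n+1}\cap T^{\leq n}$ and, using \eqref{eq:P_n}, decompose $x=y+z$ with $y\in T^1P_n+P_nT^1$ and $z\in P^{\leq n+1}$. Both summands lie in $T^{\leq n+1}$, so the equation $p^{n+1}(x)=0$ reads $p^{n+1}(y)=-p^{n+1}(z)$; by the degree argument of part (1), the left side lies in $(\wt{I})^{n+1}$ and the right side in $R^{n+1}$, whence both vanish. Consequently $y\in T^{\leq n}$ and $z\in P^{\leq n}\subseteq P_n$, so the hypothesis $(T^1P_n+P_nT^1)\cap T^{\leq n}\subseteq P_n$ yields $y\in P_n$, and therefore $x\in P_n$.

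No serious obstacle is anticipated; the only delicate point is to verify carefully the equality $p^{n+1}(t\cdot p)=t\cdot p^n(p)$ for $t\in T^1$ and $p\in P_n$, which relies on $P_n\subseteq T^{\leq n}$ and the strong grading of $T$ in force throughout this section. The bimodule-of-relations hypothesis enters each part at exactly one place, through the graded slice $R^{n+1}\cap(\wt{I})^{n+1}\subseteq R\cap\wt{I}=0$.
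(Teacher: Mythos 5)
Your proof of part (1) is the same as the paper's: project with $p^{n+1}$, land in $\wt{I}^{n+1}\cap R^{n+1}$, and use $R\cap\wt I=0$. For part (2), the forward direction is trivial in both. For the converse, the paper does not decompose by hand; it applies Lemma \ref{lem:grsum}(1) with $X:=T^1P_n+P_nT^1$ and $Y:=P^{\leq n+1}$, observing that $R_X^{n+1}=T^1I^n+I^nT^1$ and $R_Y^{n+1}=R^{n+1}$ are the only components of degree $>n$ and that they intersect trivially, to conclude $(X+Y)^{\leq n}=X^{\leq n}+Y^{\leq n}$; the equivalence then drops out because $Y^{\leq n}=P^{\leq n}\subseteq P_n$. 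Your argument instead takes $x\in P_{n+1}\cap T^{\leq n}$, writes $x=y+z$ via the recursion \eqref{eq:P_n}, and shows $p^{n+1}(y)=-p^{n+1}(z)\in\wt{I}^{n+1}\cap R^{n+1}=0$, so $y\in X^{\leq n}$ and $z\in P^{\leq n}\subseteq P_n$. This is correct and is essentially the proof of Lemma \ref{lem:grsum} unfolded inline for the special case $X,Y\subseteq T^{\leq n+1}$; the paper's route is more modular (the lemma is reused elsewhere, e.g.\ in §\ref{fa:notation_tensor}), yours is more self-contained. Either is acceptable.
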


\begin{proof}
Let $x\in \left(T^1P_n+P_nT^1\right)\cap P$. Since $P_n$ is a subset of $T^{\leq n}$ it follows that $x\in P^{\leq n+1}$. Thus, by Lemma \ref{Lemma:Sp} and in view of \S\ref{fa:Jacobi}, we get:
\begin{equation*}
 	p^{n+1}\left(\left(T^1P_n+P_nT^1\right)\cap P^{\leq n+1}\right)\subseteq \wt{ I}^{n+1}\cap R^{n+1}.
\end{equation*}
	Since $R$ is a complement subbimodule of $\wt I$ in $I$ we have  $p^{n+1}\left(x\right)=0$, that is $x$ is an element of $P^{\leq n}$.

	To prove the second part of the statement we want to apply Lemma \ref{lem:grsum} for $X:=T^1P_n+P_nT^1$ and $Y:=P^{\leq n+1}$. 	Note that $R_X^{n+1}=T^1p^n(P_n)+p^n(P_n)T^1=T^1I^n+I^nT^1$ and that $R_Y^{n+1}=R^{n+1}$ are the only non zero components of $R_X$ and $R_Y$ having degree greater than $n$. Since $R_X^{n+1}\cap R_Y^{n+1}=0$, by the lemma we deduce that $(X+Y)^{\leq n}=X^{\leq n}+Y^{ \leq n}$. Since $Y^{ \leq n}=P^{\leq n}\subseteq P_n$ we deduce that $(\cJ_n)$ holds, i.e. $(X+Y)^{\leq n}\subseteq P_n$, if and only if $X^{\leq n}\subseteq P_n$.
\end{proof}

	In order to prove another characterization of $S$-bimodules of $PBW$-type, similar to \cite[Theorem 4.2]{CS}, we  introduce the analogous notion of the central extension associated to $P$, see  \cite[p. 2]{CS}.

\begin{fact}[The graded $S$-ring $D(P)$.] \label{fa:P_z}
	Consider the polynomial ring $\tz$ in the variable $z$ with coefficients in $T$, regarded as a graded ring with homogeneous component of degree $n$ given by $\tz^n=\sum_{i=0}^n T^iz^{n-i}$.

	By \cite[\S II.11]{NvO1}, the \emph{external homogenization} of $a$ is the polynomial $a^*=\sum_{i=0}^d a_iz^{d-i}\in \tz^d$, for every $a=\sum_{i=0}^d a_i\in T$, with $a_i\in T^i$ and $a_d\neq 0$.
If $X$ is a subset of $T$, then we shall use the notation $X^*:=\{x^*\mid x\in X\}$.

	The quotient of $\tz$ by the ideal generated by $P^*$ will be denoted by $D(P)$. We shall refer to $D(P)$ as the \emph{central extension associated to} $P$. The evaluation map at $z=0$ defines a graded ring morphisms $\ev{0}:\tz\to T$. Similarly, evaluation at $z=1$ induces a  morphism of filtered rings $\ev{1}:\tz\to T$, where  on $\tz$ we take the standard filtration associated to the grading of this ring.  The kernels of $\ev{0}$ and $\ev{1}$ are the ideals generated by $z$ and $z-1$, respectively.

	Let us remark that $\ev{0}$ and $\ev{1}$ map $x^*$ to $\mathrm{LH}(x)$ and $x$ respectively, for all $x\in P$. Thus the evaluation maps factorize through ring homomorphisms  $\phi_0:D(P)\to A(R_P)$ and $\phi_1:D(P)\to U(P)$.

	If there is no danger of confusion $P$, we shall use the notation $A=A(R_P)$, $U=U(P)$ and $D=D(P)$.
\end{fact}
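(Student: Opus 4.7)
The fact bundles several verifications. The plan is to dispatch them in order. First, the grading $\tz^n = \sum_{i=0}^n T^i z^{n-i}$ is well-defined because on bihomogeneous products $(a z^i)(b z^j) = (ab) z^{i+j}$, with $a \in T^p, b \in T^q$ landing in $T^{p+q} z^{i+j} \subseteq \tz^{p+q+i+j}$; connectedness $\tz^0 = S$ is then automatic. The containment $a^* \in \tz^d$ holds term by term since $a_i z^{d-i} \in T^i z^{d-i} \subseteq \tz^d$. The morphism $\ev{0}$ sends $T^i z^{n-i}$ to $T^n$ when $i = n$ and to $0$ otherwise, hence is graded; $\ev{1}$ sends $T^i z^{n-i}$ to $T^i \subseteq T^{\leq n}$, hence is filtered with respect to the standard filtration on $\tz$.

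For the kernel descriptions, centrality of $z$ and of $z - 1$ in $\tz$ (essential since $T$ is not assumed commutative) makes the two-sided ideals $\ig{z} = z \cdot \tz$ and $\ig{z-1} = (z-1) \cdot \tz$ obviously contained in the respective kernels. For the reverse inclusions, every element of $\tz$ admits a unique expression $f = \sum_{n \geq 0} c_n z^n$ with $c_n \in T$ and finitely many nonzero: if $\ev{0}(f) = c_0 = 0$, then $f = z \cdot \sum_{n \geq 1} c_n z^{n-1} \in \ig{z}$; and the identity $z^n - 1 = (z-1)(1 + z + \cdots + z^{n-1})$ yields $f - \ev{1}(f) \in (z-1) \cdot \tz$, whence $\ev{1}(f) = 0$ forces $f \in \ig{z-1}$.

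Next, the values of $\ev{0}(x^*)$ and $\ev{1}(x^*)$ follow by direct substitution into $x^* = \sum_{i=0}^d x_i z^{d-i}$: at $z = 0$ only the term $x_d = \mathrm{LH}(x)$ survives, and at $z = 1$ all terms collapse to $\sum_i x_i = x$. Finally, the factorizations $\phi_0$ and $\phi_1$ arise from the universal property of the quotient: by the previous step, $\ev{0}(P^*) = \mathrm{LH}(P) \subseteq R_P \subseteq \ig{R_P}$ and $\ev{1}(P^*) = P \subseteq \ig{P}$, so both evaluations send the generators $P^*$ of $\ig{P^*}$ into the respective target ideals, inducing the maps $\phi_0: D(P) \to A(R_P)$ and $\phi_1: D(P) \to U(P)$. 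I do not anticipate any genuine obstacle; the entire fact is a direct unwinding of definitions, the only subtle point being the uniform appeal to centrality of $z$ and $z - 1$ when identifying the principal ideals.
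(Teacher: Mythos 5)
Your verification is correct and matches the approach the paper implicitly relies on; the paper states this fact without a dedicated proof, leaving exactly the routine unwinding you carry out (the grading check, degree of $a^*$, kernels of the evaluation maps via centrality of $z$ and $z-1$, and the factorization through the universal property of quotients by noting $\ev{0}(P^*)\subseteq \ig{R_P}$ and $\ev{1}(P^*)\subseteq \ig{P}$).
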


\begin{lemma}\label{le:ev_1}
	Keeping the foregoing notation, we have:
\begin{enumerate}
 	\item The relations $\ev{1}(\ig{P^*}^n)=P_n$ and $\ker(\phi_1)=(z-1)D$ hold, where $\ig{P^*}^n:= \ig{P^*}\bgi \tz^n$ denotes the homogeneous component of degree $n$ of $\ig{P^*}$. Thus $\phi_1$ induces an isomorphism of $S$-rings $D/(z-1)D\cong U$.

	\item The relations $\ev{0}(\ig{P^*}^n)=\ig{R_P}^n$ and $\ker(\phi_0)=zD$ hold. Thus $\phi_0$ induces an isomorphism of $S$-rings $D/zD\cong A$.
\end{enumerate}
\end{lemma}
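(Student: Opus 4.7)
The plan is to prove both (1) and (2) in parallel by first computing $\ev_1$ and $\ev_0$ on each homogeneous component $\ig{P^*}^n$, and then using the resulting identity together with the standard kernels of the two evaluation maps to identify $\ker(\phi_1)$ and $\ker(\phi_0)$. The two parts are essentially mirror images of each other: $\ev_1$ picks up the whole element, while $\ev_0$ picks up only the top homogeneous part. Thus $\ev_1$ should naturally produce the filtered pieces $P_n$, while $\ev_0$ should produce the graded components $\ig{R_P}^n$, in agreement with the recursive description from \S\ref{fa:Jacobi}.

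For the computation of $\ev_1\left(\ig{P^*}^n\right)=P_n$, I would first record the basic identities $\ev_1(T[z]^i)=T^{\leq i}$, $\ev_0(T[z]^i)=T^i$, $\ev_1(p^*)=p$ and $\ev_0(p^*)=\mathrm{LH}(p)$, which follow directly from the definitions in \S\ref{fa:P_z}. The inclusion $\ev_1(\ig{P^*}^n)\subseteq P_n$ is immediate: a homogeneous element of degree $n$ in $\ig{P^*}$ is a sum of products $a\cdot p^*\cdot b$ with $a\in T[z]^i$, $b\in T[z]^k$, $p\in P$ of degree $d_p$, and $i+d_p+k=n$, and $\ev_1$ sends each such product into $T^{\leq i}\cdot P^{\leq d_p}\cdot T^{\leq k}\subseteq P_n$ by \eqref{eq:defPn}. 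For the reverse inclusion, given $a\in T^{\leq i}$, $p\in P^{\leq j}$, $b\in T^{\leq k}$ with $i+j+k=n$, I would lift $a$ to $\tilde a:=\sum_{l}a_l z^{i-l}\in T[z]^i$ (so that $\ev_1(\tilde a)=a$), similarly lift $b$ to $\tilde b\in T[z]^k$, and crucially lift $p$ to $\tilde p:=z^{j-d_p}p^*\in T[z]^j\cap \ig{P^*}$ (this lives in $\ig{P^*}$ because $p^*$ does). Then $\tilde a\tilde p\tilde b\in\ig{P^*}^n$ and $\ev_1(\tilde a\tilde p\tilde b)=apb$, yielding the claim.

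Once $\ev_1(\ig{P^*}^n)=P_n$ is in hand, taking the union over $n$ gives $\ev_1(\ig{P^*})=\ig{P}$. Since $\ker(\ev_1)=(z-1)T[z]$ (standard one-variable fact, valid because $z$ is central), I obtain $\ev_1^{-1}(\ig{P})=\ig{P^*}+(z-1)T[z]$, and passing to the quotient by $\ig{P^*}$ yields $\ker(\phi_1)=(z-1)D$, so $\phi_1$ induces $D/(z-1)D\cong U$. Part (2) is handled by the same pattern: the generator $a\cdot p^*\cdot b$ with $i+d_p+k=n$ is sent by $\ev_0$ into $T^i\cdot\mathrm{LH}(p)\cdot T^k\subseteq T^i\cdot R_P^{d_p}\cdot T^k\subseteq \ig{R_P}^n$, and conversely any $a\cdot r\cdot b\in T^i\cdot R_P^j\cdot T^k$ with $r=\mathrm{LH}(p)$ for some $p\in P^{\leq j}$ (so $d_p=j$ when $r\neq 0$) arises as $\ev_0(a\cdot p^*\cdot b)$. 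The identification $\ker(\phi_0)=zD$ then follows from $\ker(\ev_0)=zT[z]$ in the same way.

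The main obstacle is bookkeeping with the two competing notions of degree: the internal filtration degree of an element of $P$ versus the actual top-degree $d_p$ of its homogenization $p^*$. The trick $\tilde p:=z^{j-d_p}p^*$ is what bridges them, and one must check that this auxiliary lift still lies in $\ig{P^*}$ and has homogeneous degree exactly $j$. After that, the passage from the ideal identity to the kernel identity is routine, relying only on $\ker(\ev_1)=(z-1)T[z]$ and $\ker(\ev_0)=zT[z]$, which are the customary kernels of specialization for a polynomial ring in a central variable.
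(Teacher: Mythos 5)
Your proposal is correct and follows essentially the same route as the paper: both prove $\ev{1}\big(\ig{P^*}^n\big)=P_n$ (and the analogue for $\ev{0}$) by evaluating on the homogeneous generators and, for the reverse inclusion, lifting $p\in P^{\leq j}$ via the padded homogenization $z^{\,j-d_p}p^*\in\ig{P^*}\bgi \tz^j$. The only cosmetic difference is at the end: you identify $\ker(\phi_1)=(z-1)D$ by the direct preimage computation $\ev{1}^{-1}\big(\ig{P}\big)=\ig{P^*}+(z-1)\tz$, whereas the paper packages the same information into a Snake Lemma diagram; both are valid and equivalent.
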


\begin{proof}
	We shall prove only (1). The second part of the lemma can be proved in a similar way. Let us denote $T[z]$ by $\tz$. For every $j\in\mathbb{N}$, let $(P^*)^j:=P^*\bgi \tz^j$. We claim that $\ev{1}\left((P^*)^j\right)\subseteq P^{\leq j}$. Indeed, an element $y$ belongs to $(P^*)^j$ if and only if $y=x^*$, for some $x\in P$, and $y\in T_z^j$. Let $x_0,\dots, x_r\in T$ be the homogeneous components elements of $x$ with $x_r\neq 0$. Then $y=x^*=\sum_{i=0}^rx_iz^{r-i}$ belongs to $\tz^r$. On the other hand, by assumption, $y\in \tz^j$. Thus $r=j$ and $\ev{1}(y)=x\in P\bgi T^{\leq j}=P^{\leq j}$, so our claim has been proved. Thus we get:
\begin{equation*}
	\ev{1}\big(\ig{P^*}^n\big)  = \sum_{i+j+k=n}\ev{1}\left(\tz^i (P^*)^j \tz^k\right)= \sum_{i+j+k=n}T^{\leq i} \ev{1}\left((P^*)^j\right) T^{\leq k}\subseteq\sum_{i+j+k=n}T^{\leq i} P^{\leq j} T^{\leq k}\stackrel{\eqref{eq:defPn}}{=} P_n.
\end{equation*}
	Conversely, for every $x\in P^{\leq j}$, we have $x=\sum_{i=0}^{j'}x_i$, where $j'\leq j$, each $x_i$ belongs to $T^i$ and $x_{j'}\neq 0$. Then, by definition, $x^*=\sum_{i=0}^{j'}x_iz^{j'-i}$. Since $\ev{1}(z^{j-j'}x^*)=x$, we get $P^{\leq j}\subseteq \ev{1}(\ig{P^*}^j)$. Thus
 \[
  	P_n=\sum_{i+j+k=n}T^{\leq i} P^{\leq j} T^{\leq k}\subseteq\sum_{i+j+k=n}T^{\leq i} \ev{1}(\ig{P^*}^j) T^{\leq k}= \ev{1}\left(\sum_{i+j+k=n}T_z^{ i}\ig{P^*}^j T_z^{k}\right)=\ev{1}\left(\ig{P^*}^n\right) .
 \]
 	In conclusion,  $\ev{1}\left(\ig{P^*}^n\right)$ and  $P_n$ are equal. We deduce that the ideals $\ig{P}$ and $ \ev{1}\left(\ig{P^*}\right)$ are equal.

	Let $\pi_{P^*}:\tz\to D$ denote the canonical projection. Now we can prove that $D/(z-1)D$ and $ U$ are isomorphic by applying the Snake Lemma to the next diagram, which has commutative squares and exact rows and columns.
\[\xymatrix @R=15pt{
	&&0\ar[d]&0\ar[d]\\
	&&\ig{P^*}\ar@{-->}[r]\ar[d]&\ig{P}\ar[d]\\
	0\ar[r]&(z-1)\tz\ar@{-->}[d]\ar@{^(->}[r]&\tz\ar[d]^{\pi_{P^*}}\ar[r]^{\ev{1}}&T\ar[d]^{\pi_P}\ar[r]&0\\
	0\ar[r]&\ker{\phi_1}\ar@{^(->}[r]&D\ar[r]^{\phi_{1}}&U \ar[r]&0}
\]
	The maps $\pi_{P^*}$ and $\ev{1}$ induce the vertical and the horizontal dashed arrows, respectively. We have proved  that the the image of latter one is the ideal generated by $P$, so it is surjective. On the other hand,  $\pi_{P^*}$ is surjective by definition.  It follows that the vertical dashed arrow is surjective as well. Therefore, $\ker(\phi_1)= (z-1)D $.
\end{proof}

	The class of the variable $z$ in $ D $ will be denoted by $z$ as well. The annihilator of the central element $z$ in $ D $ is a two-sided ideal. Since $z$ is homogeneous of degree $1$, this ideal is graded. We denote by $\an{}^n$ the homogeneous component of degree $n$ of $\an{}$.

\begin{lemma}\label{lemma:exactsequence}
	For every $n\in\N$ there exists an exact sequence:
\begin{equation}\xymatrix{
	0 \ar[r] & P_n \ar@{^(->}[r] & \ig{P}^{\leq n} \ar[r]^-{\tau_n} & (z-1)D\bgi D^n \ar[r] & 0.
}
\end{equation}
	In particular, the relation $P_n=\ig{P}^{\leq n}$ holds for some $n$ if and only if $(z-1)D\bgi D^n=0$.
\end{lemma}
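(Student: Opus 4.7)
The plan is to build the map $\tau_n$ explicitly using a degree-$n$ external homogenization, then exploit the identity $\ev_{1}(\ig{P^*}^n)=P_n$ from Lemma \ref{le:ev_1} to identify its kernel, and a straightforward lifting argument to establish surjectivity.

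First, for $x\in T^{\leq n}$ written uniquely as $x=\sum_{i=0}^{n}x_i$ with $x_i\in T^i$, define $\hat{x}_n:=\sum_{i=0}^{n}x_i z^{n-i}\in \tz^n$. This assignment $x\mapsto \hat{x}_n$ is an $S$-bilinear map $T^{\leq n}\to \tz^n$, and by construction $\ev_{1}(\hat{x}_n)=x$. Set $\tau_n\colon \ig{P}^{\leq n}\to D^n$ by $\tau_n(x):=\pi_{P^*}(\hat{x}_n)$. Since $\phi_1\tau_n(x)=\pi_P\ev_1(\hat{x}_n)=\pi_P(x)=0$ for $x\in \ig{P}^{\leq n}$, Lemma \ref{le:ev_1}(1) shows that $\tau_n$ lands in $\ker(\phi_1)\cap D^n=(z-1)D\cap D^n$. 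Note in passing that $P_n\subseteq \ig{P}^{\leq n}$ since $P_n\subseteq \ig{P}\cap T^{\leq n}$.

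For surjectivity: take $y\in (z-1)D\cap D^n$, lift it to $\tilde{y}\in \tz^n$, and write $\tilde{y}=\sum_{i=0}^{n}y_i z^{n-i}$ with $y_i\in T^i$. Put $x:=\ev_1(\tilde{y})=\sum_{i=0}^{n}y_i$; this belongs to $T^{\leq n}$, and $\phi_1(y)=0$ forces $\pi_P(x)=0$, so $x\in \ig{P}^{\leq n}$. Since the $y_i$ are precisely the homogeneous components of $x$ by uniqueness of the degree decomposition, one has $\hat{x}_n=\tilde{y}$, hence $\tau_n(x)=\pi_{P^*}(\tilde{y})=y$.

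For the kernel, the inclusion $P_n\subseteq \ker\tau_n$ uses Lemma \ref{le:ev_1}(1) in reverse: if $x\in P_n$, then $x=\ev_1(w)$ for some $w\in \ig{P^*}^n$; writing $w=\sum w_i z^{n-i}$ with $w_i\in T^i$ and using $x\in T^{\leq n}$, the uniqueness of the decomposition forces the degree-$i$ component of $x$ to equal $w_i$, whence $\hat{x}_n=w\in \ig{P^*}$ and $\tau_n(x)=0$. Conversely, if $\tau_n(x)=0$, then $\hat{x}_n\in \ig{P^*}^n$, so $x=\ev_1(\hat{x}_n)\in \ev_1(\ig{P^*}^n)=P_n$ by Lemma \ref{le:ev_1}(1). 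This establishes the exact sequence; the final claim is then immediate since exactness gives $\ig{P}^{\leq n}/P_n\cong (z-1)D\cap D^n$. The only delicate point is the bookkeeping that guarantees $\hat{x}_n=\tilde{y}$ (equivalently $\hat{x}_n=w$) in the surjectivity and kernel arguments, which hinges on nothing more than unique decomposition of $x\in T^{\leq n}$ into homogeneous components.
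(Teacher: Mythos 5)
Your proof is correct, and it works by a somewhat different mechanism than the paper's. The paper applies the Snake Lemma to a three-by-three commutative diagram whose middle row is $0\to (z-1)T_z\cap T_z^n \to T_z^n \xrightarrow{\ev_1} T^{\leq n}\to 0$ and whose bottom row is the corresponding sequence for $D$, noting that $(z-1)T_z\cap T_z^n=0$; the connecting morphism of the Snake Lemma is $\tau_n$, and exactness of the resulting sequence $0\to \ig{P^*}^n\xrightarrow{\ev_1}\ig{P}^{\leq n}\xrightarrow{\tau_n}(z-1)D\cap D^n\to 0$ comes for free, after which $\ig{P^*}^n$ is replaced by $P_n$ via the identity $\ev_1(\ig{P^*}^n)=P_n$ from Lemma \ref{le:ev_1}. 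You instead build $\tau_n$ by the explicit formula $\tau_n(x)=\pi_{P^*}(\hat{x}_n)$ (which agrees with the description the paper records afterwards in Remark \ref{re:tau_n}) and verify well-definedness, the target, surjectivity, and the kernel computation by hand. Both routes lean on the same input from Lemma \ref{le:ev_1}(1), namely $\ker(\phi_1)=(z-1)D$ and $\ev_1(\ig{P^*}^n)=P_n$; the diagram chase is shorter once one trusts the Snake Lemma, while your direct construction is more transparent because it makes $\tau_n$ fully concrete from the start rather than extracting it post hoc from the connecting morphism. The bookkeeping you flag at the end — that $\hat{x}_n=\tilde y$ and $\hat{x}_n=w$ follow from uniqueness of the homogeneous decomposition of $x\in T^{\leq n}$ together with the freeness of $T_z^n=\bigoplus_{i=0}^n T^i z^{n-i}$ over $T$ — is indeed the only delicate point, and you handle it correctly.
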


\begin{proof}
	Note that  $(z-1)\tz\bgi \tz^n=0$. Hence, by applying the Snake Lemma to the  diagram:
\[\xymatrix @R=15pt{
	&0\ar[r]\ar@{=}[d]&\ig{P^*}^n\ar[r]^-{\ev{1}}\ar[d]&\ig{P}^{\leq n}\ar[d]\\
	0\ar[r]&(z-1)\tz\bgi \tz^n\ar[d]_{\pi'}\ar@{^(->}[r]&\tz^n\ar[d]^{\pi_{P^*}^n}\ar[r]^{\ev{1}}&T^{\leq n}\ar[d]^{\pi_P^{\leq n}}\ar[r]&0\\
	0\ar[r]&(z-1)D\bgi D^n\ar@{^(->}[r]&D^n\ar[r]^{\phi_{1}^n}&U^{\leq n} \ar[r]&0}
\]
	we obtain the exact sequence:
\begin{equation*}\xymatrix{
	0 \ar[r] & \ig{P^*}^n \ar[r]^-{\ev{1}} & \ig{P}^{\leq n} \ar[r]^-{\tau_n} & (z-1)D\bgi D^n \ar[r] & 0,
}
\end{equation*}
where $\tau_n$ denotes the corresponding connecting morphism.  We conclude remarking that, by the proof of the Lemma  \ref{le:ev_1}, we have  the relation $\ev{1}\left(\ig{P^*}^n\right)=P_n$.
\end{proof}

\begin{remark}\label{re:tau_n}
	By the proof of the Snake Lemma \cite[Chapter 4.1]{Ve}, the connecting morphism $\tau_n$ is defined as follows. For an element $x$ in $\ig{P}^{\leq n}$ we choose $y\in \tz^n$ such that $\ev{1}(y)=x$. Then $\pi^n_{P^*}(y)$ is an element in $(z-1)D\bgi D^n$, whose class in the cokernel of the map $\pi'$ coincides with $\tau_n(x)$. Since $\pi'=0$ we get that $\tau_n(x)=\pi_{P^*}^n(y)$. Let $x=\sum_{i=0}^k x_i$, where $k\leq n$ and $x_i\in T^i$, with $x_k\neq 0$. Thus we may take $y=z^{n-k}x^*$, so $\tau_n(x)=z^{n-k}x^*+\ig{P^*}$.
\end{remark}

\begin{theorem}\label{Th:JacAnn}
	The Jacobi relation $(\cJ_n)$ holds for some $n\in\N$ if and only if $\an{}^{n}=0$.
\end{theorem}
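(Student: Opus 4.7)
The plan is to work directly with the concrete description $D=T[z]/\ig{P^*}$ and with the evaluation map $\ev{1}:T[z]\to T$, together with the computation $\ev{1}(\ig{P^*}^m)=P_m$ already established in Lemma \ref{le:ev_1}. The hinge of the whole argument is the following elementary observation, which is implicit in the proof of Lemma \ref{lemma:exactsequence}: a nonzero element of $(z-1)T[z]$ cannot be homogeneous, so $(z-1)T[z]\cap T[z]^m=0$ for every $m$. Equivalently, $\ev{1}$ is injective on each $T[z]^m$, hence any two lifts of a given element of $T$ into $T[z]^m$ coincide. This injectivity is what lets us upgrade a containment in $P_{n+1}$ to an actual element of $\ig{P^*}^{n+1}$ sitting one degree lower in $T[z]$.

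For the direction $(\cJ_n)\Rightarrow\an{}^n=0$, I would take $y\in\an{}^n$ and lift it to $\tilde y\in T[z]^n$. The relation $zy=0$ in $D$ means $z\tilde y\in\ig{P^*}^{n+1}$. Applying $\ev{1}$ gives $x:=\ev{1}(\tilde y)=\ev{1}(z\tilde y)\in\ev{1}(\ig{P^*}^{n+1})=P_{n+1}$; at the same time $x\in\ev{1}(T[z]^n)=T^{\leq n}$. By $(\cJ_n)$, $x\in P_n=\ev{1}(\ig{P^*}^n)$, so one can pick $\tilde y'\in\ig{P^*}^n$ with $\ev{1}(\tilde y')=x$. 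Then $\tilde y-\tilde y'\in\ker(\ev{1})\cap T[z]^n=(z-1)T[z]\cap T[z]^n=0$, so $\tilde y=\tilde y'\in\ig{P^*}^n$, i.e. $y=0$ in $D$.

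For the converse, start from $x\in P_{n+1}\cap T^{\leq n}$ and mimic the construction of the connecting map in Remark \ref{re:tau_n}: write $x=\sum_{i=0}^k x_i$ with $k\leq n$ and put $\tilde y:=z^{n-k}x^*\in T[z]^n$, so that $\ev{1}(\tilde y)=x$. Since $x\in P_{n+1}=\ev{1}(\ig{P^*}^{n+1})$, choose $\tilde w\in\ig{P^*}^{n+1}$ with $\ev{1}(\tilde w)=x=\ev{1}(z\tilde y)$; the same injectivity trick in $T[z]^{n+1}$ yields $z\tilde y=\tilde w\in\ig{P^*}^{n+1}$. Projecting to $D$, this reads $z\cdot\pi_{P^*}(\tilde y)=0$, so $\pi_{P^*}(\tilde y)\in\an{}^n$. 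The hypothesis $\an{}^n=0$ then forces $\tilde y\in\ig{P^*}^n$, whence $x=\ev{1}(\tilde y)\in\ev{1}(\ig{P^*}^n)=P_n$, which is $(\cJ_n)$.

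I do not foresee a serious obstacle: once one commits to working with the lifts $\tilde y\in T[z]^n$ rather than with the filtered sequence $(\cJ_n)$ in $T$, the argument is a symmetric pair of diagram chases whose only real ingredients are $\ev{1}(\ig{P^*}^m)=P_m$ and the homogeneous-injectivity of $\ev{1}$ on each $T[z]^m$. The closest thing to a subtle point is making sure to keep track of degrees when passing from $\tilde y\in T[z]^n$ to $z\tilde y\in T[z]^{n+1}$, so that the uniqueness-of-lifts argument is applied at the correct degree in each direction.
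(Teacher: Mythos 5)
Your proof is correct. Both directions are carried out soundly: the two facts you lean on --- $\ev{1}\big(\ig{P^*}^m\big)=P_m$ from Lemma~\ref{le:ev_1}, and the homogeneous injectivity of $\ev{1}$, i.e.\ $(z-1)T[z]\cap T[z]^m=0$ (which the paper records in the proof of Lemma~\ref{lemma:exactsequence}) --- are exactly the right tools, and your book-keeping of degrees when passing between $T[z]^n$ and $T[z]^{n+1}$ is careful and right.

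The route is, however, genuinely different in packaging from the paper's. The paper assembles the exact sequences of Lemma~\ref{lemma:exactsequence} in degrees $n$ and $n+1$ into a commutative ladder, with $z\cdot(-)$ as the rightmost vertical map, and applies the Snake Lemma; the connecting map $\tau_n$ of Remark~\ref{re:tau_n} (which sends $x$ to the class of $z^{n-k}x^*$) is the device that makes the squares commute. The output of that diagram chase is the stronger identification
\[
\an{}^{\,n}\;\cong\;\frac{\ig{P}^{\leq n}\cap P_{n+1}}{P_n}\;\cong\;\frac{P_{n+1}\cap T^{\leq n}}{P_n},
\]
from which the equivalence in the theorem falls out as a corollary. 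You instead unpack the Snake Lemma into two explicit element chases: in the forward direction you lift $y\in\an{}^n$ to $T[z]^n$ and use the injectivity of $\ev{1}$ on homogeneous pieces to compare lifts; in the converse you build the lift $z^{n-k}x^*$ by hand --- which is precisely the paper's $\tau_n$ in disguise --- and show its class lies in $\an{}^n$. This is a more elementary argument that avoids invoking the Snake Lemma (and, in fact, does not need the exact sequences of Lemma~\ref{lemma:exactsequence} themselves, only the two auxiliary computations inside its proof). What it gives up is the explicit module isomorphism displayed above, which the paper proves en route and which is mildly stronger than the bare equivalence of vanishing.
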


\begin{proof}
	Let us consider the following diagram:
\[\xymatrix @R=15pt{
	0 \ar[r] & P_n \ar@{^(->}[r]\ar@{^(->}[d] & \ig{P}^{\leq n} \ar[r]^-{\tau_n}\ar@{^(->}[d] & (z-1)D\bgi D^n \ar[d]^-{z\cdot} \ar[r] & 0\\
	0 \ar[r] & P_{n+1} \ar@{^(->}[r] & \ig{P}^{\leq n+1} \ar[r]^-{\tau_{n+1}} & (z-1)D\bgi D^{n+1} \ar[r] & 0
}
\]
	Its rows are exact, cf.  Lemma \ref{lemma:exactsequence}. If $x\in \ig{P}^{\leq n}$ and we write $x=\sum_{i=1}^kx_i$ as in the previous remark, then $\tau_n(x)=z^{n-k}x^*+\ig{P^*}$. On the other hand, if we regard $x$ as an element in $\ig{P}^{\leq n+1}$, then $\tau_{n+1}(x)=z^{n+1-k}x^*+\ig{P^*}$. It follows that the squares of the diagram are commutative.

	The kernel of the rightmost arrow is $(z-1)D\bgi D^n\bgi\an{}$. Since every $x\in\an{}^n$ may be written $x=(1-z)x\in (z-1)D\bgi D^n$, it follows that the kernel of this map is $\an{}^n$.

	The Snake Lemma applied to this diagram yields us the exact sequence:
\begin{equation*}\xymatrix{
0 \ar[r] & \an{}^n \ar[r] & {P_{n+1}}/{P_n} \ar[r] & {\ig{P}^{\leq n+1}}/{\ig{P}^{\leq n}},
}
\end{equation*}
	where the last arrow is induced by the canonical inclusion of  $P_{n+1}$ into $\ig{P}^{\leq n+1}$. Therefore, we can identify $\an{}^n$ with  $\left({\ig{P}^{\leq n}\bgi P_{n+1}}\right)/P_n$. Obviously this quotient bimodule is isomorphic to $\left({P_{n+1}\bgi T^{\leq n}}\right)/{P_n}$, so $\an{}^n=0$ if and only if $(\cJ_n)$ holds.
\end{proof}

{
\begin{theorem}\label{th:Rees}
	Let $T$ be a connected graded $S$-ring and $P\subseteq T$ an $S$-subbimodule. The following assertions are equivalent:
 \begin{enumerate}
  	\item $P$ is of PBW-type.

	\item $\an{}=0$.

  	\item $(z-1)D(P)\bgi D(P)^n=0$, for all  $n\in\N$.

  	\item $\ig{P^*}= \ig{\ig{P}^*}$, or equivalently $D(P)=D(\langle P\rangle)$.
 \end{enumerate}
\end{theorem}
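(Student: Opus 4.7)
The plan is to establish (1)~$\Leftrightarrow$~(2)~$\Leftrightarrow$~(3)~$\Rightarrow$~(4)~$\Rightarrow$~(2), relying on the two preceding theorems together with the elementary observation that the ideal $\ig{P}$, viewed as an $S$-subbimodule in its own right, is automatically of PBW-type.

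For (1)~$\Leftrightarrow$~(2) I would simply chain Theorem~\ref{Th:PBWJac} and Theorem~\ref{Th:JacAnn}. Since $\an{}$ is a graded ideal, (2) is equivalent to $\an{}^n=0$ for every $n\in\N$; by Theorem~\ref{Th:JacAnn} this is the Jacobi condition $(\cJ_n)$ for every $n$; and by Theorem~\ref{Th:PBWJac} this last statement is exactly (1). The equivalence (1)~$\Leftrightarrow$~(3) goes through the same ideas but via Lemma~\ref{lemma:exactsequence}: the vanishing $(z-1)D\bgi D^n=0$ amounts to $P_n=\ig{P}^{\leq n}$, and Theorem~\ref{Th:PBWJac}(4) asserts that these equalities for all $n$ are equivalent to (1). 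Thus (3)~$\Leftrightarrow$~(1)~$\Leftrightarrow$~(2).

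The technical ingredient needed for (3)~$\Rightarrow$~(4) is the grading-theoretic fact that $(z-1)\tz\bgi \tz^n=0$. Because $\tz$ is graded with $z$ in degree one, an element $w\in\tz$ satisfying $(z-1)w\in\tz^n$ has each homogeneous component forced to vanish by a direct recursion on degree. Granted this, suppose (3) holds and fix $x\in\ig{P}$ of degree $k$. Combining Lemma~\ref{lemma:exactsequence} with Lemma~\ref{le:ev_1} gives $\ig{P}^{\leq k}=P_k=\ev{1}(\ig{P^*}^k)$, so there exists $y\in\ig{P^*}^k$ with $\ev{1}(y)=x$. Since $x^*\in\tz^k$ also evaluates to $x$ at $z=1$, the difference $x^*-y$ lies in $(z-1)\tz\bgi\tz^k=0$, so $x^*=y\in\ig{P^*}$. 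This proves $\ig{P}^*\subseteq\ig{P^*}$ and hence $\ig{\ig{P}^*}\subseteq\ig{P^*}$; the opposite inclusion is immediate from $P\subseteq\ig{P}$.

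For (4)~$\Rightarrow$~(2) I would use the cute observation that $\ig{P}$, regarded as an $S$-subbimodule, is automatically of PBW-type: by Proposition~\ref{Th:$PBW$}(1) this means $\mathrm{LH}(\ig{\ig{P}})\subseteq \ig{\mathrm{LH}(\ig{P})}$, which is trivial since $\ig{\ig{P}}=\ig{P}$. Applying the already-proven implication (1)~$\Rightarrow$~(2) to the bimodule $\ig{P}$ then gives that $z$ is regular in $D(\ig{P})$. If (4) holds, so that $D(P)=D(\ig{P})$, regularity of $z$ transfers to $D(P)$ and (2) follows. The most delicate point in the whole argument is verifying the grading lemma $(z-1)\tz\bgi\tz^n=0$; once that is in hand the rest amounts to reference chasing.
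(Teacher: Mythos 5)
Your proof is correct and is organized around the same backbone (1)~$\Leftrightarrow$~(2)~$\Leftrightarrow$~(3) via Theorems~\ref{Th:PBWJac}, \ref{Th:JacAnn} and Lemma~\ref{lemma:exactsequence}, but the two arrows involving (4) are handled differently from the paper, and the comparison is worth recording. For (3)~$\Rightarrow$~(4) the paper introduces the surjection $\Lambda_P\colon D(P)\to D(\ig{P})$, notes $\phi_1'\circ\Lambda_P=\phi_1$, and concludes $\ker(\Lambda_P)^n\subseteq(z-1)D(P)\cap D(P)^n=0$; your element-level argument (lift $x\in\ig{P}$ to $y\in\ig{P^*}^k$ with $\ev{1}(y)=x$, then observe $x^*-y\in(z-1)T_z\cap T_z^k=0$) is essentially the same computation unpacked, and the grading fact $(z-1)T_z\cap T_z^n=0$ that you flag as the delicate point is exactly what the paper already invokes without proof inside Lemma~\ref{lemma:exactsequence}, so it is fair game to cite. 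For the return arrow the paper proves (4)~$\Rightarrow$~(1) by a snake-lemma comparison of the two exact sequences $0\to zD\to D\to A\to 0$ for $D(P)$ and $D(\ig{P})$, deducing that $\Psi_P\colon A(R_P)\to A(R_{\ig{P}})$ is injective and hence $P$ is of PBW-type via Remark~\ref{rem: $PBW$-type}; your (4)~$\Rightarrow$~(2) instead exploits the observation that $\ig{P}$ is automatically of PBW-type (immediate from Proposition~\ref{Th:$PBW$}(1) since $\ig{\ig{P}}=\ig{P}$), applies the already-established (1)~$\Rightarrow$~(2) to the bimodule $\ig{P}$ to get $\mathrm{ann}_{D(\ig{P})}(z)=0$, and transfers regularity along the equality $D(P)=D(\ig{P})$ given by (4). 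This buys you a shorter and more conceptual closing step at the cost of one extra general observation about ideals; both routes close the cycle of implications and are equally valid. No gaps.
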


\begin{proof}
	By Theorem \ref{Th:PBWJac}, Lemma \ref{lemma:exactsequence}  and Theorem \ref{Th:JacAnn} it follows that (1), (2) and (3) are equivalent.  We now want to prove that (3) implies (4). Let us consider the morphism $\varfun{\phi_1}{D(P)}{U(P)}$ which maps $w+\langle P^*\rangle\in D(P)$ to $\ev{1}(w)+\langle P\rangle\in U(P)$ and its analogue $\varfun{\phi_1'}{D(\langle P\rangle)}{U(\langle P\rangle)=U(P)}$. The inclusion $P^*\subseteq \ig{P}^*$ induces a surjective graded map $\varfun{\Lambda_P}{D(P)}{D(\langle P\rangle)}$ sending $w+\langle P^*\rangle\in D(P)$ to $w+\langle \ig{P}^*\rangle\in D(\langle P\rangle)$. Note that $\phi_1'   \Lambda_P=\phi_1$. Thus $\ker\left(\Lambda_P\right)^n\subseteq (z-1)D(P)\bgi D(P)^n=0$. Therefore $\Lambda_P$ is injective, so $\ig{P^*}= \ig{\ig{P}^*}$.

	To complete the proof we show that (4) implies (1). Consider the following commutative diagram with exact rows:
\[\xymatrix{
	0\ar[r]&zD(P)\ar[d]\ar[r]& D(P)\ar[d]^-{ \Lambda_P}\ar[r]^{\phi_0}& A(R_P) \ar[d]^-{\Psi_P}\ar[r]&0\\
	0\ar[r]&zD(\langle P\rangle)\ar[r]& D(\ig{P})\ar[r]^{\phi'_0}& A(R_{\ig{P}}) \ar[r]&0}
\]
	where $\phi_0\left(w+\langle P^*\rangle\right)=\ev{0}(w)+\langle P\rangle\in U(P)$, and $\phi'_0$ is defined in a similar way. The map $\Psi_P$ is induced by the inclusion $R_P\subseteq R_{\ig{P}}$. Clearly, all vertical arrows are surjective. By Snake Lemma and taking into account the standing hypothesis, it follows that $\Psi_P$ is injective. Thus $\ig{R_P}=\ig{R_{\ig{P}}}$, so $P$ is of PBW-type.
\end{proof}
}

\begin{remark}\label{rem:alphaz}
	The ring $D(\ig{P})$ plays the same role of the Rees ring of $U$, see \cite[Proposition 4.4 (i)]{Li}, where the Rees ring of $U$ is denoted by $\widetilde{U}$ and $T=\Bbbk\langle X\rangle$. In other words, Theorem \ref{th:Rees} tells us that $P$ is of PBW-type if and only if $D(P)$ is the Rees ring of $U$.\vspace*{1ex}
\end{remark}

\begin{fact}[The central extension associated to $\alpha$.]\label{fa:extension}
	Let us consider the case when $P$ is given by a filtered map  $\alpha:R\to T$, as in Example \ref{ex:alpha}.  We denote by $\alpha_i:R\to T$ the component of degree $-i$ of $\alpha$. Recall that we have assumed that $\alpha_0$ always is the inclusion of $R$ into $T$, and for any element $r\in R$, we may write $\alpha(r):=\sum_{i=0}^{\infty}\alpha_i(r)$. If $P=\alpha(R)$, then we can describe the ideal generated by $P^*$ as follows.

	Let $ \alpha_z:R\to \tz$ denote the morphism of graded $S$-bimodules given by:
\begin{equation}\label{eq:alpha_z}
	\alpha_z(r)=\sum_{i=0}^{\infty}\alpha_i(r)z^{i}.
\end{equation}
	Using this morphism we now define the graded $S$-subbimodule $P_z\subseteq \tz$ by the equation $P_z=\alpha_z(R)$.

	We will refer to $T_z/\ig{P_z}$ as the \emph{central extension associated to $\alpha$}.
\end{fact}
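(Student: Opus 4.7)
The Fact implicitly asserts that the ideal generated by $P^*$ in $\tz$ coincides with the ideal generated by the bimodule $P_z=\alpha_z(R)$; equivalently, that the construction $\tz/\ig{P_z}$ recovers the central extension $D(P)=\tz/\ig{P^*}$ introduced in \S\ref{fa:P_z}. My plan is to prove the equality $\ig{P^*}=\ig{P_z}$ by comparing generators, exploiting the standing hypothesis that the lowest-degree component $\alpha_0$ of $\alpha$ is the inclusion $R\hookrightarrow T$.

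First, I would observe that for any nonzero homogeneous $r\in R^n$ we have $\alpha_0(r)=r\neq 0$ in $T^n$, so the top-degree component of $\alpha(r)=\sum_{i\geq 0}\alpha_i(r)$ is $r$ and has degree $n$. By the very definition of external homogenization \cite[\S II.11]{NvO1}, this forces
\begin{equation*}
(\alpha(r))^{\ast}=\sum_{i=0}^{\infty}\alpha_i(r)z^{i}=\alpha_z(r).
\end{equation*}
Since $R=\bigoplus_n R^n$ and $\alpha_z$ is $S$-bilinear and graded, $P_z$ is the $S$-subbimodule of $\tz$ generated by such elements, so $P_z\subseteq \ig{P^*}$ and hence $\ig{P_z}\subseteq\ig{P^*}$.

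For the reverse inclusion I would take an arbitrary $r\in R$, decompose it as $r=\sum_j r_j$ with $r_j\in R^j$, and set $m:=\max\{j\mid r_j\neq 0\}$. Because $\alpha_0(r_m)=r_m\neq 0$, the top degree of $\alpha(r)$ is exactly $m$, and the degree-$k$ component of $\alpha(r)$ is $\sum_j \alpha_{j-k}(r_j)$. Reindexing via $i:=j-k$ should yield the identity
\begin{equation*}
(\alpha(r))^{\ast}=\sum_j z^{m-j}\alpha_z(r_j)\in\ig{P_z},
\end{equation*}
which gives $P^*\subseteq\ig{P_z}$ and hence $\ig{P^*}\subseteq\ig{P_z}$. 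The main obstacle I anticipate is precisely this last combinatorial identity for non-homogeneous $r$: one has to keep careful track of how the different homogeneous pieces $r_j$ contribute distinct $z$-shifts under external homogenization and verify that, after reindexing the double sum, they genuinely reassemble into $(\alpha(r))^{\ast}$. Once this bookkeeping is in place, both inclusions follow immediately and the equality $\tz/\ig{P^*}=\tz/\ig{P_z}$ is established.
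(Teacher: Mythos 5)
Your proof is correct and takes essentially the same route as the paper's Proposition \ref{le:h(P)}: first $\alpha_z(r)=\alpha(r)^*$ for homogeneous $r$, giving $\ig{P_z}\subseteq\ig{P^*}$, and then the expansion of $(\alpha(r))^*$ via $r=\sum_j r_j$ and reindexing of the double sum to obtain $(\alpha(r))^*=\sum_j z^{m-j}\alpha_z(r_j)$, giving the reverse inclusion. The bookkeeping you single out as the potential obstacle is precisely the computation the paper carries out, and your version of it is accurate (the top degree is indeed $m$ because $\alpha_0(r_m)=r_m$ is the sole degree-$m$ contribution).
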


\begin{proposition}\label{le:h(P)}
 	If $T$ is a graded connected $S$-ring and $P$ is the $S$-bimodule associated to a filtered morphism $\alpha:R\to T$, then $\ig{P^*}=\ig{P_z}$.
\end{proposition}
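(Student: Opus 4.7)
The plan is to exploit the elementary observation that, on homogeneous elements of $R$, the map $\alpha_z$ coincides with the external homogenization of $\alpha$, and then to leverage this through $S$-bilinearity on one side and a $z$-power expansion on the other.

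First, I would establish the key pointwise identity: for a nonzero homogeneous $r\in R^n$, one has $\alpha_z(r)=\alpha(r)^*$. Indeed, by the standing hypothesis $\alpha_0(r)=r\in T^n$ is nonzero, and the components $\alpha_i(r)\in T^{n-i}$ for $i=1,\ldots,n$ collect the remaining (lower-degree) pieces of $\alpha(r)$. Thus $\alpha(r)$ has top degree exactly $n$, and the definition of external homogenization from \cite[\S II.11]{NvO1} yields
\[
\alpha(r)^{*}=\sum_{i=0}^{n}\alpha_{i}(r)\,z^{i}=\alpha_{z}(r).
\]
This is the core computation; everything else is bookkeeping.

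For the inclusion $\langle P_z\rangle\subseteq\langle P^{*}\rangle$, I would use that $\alpha_z$ is a morphism of graded $S$-bimodules, so any element of $P_z=\alpha_z(R)$ is of the form $\alpha_z(r)=\sum_{n}\alpha_z(r_n)$ for a decomposition $r=\sum_{n}r_n$ into homogeneous components. Each nonzero term equals $\alpha(r_n)^{*}\in P^{*}$ by the identity above (the zero ones are harmless), so $P_z\subseteq \mathrm{Span}_{S\text{-}S}(P^{*})\subseteq\langle P^{*}\rangle$.

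For the reverse inclusion $\langle P^{*}\rangle\subseteq\langle P_z\rangle$, take $x=\alpha(r)\in P$ with $r=\sum_{n=0}^{N}r_n$ and $r_N\neq 0$ (so $N$ is the top degree of $\alpha(r)$, again by $\alpha_0(r_N)=r_N$). Collecting pieces by total degree in $T$ and then homogenizing gives
\[
\alpha(r)^{*}=\sum_{n=0}^{N}z^{N-n}\,\alpha_z(r_n),
\]
because the degree-$j$ component of $\alpha(r)$ is $\sum_{n\geq j}\alpha_{n-j}(r_n)$, which regrouped by $n$ yields exactly the claimed formula. Since each $\alpha_z(r_n)\in P_z$ and $z\in T_z$ is central, every summand lies in $\langle P_z\rangle$, hence so does $\alpha(r)^{*}$. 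Combining both inclusions gives $\langle P^{*}\rangle=\langle P_z\rangle$. There is no real obstacle here; the only subtlety is that external homogenization is not $S$-linear, so the two directions must be argued with slightly different tactics—$S$-bilinearity of $\alpha_z$ for one direction, and the explicit $z$-power expansion of $\alpha(r)^{*}$ for the other.
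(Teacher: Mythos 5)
Your proof is correct and follows essentially the same route as the paper's: first the pointwise identity $\alpha_z(r)=\alpha(r)^*$ for homogeneous $r$ (giving $\ig{P_z}\subseteq\ig{P^*}$), then the expansion $\alpha(r)^*=\sum_n z^{N-n}\alpha_z(r_n)$ for arbitrary $r$ (giving the reverse inclusion). The computations match the paper's in all essentials, differing only in bookkeeping and notation.
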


\begin{proof}
	The ideal $\ig{P_z}$ is generated by $P_z=\sum_{n\in\N}\alpha_z(R^n)$. If $r\in R^n$ is  nonzero, then $\alpha_z(r)=\alpha(r)^*$. Thus, $\ig{P_z}$ is included into  $\ig{P^*}$.

	Conversely, let $y=x^*$ denote an element in $P^*$. Since $x\in P$, there are $r_0, \dots r_d$, with $r_i\in R^i$ and $r_d\neq 0$, such that $x=\sum_{i=0}^d \sum_{j=0}^{i}\alpha_j({r_i})$. Since $\alpha_j({r_i})\in T^{i-j}$ and $\alpha_0(r_d)\in T^d$ is a nonzero element,
\[
	y=\sum_{i=0}^d\sum_{j=0}^{i}\alpha_j({r_{i}})z^{d+j-i}=\sum_{i=0}^d \alpha_z({r_i})z^{d-i}.
\]
	The computation  above shows us that  $y\in\ig{P_z}$. Thus the proposition is proved, as any generator of  $\ig{P^*}$ belongs to the ideal generated by $P_z$.
\end{proof}

	In the next sections we shall see that there exists a numerical  invariant, called  the homological complexity of $A$ and denoted by $c(A)$, such that $\an{}=0$, provided  that $\an{}^{\leq c(A)}=0$ and the homology in degree one of a suitable complex vanishes.  Thus, in the case when $c(A)$ is finite, for proving that $P$ is of $PBW$-type it is necessary and sufficient to show that a certain homology group vanishes and a finite number of the  Jacobi conditions $(\cJ_k)$ hold, namely those for $k\leq c(A)$. As a matter of fact, we shall be able to characterize in a homological way the vanishing of $\an{}$ for a more general class of connected graded $S$-rings $D$ and $A$.

\section{A homological criterion for regularity of a central element}\label{sec:3}

	Throughout this section we fix a connected strongly graded $S$-ring $\D=\oplus_{n\geq 0}\D^n$. We assume that  there exists a central element $z\in \D^1$ and we denote the quotient ring ${\D}/{zD}$ by $\A$. Note that $A$ is strongly graded as well. Since $zD\subseteq \D^{+}$, we have $\A^0=\D^0=S$. Let $\varfun{\pi}{\D}{\A}$ and $\an{}$  denote the canonical projection and the kernel of the multiplication map $z\cdot(-):\D\to \D$, respectively.

\begin{definition}\label{def:regular}
	The element $z$ will be called $n$-\textit{regular} if and only if the homogeneous components of $\an{}$ are trivial in degree less than or equal to $n$, i.e. if and only if $\an{}\bigcap D^ {\leq n}=0$, for all $k\leq n$. We shall say that $z$ is \textit{regular} if and only if it is $n$-regular for all $n$.
\end{definition}

	Roughly speaking, assuming that $S$ has a special type of projective resolution as a left $A$-module, in the main result of this section we will show that  the regularity of $z$ is equivalent to the fact that a certain sequence $(M_*,\delta_*)$, canonically associated to the above resolution,  defines a complex of $\D$-modules. At once that it exists, this complex yields a projective resolution of $S$ as a $D$-module.

	We will use several times the following isomorphism, known as the tensor-hom adjunction formula:
	\begin{equation}\label{eq:adjunction}
	\Hom_{S',S''}(X\ot Y,Z)\cong \Hom_{S,S''}(Y, \Hom_{S'}(X,Z)),
	\end{equation}
	where $X$ is an $(S',S)$-bimodule, $Y$ is an $(S,S'')$-bimodule and $Z$ is an $(S',S'')$-bimodule.
	
	As a direct consequence of the tensor-hom adjunction formula, we deduce that $M\ot N$ is a projective left $M$-module, provided that $M$ and $N$ are $(S,S)$-bimodules, which are  projective left $S$-modules.

\begin{fact}[A projective resolution of $S$.]\label{killing01}
	Let us construct a suitable projective resolution of the left $A$-module $S$. By definition we take  $d_0:A\to S$ to be the projection onto $0$-component. Let  $d_1: A \ot  A ^1\to  A $  denote  the $A$-linear map  which is induced by the multiplication of $ A $. Since $d_1$ is a graded map and $d_1^1$ is an isomorphism, we have $K_1=\ker(d_1)=\bigoplus_{i\geq 2}\ker(d_1^i)\subseteq A^+\otimes A^1$. Thus for any strongly graded $S$-ring $A$ we have an exact sequence of length $1$ as above, namely:
\begin{equation}\label{eq:resolution_1}
	A\ot A^1\xrightarrow{d_1} A \xrightarrow{d_0} S\rightarrow 0.
\end{equation}
	If $A^1$ is projective as a left $S$-module, then $A\ot A^1$ is a projective $A$-module. Hence we can complete the above sequence to a projective resolution:
\begin{equation}\label{eq:special_resolution}
	\cdots\rightarrow  A \ot V_n\xrightarrow{d_n} A \ot V_{n-1}\xrightarrow{d_{n-2}}\cdots  \xrightarrow{d_3} A \ot V_2\xrightarrow{d_2} A \ot A^1\xrightarrow{d_1} A  \xrightarrow{d_0} S\rightarrow 0,
\end{equation}
	where  the $S$-modules $V_k$ are projective $S$-modules. Such a resolution exists since, for a left $A$-module $M,$ there is a projective left $S$-module $V$ together with a graded $A$-linear epimorphism  $A\ot V\to M$.

	As  a matter of fact, we will show that  we can choose $V_n$ in  \eqref{eq:special_resolution} such that $ V_n^i=0$, for all $n\geq 2$ and $i=0,1$. Indeed, if $K_1=\ker(d_1)$ then $K_1^0\subseteq (A\ot A^1)^0=0$. On the other hand, $d_1^1:A^0\ot A^1\to A^1$ is an isomorphism as $d_1$ is induced by the multiplication of $A$. Thus $K_1^1=0$.
	
	For every $j\geq 2$ we pick  a surjective map  $\nu^j:V_2^j\to K_1^j$, where $V_2^j$ is a a projective $S$-module. Let $V_2=\oplus_{j\geq 2} V_2^j$. The induced  map $\nu:V_2\to K_1$ is surjective. Let  $d_2'$ denote the composition  of the arrows
\[
	A\ot V_2 \xrightarrow{A\ot \nu}	A\ot K_1 \rightarrow K_1.
\]
Hence $d_2'(a\otimes v)=a\nu(v)$. Clearly, $d_2'$ is surjective and and $V_2^i=0$, for $i=0,1$. Thus we can take $A\ot V_2$ as the term of degree $2$ of the resolution \eqref{eq:special_resolution}. The differential $d_2$ is the composition of the inclusion map $i_1:K_1\to A\ot A^1$ with $d_2'$. Let us assume  that we constructed the sequence \eqref{eq:special_resolution} up to degree $n$, such that $V_k^i=0$, for $2\leq k\leq n$ and $i=0,1$. Thus,  $(A\ot V_n)^i=0$, so $K_n=\ker(d_n)$ is trivial in degree $0$ and $1$. Now we can construct $V_{n+1}$ and $d_{n+1}$ as in the case $n=1$.

Throughout the remaining part of this section we assume that $A^1$ is projective as a left $S$-module, We also fix a resolution $(A\ot V_*,d_*)$ as in  \eqref{eq:special_resolution}, and which satisfies the condition that $V_n^i=0$, for all $n\geq 2$ and $i=0,1$.
\end{fact}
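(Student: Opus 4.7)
The plan is to build the resolution inductively, maintaining at every stage the property that the syzygy vanishes in degrees $0$ and $1$, which is precisely what allows the next module $V_{n+1}$ to be chosen concentrated in degrees $\geq 2$. The two key ingredients are the strongly graded hypothesis $A^1 A^{n-1} = A^n$, which drives surjectivity at the first step, and the tensor-hom adjunction \eqref{eq:adjunction}, which ensures that each $A \otimes V_k$ is projective as a left $A$-module whenever $V_k$ is projective as a left $S$-module.

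First I would make the beginning of the resolution explicit. The augmentation $d_0 : A \to S$ is projection onto the degree-$0$ component; its kernel is $A^+ = \bigoplus_{n \geq 1} A^n$. Define $d_1 : A \otimes A^1 \to A$ as the $A$-linear extension of multiplication, viewing $A^1$ as a graded $S$-bimodule concentrated in degree $1$, so that $(A \otimes A^1)^n = A^{n-1} \otimes A^1$ and $d_1$ is graded of degree $0$. Strong gradedness forces $d_1$ to surject onto $A^+$, making the sequence \eqref{eq:resolution_1} exact. Moreover $(A \otimes A^1)^0 = 0$ and $d_1^1 : S \otimes A^1 \to A^1$ is the canonical isomorphism, so $K_1 := \ker d_1 \subseteq \bigoplus_{i \geq 2}(A \otimes A^1)^i \subseteq A^+ \otimes A^1$, as required.

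For the inductive step, suppose the resolution has been extended through $A \otimes V_n \xrightarrow{d_n} A \otimes V_{n-1}$, with each $V_k$ a projective graded $S$-bimodule satisfying $V_k^0 = V_k^1 = 0$ for $2 \leq k \leq n$. Then $(A \otimes V_n)^i = \bigoplus_{j+\ell = i} A^j \otimes V_n^\ell = 0$ for $i = 0, 1$, so $K_n := \ker d_n$ also vanishes in those degrees. For each $j \geq 2$ choose a projective $S$-bimodule $V_{n+1}^j$ together with an $S$-bilinear surjection $\nu^j : V_{n+1}^j \to K_n^j$; set $V_{n+1} := \bigoplus_{j \geq 2} V_{n+1}^j$, which is projective over $S$ with $V_{n+1}^0 = V_{n+1}^1 = 0$ by construction. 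The $A$-linear graded map $a \otimes v \mapsto a \cdot \nu(v)$ sends $A \otimes V_{n+1}$ onto $K_n$ (already $1 \otimes v \mapsto \nu(v)$ exhausts $K_n$); composing with the inclusion $K_n \hookrightarrow A \otimes V_n$ produces $d_{n+1}$. Projectivity of $A \otimes V_{n+1}$ as a left $A$-module follows from \eqref{eq:adjunction}.

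The only delicate point is the inductive propagation of the low-degree vanishing condition $V_k^0 = V_k^1 = 0$; once one observes that this forces $K_k^0 = K_k^1 = 0$, the rest is a standard horseshoe-style construction, relying only on the fact that every $S$-bimodule admits a surjection from a projective one. I do not expect any substantial obstacle beyond the bookkeeping of degrees.
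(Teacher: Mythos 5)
Your proposal is correct and follows essentially the same construction as the paper: use strong gradedness to get exactness at $A$, observe that $d_1^1$ being an isomorphism and $(A\ot A^1)^0=0$ force $K_1^0=K_1^1=0$, then inductively cover each syzygy $K_n$ degree by degree (for $j\geq 2$) by projectives, which propagates the low-degree vanishing; projectivity of $A\ot V_k$ as a left $A$-module comes from the tensor-hom adjunction. The only minor over-specification is that you ask for $V_{n+1}^j$ to be a projective $S$-\emph{bimodule} and for $\nu^j$ to be $S$-bilinear, whereas the paper only requires projectivity as a left $S$-module and left $S$-linearity of $\nu^j$ (this is all that the adjunction needs, and the weaker hypothesis is the one actually carried through the rest of the paper); this does not affect correctness but is unnecessarily restrictive.
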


\begin{fact}[The normalized bar complex.] \label{fact:normbarcompl}
	Let $ A $ be a  graded connected $S$-ring and let $\cX$ be a graded left $ A $-module. Since we work with graded objects, we can regard $A$ as an augmented ring, whose augmentation ideal is $A^+$. By \cite[Exercise 8.6.4]{We}, the sequence $B_*(A,X)$ is exact. Recall that \textit{the bar resolution} is defined by $B_n(A,X)=A\ot (A^+)^{\ot n}\ot X$. The differential maps are defined by:
\[
	b_n(a_0\ot a_1\ot\cdots \ot a_{n}\ot x)=\sum_{i=0}^{n-1}(-1)^{i}a_0\ot  a_1\ot\cdots \ot a_ia_{i+1}\ot\cdots\ot a_{n}\ot x+(-1)^na_0\ot a_1\ot a_2\ot\cdots\ot a_{n} x,
\]
	for all $a_0\in A$, $a_1,\dots,a_n\in A^+$ and $x\in X$. In spite of its name, $B_*(A,M)$ is not a resolution of $X$, as $B_n(A,X)$ is not a projective $A$-module.

	Nevertheless, if $A$ and $X$ are flat as left $S$-modules, then we can use the bar resolution for computing $\tor_n^A(Y,X)$, for any right $S$-module $Y$, because it is easy to see that $B_n(A,X)$ are flat $A$-modules, so $B_*(A,X)$ is a flat resolution of $X$.

	Let $\Omega_\ast( A ,\cX)= S\ot_A B_*(A,X)$ be the  \textit{normalized bar complex}. In view of the foregoing remarks, we can use it to compute $\Tor_\ast^ A (S, X)$ provided that $A$ and $X$ are flat left $S$-modules. It is not difficult to prove that $\Omega_n( A ,\cX)= ( A ^+)^{\ot n}\ot X$ and $d_1$ is the restriction of the module structure map of $\cX$ to $A ^+\ot \cX $. For all $n>1$, the differential morphism $d_n:\Omega_n( A ,\cX)\to\Omega_{n-1}( A ,\cX)$ satisfies the relation:
\[
	d_n(a_1\ot\cdots \ot a_{n}\ot x)=\sum_{i=1}^{n-1} (-1)^{i-1} a_1\ot\cdots \ot a_ia_{i+1}\ot\cdots\ot a_{n}\ot x+(-1)^{n-1}a_1\ot a_2\ot\cdots\ot a_{n-1}\ot a_{n}x.
\]
	It is well-known that $\Tor_n^A(Y,X)$  is a graded abelian group, whose component of degree $m$ will be denoted by $\Tor_{n,m}^A(Y,X)$. Thus $\Tor_n^ A (S,X)\cong\oplus_{m=0}^\infty\Tor_{n,m}^ A (S,X)$. We will refer to $n$ and $m$  as being the\textit{ homological} degree and the \textit{internal} degree, respectively.

	The complex $\Omega_\ast( A , \cX)$ decomposes as  a direct sum  of subcomplexes $\Omega_\ast( A ,\cX)=\oplus_{m \geq 0}\,\Omega_\ast( A ,\cX,m)$, where  $\Omega_n(\A,\cX,m)$ is the $S$-submodule of $\Omega_n( A ,\cX)$ generated by all $a_1\ot\cdots \ot a_n\ot x$, with $x\in X^{m_{n+1}}$ and $a_i\in A^{m_i}$, satisfying the relation $\sum_{i=1}^{n+1} m_i=m$. In the case when $A$ and $X$ are flat left $S$-modules, the $n$-th homology group of  $\Omega_\ast( A ,\cX, m)$ coincides with $\Tor_{n,m}^ A (S,X)$. In particular, for  $n>m$, we have  $\Tor_{n,m}^ A (S,X)=0$, as there are no nontrivial $n$-chains in $\Omega_\ast( A ,\cX,m)$.

	We shall use the notation $\Omega_*(A)$ for $\Omega_*(A,S)$. Note that $\Omega_n(A)\cong (A^+)^{\ot n} $. Via this identification,  the formula of the differential $d_n$ becomes:
\[
	d_n(a_1\ot\cdots \ot a_{n})=\sum_{i=1}^{n-1} (-1)^{i-1} a_1\ot\cdots \ot a_ia_{i+1}\ot\cdots\ot a_{n}.
\]
\end{fact}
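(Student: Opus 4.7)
The plan is to deduce the normalized bar complex $\Omega_*(A,X)$ from the ordinary bar resolution $B_*(A,X)$ by applying $S\ot_A(-)$ and tracking the augmentation $\epsilon:A\to S$ whose kernel is $A^+$. First, since each $B_n(A,X)=A\ot(A^+)^{\ot n}\ot X$ is free as a left $A$-module on $(A^+)^{\ot n}\ot X$, and $S$ is a left $A$-module via $\epsilon$, tensoring on the left collapses the leading $A$-factor to $S$ and yields $\Omega_n(A,X)\cong(A^+)^{\ot n}\ot X$. Explicitly, $1\ot_A(a_0\ot a_1\ot\cdots\ot a_n\ot x)\mapsto \epsilon(a_0)\cdot(a_1\ot\cdots\ot a_n\ot x)$; in particular elements with $a_0\in A^+$ already die, which is precisely what the ``normalization'' encodes.

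For the differential $d_n=S\ot_A b_n$, I would substitute this identification into the formula for $b_n$ and observe that the $i=0$ summand places $a_0a_1$ as the leading factor in $B_{n-1}$, so after applying $\epsilon$ it contributes $\epsilon(a_0)\epsilon(a_1)$; since $a_1\in A^+$, this term vanishes. Each remaining term with $1\le i\le n-1$ keeps $a_0$ as leading factor (now absorbed as a scalar), so after the reindexing $i\mapsto i-1$ in the sign it produces the main sum in the stated formula, while the $(-1)^n$ boundary term similarly reindexes to $(-1)^{n-1}a_1\ot\cdots\ot a_{n-1}\ot a_nx$. For $n=1$ the same procedure shows that $d_1$ is exactly the restriction of the action map $A^+\ot X\to X$.

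To justify that $\Omega_*(A,X)$ computes $\Tor_*^A(S,X)$ when $A$ and $X$ are flat left $S$-modules, I would verify that $B_*(A,X)$ is a flat resolution. Exactness follows from the cited exercise of Weibel via the standard contracting homotopy. Flatness of each $B_n(A,X)$ over $A$ reduces, via the tensor-hom adjunction \eqref{eq:adjunction}, to flatness of $(A^+)^{\ot n}\ot X$ over $S$, which is standard because $A^+$ is an $S$-bimodule direct summand of $A$ (via $A=S\oplus A^+$), and tensor products of flat $S$-modules are flat. Hence $\Tor_n^A(S,X)=H_n(\Omega_*(A,X))$.

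Finally, the splitting $\Omega_*(A,X)=\bigoplus_m\Omega_*(A,X,m)$ by total internal degree is preserved by the differential, since the product $a_ia_{i+1}$ has degree $m_i+m_{i+1}$ and the $A$-action on $X$ respects the grading; additivity of homology yields $\Tor_{n,m}^A(S,X)=H_n(\Omega_*(A,X,m))$. The vanishing $\Omega_n(A,X,m)=0$ for $n>m$ is immediate: each $a_i\in A^+$ has internal degree $\geq 1$, forcing $m=\sum_i m_i\geq n$. The only real obstacle in all of this is the sign and index bookkeeping at the normalization step; once one notices that the $i=0$ boundary term is annihilated because $\epsilon(a_1)=0$, everything else is mechanical.
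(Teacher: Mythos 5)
Your proposal is correct and follows essentially the same standard route the paper sketches: apply $S\ot_A(-)$ to the bar resolution, collapse the leading $A$-factor through the augmentation so that the $i=0$ face dies because $a_1\in A^+$, check $A$-flatness of $B_n(A,X)$, and split by total internal degree to get $\Tor_{n,m}^A(S,X)=0$ for $n>m$. Two cosmetic remarks: the literal induced map $S\ot_A b_n$ equals the displayed $d_n$ only up to a global sign (no reindexing $i\mapsto i-1$ occurs -- the surviving faces keep their signs $(-1)^i$), which changes nothing about the complex or its homology; and the reduction of $A$-flatness of $A\ot (A^+)^{\ot n}\ot X$ to left $S$-flatness of $(A^+)^{\ot n}\ot X$ is via the cancellation isomorphism $N\ot_A(A\ot M)\cong N\ot M$ rather than the tensor--hom adjunction, which is the tool for projectivity.
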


\begin{fact}[The morphisms $_\nu u$ and $^\nu v$.]\label{ssec:u-v}
	Recall that our next goal is to show that the vanishing of $\an{}$ is equivalent to the fact that a certain sequence $(M_*,\delta_*)$ is a complex in the category of left $D$-modules. In order to define it, we need the constructions below.
	
	Let $ A $ denote a connected graded $S$-ring. We assume that $X$ and $Y$ are graded left  $S$-modules. If $u: A \ot X\to A \ot Y$ is a graded $ A $-linear  map, then  we define  $\wh{u}:X\to  A \ot Y$ by $\wh{u}(x)=u(1\ot x)$. By construction $\wh{u}$ is graded $S$-linear and $u=\left(m_{ A }\otimes Y\right) \left( A  \otimes \wh{u}\right)$.
	Clearly, if $u': A \ot X\to  A \ot Y$ is another graded $ A $-linear map such that $\wh{u}=\wh{u'}$, then $u=u'$. Moreover, if $u_1: A \ot \cX\to  A \ot \cY$ and  $u_2: A \ot \cY\to A \ot \cZ$ are graded $ A $-linear maps, then the following relation holds true:
	\begin{equation}\label{eq:f_hat}
	\wh{u_2u_1}=(m_ A \ot Z)( A \ot\wh{u}_2)\wh{u}_1.
	\end{equation}
	
	Let $\nu: A \to \Gamma$ denote a surjective morphism of connected graded $S$-rings. Obviously, any graded $ A $-linear map $u: A \ot \cX\to  A \ot \cY$ induces a unique $\Gamma$-linear morphism $\fd{u}{\nu}:\Gamma\ot \cX\to\Gamma\ot \cY$ such that the first diagram from Figure~\ref{fig:morphisms} is commutative. Note that, if $u'=(\nu\ot Y)\wh{u}$, then $_\nu u(t\ot x)=t\cdot u'(x)$.
	
	Furthermore, let  $v:\Gamma\ot \cX\to\Gamma\ot \cY$ be a graded $\Gamma$-linear map, where $X$ is now a projective $S$-module. Since $ A \ot X$ is a projective $ A $-module, we can choose a morphism $\fu{v}{\nu}: A \ot \cX\to  A \ot \cY$ which makes commutative the second square in Figure~\ref{fig:morphisms}.
	
	If $\nu$ has a section $\mu$, which is a morphism of $S$-graded rings, then we can choose $\fu{v}{\nu}$ in a canonical way. Note that in this case we do not need $X$ to be  projective. Indeed,   we may define $v':\cX\to  A \ot \cY$ by $v'=(\mu\ot Y)\wh{v}$, where $\wh{v}$ is constructed as above, replacing the ring $ A $ and the map $u$ with $\Gamma$ and $v$, respectively. By definition, $v'$ is a morphism of graded $S$-modules.
	We can now extend  in a unique way $v'$ to a graded $ A $-linear map $\fu{v}{\nu}: A \ot \cX\to  A \ot \cY$. Clearly, $\fu{v}{\nu}$ makes  the second square in Figure~\ref{fig:morphisms} commutative. Note that $\fu{v}{\nu}(a\ot x)=a\cdot v'( x)$, so $\fu{v}{\nu}$ only depends on $\nu$ and $v$.
\begin{figure}[ht] \[\xymatrix{
	A \ot \cX\ar[r]^u\ar[d]_{\nu\ot \cX} & A \ot \cY\ar[d]^{\nu\ot \cY}\\
	\Gamma\ot \cX\ar[r]_{\fd{u}{\nu}}& \Gamma\ot \cY} \qquad \xymatrix{ A \ot \cX\ar[r]^{\fu{v}{\nu}}\ar[d]_{\nu\ot \cX} & A \ot \cY\ar[d]^{\nu\ot \cY}\\
	\Gamma\ot \cX\ar[r]_{v}& \Gamma \ot \cY}
\]
	\caption{The morphisms $_\nu u$ and $^\nu v$.}\label{fig:morphisms}
\end{figure}
\end{fact}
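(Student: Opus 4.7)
The plan is to verify the several assertions bundled into this construction: (a) the bijection between graded $A$-linear maps $u\colon A\otimes X\to A\otimes Y$ and graded $S$-linear maps $\widehat{u}\colon X\to A\otimes Y$ together with the composition formula~\eqref{eq:f_hat}; (b) the existence and explicit description of the descent ${}_\nu u\colon \Gamma\otimes X\to \Gamma\otimes Y$; and (c) the existence of the lift ${}^\nu v\colon A\otimes X\to A\otimes Y$, both in the projective-$X$ case and, canonically, when $\nu$ admits a graded $S$-ring section.

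For (a), the key input is the tensor-hom adjunction~\eqref{eq:adjunction} applied with $S'=A$, $S''=S$ and target $A\otimes Y$: it provides a bijection between graded $A$-linear maps $A\otimes X\to A\otimes Y$ and graded $S$-linear maps $X\to A\otimes Y$; under this bijection $u\leftrightarrow \widehat{u}$, with inverse sending $f$ to $(m_A\otimes Y)(A\otimes f)$. This immediately yields both the reconstruction formula $u=(m_A\otimes Y)(A\otimes \widehat{u})$ and the uniqueness claim $\widehat{u}=\widehat{u'}\Rightarrow u=u'$. The composition formula~\eqref{eq:f_hat} then follows by direct computation: $\widehat{u_2 u_1}(x)=u_2(\widehat{u}_1(x))$, and writing $\widehat{u}_1(x)=\sum_i a_i\otimes y_i$, the $A$-linearity of $u_2$ gives $u_2\bigl(\sum_i a_i\otimes y_i\bigr)=\sum_i a_i \widehat{u}_2(y_i)=(m_A\otimes Z)(A\otimes \widehat{u}_2)\widehat{u}_1(x)$.

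For (b), set $u':=(\nu\otimes Y)\widehat{u}\colon X\to \Gamma\otimes Y$ and define ${}_\nu u$ via the analog of (a) applied to $\Gamma$, namely $t\otimes x\mapsto t\cdot u'(x)$. Commutativity of the left diagram in Figure~\ref{fig:morphisms} is checked on pure tensors: both $(\nu\otimes Y)u(a\otimes x)$ and ${}_\nu u(\nu(a)\otimes x)$ evaluate to $\nu(a)\cdot u'(x)$. For (c), in the projective case $A\otimes X$ is a projective left $A$-module (by~\eqref{eq:adjunction} applied with $S'=S''=A$, which yields $\Hom_A(A\otimes X,-)\cong \Hom_S(X,-)$, an exact functor since $X$ is projective over $S$), so the graded surjection $\nu\otimes Y$ admits a graded lift of $v\circ(\nu\otimes X)$. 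When $\nu$ has a graded $S$-ring section $\mu$, projectivity can be bypassed: set $v':=(\mu\otimes Y)\widehat{v}$ and define ${}^\nu v$ as the $A$-linear extension $a\otimes x\mapsto a\cdot v'(x)$. Commutativity then reduces to the identity $(\nu\otimes Y)v'=(\nu\mu\otimes Y)\widehat{v}=\widehat{v}$, from which ${}^\nu v$ is seen to depend only on $\nu$ and $v$.

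The only delicate point is that the lift in (c) under the bare projectivity hypothesis must be produced in the graded category. This is handled by applying the lifting property internal-degree by internal-degree: the map $\nu\otimes Y$ is surjective in each internal degree because $\nu$ is graded, the projectivity of $A\otimes X$ passes to its homogeneous components (as direct summands), and all structure maps in sight are homogeneous of degree zero. No single step constitutes a serious obstacle; the main care required is simply to keep straight which hats, adjunctions, and module structures are in play at each stage.
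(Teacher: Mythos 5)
Your verification is correct and follows essentially the same route as the paper, which in this Fact is really just a sequence of constructions together with the brief observations needed to justify them; like the paper, you invoke the tensor-hom adjunction~\eqref{eq:adjunction} for the bijection $u\leftrightarrow\wh u$ and check the composition and descent formulas by the evident direct computations. One small slip worth correcting: the argument you sketch for producing a \emph{graded} lift in case (c), ``applying the lifting property internal-degree by internal-degree,'' does not quite make sense, because an $A$-linear lift cannot be assembled one internal degree at a time---the module structure of $A$ couples degrees. The clean statement is that if $X$ is projective in the category of graded left $S$-modules, then $A\ot X$ is a projective object in the category of graded left $A$-modules (with degree-zero morphisms), and $\nu\ot Y$ is an epimorphism there, so the lifting property applies wholesale and yields a graded $A$-linear $\fu{v}{\nu}$ directly; this is what the paper (implicitly) relies on, and it removes the need for any degree-by-degree bookkeeping.
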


\begin{fact}[The maps $\delta_n:M_n\to M_{n-1}$.]\label{ssec:3.5}
	We are now turning back to the investigation of regularity of $z\in \D$. Recall that in this section we assumed that there exists a projective resolution as in \eqref{eq:special_resolution}.  Thus $V_0=S$, $V_1=\A^1$ and $d_1$ is induced by the multiplication in $A$. Moreover, every $V_n$ is a projective graded left $S$-module and $d_i$ is a morphism of graded $A$-modules.

	For $n=0$ we let $\partial_0:D\ot V_0\cong D\to S$ to be the canonical projection. For $n>0$, since $V_n$ is projective, there is a morphism of graded $D$-modules $\partial_n:D\ot V_n\to D\ot V_{n-1}$ as in \S\ref{ssec:u-v}, such that:
\[
 	(\pi\ot V_{n-1}) \partial_n=d_n (\pi\ot V_n).
\]
	Let us note that there exists a left $S$-linear section $\sigma^1:A^1\to D^1$ of $\pi^1$, as $A^1$ is a projective left $S$-module. It is easy to see that  $\partial_1$ can be chosen such that $\partial_1(x\ot a)=x\sigma^1(a)$, for all $x\in D$ and $a\in A^1$, as the latter map satisfies the above relation.

 	For easiness of notation, let $N_n:=\D\ot V_n$ and $M_n:=N_n\oplus N_{n-1}(-1)$. Thus $\partial_n:N_n\to N_{n-1}$. By construction, we have  $ (\pi\ot V_{n-2}) \partial_{n-1} \partial_n=0$. Therefore the image of  $\partial_{n-1}  \partial_n$ is included into $zN_{n-2}$, which coincides with the kernel of $\pi\otimes V_{n-2}$, as $V_{n-2}$ is flat. Thus $\partial_{n-1}  \partial_n$ can be regarded as a morphism from $N_n$ to $zN_{n-2}$. On the other hand, the multiplication by the central element $(-1)^{n-1}z$ defines a surjective morphism of graded $D$-modules from $N_{n-2}\left(-1\right)$ to $zN_{n-2}$. Hence, for every $n\geq 2$, we have the following diagram of graded morphisms:
\begin{equation*}\xymatrix @C=25pt @R=20pt{
 	& & N_n \ar[d]^-{\partial_{n-1}  \partial_n} & \\
	N_{n-2}\left(-1\right) \ar[rr]^-{(-1)^{n-1} z\cdot} & & zN_{n-2} \ar[r] & 0
}
\end{equation*}
 	Note that $\D\ot V_n$ is projective as an object in the category of graded $D$-bimodules, since $V_n$ is projective. Thus, there is a graded $D$-linear map $\varfun{f_n}{N_n}{N_{n-2}\left(-1\right)}$ such that:
\begin{equation}\label{eq:partial_n}
	\partial_{n-1}  \partial_n=\left(-1\right)^{n-1}z\cdot f_n.
\end{equation}

	We are now ready to define our candidates for the differential maps of a complex of left $D$-modules $(M_\ast,\delta_\ast)$. In the case when $S$ is a field these maps  were constructed in  \cite[\S3]{CS}.

	We first take $\delta_0:\D\to S$ to be the canonical projection. Since $V_0=S$ and $V_1=\A^1$, for all $x\in N_1$ and $y\in N_0(-1)$ we can define $\delta_1$  by the formula:
\begin{equation}\label{eq:delta1}
	\delta_1(x,y)=\partial_1(x)+zy.
\end{equation}
	On the other hand, if $n>1$ and $(x,y)\in M_n$, then we set:
\[
 	\delta_n(x,y)=\big(\partial_n(x)+(-1)^{n-1}z\cdot y, \partial_{n-1}(y)+f_n(x)\big).
\]
 	Note that the map $\delta_n$ is a morphism of graded $D$-modules, for all $n\in \N$. In general the sequence $\big\{ \delta_n\big\}_{n\in\N}$ does not define a complex. Nevertheless,
\begin{equation}\label{eq:M_2}
 	M_2\xrightarrow{\delta_2} M_1\xrightarrow{\delta_1} M_0\xrightarrow{\delta_0}S\xrightarrow{}0
\end{equation}
	is always a complex. Indeed the image of $\delta_1$ is included into $\oplus_{n>0}M_0^n$, as $N_1$ and $N_0(-1)$ do not contain elements of degree zero and $\delta_1$ is a graded map. Thus $\delta_0 \delta_1=0$, since $\delta_0$ is the projection onto $\D^0$. Furthermore, for $(x,y)\in M_2$ we have:
\[
	(\delta_1 \delta_2)(x,y)=\delta_1\big(\partial_2(x)-zy, \partial_1(y)+f_2(x)\big)=(\partial_1 \partial_2)(x)-z\partial_1(y)+z\partial_1(y)+zf_2(x)=0,
\]
	where in the above equations we used the fact that $\delta_1$ is $\D$-linear and the relation $\partial_1 \partial_2+zf_2=0$, which holds by the construction of $f_2$.
\end{fact}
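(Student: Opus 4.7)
The plan is to verify directly the two composition identities $\delta_0\circ\delta_1=0$ and $\delta_1\circ\delta_2=0$, using only the explicit formulas for the $\delta_i$ given in \S\ref{ssec:3.5} and the defining relation \eqref{eq:partial_n} for $f_2$. Nothing more subtle should be needed, since the claim is made unconditionally (no regularity hypothesis on $z$), and the sequence is truncated at $M_2$ precisely so as to stay inside the range where the identities hold for free.

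For $\delta_0\circ\delta_1=0$, I would argue by degree. The map $\delta_1\colon M_1=N_1\oplus N_0(-1)\to M_0=D$ is a graded morphism of graded $D$-modules. Both summands are concentrated in strictly positive degrees: $V_1=A^1$ has trivial degree-zero part, so $(D\otimes V_1)^0=0$, and $N_0(-1)^0=D^{-1}=0$. Consequently $\mathrm{Im}(\delta_1)\subseteq\bigoplus_{n>0}D^n$, which sits inside the kernel of the augmentation $\delta_0\colon D\twoheadrightarrow S=D^0$.

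For $\delta_1\circ\delta_2=0$, I would take an arbitrary $(x,y)\in M_2=N_2\oplus N_1(-1)$ and compute $\delta_2(x,y)=(\partial_2(x)-zy,\,\partial_1(y)+f_2(x))$, noting that the sign $(-1)^{n-1}$ at $n=2$ is $-1$. Applying $\delta_1$ via \eqref{eq:delta1} and using that $\partial_1$ is $D$-linear (so it commutes with multiplication by the central element $z$), the cross terms $\pm z\partial_1(y)$ cancel and what remains is $\partial_1\partial_2(x)+zf_2(x)$. Invoking \eqref{eq:partial_n} at $n=2$, namely $\partial_1\partial_2=-z\cdot f_2$, the two surviving summands annihilate one another. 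The only point one must watch is the sign bookkeeping; no genuine obstacle arises, and the analogous identities at higher levels $\delta_{n-1}\circ\delta_n$ for $n\geq 3$ will generally fail without the regularity assumption that the forthcoming Theorem~\ref{th:mainth} is designed to impose.
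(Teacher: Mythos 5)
Your proposal is correct and follows essentially the same route as the paper: the vanishing of $\delta_0\circ\delta_1$ by the degree argument (both summands of $M_1$ live in strictly positive degrees), and the vanishing of $\delta_1\circ\delta_2$ by direct computation using the $D$-linearity of $\partial_1$, the centrality of $z$, and the relation $\partial_1\partial_2=-z\cdot f_2$ from \eqref{eq:partial_n} at $n=2$. The sign bookkeeping is handled exactly as in the text, so nothing further is needed.
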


	A necessary and sufficient condition for $(M_\ast, \delta_\ast)$ being a complex is given in the following result.

\begin{lemma} \label{lemma:deltacomplex}
	We keep the notations and assumptions from \S\ref{ssec:3.5}. For $n\geq 2$ we have $\delta_n  \delta_{n+1}=0$ if and only if
\begin{equation}\label{eq:f_n}
	\partial_{n-1}  f_{n+1}+f_n \partial_{n+1}=0.
\end{equation}
\end{lemma}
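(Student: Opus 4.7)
The plan is to compute $\delta_n \circ \delta_{n+1}$ directly from the explicit definition of $\delta_k$ given in \S\ref{ssec:3.5}, and to show that the ``diagonal'' component vanishes automatically (thanks to \eqref{eq:partial_n}) while the ``off-diagonal'' component equals exactly $\partial_{n-1} f_{n+1} + f_n \partial_{n+1}$. Fix $n\geq 2$ and take $(x,y)\in M_{n+1} = N_{n+1}\oplus N_n(-1)$. By definition
\[
\delta_{n+1}(x,y)=\bigl(\partial_{n+1}(x)+(-1)^n z\cdot y,\; \partial_n(y)+f_{n+1}(x)\bigr),
\]
and applying $\delta_n$ to this element I obtain a pair in $N_{n-1}\oplus N_{n-2}(-1)$ whose first component reads
\[
\partial_n\bigl(\partial_{n+1}(x)+(-1)^n z y\bigr)+(-1)^{n-1} z\cdot\bigl(\partial_n(y)+f_{n+1}(x)\bigr).
\]

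For the first component, I expand and observe that the two middle terms $(-1)^n z\,\partial_n(y)$ and $(-1)^{n-1} z\,\partial_n(y)$ cancel (using that $\partial_n$ is $D$-linear and $z$ is central), while the remaining $\partial_n\partial_{n+1}(x)+(-1)^{n-1} z\, f_{n+1}(x)$ vanishes because $\partial_n \partial_{n+1}=(-1)^n z\cdot f_{n+1}$ by \eqref{eq:partial_n} applied with index $n+1$. Hence the first component of $\delta_n\delta_{n+1}(x,y)$ is always zero, with no extra hypothesis needed.

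For the second component I get
\[
\partial_{n-1}\bigl(\partial_n(y)+f_{n+1}(x)\bigr)+f_n\bigl(\partial_{n+1}(x)+(-1)^n z\cdot y\bigr).
\]
Using \eqref{eq:partial_n} (for index $n$) gives $\partial_{n-1}\partial_n(y)=(-1)^{n-1} z\, f_n(y)$, and $D$-linearity plus centrality of $z$ yields $f_n\bigl((-1)^n z\cdot y\bigr)=(-1)^n z\,f_n(y)$. These two contributions add to zero, and what remains is precisely
\[
\bigl(\partial_{n-1}\circ f_{n+1}+f_n\circ\partial_{n+1}\bigr)(x).
\]
Therefore $\delta_n \circ \delta_{n+1}(x,y)=\bigl(0,\,(\partial_{n-1} f_{n+1}+f_n \partial_{n+1})(x)\bigr)$, and so $\delta_n\circ\delta_{n+1}=0$ holds if and only if $\partial_{n-1} f_{n+1}+f_n \partial_{n+1}=0$, which is exactly \eqref{eq:f_n}.

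There is essentially no obstacle here: once one writes the formula for $\delta_n$ down carefully and tracks the signs, both the cancellation in the first component and the extraction of \eqref{eq:f_n} from the second component follow from the single identity \eqref{eq:partial_n} together with the centrality of $z$. The only mild point worth checking is that $f_n$, being a graded $D$-linear map, commutes with the left action of the central element $z$, which is immediate.
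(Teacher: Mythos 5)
Your proof is correct and is essentially identical to the paper's argument: a direct expansion of $\delta_n\delta_{n+1}(x,y)$, with the first component vanishing and the cross terms in the second component cancelling via \eqref{eq:partial_n} (applied with indices $n+1$ and $n$) and the $D$-linearity of the maps, leaving exactly $(0,(\partial_{n-1}f_{n+1}+f_n\partial_{n+1})(x))$. The sign bookkeeping matches the paper's computation, so there is nothing to add.
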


\begin{proof}
	For $(x,y)\in M_{n+1}$, let $(x',y')=(\delta_n  \delta_{n+1})(x,y)$. Using \eqref{eq:partial_n}, by direct computation we get:
\begin{align*}
 	x'&=\partial_n(\partial_{n+1}(x))+(-1)^nz\partial_n(y)+(-1)^{n-1}z\partial_n(y)+(-1)^{n-1}zf_{n+1}(x)=0,\\
 	y'&=\partial_{n-1}(\partial_n(y))+\partial_{n-1}(f_{n+1}(x))+f_n(\partial_{n+1}(x))+(-1)^nzf_n(y)=\partial_{n-1}(f_{n+1}(x))+f_n(\partial_{n+1}(x)).
\end{align*}
	For $n\geq 2$, we conclude that  $\delta_n  \delta_{n+1}=0$ if and only if the relation \eqref{eq:f_n} holds.
\end{proof}

	For showing that $\an{}=0$ if and only if $(M_\ast,\delta_\ast)$ is a complex, we also need the following.

\begin{lemma}\label{lemma:annz}
	For any flat $S$-module $V$ we have  $\mathrm{ann}_{\D\otimes V}(z)=\an{}\ot V$. In particular, if $\an{}=0$, then $\mathrm{ann}_{\D\otimes V}(z)=0$.
\end{lemma}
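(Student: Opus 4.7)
The plan is to deduce the lemma as a direct application of flatness of $V$ to a short left-exact sequence built from the definition of $\mathrm{ann}(z)$. First I would observe that, since $z\in D^1$ is central, the multiplication map $z\cdot(-):D\to D$ is in fact a morphism of $(D,S)$-bimodules; in particular it is right $S$-linear, and its kernel (in the category of right $S$-modules) is exactly $\mathrm{ann}(z)$ as defined in the paper.

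Thus we have an exact sequence of right $S$-modules
\begin{equation*}
0\longrightarrow \mathrm{ann}(z)\lhook\joinrel\longrightarrow D\xrightarrow{\,z\cdot\,} D.
\end{equation*}
Applying the functor $(-)\otimes_S V$ to this sequence and using that $V$ is a flat left $S$-module yields exactness of
\begin{equation*}
0\longrightarrow \mathrm{ann}(z)\otimes V\lhook\joinrel\longrightarrow D\otimes V\xrightarrow{(z\cdot)\otimes V} D\otimes V.
\end{equation*}

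The final step is to identify the map $(z\cdot)\otimes V$ with the multiplication by $z$ on $D\otimes V$. This is immediate from the definition of the left $D$-module structure on $D\otimes V$: for $a\otimes v\in D\otimes V$ we have $z\cdot(a\otimes v)=(za)\otimes v=((z\cdot)\otimes V)(a\otimes v)$. Hence the kernel of the left-multiplication by $z$ on $D\otimes V$ is precisely $\mathrm{ann}(z)\otimes V$, which proves the first assertion. The second assertion is then immediate, since $\mathrm{ann}(z)=0$ forces $\mathrm{ann}(z)\otimes V=0$.

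I do not anticipate any real obstacle: the whole argument reduces to the defining property of flatness, once one checks that $z\cdot(-)$ is right $S$-linear (which follows from centrality of $z$) and that the left $D$-action on $D\otimes V$ is the one induced by left multiplication on the first tensorand.
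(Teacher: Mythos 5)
Your proof is correct and follows essentially the same route as the paper: both arguments tensor the left-exact sequence $0\to\mathrm{ann}(z)\to D\xrightarrow{z\cdot}D$ with the flat module $V$ and identify the resulting injection with the kernel of multiplication by $z$ on $D\otimes V$. Your explicit check that $z\cdot(-)$ is right $S$-linear (so that tensoring over $S$ makes sense) is a worthwhile remark that the paper leaves implicit, but the substance of the argument is the same.
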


\begin{proof}
	We have seen that $\an{}$ is the kernel of the $D$-linear map from $D$ to $D(1)$, which is defined by the multiplication by $z$. Since $V$ is a flat $S$-module, we get an exact sequence:
\begin{equation*}
	0\xrightarrow{} \an{}\otimes V \xrightarrow{} \D\otimes V \xrightarrow{z\cdot}  (\D\otimes V)(1).
\end{equation*}
	Therefore, $\mathrm{ann}_{\D\otimes V}(z)=\an{}\otimes V$, so the lemma is proved.
\end{proof}

\begin{lemma}\label{lemma:M_complex}
	For any $n\geq 2$ the relation:
\begin{equation}\label{eq:times_z}
	z\cdot \left(\partial_{n-1}  f_{n+1}+f_n \partial_{n+1}\right)=0
\end{equation}
	holds. In particular, if $\an{}=0$ then $\left(M_\ast,\delta_\ast\right)$ is a complex.
\end{lemma}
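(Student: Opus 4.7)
The plan is to establish the identity $z \cdot (\partial_{n-1} f_{n+1} + f_n \partial_{n+1}) = 0$ by a direct manipulation exploiting the centrality of $z$, the $D$-linearity of the $\partial_\ast$, and the defining relation $\partial_{k-1} \partial_k = (-1)^{k-1} z f_k$ from \eqref{eq:partial_n}. Once this is done, the ``in particular'' clause follows from Lemma~\ref{lemma:annz} and Lemma~\ref{lemma:deltacomplex}.

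First I would compute $z\cdot\partial_{n-1}f_{n+1}$. Since $z$ is central in $D$ and $\partial_{n-1}$ is $D$-linear, multiplication by $z$ commutes with $\partial_{n-1}$, so
\[
z\cdot\partial_{n-1}f_{n+1}=\partial_{n-1}\bigl(z\cdot f_{n+1}\bigr).
\]
Applying \eqref{eq:partial_n} with index $n+1$ gives $z\cdot f_{n+1}=(-1)^{n}\,\partial_{n}\partial_{n+1}$, whence
\[
z\cdot\partial_{n-1}f_{n+1}=(-1)^{n}\,\partial_{n-1}\partial_{n}\partial_{n+1}.
\]
On the other hand, again by \eqref{eq:partial_n} applied at index $n$, we have $\partial_{n-1}\partial_n=(-1)^{n-1}\,z\cdot f_n$, so post-composing with $\partial_{n+1}$ yields
\[
(-1)^{n}\,\partial_{n-1}\partial_{n}\partial_{n+1}=(-1)^{n}(-1)^{n-1}\,z\cdot f_n\partial_{n+1}=-\,z\cdot f_n\partial_{n+1}.
\]
Combining the two displays gives exactly $z\cdot\partial_{n-1}f_{n+1}+z\cdot f_n\partial_{n+1}=0$, which is \eqref{eq:times_z}.

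For the second assertion, note that $\partial_{n-1}f_{n+1}+f_n\partial_{n+1}$ takes values in $D\otimes V_{n-2}$ (up to the internal shift, which is immaterial for the annihilator computation), and $V_{n-2}$ is a projective, hence flat, left $S$-module. Hence by Lemma~\ref{lemma:annz}, the hypothesis $\an{}=0$ forces $\mathrm{ann}_{D\otimes V_{n-2}}(z)=0$, and the vanishing of $z\cdot(\partial_{n-1}f_{n+1}+f_n\partial_{n+1})$ just obtained upgrades to $\partial_{n-1}f_{n+1}+f_n\partial_{n+1}=0$. Invoking Lemma~\ref{lemma:deltacomplex} we conclude $\delta_n\delta_{n+1}=0$ for every $n\geq 2$; together with the identities $\delta_0\delta_1=0$ and $\delta_1\delta_2=0$ already verified in \S\ref{ssec:3.5}, this shows $(M_\ast,\delta_\ast)$ is a complex.

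There is no real obstacle; the only thing to keep an eye on is bookkeeping with signs and shifts (and the fact that $V_{n-2}$ must indeed be flat, which is guaranteed by the choice of resolution \eqref{eq:special_resolution}).
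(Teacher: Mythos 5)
Your proof is correct and takes essentially the same approach as the paper: both compute $z\cdot(\partial_{n-1}f_{n+1}+f_n\partial_{n+1})$ by moving $z$ through the $D$-linear maps, applying \eqref{eq:partial_n} at indices $n$ and $n+1$ to produce $(-1)^n\partial_{n-1}\partial_n\partial_{n+1}$ and $(-1)^{n-1}\partial_{n-1}\partial_n\partial_{n+1}$, which cancel. The ``in particular'' clause is likewise handled identically via Lemma~\ref{lemma:annz} (with $N_{n-2}(-1)\cong N_{n-2}$ as ungraded $D$-modules) and Lemma~\ref{lemma:deltacomplex}.
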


\begin{proof}
	In view of relation \eqref{eq:partial_n} we have:
\begin{align*}
	z\cdot  \left(\partial_{n-1}  f_{n+1}+f_n \partial_{n+1}\right)& =\partial_{n-1}  \left(z\cdot f_{n+1}\right)+\left(z\cdot f_n\right) \partial_{n+1} \\
 	& =(-1)^{n}\partial_{n-1} \partial_n  \partial_{n+1}+(-1)^{n-1}\partial_{n-1} \partial_n  \partial_{n+1},
\end{align*}
	which proves the first claim. Moreover, the left-hand side of relation \eqref{eq:f_n} is a graded map from $N_{n+1}$ to $N_{n-2}(-1)$. The $\D$-modules $N_{n-2}(-1)$ and $N_{n-2}$ are isomorphic (but not as graded modules). Thus, to conclude the proof it is enough to remark that  $\mathrm{ann}_{N_{n-2}}(z)=\left\{m\in N_{n-2}\mid z\cdot m=0\right\}$ is trivial for all $n\geq 2$, in view of  Lemma \ref{lemma:annz}.
\end{proof}

\begin{fact}[The truncated sequence $^pM_\ast$.]\label{fact:truncseq}
	Even though $(M_\ast,\delta_\ast)$ is not in general a complex, for $p\geq 0$ we shall denote the truncated sequence of maps:
\begin{equation*}
	M_{p+1}\xrightarrow{\delta_{p+1}}  M_p  \xrightarrow{\delta_{p}}   M_{p-1}  \xrightarrow{\delta_{p-1}} \cdots  \xrightarrow{\delta_{1}} M_0 \xrightarrow{} 0
\end{equation*}
	by ${}^pM_{\ast}$. We have already seen in \S\ref{ssec:3.5} that $^1M_\ast$ is always a complex. Let us prove that $\h_0({^1M_\ast})=S$. Since  $^1M_\ast$ is a complex of graded $D$-modules, it can be written it as the direct sum of the complexes:
\begin{equation}\label{eq:M_1^k}
	M_1^k\xrightarrow{\delta_{1}^k}   M_0^k\xrightarrow{\delta_{0}^k}  S^k \xrightarrow{} 0.
\end{equation}
	We want to show that all of them are exact. Let us first consider the case $k>0$. Then $S^k=0$, $M_0^k=\D^k$ and $ M_1^k=N_1^k\oplus N_0^{k-1}=\left(\D^{k-1}\otimes \A^1\right)\oplus \D^{k-1}$. Hence the sequence \eqref{eq:M_1^k} coincides with:
\begin{equation*}
	\left(\D^{k-1}\otimes \A^1\right)\oplus \D^{k-1} \xrightarrow{\delta_{1}^k}   \D^k\xrightarrow{} 0 \xrightarrow{} 0.
\end{equation*}
	Thus in this case we have to check that $\delta_{1}^k$ is surjective. Recall that $\sigma^1$ denotes a graded $S$-linear section of $\pi^1$. By the definition of  $\delta_1$ and the construction of $\partial_1$ in \S\ref{ssec:3.5}, for $x,y\in D^{k-1}$ and $a\in \A^1$, we have:
\begin{equation*}
	\delta_{1}^k\left(x\otimes a,y\right)\stackrel{\eqref{eq:delta1}}{=}\partial_1\left(x\otimes a\right)+zy=x\sigma^1(a)+zy.
\end{equation*}
	Moreover, since $\sigma^1$ is section of $\pi^1$, we have  $\D^1=\sigma^1(\A^1)\oplus\ker(\pi^1) =\sigma^1(\A^1)\oplus zS$. Thus, for  $k>0$ we get $\D^k=\D^{k-1}\D^1= \D^{k-1}\sigma^1(\A^1)+ zD^{k-1}$. Thus  every element $x\in\D^k$ can be written as a sum $x=\sum_{i=0}^{r}x_i\sigma\left(a_i\right)+zy$, with $x_i\in \D^{k-1}$, $y\in \D^{k-1}$ and $a_i\in \A^1$. This means that $\delta_1^k$ is surjective.

	If $k=0$, then $M_1^0=0$, as $\A^1$ contains only elements of degree one. The sequence \eqref{eq:M_1^k} becomes:
\begin{equation*}
	0\xrightarrow{} M_0^0 \xrightarrow{\delta_0^0}   S\xrightarrow{} 0,
\end{equation*}
	which is clearly exact, as $\delta_0^0=\mathrm{id}_S$, being induced by the projection $\pi$.

	We set $\delta'_1(x,y)=\delta_1(x,y)$ and $\delta''_1(x,y)=0$. For  $n\geq 2$ and $(x,y)\in M_n$ we shall use the notation  $\delta_n(x,y)=(\delta'_n(x,y),\delta''_n(x,y))$, where $\delta'_n(x,y)=\partial_n(x)+(-1)^{n-1}z\cdot y$ and $\delta''_n(x,y)=\partial_{n-1}(y)+f_n(x)$.
\end{fact}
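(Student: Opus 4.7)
The plan is to prove the claim made within \S\ref{fact:truncseq}, namely that $\h_0({}^1M_\ast) = S$, by reducing to a degree-by-degree check in the internal grading. This amounts to showing that the sequence $M_1 \xrightarrow{\delta_1} M_0 \xrightarrow{\delta_0} S \to 0$ is exact. Since all the $M_n$ are graded $D$-modules and the $\delta_n$ are graded $D$-linear maps, the truncated complex $^1M_\ast$ decomposes as the direct sum over $k \geq 0$ of the subcomplexes
\begin{equation*}
    M_1^k \xrightarrow{\delta_1^k} M_0^k \xrightarrow{\delta_0^k} S^k \to 0,
\end{equation*}
so it suffices to prove exactness of each.

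For $k = 0$, I would unwind the definitions: $M_0 = N_0 = D$ and $M_1 = N_1 \oplus N_0(-1) = (D \otimes A^1) \oplus D(-1)$. Both summands of $M_1$ have trivial degree-zero component, since $A^1$ is concentrated in positive degree (so $(D \otimes A^1)^0 = 0$) and $D(-1)^0 = D^{-1} = 0$. Meanwhile $M_0^0 = D^0 = S$ and $\delta_0^0$ is the degree-zero restriction of $\pi$, namely $\id_S$. Thus the sequence reduces to $0 \to S \xrightarrow{\id} S \to 0$, which is trivially exact.

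For $k > 0$, the target $S^k$ vanishes, so exactness reduces to surjectivity of $\delta_1^k$ onto $D^k$. The key structural inputs are: (i) since $A^1$ is projective over $S$, the graded projection $\pi^1 : D^1 \to A^1$ admits an $S$-linear section $\sigma^1$, yielding a splitting $D^1 = \sigma^1(A^1) \oplus zS$; and (ii) $D$ is strongly graded, so $D^k = D^{k-1} \cdot D^1$. Combining these and using centrality of $z$, any element of $D^k$ can be written as $\sum_i x_i \sigma^1(a_i) + zy$ with $x_i, y \in D^{k-1}$ and $a_i \in A^1$. Since $\partial_1$ is defined so that $\partial_1(x \otimes a) = x\sigma^1(a)$ and $\delta_1(u, v) = \partial_1(u) + zv$, this element is exactly $\delta_1^k\bigl(\sum_i x_i \otimes a_i,\, y\bigr)$, giving the desired surjectivity.

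There is no substantive obstacle here: the argument is a direct unpacking of definitions once the splitting $D^1 = \sigma^1(A^1) \oplus zS$ and the strong gradation are in hand. The conceptual point worth emphasising is that the projectivity hypothesis on $A^1$ is precisely what produces the section $\sigma^1$, which is the essential ingredient that makes surjectivity work in every positive degree.
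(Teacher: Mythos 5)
Your argument is correct and follows essentially the same route as the paper: decompose ${}^1M_\ast$ by internal degree, treat $k=0$ by noting $M_1^0=0$ and $\delta_0^0=\id_S$, and for $k>0$ deduce surjectivity of $\delta_1^k$ from the splitting $D^1=\sigma^1(A^1)\oplus zS$ together with the strong gradation $D^k=D^{k-1}D^1$. No gaps; your explicit remark that projectivity of $A^1$ is what furnishes the section $\sigma^1$ matches the paper's use of that hypothesis.
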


\begin{lemma}\label{lemma:proj_resolution}
	Let $n\geq 1$ and let  $(x,y)\in M_n$.
\begin{enumerate}
 	\item If $\delta'_n(x,y)=0$, then there is $y'\in N_{n-1}(-1)$ such that $(x,y)-(0,y')$ is a boundary in $(M_\ast,\delta_\ast)$ and $z\cdot y'=0$. Moreover, if $\delta_n \delta_{n+1}=0$ then $y'$ can be chosen such that $\partial_{n-1}(y')=\delta''_n(x,y)$.
  	
  	\item If $\an{}=0$ then the sequence $\left({M_*},\delta_*\right)$ is an acyclic complex.
\end{enumerate}
\end{lemma}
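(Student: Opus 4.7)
For part (1), the idea is to exploit exactness of the resolution $(A\otimes V_*, d_*)$ modulo $z$, then correct the resulting lift by an element of $\im\delta_{n+1}$. Applying $\pi\otimes V_{n-1}$ to $\delta'_n(x,y)=0$ kills the $z$-summand and gives $d_n(\bar x) = 0$, where $\bar x := (\pi\otimes V_n)(x)$. Since the resolution is exact at homological degree $n\geq 1$, there exists $\bar u \in A\otimes V_{n+1}$ with $d_{n+1}(\bar u) = \bar x$. I will lift $\bar u$ to some $u \in N_{n+1}$ via the surjection $\pi\otimes V_{n+1}$, and observe that $x-\partial_{n+1}(u) \in \ker(\pi\otimes V_n) = zN_n$ by flatness of $V_n$. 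Hence I can write $x-\partial_{n+1}(u) = (-1)^n zw$ for a suitable $w \in N_n(-1)$ (the sign and the grading shift are chosen so that $w$ fits into the first slot of $\delta_{n+1}$'s second component). Setting $y' := y - \partial_n(w) - f_{n+1}(u)$, a direct expansion shows $\delta_{n+1}(u,w) = (x, y-y')$, so $(x,y)-(0,y')$ is a boundary as required.

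The key technical verification is $zy'=0$. I will expand $zy' = zy - z\partial_n(w) - zf_{n+1}(u)$ and substitute three relations: the rewriting $\partial_n(x) = (-1)^n zy$ of the hypothesis; the identity $\partial_n\partial_{n+1} = (-1)^n z f_{n+1}$ obtained from \eqref{eq:partial_n}; and the equation obtained by applying $\partial_n$ to $x-\partial_{n+1}(u) = (-1)^n zw$. The three contributions cancel. For the \emph{moreover} clause, the assumption $\delta_n\delta_{n+1}=0$ gives $\delta_n(x, y-y') = 0$, whence $\delta_n(x,y) = \delta_n(0, y')$. Because $zy'=0$ the right-hand side collapses to $(0, \partial_{n-1}(y'))$, yielding $\partial_{n-1}(y') = \delta''_n(x,y)$.

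For part (2), the ingredients assemble cleanly. Lemma \ref{lemma:M_complex} already ensures that $(M_*,\delta_*)$ is a complex under the hypothesis $\an{}=0$, and exactness at $M_0$ (together with surjectivity onto $S$) was established in \S\ref{fact:truncseq}. For $n\geq 1$ and $(x,y)\in\ker\delta_n$, the condition $\delta'_n(x,y)=0$ holds, so part (1) produces $y'\in N_{n-1}(-1)$ with $(x,y)-(0,y')\in\im\delta_{n+1}$ and $zy'=0$. Lemma \ref{lemma:annz} applied to $V_{n-1}$ gives $\mathrm{ann}_{N_{n-1}}(z) = \an{}\otimes V_{n-1} = 0$, which forces $y'=0$; thus $(x,y)\in\im\delta_{n+1}$. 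Notice that part (2) does not even need the \emph{moreover} clause: the vanishing of $\an{}$ wipes out $y'$ immediately.

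The main obstacle is the sign- and shift-bookkeeping inside part (1): producing $w$ in the correctly shifted module $N_n(-1)$ and orchestrating signs so that the three summands of $zy'$ cancel rests delicately on the defining relation $\partial_n\partial_{n+1} = (-1)^n zf_{n+1}$ and on the precise formulas for the components of $\delta_{n+1}$.
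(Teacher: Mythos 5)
Your proposal is correct and follows essentially the same route as the paper: lift a cycle through the exact $A$-resolution, pull back along $\pi_{n+1}$, correct by $\delta_{n+1}(u,w)$, and cancel $zy'$ using the hypothesis, the defining relation $\partial_n\partial_{n+1}=(-1)^n z f_{n+1}$, and the equation for $x-\partial_{n+1}(u)$. Part (2) is also handled identically (Lemma~\ref{lemma:M_complex} plus Lemma~\ref{lemma:annz} applied to $V_{n-1}$), and your observation that the moreover clause is not needed for (2) matches the paper, which mentions $\partial_{n-1}(y')=0$ but ultimately only uses $zy'=0$ to conclude.
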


\begin{proof}
	Let us prove the first part. By definition of $\delta'_n$ we have:
\begin{equation} \label{eq:rel01}
	\partial_n(x)=(-1)^{n}z\cdot y.
\end{equation}
	Let $\pi_i:=\pi\otimes V_i$. Recall that  $\pi_{i-1} \partial_i=d_i \pi_i$. By  \eqref{eq:rel01} we get $d_n\left(\pi_n\left(x\right)\right)=\pi_{n-1}\left(\partial_n\left(x\right)\right)=0$. Thus  $\pi_n(x)$ is an $n$-cycle in $(\A\ot V_\ast,d_\ast)$, which is an acyclic complex. It follows that there exists some $u\in \D\otimes V_{n+1}$ such that $\pi_n(x)=d_{n+1}\left(\pi_{n+1}(u)\right)=\pi_n\left(\partial_{n+1}(u)\right)$. Since $x-\partial_{n+1}(u)\in\ker(\pi_n)=zD\otimes V_n$,  there exists $v\in \D\otimes V_n$ such that:
\begin{equation}\label{eq:rel03}
	x=\partial_{n+1}(u)+(-1)^nz\cdot v.
\end{equation}
	By construction, the element $\left(u,v\right)$ belongs to $M_{n+1}$ and by the definition of $\delta_{n+1}$ we get:
\begin{equation*}
	(x,y)-\delta_{n+1}\left(u,v\right)=(0,y-f_{n+1}(u)-\partial_n(v)).
\end{equation*}
	Hence we may take $y':=y-f_{n+1}(u)-\partial_n(v)$. The relation $z\cdot y'=0$ follows by the computation:
\begin{equation*}
	z\cdot y' =z\cdot y-z\cdot f_{n+1}(u)-z\cdot \partial_n(v) = (-1)^n\partial_n\left(x-\partial_{n+1}(u)-(-1)^nz\cdot v\right)\stackrel{\eqref{eq:rel03}}{=}0,
\end{equation*}
	where for the second equality we used relations \eqref{eq:partial_n} and \eqref{eq:rel01}.

	Let us assume that $\delta_n \delta_{n+1}=0$. For $n=1$, the relation $\partial_0(y')=0=\delta_1''(x,y)$ obviously holds, as $\partial_0:D\to S$ is the projection and $\delta''_1=0$. For $n>1$ we have:
\begin{equation*}
	\left(0,\partial_{n-1}(y')\right)=\delta_n(0,y')=\delta_n(x,y)=\left(0,\delta''_n(x,y)\right).
\end{equation*}
	Let us prove the second part of the lemma.  By Lemma \ref{lemma:M_complex}, since $\an{}=0$, it follows that $(M_\ast,\delta_\ast)$ is a complex. By the first part of the lemma, every cycle of degree $n$ is homologous to $(0,y')$, for some $y'$ such that $z\cdot y'=0$ and $\partial_{n-1}(y')=0$. Hence $y'\in \mathrm{ann}_{\D\ot V_{n-1}(-1)}=\an{}\ot V_{n-1}(-1)=0$. Thus every $n$-cycle is a boundary, that is $\h_n(M_\ast)=0$.
\end{proof}

\begin{lemma}\label{lemma:vanishingH1}
	If the sequence ${^2M_*}$ is a complex and $H_1\left({^1M_*}\right)=0$ then $\an{}=0$.
\end{lemma}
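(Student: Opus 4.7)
\smallskip

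The plan is to show $\an{} = 0$ by strong induction on the internal degree. Fix $n \geq 0$ and assume that $\an{}^m = 0$ for all $m < n$ (the case $n = 0$ will fall out automatically, since $\an{}^{-1}$ is vacuously $0$). Pick any homogeneous element $w \in \an{}^n$; the goal is to force $w = 0$ by manufacturing it as the image under $\partial_1$ of an element of $\an{} \otimes A^1$ whose components live in a strictly lower degree.

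The first step is to produce a $1$-cycle out of $w$. Viewed as sitting in $N_0(-1)$, the element $w$ occupies degree $n+1$, so $(0, w) \in M_1^{n+1}$; and $\delta_1(0, w) = \partial_1(0) + z \cdot w = 0$ since $w \in \an{}$. Because ${}^1M_*$ is always a complex and the hypothesis $H_1({}^1M_*) = 0$ says it is exact at $M_1$, one can lift $(0, w)$ to a pair $(x, y) \in M_2^{n+1}$ with $\delta_2(x, y) = (0, w)$; equivalently $\delta_2'(x, y) = \partial_2(x) - z y = 0$ and $\delta_2''(x, y) = \partial_1(y) + f_2(x) = w$.

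Next I would invoke Lemma~\ref{lemma:proj_resolution}(1) at $n = 2$. The relation $\delta_2'(x, y) = 0$ is exactly its hypothesis, so it produces $y' \in N_1(-1)^{n+1}$ such that $(x, y) - (0, y')$ is a boundary and $z y' = 0$. The second assumption in the statement, namely that ${}^2M_*$ is a complex (i.e.\ $\delta_2 \delta_3 = 0$), is precisely what allows the addendum of that lemma to be applied, yielding the further identity $\partial_1(y') = \delta_2''(x, y) = w$.

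Finally, I would conclude by a degree count. By Lemma~\ref{lemma:annz}, the condition $z y' = 0$ places $y'$ in $\an{} \otimes A^1$; since $A^1$ is concentrated in internal degree $1$ and $y'$ has total degree $n$ in $N_1 = D \otimes A^1$, its summands must have $D$-part in $\an{}^{n-1}$. The inductive hypothesis gives $\an{}^{n-1} = 0$, hence $y' = 0$, and therefore $w = \partial_1(y') = 0$. This closes the induction and shows $\an{} = 0$. The whole argument is formally straightforward; the only delicate point — and the one where the two hypotheses of the statement are actually used — is ensuring that the lemma-output $y'$ can simultaneously be taken to annihilate $z$ \emph{and} to satisfy $\partial_1(y') = w$, which is exactly what the complex condition $\delta_2 \delta_3 = 0$ buys us.
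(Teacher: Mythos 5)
Your proof is correct and follows essentially the same route as the paper: induct on the internal degree, view $w\in\an{}^n$ as the $1$-cycle $(0,w)$, lift it through $M_2$ using $H_1({}^1M_*)=0$, and apply Lemma \ref{lemma:proj_resolution} (whose ``moreover'' clause is enabled exactly by $\delta_2\delta_3=0$) together with Lemma \ref{lemma:annz} and the degree count to land in $\an{}^{n-1}\otimes A^1=0$. The only implicit point, choosing the lift and $y'$ homogeneous, is handled in the paper by the gradedness of $\partial_1$ and is equally routine in your version.
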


\begin{proof}
	We shall prove by induction on $k$ that $\an{}^i=0$, for all $i<k$. If $k=0$, then this property is obviously true. Let us assume that $ \an{}^i=0$, for all $i<k$. We pick an element $y$ in $\an{}^k$. Then, by the definition of $\delta_1$, the couple  $(0,y)$ is a cycle of degree $1$. By hypothesis, there is $(u,v)\in M_2$ such that  $(0,y)=\delta_2(u,v)$. By Lemma \ref{lemma:proj_resolution}, since $\delta_2'(u,v)=0$, there is $y'\in\mathrm{ann}_{\D\otimes A^{1}}(z)$ such that $y=\partial_1(y')$. Because $\partial_1$ is graded, we can assume $y'\in\mathrm{ann}_{\D\otimes A^{1}}(z)^k=\an{}^{k-1}\otimes A^{1}$. Since  $\an{}^{k-1}=0$ we have $y'=0$. Thus $y=0$, so $\an{}^k=0$. In conclusion, we have $\an{}^i=0$, for all $i<k+1$.
\end{proof}

	We can now prove one of the main results of this section by applying the preceding two lemmas.

\begin{theorem}\label{th:mainth}
	Let $A:=D/zD$, where $\D$ is a strongly graded $S$-ring and $z\in \D^1$ is a central element. Suppose that $A^1$ is left $S$-projective.  The following assertions are equivalent.
\begin{enumerate}
	\item The element $z$ is regular.
	
	\item The sequence $\left(M_\ast,\delta_\ast\right)$ is a resolution of $S$ by projective left $D$-modules.
	
	\item The sequence $\left({}^2M_\ast,\delta_\ast\right)$  is a complex  and $H_1\left({}^1M_\ast\right)=0$.
\end{enumerate}
\end{theorem}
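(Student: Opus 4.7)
The plan is to extract the theorem from the three preceding lemmas together with a couple of bookkeeping observations, so that no new ideas are required.

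\textbf{(1)$\Rightarrow$(2).} Assume $\an{}=0$. By Lemma~\ref{lemma:M_complex} the sequence $(M_\ast,\delta_\ast)$ is a complex, and Lemma~\ref{lemma:proj_resolution}(2) then upgrades this to acyclicity in positive degrees. What remains is the exactness at $M_0$ together with $H_0(M_\ast)=S$. This is already recorded in \S\ref{fact:truncseq}, where it is shown directly that the tail $M_1\xrightarrow{\delta_1} M_0\xrightarrow{\delta_0}S\to 0$ of ${}^1M_\ast$ is exact; in particular $\delta_0$ is surjective and $\ker(\delta_0)=\mathrm{im}(\delta_1)$. Finally, each $M_n=(\D\otimes V_n)\oplus(\D\otimes V_{n-1})(-1)$ is a projective left $\D$-module: by the tensor-hom adjunction \eqref{eq:adjunction}, $\Hom_\D(\D\otimes V,-)\cong\Hom_S(V,-)$, and the right-hand functor is exact whenever $V$ is a projective left $S$-module, which holds here by the choice of the resolution~\eqref{eq:special_resolution} and by the standing projectivity hypothesis on $A^1$. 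Hence $(M_\ast,\delta_\ast)$ is a projective resolution of $S$.

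\textbf{(2)$\Rightarrow$(3).} Immediate. If $(M_\ast,\delta_\ast)$ is a complex then so is its truncation ${}^2M_\ast$. Moreover, since $H_1({}^1M_\ast)=\ker(\delta_1)/\mathrm{im}(\delta_2)$ depends only on $\delta_1$ and $\delta_2$, it coincides with $H_1(M_\ast)$, and the latter is zero by hypothesis.

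\textbf{(3)$\Rightarrow$(1).} This is exactly the content of Lemma~\ref{lemma:vanishingH1}: the hypothesis that ${}^2M_\ast$ is a complex (so that $\delta_2\delta_3=0$, and in particular Lemma~\ref{lemma:proj_resolution}(1) is applicable at degree $n=1$) together with $H_1({}^1M_\ast)=0$ forces $\an{}=0$, i.e., the $n$-regularity of $z$ for every $n\in\N$.

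There is no real obstacle to overcome; the proof is essentially a packaging of Lemmas~\ref{lemma:M_complex}, \ref{lemma:proj_resolution} and \ref{lemma:vanishingH1}. The only point where one must take a little care is in the passage (1)$\Rightarrow$(2), where acyclicity and projectivity must be invoked from two independent sources (Lemma~\ref{lemma:proj_resolution}(2) for the former; the tensor-hom adjunction for the latter), and in confirming that the $H_1$ appearing in (3) is literally the same abelian group as the degree-one homology of the full complex $(M_\ast,\delta_\ast)$ used in (2).
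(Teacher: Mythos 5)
Your proposal is correct and follows essentially the same route as the paper, whose proof is exactly the packaging of Lemma~\ref{lemma:M_complex}, Lemma~\ref{lemma:proj_resolution} and Lemma~\ref{lemma:vanishingH1}, with the exactness at $M_0$ coming from \S\ref{fact:truncseq} and projectivity of the $M_n$ from the tensor-hom adjunction, both of which you rightly make explicit. (Only a cosmetic slip: inside the proof of Lemma~\ref{lemma:vanishingH1} the first part of Lemma~\ref{lemma:proj_resolution} is applied at degree $n=2$, not $n=1$, but since you invoke that lemma as a whole this does not affect your argument.)
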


\begin{proof}
	The implication (1)$\Longrightarrow$(2) follows by Lemma \ref{lemma:proj_resolution}. The implication (2)$\Longrightarrow$(3) is trivial. Finally, using Lemma \ref{lemma:vanishingH1} we get the implication (3)$\Longrightarrow$(1).
\end{proof}

\begin{definition}\label{fact:complexity}
The \textit{complexity} of a graded left $S$-module  $V$ is the integer $c_V$ defined as follows. If $V=0$, then $c_V=-1$. Otherwise, one takes  $c_V=\sup\{n-1\mid V^n\neq 0\}$. We point out that the complexity will be mainly used for modules $V$ with $V^0=V^1=0$, such as $V=V_3$, see  \S\ref{killing01}.
\end{definition}

	Let us now turn back to the investigation of regularity of $z$. We assume that $c:=c_{V_3}$ is finite. Since $\D$ is strongly graded, for $n\geq c+1$, we have
\begin{equation}\label{eq:D}
	\left(\D\otimes V_3\right)^n=\D^{n-c-1}\left(\D\otimes V_3\right)^{c+1}.
\end{equation}
	Let $f:=f_2  \partial_3+\partial_1  f_3$. By definition, $f$ is a $\D$-linear map of degree $-1$. Let $f^k$ denote the restriction of $f$ to $\left(\D\otimes V_3\right)^k$, so $f^k:\left(\D\otimes V_3\right)^k\longrightarrow \D^{k-1}$.

	Let us assume in addition that $\an{}^{\leq c }=0$. By \eqref{eq:times_z},  it follows that $f^k=0$ for all $ k\leq c +1$.  Taking into account the equation \eqref{eq:D}, the submodule $(D\ot V_3)^{\geq c  +1}$ is generated by $(D\ot V_3)^{ c  +1}$.  Since $f$ is a morphism of graded $D$-modules and $f^{c  +1}=0$ it follows that $f^k=0$ for all $k\geq c  +1$. Thus, by Lemma \ref{lemma:deltacomplex},  it follows that $\left({^2M_*,\delta_*}\right)$ is a complex. Supposing also that $H_1\left({^1M_*}\right)=0$, by Theorem \ref{th:mainth} we deduce that $z$ is regular. Conversely, if $z$ is regular then it follows from Theorem \ref{th:mainth} that  $H_1\left({^1M_*}\right)=0$. Obviously, $z$ is $c \,$-regular.

	Notice that if the complexity of $V_3$ is not finite, $z$ is $c\,$-regular if and only if it is regular and in this case $H_1\left({^1M_*}\right)=0$ is automatically satisfied, in light of Theorem \ref{th:mainth}. Summarizing, we have the following.

\begin{theorem}\label{th:regandcompl}
	Let $A:=D/zD$, where $\D$ is a strongly graded $S$-ring and $z\in \D^1$ is a central element. Suppose that $A^1$ is projective as a left $S$-module. The element $z$ is regular in $D$ if and only if $H_1\left({^1M_*}\right)$ vanishes and $z$ is $c_{V_3}$-regular.
\end{theorem}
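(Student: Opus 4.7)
The strategy is to reduce this to the homological criterion of Theorem \ref{th:mainth}, which already characterizes regularity of $z$ via the pair of conditions ``$({}^2M_*,\delta_*)$ is a complex'' and ``$H_1({}^1M_*)=0$''. The whole game is therefore to show that, under the standing hypothesis that $A^1$ is left $S$-projective, the first of these two conditions is equivalent to $c_{V_3}$-regularity of $z$ (granted the second). One direction is essentially free: if $z$ is regular, then trivially $z$ is $c_{V_3}$-regular, and Theorem \ref{th:mainth} supplies $H_1({}^1M_*)=0$.

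For the converse, assume that $H_1({}^1M_*)=0$ and that $z$ is $c_{V_3}$-regular. By Theorem \ref{th:mainth} it suffices to prove that $({}^2M_*,\delta_*)$ is a complex; since ${}^1M_*$ is always a complex (cf.\ \S\ref{ssec:3.5}), the only new relation to check is $\delta_2\delta_3=0$. By Lemma \ref{lemma:deltacomplex} this is equivalent to the vanishing of the graded $D$-linear map
\[
f:=\partial_1 f_3+f_2 \partial_3\colon D\otimes V_3\longrightarrow D(-1),
\]
and this will be the main technical step.

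To show $f=0$, first dispose of the trivial case $c_{V_3}=\infty$: by definition $c_{V_3}$-regularity then means regularity outright, and Theorem \ref{th:mainth} delivers the vanishing of $H_1({}^1M_*)$ for free. Assume $c:=c_{V_3}<\infty$. The key input is equation \eqref{eq:times_z} of Lemma \ref{lemma:M_complex}, which yields $z\cdot f=0$; hence the $k$-th graded component $f^k\colon (D\otimes V_3)^k\to D^{k-1}$ takes values in $\mathrm{ann}_{D^{k-1}}(z)=\an{}^{k-1}$. The $c$-regularity hypothesis gives $\an{}^{j}=0$ for $j\leq c$, and therefore $f^k=0$ for every $k\leq c+1$.

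It remains to propagate this vanishing to higher degrees, and here the strong grading of $D$ together with the definition of $c_{V_3}$ (which forces $V_3^j=0$ for $j>c+1$) is what does the work: equation \eqref{eq:D} gives $(D\otimes V_3)^n=D^{n-c-1}\cdot(D\otimes V_3)^{c+1}$ for all $n\geq c+1$. Since $f$ is a morphism of left $D$-modules and $f^{c+1}=0$, we conclude $f^n=0$ for all $n\geq c+1$, hence $f=0$ globally. The main obstacle I expect is precisely this last propagation step, as it is where the projectivity of $A^1$ (for the existence of the resolution \eqref{eq:special_resolution} with $V_n^{0}=V_n^{1}=0$ for $n\geq 2$) and the strong grading of $D$ must be combined cleanly; everything else is essentially formal bookkeeping on top of Theorem \ref{th:mainth}.
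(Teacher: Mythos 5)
Your proof is correct and follows essentially the same route as the paper: both arguments reduce to Theorem \ref{th:mainth} via Lemma \ref{lemma:deltacomplex}, use \eqref{eq:times_z} together with $c_{V_3}$-regularity to kill $f^k$ for $k\leq c_{V_3}+1$, and then propagate the vanishing of $f$ upward using the strong grading of $D$ via \eqref{eq:D}. The only cosmetic difference is the ordering of the cases; the paper treats $c_{V_3}<\infty$ first and disposes of the infinite case in a closing remark, whereas you front-load the trivial infinite case, but the mathematical content is identical.
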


\begin{remark}\label{re:c(A)=-1}
	Recall that in  \S\ref{killing01} we chosen a  the resolution $A\ot V_*$ such that $V_3^0=0$. Thus, if $c_{V_3}=-1$, then $V_3=0$.  Keeping the notation from the proof of the preceding theorem, it follows $f_2  \partial_3+\partial_1  f_3=0$. This means that $\left({^2M_*,\delta_*}\right)$ is a complex. Therefore,  if $c_{V_3}=-1$, then $z$ is regular if and only if  $H_1\left({^1M_*}\right)=0$.
\end{remark}

\begin{remark}
	In the next section of the paper we will show that there is an optimal bound $c(A)$ such that $z$ is regular if and only if $z$ is $c(A)$-regular. If there is a minimal resolution $A\ot V_*$ of $S$ as a left $A$-module we will see that  $c(A)=c_{V_3}$.
	
Nevertheless, the freedom of choice of the resolution in the above theorem might be useful in the case when it is difficult to find the minimal resolution (if it exists).\vspace*{1ex}
\end{remark}

	We will give in the last section of the paper some examples of central extensions, whose first homology group $\mathrm{H}_1({}^1M_*)$ of the corresponding sequence $(M_*,\delta_*)$ is not trivial.

\section{On the vanishing of \texorpdfstring{$\h_1(^1M_\ast)$}{H1=0}.} \label{sec:vanH1}
	In this section our aim is to show that $^1M_\ast$ is exact in degree $1$, provided that $D$ is a central extension of $A$ as in the following subsection.

\begin{fact}[Notation and assumptions.]\label{fa:Notation}
	Let  $\pi_A:\Ga\to A $ denote the canonical surjective graded morphisms of $S$-rings from the tensor $S$-ring $T_A=T_S(A^1)$. Let $I_A:=\ker(\pi_A)$. By definition, $\pi_A$ extends $\pi_A^0=\Id_{S}$ and $\pi_A^1=\Id_{A^1}$. Thus $I_A^0=0=I_A^1$. We assume that $A^1$ is projective as a left $S$-module. Note that this condition is equivalent to the projectivity of $T_A$ as a left $S$-module. We also fix a projective resolution $(A\ot V_*,d_*)$ as in \eqref{eq:special_resolution}.

	We fix a central extension $D$ of $A$ which corresponds  to a filtered map $\alpha:R\to T_A$. Therefore, $D:=T_A[z]/\ig{\alpha_z(R)}$, see \S\ref{fa:extension}. There is a unique morphisms of graded $S$-rings $\pi_D: T_D\to D$ which lifts $\id_{S}$ and $\id_{D^1}$, where $R$ is a bimodule of relation for $A$. Note that $I_D=\ker(\pi_D)$ does not contain non-zero elements of  degree 0 and 1.

Under the above assumptions, we want to show that in some cases we can choose the resolution of $S$ such that $V_2$ and $R$ coincide up to an isomorphism of $S$-modules. The key points for achieving this goal are the following  useful lemmas.

\end{fact}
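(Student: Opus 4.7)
The plan is to show that, for the resolution \eqref{eq:special_resolution} of $S$ over $A$, the kernel $K_1$ of $d_1:A\ot A^1\to A$ can be presented as a quotient of $A\ot R$, and that this presentation serves as the next differential $d_2$ once $R$ is assumed to be left $S$-projective. Set $V:=A^1=T_A^1$ and let $\mu:T_A\ot V\to T_A$ denote the multiplication map. Since $T_A$ is the tensor $S$-ring $T_S(V)$, $\mu$ is left $T_A$-linear, injective, and its image is $T_A^+$. Fitting $\mu$ into a commutative square with $d_1$ on the bottom, where the vertical arrows are $\pi_A\ot V$ and $\pi_A$, and chasing through the Snake Lemma (or directly), one obtains
\begin{equation*}
K_1\;\cong\;\mu^{-1}(I_A)/(I_A\ot V)\;\cong\;I_A/(I_A\cdot V)
\end{equation*}
as left $A$-modules, where the second isomorphism uses that $\mu$ restricts to a bijection from $\mu^{-1}(I_A)$ onto $I_A\bgi T_A^+=I_A$, since $I_A^0=I_A^1=0$.

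Having identified $K_1$, the next step is to produce a canonical $A$-linear epimorphism $A\ot R\twoheadrightarrow K_1$. Because $R$ is a bimodule of relations, $R$ generates $I_A$ as a two-sided ideal, so $I_A=T_A R T_A$. Writing $T_A=S\oplus T_A V$ gives $I_A=T_A R+T_A R\cdot T_A V\subseteq T_A R+I_A V$, and the reverse inclusion is trivial, whence $I_A=T_A R+I_A V$. Consequently $t\ot r\mapsto [tr]$ defines a surjective left $T_A$-linear map $T_A\ot R\to I_A/(I_A V)$. To see it factors through $A\ot R$, take $i\in I_A$ and $r\in R$: since $R\subseteq T_A^{\geq 2}\subseteq T_A V$ one can write $r=\sum t_j v_j$ with $v_j\in V$, so $ir=\sum (it_j)v_j\in I_A V$, because $I_A$ is two-sided. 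Thus the induced map $\varrho:A\ot R\to I_A/(I_A V)\cong K_1$ is a well-defined surjection of left $A$-modules.

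It remains to set $V_2:=R$ and let $d_2:A\ot V_2\to A\ot A^1$ be the composition of $\varrho$ with the inclusion $K_1\hookrightarrow A\ot A^1$. The projectivity hypothesis on $R$ ensures, via the tensor-hom adjunction \eqref{eq:adjunction}, that $A\ot R$ is projective as a left $A$-module, so this is a legitimate second term of a projective resolution in the sense of \S\ref{killing01}. The normalization conditions $V_2^0=V_2^1=0$ required there also hold: $R\subseteq I_A$ together with $I_A^0=I_A^1=0$ force $R^0=R^1=0$. The remaining terms $V_n$, $n\geq 3$, can then be chosen exactly as in the general construction of \S\ref{killing01}, starting from $d_2$ just built.

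The main technical hurdle is the identification $K_1\cong I_A/(I_A\cdot V)$: one has to track carefully that the snake-lemma computation producing this isomorphism is compatible with the left $A$-module structure, and that the inclusion $R\subseteq T_A V$ is what forces $I_A\cdot R\subseteq I_A V$, so that $A\ot R$ (and not merely $T_A\ot R$) surjects onto $K_1$. Once these are secured, packaging the argument as two lemmas---one identifying $K_1$ with $I_A/(I_A V)$ and one exhibiting the $A$-linear surjection $A\ot R\twoheadrightarrow K_1$---supplies exactly the ``useful lemmas'' announced at the end of \S\ref{fa:Notation}.
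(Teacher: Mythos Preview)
Your argument is correct and reaches the same identification $K_1\cong I_A/(I_A\cdot A^1)$ via the Snake Lemma that the paper obtains in \S\ref{fa:A-resolution_2}. The route to the surjection $A\ot R\twoheadrightarrow K_1$, however, is genuinely different. You use only that $R$ \emph{generates} $I_A$: from $I_A=T_AR+I_AV$ and $I_A\cdot R\subseteq I_AV$ you get the factorization through $A\ot R$ by a direct computation. The paper instead exploits the \emph{complement} condition $R\oplus\wt I_A=I_A$ to obtain the direct-sum decomposition $K_1\cong R\oplus A^+K_1$, and then invokes an abstract lemma (Lemma~\ref{le:minimal}, powered by graded Nakayama) to produce $d_2$.

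What each approach buys: your argument is more elementary and shows that mere generation already suffices for surjectivity of $A\ot R\to K_1$. The paper's packaging has two advantages you do not capture. First, Lemma~\ref{le:minimal} is stated abstractly so it can be iterated---it is reused in \S\ref{fa:special_resolution} to build $V_3$ with controlled support. Second, and more importantly for what follows, Lemma~\ref{le:minimal} automatically yields $\ker(d_2)\subseteq A^+\ot R$; this inclusion is essential in \S\ref{fa:special_resolution} for the dimension-shifting argument computing $\tor_3^A(S,S)$. Your construction does not give this for free: to recover it you would still need the complement property $R\cap\wt I_A=0$ (to see that $r\mapsto[r]$ embeds $R$ into $K_1/A^+K_1$), which you have not invoked. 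So your two proposed lemmas suffice for the bare claim $V_2=R$, but not for the finer control the paper needs downstream.
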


\begin{lemma}[Graded Nakayama Lemma] \label{le:Nakayama}
	Let $A$ be a graded connected $S$ring. Let $X$ be a graded left $A$-module such that $X^n=0$ for $n<0$. Then $X$=0 if and only if  $X/A^+X=0$. Furthermore a morphism $f:X\to Y $ is surjective if and only if its composition with the projection $Y\to Y/A^+Y$ is surjective.
\end{lemma}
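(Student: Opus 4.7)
The plan is to prove both parts by a minimal-degree argument, using that $A$ is connected (so $A^0=S$ and $A^+ = \bigoplus_{n\geq 1}A^n$) together with the assumption that $X$ is concentrated in non-negative degrees.

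For the first equivalence, the forward implication is trivial. For the converse, I would argue by contradiction. Assuming $X \neq 0$, pick the smallest integer $n_0 \geq 0$ with $X^{n_0} \neq 0$. The hypothesis $X/A^+X = 0$ reads $X = A^+X$, so taking the degree-$n_0$ component gives
\[
X^{n_0} \;=\; \sum_{\substack{i+j=n_0 \\ i\geq 1}} A^{i}\cdot X^{j}.
\]
Every index $j$ appearing satisfies $j = n_0 - i < n_0$, hence $X^j = 0$ by minimality of $n_0$, so the right-hand side vanishes. This contradicts the choice of $n_0$, so $X=0$.

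For the second part, I would reduce it to the first by considering the cokernel $Z := Y/\mathrm{im}(f)$, which is a graded left $A$-module concentrated in non-negative degrees (assuming, as is implicit, that $Y$ satisfies the same lower-bound hypothesis as $X$). The composition $X \xrightarrow{f} Y \twoheadrightarrow Y/A^+Y$ factors through $Y/(\mathrm{im}(f) + A^+Y) \cong Z/A^+Z$; surjectivity of this composition is equivalent to $Z/A^+Z = 0$. Applying the first part to $Z$ yields $Z = 0$, i.e.\ $f$ is surjective. The converse direction is again immediate.

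There is no real obstacle here — the argument is the standard connected-graded version of Nakayama's lemma. The only subtlety worth flagging is the implicit lower-bound hypothesis on $Y$ that is needed to apply part one to the cokernel $Z$; without it the minimal-degree argument has nothing to latch onto.
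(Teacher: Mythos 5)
Your proof is correct and takes essentially the same route as the paper: your minimal-degree contradiction is just the contrapositive of the paper's induction on $n$ via the exact sequence $0\to\sum_{p=0}^{n-1}A^{n-p}X^{p}\to X^{n}\to (X/A^{+}X)^{n}\to 0$, and your second part is exactly the paper's reduction to the cokernel $W:=Y/\mathrm{im}(f)$, noting $\mathrm{im}(f)+A^{+}Y=Y$ iff $W/A^{+}W=0$. Your remark about the implicit lower-bound hypothesis on $Y$ is fair, but it coincides with the paper's tacit convention (all modules in play are non-negatively graded), so nothing is missing relative to the paper's argument.
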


\begin{proof}
	One proceeds by induction, using  the exact sequence:
\[
	0\xrightarrow{}\sum_{p=0}^{n-1}\A^{n-p}X^p\xrightarrow{}X^n\xrightarrow{}(X/\A^+X  )^n\xrightarrow{}0.\qedhere
\]	
If $p:Y\to Y/A^+Y$ denotes the projection then $\im (p\circ f)=(\im (f)+A^+Y)/A^+Y$. Thus  $p\circ f$ is surjective if and only if $\im (f)+A^+Y=Y$ if and only if $W/A^+W=0$ where $W:=Y/\im (f)$.
\end{proof}

\begin{lemma}\label{le:minimal}
	Let $\cX$ and $\cY$ be two graded $S$-bimodules. We assume that $f: A \ot \cX\to  A \ot \cY$ is a morphism of  graded $A$-modules such that $\cK=\ker(f)$ is an $S$-submodule of $ A ^+\ot \cX$. If $Z$ is a graded $S$-submodule  of $K$ such that $\cZ\oplus  (A ^+\cK)=\cK$, then there exists a graded morphism $g: A \ot \cZ\to  A \ot \cX$ such that the sequence
	\[
	A \ot \cZ \xrightarrow{g} A \ot \cX \xrightarrow{f}  A \ot \cY
	\]
	is exact and $\ker(g)\subseteq  A ^+\ot \cZ$. If the $S$-modules $A$ and $K$ are projective, then $\ker(g)$ and $Z$ are projective too. Furthermore, if $\ker(f)^{\leq p}=0$, for some $p$, then $\ker(g)^{\leq p+1}=0$.
\end{lemma}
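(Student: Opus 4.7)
The plan is to define $g$ as the unique graded $A$-linear extension of the $S$-linear inclusion $Z\hookrightarrow K\subseteq A\otimes X$; concretely, $g(a\otimes z)=a\cdot z$, where the right-hand side uses the left $A$-action on $A\otimes X$. By construction $g$ is graded and $\im(g)\subseteq K$, since $K=\ker(f)$ is an $A$-submodule of $A\otimes X$ (as $f$ is $A$-linear) and contains $Z$.

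To obtain exactness at $A\otimes X$, I would apply the Graded Nakayama Lemma (Lemma~\ref{le:Nakayama}) to $W:=K/\im(g)$. The hypothesis $Z\oplus A^+K=K$ together with $Z\subseteq \im(g)$ gives $K=\im(g)+A^+K$, hence $W=A^+W$. Since $K\subseteq A^+\otimes X$ has no components in non-positive internal degrees, the same holds for $W$, so Nakayama forces $W=0$ and thus $\im(g)=K=\ker(f)$.

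The key technical point is the inclusion $\ker(g)\subseteq A^+\otimes Z$. I would use the $S$-bimodule splitting $A\otimes Z=(S\otimes Z)\oplus(A^+\otimes Z)$ and identify $S\otimes Z$ with $Z$. If $u\in\ker(g)$ is written $u=u_0+u_+$ with $u_0\in Z$ and $u_+\in A^+\otimes Z$, then $g(u_0)=u_0$ viewed inside $A\otimes X$, while $g(u_+)$ lies in $A^+K$ since it is an $A^+$-combination of elements of $Z\subseteq K$. Hence $u_0=-g(u_+)\in Z\cap A^+K=0$ by the defining direct-sum decomposition of $K$, so $u\in A^+\otimes Z$.

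The remaining claims are bookkeeping. Since $Z$ is an $S$-module direct summand of $K$ by hypothesis, it inherits $S$-projectivity from $K$; as $A$ and $Z$ are left $S$-projective, so is $A\otimes Z$. The short exact sequence $0\to\ker(g)\to A\otimes Z\to K\to 0$ then splits as left $S$-modules when $K$ is $S$-projective, exhibiting $\ker(g)$ as a summand of an $S$-projective module. For the degree bound, $Z\subseteq K$ forces $Z^n=0$ for $n\le p$ whenever $\ker(f)^{\le p}=0$, whence $(A^+\otimes Z)^n=\bigoplus_{i\ge 1}A^i\otimes Z^{n-i}$ vanishes for $n\le p+1$, and the containment $\ker(g)\subseteq A^+\otimes Z$ delivers $\ker(g)^{\le p+1}=0$. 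The main obstacle I anticipate is the Nakayama step: one must confirm that $W$ is bounded below in internal degree so that Lemma~\ref{le:Nakayama} applies, which rests on the ambient positivity of the grading together with the inclusion $K\subseteq A^+\otimes X$.
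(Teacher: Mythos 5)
Your proof is correct and follows essentially the same route as the paper: both define $g$ by $A$-linearly extending the inclusion $Z\hookrightarrow K$, both use the graded Nakayama Lemma to get surjectivity onto $K$ (the paper applies its surjectivity formulation to $\mu:A\ot Z\to K$, while you apply the vanishing formulation to the cokernel $K/\im(g)$ -- these are the two equivalent halves of the same lemma), and both obtain $\ker(g)\subseteq A^+\ot Z$ from the splitting $A\ot Z=(S\ot Z)\oplus(A^+\ot Z)$ together with $Z\cap A^+K=0$. The projectivity and degree-bound arguments also match the paper's exactly.
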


\begin{proof}
	Let $\mu:A\ot Z\to K$ be the $A$-linear map induced by the action of $A$ on $K$. By definition of $Z$, the composition of $\mu$ with the projection $K\to K/A^+K\cong Z$ is  surjective.  By the graded version of the Nakayama Lemma we deduce that  $\mu$ is surjective. Hence, we can take $g:=i\circ \mu$, where $i$ denotes the inclusion of $K$ into  $A\ot X$. Obviously, $\im(g)=\ker(f)$.

	Let us assume that $A$ and $K$ are projective left $S$-modules. Obviously $Z$ is projective, being a direct summand of $K$. Thus  $A\ot Z$ is projective as an $S$-module. Using the exact sequence:
	\[
	0\rightarrow \ker(g) \rightarrow A\ot Z\rightarrow K\rightarrow 0,
	\]
	we deduce that $\ker(g)$ is a direct summand of $A\ot Z$ as an $S$-module, so $\ker(g)$ is projective.
	
	Let us now show that $\ker g\subseteq A^+\ot Z$. Let $x$ be an element in $\ker g \subseteq A^0\ot Z+A^+\ot Z$.  Thus  $x=1\ot u+\sum_{i=1}^p \lambda_i\ot u_i$, with $u,u_i\in Z$ and $\lambda_i\in A^+$.  Since $x\in \ker g$ it follows that $u+\sum_{i=1}^p \lambda_iu_i=0$. The sum from the left hand-side of the preceding relation belongs to $A^+K$, so $u\in Z\bigcap A^+K=0$. We conclude that $x\in A^+\ot Z$.
	
	For the last part of the lemma we notice that $Z^k\subseteq \ker(f)^k=0$ for $k\leq p$. On the other hand, $\ker(g)^k\subseteq (A^+\ot Z)^k=\oplus_{i=1}^k A^i\ot Z^{k-i}=0$, for $k\leq p+1$, so the lemma is completely proved.
\end{proof}

 \begin{fact}[Bimodule of relations for $A$ versus $V_2$.] \label{fa:A-resolution_2}
	Let us suppose that $R$ is left $S$-module such that $I_A=R\oplus\tilde{I_A}$ as left $S$-modules. Note that $R$ is not necessarily a bimodule, so it is not a bimodule of relations for $A$. Nevertheless $I_A=R\oplus\tilde{I_A}\subseteq RS+\tilde{I_A}\subseteq I_A$ so that $I_A=RS+\tilde{I_A}$ and hence, by Lemma \ref{lem:foot} we have that $I_A=\ig{RS}=\ig{R}$. Let $K_1$ denote the kernel of $d_1$. We consider the following diagram:
\[
\xymatrix @R=15pt{
	&0\ar[r]\ar@{=}[d]&I_A\ot A^1\ar[r]\ar@{_(->}[d]&I_A\ar@{_(->}[d]\\
	0\ar[r]&0\ar[d]\ar[r]&T_A\ot A^1\ar[d]_{\pi_A\ot A^1}\ar[r]^{m}&T_A^+\ar[d]^{\pi_A}\ar[r]&0\\
	0\ar[r]&K_1\ar[r]\ar@{=}[d]&A\ot A^1\ar[r]_{d_{1}}\ar[d]&A^+ \ar[r]&0 \\
	&K_1\ar[r]& 0& & }
\]
	By definition,  $m$ is the map induced by the multiplication in $T_A$. Thus $m$ is an isomorphism. Since $A$ is strongly graded, it follows that the rows are exact. The  middle column is exact as well, because $A^1$ is projective. Using Snake Lemma and the fact that  $R\oplus \wt I_A=I_A$, we get:
\[
	K_1\cong \frac{I_A}{ I_AA^1}=\frac{R+I_AA^1}{I_AA^1}+ \frac{I_AA^1+A^1I_A}{I_AA^1}\cong\frac{R}{R\bigcap I_AA^1}+A^1\left(\frac{I_A}{ I_AA^1}\right)\cong R+ A^+K_1.
\]
	Obviously, the sum $R+(A^+K_1)$ is direct. Using Lemma  \ref{le:minimal} we get an exact sequence:
\begin{equation}\label{eq:A-resolution}
	A \ot \cR\xrightarrow{d_2} A \ot \cV\xrightarrow{d_1} A \xrightarrow{d_0} S\xrightarrow{} 0
\end{equation}
where $\ker(d_2)\subseteq A^+\ot R$.
\end{fact}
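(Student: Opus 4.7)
The strategy is to apply Lemma~\ref{le:minimal} to the map $d_1$ of \eqref{eq:resolution_1}, using the given left $S$-module $R$ (after embedding it as a complement of $A^+K_1$ in $K_1 := \ker d_1$) to define the next differential. I first note that $R$ generates the two-sided ideal $I_A$: since $I_A = R + \wt{I_A}$, Lemma~\ref{lem:foot} gives $\ig{R} = I_A$.

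The initial technical step is to compute $K_1$. I would apply the snake lemma to the diagram displayed in the excerpt, whose top row is the isomorphism $T_A \otimes A^1 \xrightarrow{m} T_A^+$ (the defining property of the tensor $S$-ring on $A^1$) and whose bottom row is short exact by definition of $K_1$. The vertical arrows $\pi_A \otimes A^1$ and $\pi_A$ have kernels $I_A \otimes A^1$ (using flatness of $A^1$) and $I_A$, respectively, with vanishing cokernels. Snake then yields a short exact sequence $0 \to I_A \otimes A^1 \to I_A \to K_1 \to 0$, whence $K_1 \cong I_A / I_A A^1$ as left $A$-modules, the $A$-action being induced by left multiplication on $I_A$ by $T_A$ and well-defined on the quotient because $I_A \cdot I_A \subseteq I_A T_A^+ = I_A A^1$ by strong grading of $T_A$.

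The second and main step is to show $K_1 = R \oplus A^+K_1$. Under the identification above, $A^+K_1$ corresponds to $T_A^+ I_A / I_AA^1 = A^1 I_A / I_AA^1$, again by strong grading. Combined with $\wt{I_A} = I_AA^1 + A^1 I_A$, this gives $A^+K_1 = \wt{I_A}/I_AA^1$ inside $I_A/I_AA^1$, and hence $K_1/A^+K_1 \cong I_A/\wt{I_A} \cong R$ by hypothesis. The composite $R \hookrightarrow I_A \twoheadrightarrow I_A/I_AA^1 \cong K_1$ provides a graded $S$-linear embedding of $R$ into $K_1$; it is injective because $R \cap I_AA^1 \subseteq R \cap \wt{I_A} = 0$. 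Moreover, any element of $R \cap A^+K_1$ would represent something simultaneously in $R$ and in $\wt{I_A}$ modulo $I_AA^1$, hence lies in $R \cap \wt{I_A} = 0$; so the sum $R + A^+K_1 = K_1$ is internally direct.

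With $K_1 = R \oplus A^+K_1$ in hand, and $K_1 \subseteq A^+ \otimes A^1$ because $K_1^0 = K_1^1 = 0$ (as shown in \S\ref{killing01}), Lemma~\ref{le:minimal} applied with $\cZ = R$ produces $d_2 \colon A \otimes R \to A \otimes A^1$ with image $K_1 = \ker d_1$ and kernel contained in $A^+ \otimes R$; concatenation with \eqref{eq:resolution_1} then yields the exact sequence \eqref{eq:A-resolution}. The hard part is the decomposition of $K_1$ in step two, which rests on tracking how the $A$-module structure interacts with quotienting $I_A$ by the one-sided submodule $I_AA^1$; the remainder is routine diagram chasing and a direct citation of Lemma~\ref{le:minimal}.
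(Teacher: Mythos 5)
Your proposal is correct and follows essentially the same approach as the paper: both apply the Snake Lemma to the displayed diagram to obtain $K_1\cong I_A/I_AA^1$, translate $A^+K_1$ into $\wt{I_A}/I_AA^1$ via strong grading, use the hypothesis $I_A=R\oplus\wt{I_A}$ to conclude $K_1=R\oplus A^+K_1$, and then invoke Lemma~\ref{le:minimal}. Your write-up is in fact slightly more explicit than the paper's about why the $A$-module structure on $I_A/I_AA^1$ is well defined and about tracing the identifications, but the underlying argument is identical.
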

	
\begin{fact}[A special projective resolution of $S$.]\label{fa:special_resolution}

	For the remaining part of this section we will assume that there exists a bimodule of relations $R$ for $A$ and that $R$ and $A^1$ are projective left $S$-modules. Thus we may consider the exact sequence \eqref{eq:A-resolution}.

We set $K_2:=\ker(d_2)$. We denote by  $i_1$ and $i_2$ the inclusion maps of $K_1$ and $K_2$ into $A\ot A^1$ and $A\ot R$, respectively.  	The map $d_2$ factors as $d_2=i_{1} d'_2$, where $d_2':A\ot R\to K_1$ is an epimorphism. Since $\im(i_2)=\ker(d_2)\subseteq A^+ \ot R$ it follows $S\ot_A i_2=0$. By dimension shifting \cite[Exercise 2.4.3]{We} we deduce that $\tor_3^A(S,S)\cong\ker(S\ot_A i_2)=S\ot_A K_2$. Thus $\tor_{3,n}^A(S,S)=0$ if and only if $(K_2/A^+K_2)^n=0$ if and only if $K_2^n=(A^+K_2)^n$. Note that $(A^+K_2)^n=\sum_{i=1}^n A^iK_2^{n-i}=A^1K_2^{n-1}$ as $A$ is strongly graded. Thus $\tor_{3,n}^A(S,S)=0$ if and only if $K_2^n=A^1K_2^{n-1}$.

Let $J:=\{n\in \mathbb{N}\mid \tor_{3,n}^A(S,S)\neq 0\}$. By induction we get that $K_2^p\subseteq\sum_{n\in J}AK_2^n$, for all $p\in\N$, so
\begin{equation}\label{K2gen}
  K_2=\sum_{n\in J}AK_2^n.
\end{equation}
	Our goal now is to produce a morphism of graded $A$-modules $d_3:A\ot V_3\to A\ot R$ such that $V_3$ is projective, $\im(d_3)=\ker(d_2)$ and, in addition, $V_3^n=0$ if and only if $\tor_{3,n}^A(S,S)=0$.
	
	For every $n\in J$ we pick a projective $S$-module $V_3^n$ together with an epimorphism $u_n':V_3^n\to K_2^n$. For $n\notin J$ we set $V_3^n=0$ and $u_n'=0$. Set $V_3:=\oplus_{n\in \mathbb{N}} V_3^n$ and let $u':=\oplus_{n\in \mathbb{N}} u_n':V_3\to K_2$. Using the constructions from \S\ref{ssec:u-v}, we get an $A$-linear map $u:A\ot V_3\to K_2$ such that $u(1\ot x)=u'(x)$, for all $x\in V$. By \eqref{K2gen} it follows that $u$ is surjective. Clearly,
	if $d_3=i_2u$, then $\im(d_3)=K_2=\ker(d_2)\subseteq A^+\ot R$. As a consequence $S\ot_A d_3=0$.

	Finally, we point out that, if $\tor_3^A(S,S)=0$, by \eqref{K2gen} we get $K_2=0$. Thus in this case, we may take $V_3=0$ and $d_3=0$.

	In conclusion, our claim about the existence of the morphism $d_3$ has just been proved. We can now use $d_3$ to extend the exact sequence  \eqref{eq:A-resolution} with one more arrow. The resulting exact sequence can be completed in the usual way to a projective resolution:
\begin{equation}\label{eq:special_resolution-V3}
	\cdots\rightarrow  A \ot V_n\xrightarrow{d_n} A \ot V_{n-1}\xrightarrow{d_{n-2}}\cdots  \xrightarrow{d_4} A \ot V_3 \xrightarrow{d_3} A \ot R\xrightarrow{d_2} A \ot A^1\xrightarrow{d_1} A  \xrightarrow{d_0} S\rightarrow 0.
\end{equation}
	By construction this resolution has the following properties: $V_n^i=0$, for all $n\geq 3$ and $i=0,1$; the component $V_3^n$ is zero if and only if $\tor_3^n(S,S)$ is so; for $n=1,2,3$ we have $S\ot _A d_n=0$.
\end{fact}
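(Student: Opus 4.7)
The plan is to verify the three properties asserted for the resolution \eqref{eq:special_resolution-V3}: (a) $S\otimes_A d_n = 0$ for $n=1,2,3$; (b) $V_3^n = 0$ if and only if $\tor_{3,n}^A(S,S) = 0$; and (c) $V_n^i = 0$ for all $n\geq 3$ and $i=0,1$. The pivotal step is (b); once this is in hand, (a) and (c) follow by routine inductive arguments and by inspection of the construction carried out in \S\ref{fa:A-resolution_2} and \S\ref{killing01}.

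First I would verify (a), which reduces to showing $\im(d_n) \subseteq A^+\otimes V_{n-1}$ for $n=1,2,3$. For $n=1$ this is automatic from gradedness since $d_1^0=0$. For $n=2$, the inclusion $K_1 \subseteq A^+\otimes A^1$ (noted in \S\ref{killing01}, because $d_1^1$ is an isomorphism) together with the factorization $d_2 = i_1 d_2'$ gives the claim. For $n=3$ it follows from $K_2 \subseteq A^+\otimes R$, which is produced by the last assertion of Lemma \ref{le:minimal} applied in the construction of \eqref{eq:A-resolution} (since $\ker(d_1)^{\leq 1} = 0$), combined with the definition $d_3 = i_2 u$.

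The heart of the argument is (b). Because $S\otimes_A d_2 = 0$ by (a), dimension shifting identifies $\tor_3^A(S,S)$ with $\ker(S\otimes_A i_2)$; and since $K_2 \subseteq A^+\otimes R$ forces $S\otimes_A i_2 = 0$, this further equals $S\otimes_A K_2 = K_2/A^+K_2$ grade by grade. Using that $A$ is strongly graded, one has $(A^+K_2)^n = A^1 K_2^{n-1}$, and a straightforward induction on $n$ yields the key identity $K_2 = \sum_{n\in J} A K_2^n$, where $J:=\{n \mid \tor_{3,n}^A(S,S) \neq 0\}$. This identity permits the choice, for each $n\in J$, of a projective $S$-module $V_3^n$ together with a surjection $V_3^n \twoheadrightarrow K_2^n$, and $V_3^n = 0$ otherwise. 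The graded Nakayama Lemma \ref{le:Nakayama} then upgrades this to a surjection $A\otimes V_3 \twoheadrightarrow K_2$, and setting $d_3 := i_2 u$ completes the construction with the desired equivalence in (b).

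Finally, (c) is obtained by iterating the construction of \S\ref{killing01}: $V_3$ already satisfies $V_3^0 = V_3^1 = 0$ (since $\tor_{3,0}^A(S,S) = \tor_{3,1}^A(S,S) = 0$, so $J\subseteq\{n\geq 2\}$), and inductively each successive $V_n$ can be chosen projective with $V_n^0 = V_n^1 = 0$, because the kernel of $d_{n-1}$ inherits this vanishing in internal degrees $0,1$ from its ambient module $A\otimes V_{n-1}$. The main obstacle is the generation statement $K_2 = \sum_{n\in J} A K_2^n$: its proof depends crucially on the strong grading of $A$ (to reduce $A^+K_2$ to $A^1 K_2^{\cdot-1}$) and on the projectivity of $R$ as a left $S$-module (which ensures that both $A\otimes R$ is $A$-projective and the flatness hypothesis needed for the dimension shift identifying $\tor_{3,n}^A(S,S)$ with $(K_2/A^+K_2)^n$ is available).
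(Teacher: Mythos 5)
Your proposal is correct and follows essentially the same route as the paper: dimension shifting via $S\otimes_A i_2=0$ to identify $\tor_{3,n}^A(S,S)$ with $(K_2/A^+K_2)^n$, the strong-grading reduction $(A^+K_2)^n=A^1K_2^{n-1}$, the generation identity $K_2=\sum_{n\in J}AK_2^n$, and the choice of projective $V_3^n$ surjecting onto $K_2^n$ for $n\in J$ only. Your appeal to the graded Nakayama Lemma for the surjectivity of $u:A\otimes V_3\to K_2$ is just a repackaging of the paper's direct use of that identity, and the remaining verifications (the vanishing of $V_n^i$ for $n\geq 3$, $i=0,1$, and $S\otimes_A d_n=0$ for $n\leq 3$) match the construction in the text.
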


\begin{remark}\label{re:R_projectiv}
	Let us suppose that $R$ is a left $S$-module such that $R\oplus \wt I_A=I_A$. If $A$ is projective as a left $S$-module, by applying Lemma \ref{le:minimal} for $f=d_0$ and $g=d_1$, we deduce that $K_1$ is projective. We conclude, for free, that $R$ is projective as a left $S$-module, since  by the foregoing observations it is a direct summand of $K_1$. Thus,  in the case when $A$ is left $S$-projective, a bimodule of relations for $A$ is always left $S$-projective.
\end{remark}\vspace*{1ex}

\begin{remark}\label{re:d_2}
	The map $d_2$ can be described as follows. Proceeding as in Remark \ref{re:tau_n}, it is not difficult to show that the connecting morphism from $I_A$ to $K_1$ is precisely the restriction of $(\pi_A\ot A^1)  m^{-1}$ to $I_A$. We shall use a Sweedler-like notation, writing an element $x\in T_A\ot A^1$ as:
\[
	x=\sum x_{T_A}\ot x_{A^1}.
\]
	By construction,  $d_2$ is induced by the restriction of the $A$-module structure map of $K_1$ to $A\ot V_2$.  Via the identification $R\cong (R+I_AA^1)/(I_AA^1)\subseteq K_1$, for $a\in A$ and $r\in R$, we get:
	\[
d_2(a\ot r)=\sum a\pi_A(r_{T_A})\ot r_{A^1},
	\]
\end{remark}

\begin{remark}\label{re:SotAd3}
We have seen that, if there exists a complement left $S$-submodule  $R$ of $\wt I_A$ in $I_A$ which is projective, then there is a projective resolution $(A\ot V_*,d_*)$  such that $V_2=R$ and $S\ot_A d_3=0$. Let us show that the converse also holds.

Let us assume, that $S\ot_A d_{3}=0$. Thus $S\ot_A d_2'$ is an isomorphism, so we can identify $V_2$ and $K_1/(A^+K_1)$. In particular, the latter left $S$-module is projective. Recall that $\wt I_A=  A^1I_A+ I_AA^1$ and $K_1\simeq I_A/I_AA^1$. Therefore  $K_1/A^+K_1$ and $ I_A/\wt I_A$  are isomorphic. Since the former left module is projective, there exists a   \textit{left} module $R$ such that $I_A=R\oplus \wt I_A$. Clearly, $R\simeq V_2$.
\end{remark}\vspace*{1ex}

\begin{fact}[A presentation of $D$ by generators and relations.] \label{fa:D-presentation}
	Let $T_A[z]$ denote the polynomial ring in the indeterminate $z$ with coefficients in $T_A$. It is a graded connected $S$-ring, whose component of degree $1$ is $T_A[z]^{1}=A^1\oplus Sz$.

	There is a unique morphisms of graded $S$-rings $\pi_D: T_D\to D$ which lifts $\id_{S}$ and $\id_{D^1}$. Note that $I_D=\ker(\pi_D)$ does not contain non-zero homogeneous elements of  degree 0 and 1.

	Recall that the evaluation at $0$  induces a surjective graded $S$-ring map $\phi_0:D\to A$ whose kernel is precisely $zD$, cf. \S\ref{fa:P_z}. Since $z$ is central in $D$, the left and the right $S$-submodules generated by $z$ coincide. Note that, the ideal $\ig{\alpha_z(R)}$ is contained in $T_A[z]^{\geq 2}$, so $D^1=T_A[z]^1=A^1\oplus Sz$. Thus $Sz$ is a free $S$-module and	the homogeneous component of $\phi^1_0$ coincides with the canonical projection $\xi:A^1\oplus Sz\to A^1$. Note that $D^1$ is projective as a left $S$-module, since $A^1$ is projective by assumption.

	We denote by  $\zeta:A^1\to D^1$ the canonical inclusion, which is an $S$-bimodule section of $\xi$. By the universal property of the tensor $S$-ring, the maps $\xi$ and $\zeta$ induce morphisms of graded $S$-rings between $T_D:=T_S(D^1)$ and $T_A$, which will be denoted by the same symbols. Note that  $\zeta:T_A\to T_D $ is a section of $\xi$ so we can regard $T_A$ as a graded $S$-subring of $ T_D $ via $\zeta$. Henceforth, the $n$-degree homogeneous component $ T_D ^n$ is generated as a left or right $S$-module by all tensor monomials $x_1\ot \cdots \ot x_n$, where each $x_i$ is either in  $A^1$ or $x_i=z$, since every element in $D^1$ can be written in a unique way as  $a+sz$, for some  $a\in A^1$ and $s\in S$.

 	Using the universal property of the tensor $S$-ring $T_D$, there exists a unique morphism of graded $S$-rings $\gamma: T_D\rightarrow T_A[z]$, which extends the identity maps of $S$ and $D^1=T_A[z]^1$. Clearly $\gamma$ is surjective since $D$ is strongly graded and $\gamma^0$ and $\gamma^1$ are isomorphisms. Moreover, the $S$-ring morphism $\gamma \zeta$ and the inclusion of $T_A$ into $T_A[z]$ coincide, as they are equal on $T_A^{\leq 1}$.

	Obviously, the ideal generated by $[A^1,z]=\{z\ot a-a\ot z\mid a\in A^1\}$  is included into $\ker(\gamma)$. As a matter of fact, these ideals are equal. Indeed, for $x\in \ker(\gamma)^n$, there are $x_0, \dots, x_n$ so that $x_i\in T_A^i$ and
\[
	x=x'+\sum_{i=0}^n x_i\ot z^{\ot (n-i)},
\]
	where $x'$ is some element in the ideal generated by $[A^1,z]$. Since $\gamma(x)=\gamma(x')=0$ we deduce that the coefficients of the polynomial $\sum_{i=0}^n x_iz^{n-i}$ must be zero. Thus $x=x'$.

	Recall that throughout this section, $D:=T_A[z]/\ig{\alpha_z(R)}$. Let  $\pi_D':T_A[z]\rightarrow D$ denote the canonical map. Since the morphism of graded $S$-rings $\pi_D$ and $\pi'_D \gamma$ are equal on $T_D^{\leq 1}$, they must be identical.

	Finally, the set $\{z^n\mid n\in\N\}$ is a basis of $T_A[z]$, regarded  as a left $T_A$-module. Thus there is a unique morphism  $\lambda:T_A[z]\to T_D$ of left $T_A$-modules which maps $z^n$ to $z^{\ot n}$, for all $n\in\N$. Obviously, $\lambda$ is an $T_A$-linear section of $\gamma$.

	Summarizing, we have constructed the maps from the following diagram:
\[\xymatrix{
 	& & &T_A[z] \ar@<-0.75ex>[dl]_{\lambda}  \ar[dr]^{\pi'_D}& \\
	0\ar[r]& I_D\ar[d]_{\xi}\ar[r]& T_D\ar[d]_-{\xi}\ar[rr]^{\pi_D}\ar[ur]_{\gamma}& & D \ar[d]_-{\pi}\ar[r]&0\\
	0\ar[r]&I_A\ar[r]&T_A\ar[rr]^{\pi_A}\ar@<-0.75ex>[u]_{\zeta}& &A \ar[r]&0}
 \]
	If $\theta:R\to T_D$ is defined  by $\theta:=\lambda \alpha_z$, then:
\begin{equation}\label{eq:theta(r)}
 	\theta(r)=\sum_{i=0}^{\infty}\zeta\left(\alpha_i(r) \right)\otimes z^{\otimes i},
\end{equation}
	for every $r\in R$. By the definition of $\theta$, we also get $\theta(R)=\lambda\left(P_z\right)$.
\end{fact}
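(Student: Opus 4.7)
The plan is to verify both claims at the end of Fact \ref{fa:D-presentation} by unwinding the definitions of $\lambda$, $\alpha_z$, and $\zeta$. No heavy machinery is required; the only point that demands care is matching the left $T_A$-module structure on $T_A[z]$ (used to define $\lambda$) with the left $T_A$-module structure on $T_D$ (obtained by embedding $T_A$ via $\zeta$ and using the multiplication of $T_D$).

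For the formula, I would rewrite $\alpha_z(r)\in T_A[z]$ using \eqref{eq:alpha_z} as the finite sum
$$\alpha_z(r)=\sum_{i\geq 0}\alpha_i(r)\,z^i,$$
finite because any $r$ lies in some $R^n$ and $\alpha_i(r)=0$ whenever $i>n$. Applying $\lambda$ and using its left $T_A$-linearity together with $\lambda(z^i)=z^{\otimes i}$, I get
$$\theta(r)=\lambda(\alpha_z(r))=\sum_{i\geq 0}\alpha_i(r)\cdot z^{\otimes i},$$
where the dot is the multiplication in $T_D$. The final step is to observe that, under the embedding $\zeta\colon T_A\hookrightarrow T_D$ described in the text, this multiplication reduces to the tensor concatenation $\zeta(\alpha_i(r))\otimes z^{\otimes i}$, yielding the claimed formula.

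For the second claim, recall the definition $P_z:=\alpha_z(R)$ given in \S\ref{fa:extension}. Then
$$\lambda(P_z)=\lambda(\alpha_z(R))=(\lambda\circ\alpha_z)(R)=\theta(R),$$
which is immediate from $\theta:=\lambda\circ\alpha_z$. The main (and only) obstacle is therefore the bookkeeping identification of multiplication-by-powers-of-$z$ in $T_D$ with the tensor concatenation prescribed by the universal property of $T_D$; this is built into the construction of $\zeta$ as an $S$-ring section of $\xi$ at the level of generators, so once that identification is made, both assertions are direct.
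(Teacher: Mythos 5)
Your proof is correct and takes essentially the same approach as the paper, which states both identities as immediate consequences of the definitions of $\theta$, $\lambda$, and $\alpha_z$. You have merely spelled out the bookkeeping — in particular, that the left $T_A$-module structure on $T_D$ used to define $\lambda$ is the one coming from $\zeta$, so $\lambda(t z^n)=\zeta(t)\ot z^{\ot n}$ by tensor concatenation — which is precisely what the paper's wording is appealing to.
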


\begin{lemma}\label{le:Rz}
	Let $R_D:=\lambda(P_z)+ [z,A^1]$. The sum defining $R_D$ is direct and $R_D$ is an  $S$-bimodule of relations for $D$.
\end{lemma}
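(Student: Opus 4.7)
I will prove the lemma by transporting the claim across the $S$-bilinear splitting $T_D=\lambda(T_A[z])\oplus\langle[z,A^1]\rangle$, which is available because $\lambda$ is an $S$-bilinear section of $\gamma\colon T_D\to T_A[z]$ whose kernel is exactly $\langle[z,A^1]\rangle$ (see \S\ref{fa:D-presentation}). Directness of the sum defining $R_D$ is immediate from this: if $x\in\lambda(P_z)\cap[z,A^1]$, then $\gamma(x)=0$ while $x=\lambda(y)$ for a unique $y\in P_z$, so $y=\gamma(\lambda(y))=0$ and $x=0$. The $S$-bimodule structure of $R_D$ is inherited from its summands: $\lambda(P_z)$ is the $S$-bilinear image of the bimodule $P_z=\alpha_z(R)$, while $[z,A^1]$ is stable under left and right multiplication by $S$ because $z$ is central in $D^1$ and the tensor is over $S$.

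To see that $R_D$ generates $I_D$ as a two-sided ideal, I appeal to Lemma \ref{lem:foot}, so it suffices to check $\langle R_D\rangle_{T_D}=I_D$. Set $J:=\langle R_D\rangle_{T_D}$. By construction $\ker(\gamma)=\langle[z,A^1]\rangle\subseteq J$, and since $\gamma$ is a ring map with $\gamma(\lambda(P_z))=P_z$, we obtain $\gamma(J)=\langle P_z\rangle_{T_A[z]}$. On the other hand $I_D=\ker(\pi'_D\circ\gamma)=\gamma^{-1}(\langle P_z\rangle)$, whence $J=\gamma^{-1}(\gamma(J))=I_D$.

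The heart of the argument is showing $R_D\cap\widetilde{I_D}=0$. I plan to establish first the analogous statement upstairs, namely $P_z\cap\widetilde{\langle P_z\rangle}=0$ inside $T_A[z]$, by exploiting the graded ring homomorphism $\mathrm{ev}_0\colon T_A[z]\to T_A$ that evaluates $z$ to $0$: since $\alpha_0$ is the inclusion of $R$, one has $\mathrm{ev}_0(P_z)=R$, and hence $\mathrm{ev}_0(\langle P_z\rangle)=\langle R\rangle=I_A$ and $\mathrm{ev}_0(\widetilde{\langle P_z\rangle})\subseteq\widetilde{I_A}$. Any $x$ in the intersection therefore satisfies $\mathrm{ev}_0(x)\in R\cap\widetilde{I_A}=0$, and writing $x=\alpha_z(r)$ with $\mathrm{ev}_0(x)=r$ forces $r=0$, so $x=0$. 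Now take $x=p+c\in R_D\cap\widetilde{I_D}$ with $p\in\lambda(P_z)$ and $c\in[z,A^1]$; applying $\gamma$ gives $\gamma(p)=\gamma(x)\in\gamma(\widetilde{I_D})=\widetilde{\langle P_z\rangle}$ while $\gamma(p)\in P_z$, so $\gamma(p)=0$ by the intermediate step, whence $p=0$ by injectivity of $\gamma$ on $\lambda(T_A[z])$. Finally $x=c\in[z,A^1]\cap\widetilde{I_D}=0$, the last equality holding because $[z,A^1]$ is concentrated in degree $2$ while $\widetilde{I_D}^{\le 2}=0$ (since $I_D^{\le 1}=0$).

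The main obstacle is the intermediate identity $P_z\cap\widetilde{\langle P_z\rangle}=0$; once this is settled via the evaluation-at-zero trick together with the injectivity of $\alpha_z$ that comes from $\alpha_0$ being the inclusion, the remainder is a routine manipulation of the splitting $T_D=\lambda(T_A[z])\oplus\ker(\gamma)$ and of the low-degree vanishing of $I_D$.
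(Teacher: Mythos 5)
Your proof is correct and follows essentially the paper's own argument: the paper obtains directness and the identity $\langle R_D\rangle=I_D$ from the section $\lambda$ of $\gamma$ exactly as you do, and for the key step $R_D\cap\widetilde{I_D}=0$ it applies the single graded ring map $\xi\colon T_D\to T_A$ killing $z$ (which is precisely your composite $\mathrm{ev}_0\circ\gamma$) to reduce to $R\cap\widetilde{I_A}=0$, finishing with the same degree-$2$ versus degree-$\geq 3$ observation. Your only difference is cosmetic: you factor $\xi$ through $T_A[z]$ and isolate the intermediate identity $P_z\cap\widetilde{\langle P_z\rangle}=0$ rather than doing the reduction in one step.
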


\begin{proof}
	Since $\lambda$ is a section of $\gamma$ it follows  that $\gamma^{-1}(P_z)=\lambda(P_z)\oplus\ker(\gamma)=\lambda(P_z)\oplus \ig{[z,A^1]}$. Notice that, in particular, the sum that appears in the definition of $R_D$ is direct. Since $\pi_D=\pi'_D \gamma$, we have $I_D=\ig{\gamma^{-1}(P_z)}=\ig{\lambda(P_z)+ [z,A^1]}=\ig{R_D}$.

	Let $\wt I_D:=I_D\ot D^1+D^1\ot I_D$. If $x\in  R_D\bgi \wt I_D$, then
\begin{equation*}
x=\theta(r)+\big(z\ot a-a\ot z\big),
\end{equation*}
	for some $r\in R$ and $a\in A^1$. By \eqref{eq:theta(r)} and taking into account that $\xi: T_D \to T_A$ is a morphism of $S$-rings, $\xi(z)=0$ and $\zeta$ is a section of $\xi$, we deduce that $\xi(x)=r$. On the other hand, $\xi(\lambda(P_z))\subseteq R$ and $\xi$ vanishes on  $[z,A^1]$. Thus $\xi(I_D)=I_A$. In particular, $r\in R\bgi \wt I_A=0$. It follows  that $x=z\ot a-a\ot z$, so $x$ is an element of degree $2$ in $T_D$. But $\wt I_D\subseteq T_D ^{>2}$, so $x=0$. This completes the proof of the fact that $R_D$ is a bimodule of relations for $D$.
\end{proof}

	In order to show that $^1M_\ast$ is exact in the present setting, we define a new morphism of graded $S$-modules $\rho:A^1(-1)\to [z,A^1]$ by $\rho(a)=a\otimes z-z\otimes a$, for all $a\in A^1$.

\begin{lemma}\label{le:gamma12}
	The maps $\theta : R \to \lambda(P_z)$ and $\rho:A^1\to [z,A^1]$  are isomorphisms of graded $S$-bimodules. The left $S$-module $R_D$ is projective.
\end{lemma}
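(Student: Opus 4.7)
The plan is to dispatch the three claims in sequence, with the bulk of the work going into the isomorphism claims. For the map $\rho : A^1(-1) \to [z, A^1]$, I would begin by invoking the $S$-bimodule splitting $D^1 = A^1 \oplus Sz$ supplied by the section $\zeta$ in \S\ref{fa:D-presentation}. This yields a direct-sum decomposition of $D^1 \otimes_S D^1$ into four $S$-subbimodules, among which $A^1 \otimes_S Sz$ and $Sz \otimes_S A^1$ are distinct summands. Since $a \otimes z$ and $z \otimes a$ lie in these separate summands, $\rho(a) = 0$ forces $a \otimes z = 0$ in $A^1 \otimes_S Sz \cong A^1$, whence $a = 0$. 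Surjectivity onto $[z, A^1]$ is immediate from the definition, while the $S$-bilinearity of $\rho$ reduces to the relation $sz = zs$ in $D^1$, itself a consequence of the centrality of $z$ in $D$.

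For $\theta = \lambda \alpha_z : R \to \lambda(P_z)$, surjectivity is built into the definition. Injectivity factors: $\lambda$ is split injective since $\gamma\lambda = \mathrm{id}_{T_A[z]}$, and $\alpha_z(r) = \sum_i \alpha_i(r) z^i$ has constant term in $z$ equal to $\alpha_0(r) = r$ (by the standing convention that $\alpha_0$ is the inclusion $R \hookrightarrow T_A$), so $\alpha_z(r) = 0$ forces $r = 0$. The map $\alpha_z$ is a graded bimodule morphism by construction \eqref{eq:alpha_z}. The only remaining point is that $\lambda$ preserves the right $S$-action, which after $S$-balancing reduces to checking $z^{\otimes n} s = s z^{\otimes n}$ in $T_D$; this is obtained by sliding $s$ across each tensor factor of $z$ via the identity $sz = zs$ already used above. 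Composing, $\theta$ is a graded $S$-bimodule isomorphism.

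Finally, Lemma \ref{le:Rz} furnishes the direct-sum decomposition $R_D = \lambda(P_z) \oplus [z, A^1]$ of $S$-bimodules, so transporting through the two isomorphisms yields $R_D \cong R \oplus A^1(-1)$ as left $S$-modules. Both $R$ and $A^1$ are projective as left $S$-modules by the standing assumptions of \S\ref{fa:special_resolution}, so $R_D$ is too. The main (mild) obstacle in the argument is verifying that $A^1$ and $Sz$ are genuine $S$-subbimodules of $D^1$, so that the tensor-square really splits into four summands; but this follows directly from the fact that $\zeta$ is a bimodule section and that $z$ is central in $D$.
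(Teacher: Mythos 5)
Your proof is correct and follows essentially the same route as the paper: injectivity of $\rho$ via the four-summand decomposition of $D^1\otimes_S D^1$ coming from $D^1=A^1\oplus Sz$ together with freeness of $Sz$, injectivity of $\theta$ from $\gamma\lambda=\mathrm{id}$ and the injectivity of $\alpha_z$ (constant $z$-term equal to $r$), and projectivity of $R_D$ by transporting the splitting $R_D=\lambda(P_z)\oplus[z,A^1]$ of Lemma \ref{le:Rz} to $R\oplus A^1(-1)$. The extra verifications you include (bilinearity of $\rho$ and $\theta$ via centrality of $z$, i.e. $z^{\otimes n}s=sz^{\otimes n}$ in $T_D$) are details the paper leaves implicit, and they check out.
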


\begin{proof}
Both $\theta$ and $\rho$ are surjective by construction. To show that $\rho$ is injective, let us take $a\in \ker \rho$.  Since $D^1=A^1\oplus Sz$ it follows that $D^1\ot D^1=(A^1\ot Sz)\oplus(Sz\ot A^1)\oplus (Sz\ot Sz)\oplus (A^1\ot A^1)$. On the other hand, $a\ot z$ and $z\ot a$ leave in the first and the second summands, respectively. Because $\rho(a)=0$, it follows that both $a\otimes z$ and $z\otimes a$ must be zero. Since $\{z\}$ is a basis for  the free left $S$-module $Sz$, it follows that $a=0$.
For checking that $\theta$ is injective as well, we note that $\gamma \theta=\alpha_z$ and that $\alpha_z$ is injective.
	
Clearly, $R_D$ is left $S$-projective, since $R$ and $A^1$ have this property, by the standing assumptions in this section, and $\lambda$ and $\rho$ are isomorphisms.
\end{proof}

	 Recall that $D^1$ and  $R_D$ are left $S$-projective and that $R_D$ is a bimodule of relations for $D$. Thus, all properties of $A$ that we discussed above hold for $D$ as well. First, we get an  exact sequence:
\begin{equation}\label{eq:D_resolution}
 	D\ot R_D\xrightarrow{d'_2}D\ot D^1\xrightarrow{d'_1}D\rightarrow S\rightarrow 0,
\end{equation}
	which can be completed to a  projective resolution of $S$ as a $D$-module. Moreover, the kernels of $d_1'$ and $d_2'$ are included into $D^+$ and $D^+\ot R_D$, respectively. Secondly, by proceeding as in Remark \ref{re:d_2}, we get a similar description of  the map $d_2'$. For  $x\in D$ and $r\in R_D$,
\[
	d_2'(x\ot r)=\sum x\pi_D({r}_{T_D})\ot r_{D^1},
\]
On the other hand, $d_1'(x\ot y)=xy$, for all $x\in D$ and $y\in D^1$.

\begin{fact}[The morphisms $\delta_1$, $\delta_2$ and $f_2$.]\label{sec:explicitmaps}
	Let us explain the construction of the new maps that appear in Figure \ref{eq:diagram}.
\begin{figure}
\begin{equation*}
	\xymatrix @C=10pt @R=18pt {
		&T_D\otimes R \ar[rr]|-{\overline{\partial}_2} \ar[dl]|-{\xi\otimes R} \ar[dd]|-(.51){\hole} |-(.7){\pi_D\otimes R}&& T_D\otimes {A^1} \ar[dl]|-{\xi\otimes {A^1}} \ar[rr]|-{\overline{\partial}_1} \ar[dd]|-{\hole}|-(.7){\pi_D\otimes {A^1}}& & T_D \ar@<-0.45ex>[dl]_{\xi} \ar[dd]|-(.7){\pi_D} \\
		T_A\ot R\ar[rr]|-(.65){\bar{d}_2}\ar[dd]|-(.65){\pi_A\otimes R} & & T_A\ot {A^1}\ar[rr]|-(.7){\bar{d}_1}\ar[dd]|-(.65){\pi_A\otimes {A^1}}& &T_A\ar[dd]|-(.65){\pi_A} \ar@<-0.45ex>@{-->}[ur]_-{\zeta} &\\
		&D\otimes R \ar[dl]|-{\pi\otimes R} \ar[rr]|-(.3){\partial_2}|(.5){\hole} && D\otimes {A^1} \ar[dl]|-{\pi \otimes {A^1}} \ar[rr]|-(.3){\partial_1}|(.54){\hole} && D\ar[dl]|-{\pi} \\
		A \ot R\ar[rr]|-{d_2} &&  A \ot {A^1}\ar[rr]|-{d_1}&& A &
	}
	\end{equation*}
	\caption{}\label{eq:diagram}
\end{figure}

	By Remark \ref{re:d_2}, the maps  $\bar{d}_1$ and $\bar{d}_2$, which are defined by  $\bar{d}_1(t\otimes v)=tv$  and $\bar{d}_2(t\otimes r)=\sum t\cdot r_{T_A}\otimes r_{A^1}$, make commutative the front squares. Using the constructions from  \S\ref{ssec:u-v} we now define $\overline{\partial}_i:={}^{\xi}\bar{d}_i$, for $i=1,2$.  Thus, the parallelograms on the top of the diagram are commutative. Since $\zeta$ is a  section of $\xi$, we can choose  $\overline{\partial}_1$ and $\overline{\partial}_2$ such that:
\begin{equation*}
	\overline{\partial}_1(x\otimes a)=xa, \qquad  \overline{\partial}_2(x\otimes r)=\sum x\zeta(r_{T_A})\otimes r_{A^1} .
\end{equation*}
	As $\pi_D:T_D\to D$ is surjective, we can use once again the constructions from \S\ref{ssec:u-v} to define the arrows $\partial_i:={}_{\pi_D}\overline{\partial}_i$. If $\wt{\zeta}= \pi_D\zeta$, then:
\begin{equation*}
	\partial_1(x\otimes a)=xa,\qquad  \partial_2(x\otimes r)=\sum x\wt{\zeta}(r_{T_A})\otimes r_{A^1}.
\end{equation*}
	Since $\pi_D$ is surjective it follows that the parallelograms on the bottom of the Figure  \ref{eq:diagram} are commutative as well. In conclusion all squares from the same figure are commutative.

	Therefore, taking into account \S\ref{ssec:3.5}, for defining the differential maps $\delta_1$ and $\delta_2$ of $^1 M_\ast$, we can choose the maps $\partial_1$ and $\partial_2$ as in Figure \ref{eq:diagram}.

	The above explicit description of $\partial_1$ and $\partial_2$ allows us to compute  the map $f_2:D\otimes R\to D$,  which was defined in \eqref{eq:partial_n}. Note that, for $r\in R$ and $x\in D$, we have $\partial_1(\partial_2(x\otimes r) =x\wt{\zeta}(r)$, since $\wt{\zeta}$ is a ring morphism. Furthermore, $(\pi_D \theta)(r)=(\pi_D' \gamma \lambda \alpha_z)(r)=\pi_D'(\alpha_z(r))=0$. Thus, by \eqref{eq:theta(r)}, we get:
\begin{equation*}
	\wt{\zeta}(r)=-z\sum_{i=1}^{\infty}\wt{\zeta}\left(\alpha_i(r)\right)z^{i-1}.
\end{equation*}
	In view of \eqref{eq:partial_n}, for $x$ and $r$ as above, one can define $f_2$ by the relation:
\begin{equation}\label{eq:f_2}
	f_2(x\otimes r)=\sum_{i=1}^{\infty}x \wt{\zeta}\left(\alpha_i(r)\right)z^{i-1}.
\end{equation}
\end{fact}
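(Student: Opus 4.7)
The plan is to verify the stated construction in stages, mirroring the structure of Figure \ref{eq:diagram}. First, I would check that $\bar{d}_1$ and $\bar{d}_2$, defined by $\bar{d}_1(t\otimes v)=tv$ and $\bar{d}_2(t\otimes r)=\sum t\cdot r_{T_A}\otimes r_{A^1}$, make the front squares commute. This is immediate from the description of $d_2$ given in Remark \ref{re:d_2}: $(\pi_A\otimes A^1)\bar{d}_2(t\otimes r)=\sum \pi_A(t)\pi_A(r_{T_A})\otimes r_{A^1}=d_2(\pi_A(t)\otimes r)$, and similarly for $\bar{d}_1$ using that $\pi_A$ is an $S$-ring map.

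Next, I would apply the construction of \S\ref{ssec:u-v} to the surjection $\xi:T_D\to T_A$, which admits the $S$-ring section $\zeta$. The canonical choice of $\overline{\partial}_i:={}^{\xi}\bar{d}_i$ provided by that recipe, namely $^{\xi}v(a\otimes x)=a\cdot (\zeta\otimes Y)\widehat{v}(x)$, unfolds directly to the stated formulas $\overline{\partial}_1(x\otimes a)=xa$ (with $a\in A^1$ identified with $\zeta(a)\in D^1$) and $\overline{\partial}_2(x\otimes r)=\sum x\zeta(r_{T_A})\otimes r_{A^1}$, and by construction the top parallelograms commute. Then $\partial_i:={}_{\pi_D}\overline{\partial}_i$ makes the bottom parallelograms commute by the very definition of $_{\pi_D}(-)$, and applying $\pi_D$ to the preceding expressions (together with $\wt\zeta=\pi_D\circ\zeta$) gives the stated formulas for $\partial_1$ and $\partial_2$. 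Commutativity of the back squares then follows from the commutativity of the other five faces of each cube, using surjectivity of $\pi_A\otimes(-)$ and $\pi_D\otimes(-)$.

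Finally, I would derive \eqref{eq:f_2} from the defining identity $\partial_1\partial_2=-z\cdot f_2$, which is \eqref{eq:partial_n} for $n=2$. A direct calculation gives $\partial_1\partial_2(x\otimes r)=\sum x\wt\zeta(r_{T_A})r_{A^1}=x\wt\zeta(r)$, where the last equality uses that $\wt\zeta$ is an $S$-ring morphism and that $r=\sum r_{T_A}r_{A^1}$ under the multiplication isomorphism $m:T_A\otimes A^1\to T_A^+$ introduced in \S\ref{fa:A-resolution_2}. On the other hand, $\pi_D\circ\theta=\pi'_D\circ\gamma\circ\lambda\circ\alpha_z=\pi'_D\circ\alpha_z=0$, and formula \eqref{eq:theta(r)} together with $\alpha_0=\id_R$ yields
\[
0=\pi_D(\theta(r))=\wt\zeta(r)+\sum_{i\geq 1}\wt\zeta(\alpha_i(r))z^i,
\]
so, invoking the centrality of $z$, $\wt\zeta(r)=-z\sum_{i\geq 1}\wt\zeta(\alpha_i(r))z^{i-1}$. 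Substituting back gives $\partial_1\partial_2(x\otimes r)=-z\sum_{i\geq 1}x\wt\zeta(\alpha_i(r))z^{i-1}$, so $f_2$ is forced to be the map in \eqref{eq:f_2}. The main bookkeeping obstacle is keeping straight the several identifications of $R$---as a bimodule inside $I_A\subset T_A$ and, via $m^{-1}$, as a submodule of $T_A\otimes A^1$---when pushing $r$ through $\wt\zeta$; once one records that $\wt\zeta$ is a ring morphism and that $\zeta$ restricts to the inclusion $A^1\hookrightarrow D^1$, the computation proceeds mechanically.
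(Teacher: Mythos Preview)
Your proposal is correct and follows essentially the same route as the paper's own argument: lift $\bar d_i$ through $\xi$ using the section $\zeta$, push down through $\pi_D$, and then compute $\partial_1\partial_2(x\otimes r)=x\wt\zeta(r)$ and use $\pi_D\theta=0$ together with \eqref{eq:theta(r)} to extract $f_2$. The only slip is a swap of labels: by the very definition of ${}_{\pi_D}(-)$ it is the \emph{back} squares (the $T_D$--$D$ faces, with $\pi_D\otimes(-)$ as vertical arrows) that commute, and it is the \emph{bottom} parallelograms (the $D$--$A$ faces) whose commutativity then follows by a diagram chase using surjectivity of $\pi_D\otimes(-)$; this is purely terminological and does not affect the substance.
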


\begin{lemma}\label{le:zetacomult}
	If $x\in T_A$, then $\sum\zeta(x)_{T_D}\otimes \zeta(x)_{D^1}=\sum \zeta\left(x_{T_A}\right)\otimes \zeta\left(x_{A^1}\right)$.
\end{lemma}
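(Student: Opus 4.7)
My plan is to reformulate the Sweedler-like notation as the inverse of a multiplication isomorphism and then derive the claim from the fact that $\zeta$ is a morphism of $S$-rings.

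Recall from \S\ref{fa:A-resolution_2} that because $T_A = T_S(A^1)$ is the tensor $S$-ring on $A^1$, the multiplication map $m_A : T_A \ot A^1 \to T_A^+$, $t \ot v \mapsto tv$, is an isomorphism of graded $S$-bimodules. The Sweedler-like decomposition $x = \sum x_{T_A} \ot x_{A^1}$ is, by definition (cf.~Remark~\ref{re:d_2}), exactly $m_A^{-1}(x)$ for $x\in T_A^+$. Since $T_D = T_S(D^1)$ is likewise a tensor $S$-ring, the analogous multiplication map $m_D : T_D \ot D^1 \to T_D^+$, $t \ot v \mapsto tv$, is also an isomorphism; the right-hand side $\sum \zeta(x)_{T_D} \ot \zeta(x)_{D^1}$ is then $m_D^{-1}(\zeta(x))$.

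Hence the lemma amounts to showing that the square
\[
\xymatrix@C=30pt@R=20pt{
T_A \ot A^1 \ar[r]^{\zeta \ot \zeta} \ar[d]_{m_A} & T_D \ot D^1 \ar[d]^{m_D}\\
T_A^+ \ar[r]_{\zeta} & T_D^+
}
\]
commutes, or equivalently, inverting the vertical arrows, that $(\zeta \ot \zeta) \circ m_A^{-1} = m_D^{-1} \circ \zeta$. The commutativity of the above square is a direct consequence of the fact that $\zeta : T_A \to T_D$ is a morphism of $S$-rings: for any elementary tensor $t \ot v \in T_A \ot A^1$, one has
\[
m_D \big((\zeta \ot \zeta)(t \ot v)\big) = \zeta(t)\,\zeta(v) = \zeta(tv) = \zeta\big(m_A(t \ot v)\big),
\]
and both sides extend $S$-bilinearly. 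Applying the inverted identity to $x \in T_A^+$ yields precisely the stated equality; for $x \in T_A^0 = S$ both Sweedler sums are zero (as $T_A \ot A^1$ and $T_D \ot D^1$ are concentrated in degrees $\geq 1$) and the identity holds trivially.

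There is no real obstacle here: the statement is a bookkeeping identity expressing the compatibility of the ring map $\zeta$ with the canonical splittings $T_A^+ \cong T_A \ot A^1$ and $T_D^+ \cong T_D \ot D^1$ provided by the respective tensor $S$-ring structures. The only point to be mildly careful about is to note that these splittings do exist canonically (by the universal property of the tensor $S$-ring), so that the Sweedler notation is unambiguous on both sides.
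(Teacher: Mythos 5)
Your proof is correct and follows essentially the same line of reasoning as the paper's: both rest on the observation that the Sweedler-like decomposition is the inverse of the multiplication isomorphism $T_A\ot A^1\xrightarrow{\sim}T_A^+$ (and likewise for $T_D$), and that $\zeta$ being a graded morphism of $S$-rings makes the relevant square commute. The paper phrases this more tersely by reducing to a homogeneous $x$ of degree $n$, applying $\zeta$ directly, and observing that $\zeta(x_{T_A})\in T_D^{n-1}$, $\zeta(x_{A^1})\in D^1$; your version simply makes the commuting-square structure explicit and handles the $x\in T_A^0$ case separately, which is a harmless and slightly more careful rephrasing.
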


\begin{proof}
	Without loosing in generality, we may assume that $x\in T_A$ is a homogeneous element of degree $n$. Then $x=\sum x_{T_A}\ot x_{A^1}$, with $x_{T_A}\in T_A^{n-1}$ and $x_{A^1}\in A^1$. Since $\zeta$ is a morphism of rings   we get $\zeta(x)=\sum_{i=1}^p \zeta(x_{T_A})\ot \zeta(x_{A^1})$. We conclude by remarking that $\zeta(x_{T_A})\in T_D^{n-1}$ and $\zeta(x_{A^1})\in D^1$, as $\zeta$ is a graded map.
\end{proof}

\begin{theorem}\label{prop:H10}
	Let $R\subseteq T_A$ be a bimodule of relations for a strongly graded $S$-ring $A$ such that $A^1$ and $R$ are left $S$-projective. If $D$ is a central extension of $A$ associated to some filtered map $\alpha$, then the corresponding sequence $(^1M_\ast,\delta_*)$ is isomorphic to \eqref{eq:D_resolution}. In particular, $\h_1\left({}^1M_\ast\right)=0$.
\end{theorem}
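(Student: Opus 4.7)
The plan is to construct an explicit isomorphism between the truncated sequence ${}^1M_*$ and the beginning of the projective resolution \eqref{eq:D_resolution}, and then to deduce $\h_1({}^1M_*)=0$ from the exactness of the latter. First I would choose the resolution of $S$ as an $A$-module to be the one built in \S\ref{fa:special_resolution}, so that $V_2=R$ and $d_2$ is given by the formula in Remark \ref{re:d_2}. Then $M_0\cong D$, $M_1=(D\ot A^1)\op D(-1)$ and $M_2=(D\ot R)\op (D\ot A^1)(-1)$. On the other side, the decompositions $D^1=\zeta(A^1)\op Sz$ and $R_D=\lambda(P_z)\op[z,A^1]$ from Lemma \ref{le:Rz}, together with the graded isomorphisms $\theta$ and $\rho$ of Lemma \ref{le:gamma12}, yield $D\ot D^1\cong (D\ot A^1)\op D(-1)$ and $D\ot R_D\cong (D\ot R)\op (D\ot A^1)(-1)$. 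The candidate isomorphism of sequences is then given on the terms by $\Theta_0=\Id_D$, $\Theta_1(x\ot a,y)=x\ot \zeta(a)+y\ot z$ and $\Theta_2(x\ot r,y\ot a)=x\ot \theta(r)+y\ot \rho(a)$.

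The crux is to verify that the $\Theta_i$ intertwine the differentials. The check for $\delta_1$ versus $d_1'$ is routine: $\delta_1(x\ot a,y)=\partial_1(x\ot a)+zy=xa+yz$ by the formula for $\partial_1$ in \S\ref{sec:explicitmaps} and centrality of $z$, which matches $d_1'(x\ot \zeta(a)+y\ot z)=xa+yz$. The interesting comparison is $\delta_2$ versus $d_2'$, which I would split according to the two summands of $M_2$. For an element $(1\ot r,0)$ I would expand $\theta(r)=\sum_{i\geq 0}\zeta(\alpha_i(r))\ot z^{\ot i}$ and use Lemma \ref{le:zetacomult} to compute its Sweedler-like decomposition in $T_D$: the $i=0$ summand has last tensor factor in $\zeta(A^1)$ and contributes $\partial_2(1\ot r)$ to $d_2'(1\ot \theta(r))$ via Remark \ref{re:d_2}, while the summands with $i\geq 1$ have last factor $z$ and collectively contribute $\sum_{i\geq 1}\wt{\zeta}(\alpha_i(r))z^{i-1}\ot z$, which under $\Theta_1^{-1}$ becomes precisely $f_2(1\ot r)$ by formula \eqref{eq:f_2}. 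For $(0,1\ot a)$, a direct computation gives $d_2'(1\ot \rho(a))=\wt{\zeta}(a)\ot z-z\ot \zeta(a)$; its pullback through $\Theta_1^{-1}$ is $(-z\ot a,\,a)$, which matches $\delta_2(0,1\ot a)$ thanks to the sign $(-1)^{2-1}$ in the definition of $\delta_2$ and the relation $\partial_1(1\ot a)=\wt{\zeta}(a)=a$.

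Since $R_D$ is a bimodule of relations for $D$ (Lemma \ref{le:Rz}) and both $R_D$ and $D^1$ are projective as left $S$-modules (Lemma \ref{le:gamma12} and \S\ref{fa:D-presentation}), the construction of \S\ref{fa:A-resolution_2} applied to $D$ in place of $A$ exhibits \eqref{eq:D_resolution} as the beginning of a projective resolution of $S$, hence it is exact at $D\ot D^1$. Transporting this exactness through the isomorphism $\Theta_*$ yields $\h_1({}^1M_*)=0$. The main obstacle I anticipate is the Sweedler-style bookkeeping on $T_D=T_S(D^1)$: one must systematically track, for each summand $\zeta(\alpha_i(r))\ot z^{\ot i}$, whether the last tensor factor belongs to $\zeta(A^1)$ or to $Sz$, since this is exactly what separates $d_2'(1\ot \theta(r))$ into the $\partial_2$-part and the $f_2$-part, while simultaneously controlling the degree shift on the second summand of $M_2$ and the sign $(-1)^{2-1}$ in $\delta_2$.
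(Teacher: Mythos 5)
Your proposal is correct and follows essentially the same route as the paper: the maps $\Theta_1,\Theta_2$ are exactly the isomorphisms $\vartheta_1=(D\ot\zeta)\oplus(-\ot z)$ and $\vartheta_2=(D\ot\theta)\oplus(D\ot\rho)$ used there, and the verification of the two commuting squares proceeds via the same ingredients (Remark \ref{re:d_2}, Lemma \ref{le:zetacomult}, the explicit $f_2$ of \eqref{eq:f_2}, and the sign in $\delta_2$), with exactness of \eqref{eq:D_resolution} giving $\h_1({}^1M_*)=0$. No gaps to report.
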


\begin{proof}
	Recall that the sequence $^1M_\ast$ was defined using the maps $\partial_1$ and $\partial_2$ that were constructed in \S\ref{sec:explicitmaps}. Moreover we have:
\begin{equation}
	D\otimes R_D=D\otimes (\lambda(P_z)\oplus [z,A^1])\cong (D\otimes \lambda(P_z))\oplus (D\otimes [z,A^1]).
\end{equation}
	By definition, $M_1=(D\ot {A^1})\oplus D(-1)$ and $M_2=(D\ot R) \oplus  \big( D\ot {A^1}(-1)\big)$. Let ${\vartheta}_1:M_1\to D\ot D^1$ be the isomorphism which coincides with $D\ot \zeta$ on $D\ot {A^1}$ and mapping $x$ to $x\ot z$, for all $x\in D$. We also have an $S$-linear isomorphism ${\vartheta}_2:=(D\otimes \theta)\oplus(D\otimes\rho)$ from $M_2$ to $D\ot R_D$.  To conclude the proof, since $M_0=D$,  it is enough to show that the squares of the following diagram are commutative.
\begin{equation*}\label{eq:commsquares}
\xymatrix{
	M_2 \ar[d]_-{{\vartheta_2}} \ar[r]^-{\delta_2} & M_1 \ar[d]_-{\vartheta_1} \ar[r]^-{\delta_1} & M_0\ar@{=}[d] \\
	D\otimes R_D \ar[r]_-{d_2'}  & D\otimes D^1 \ar[r]_-{d_1'}  & D
}
\end{equation*}
	Let us pick a homogeneous element $r\in R^n$, for some $n\geq 2$. Thus:
\begin{align*}
	(d_2'  \vartheta_2)(1\otimes r) & =d_2'\left(1\otimes \theta(r)\right) = \sum \pi_D\big({\zeta(r)}_{T_D}\big)\ot {\zeta(r)}_{D^1}+\sum_{i=1}^{\infty}\pi_D\left(\zeta\left(\alpha_i(r)\right)\otimes z^{\otimes i-1}\right)\otimes z \\
 	&=\sum {\wt{\zeta}(r_{T_A})}\ot {\zeta(r_{{A^1}})}+\sum_{i=1}^{\infty}\wt{\zeta}\left(\alpha_i(r)\right)z^{i-1}\otimes z\\
 	& =(D\otimes \zeta)\left(\partial_2(1\otimes r)\right)+f_2(1\otimes r)\otimes z=(\vartheta_1 \delta_2 )(1\otimes r),
\end{align*}
	where for the third equation we used the relation from Lemma \ref{le:zetacomult}.
	Furthermore, for $a\in {A^1}$ we get:
\begin{align*}
	(d_2'  \vartheta_2)(1\otimes a)& =d_2'\big(1\otimes a\otimes z-1\ot z\otimes a)=a\otimes z-z\otimes a\\
	& = (D\otimes \zeta)(-z\cdot (1\otimes a))+\partial_1(1\otimes a)\otimes z= (\vartheta_1 \delta_2 )(1\otimes a).
\end{align*}
	Since $d_2'$ and $\vartheta_2$ are $D$-linear maps we conclude that the the left square of the above diagram is commutative. The other square of the same diagram is obviously commutative, by the definition of the maps $\delta_1$, $d_1'$ and $\vartheta_1$.
\end{proof}

\begin{theorem}\label{th:homological_condition}
	Let $R\subseteq T_A$ be a bimodule of relations of a strongly graded connected $S$-ring $A$ such that $R$ and $A^1$ are left $S$-projective.  If $D$ is the central extension of $A$ associated to the filtered morphism $\alpha:R\to T_A$, then $z$ is regular in $D$ if and only if either  $c_{V_3}=-1$ or $z$ is $c_{V_3}$-regular.
\end{theorem}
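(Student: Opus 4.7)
The plan is to deduce this theorem as an immediate consequence of combining Theorem~\ref{th:regandcompl}, Theorem~\ref{prop:H10}, and Remark~\ref{re:c(A)=-1}, since all the hard work has been done in the preceding subsections. The assumptions of the statement place us exactly in the framework of \S\ref{sec:vanH1}: $R$ is a bimodule of relations for $A$, both $R$ and $A^1$ are left $S$-projective, and $D$ is the central extension of $A$ attached to the filtered map $\alpha:R\to T_A$. Moreover, by the construction in \S\ref{fa:A-resolution_2}--\S\ref{fa:special_resolution} we may (and do) use the projective resolution \eqref{eq:special_resolution-V3} as the resolution \eqref{eq:special_resolution} when defining the sequence $(M_*,\delta_*)$.

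The key observation is that, in this setting, Theorem~\ref{prop:H10} guarantees unconditionally that $\h_1({}^1M_*)=0$; this is precisely the homological obstruction that would otherwise need to be checked separately. The proof then splits into two cases according to whether $c_{V_3}=-1$ or not.

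First, if $c_{V_3}\neq-1$, I would simply invoke Theorem~\ref{th:regandcompl}, which states that $z$ is regular in $D$ if and only if $\h_1({}^1M_*)=0$ and $z$ is $c_{V_3}$-regular. Since the first condition is automatic by Theorem~\ref{prop:H10}, the regularity of $z$ is equivalent to its $c_{V_3}$-regularity, as required. Second, if $c_{V_3}=-1$, I would invoke Remark~\ref{re:c(A)=-1}, which tells us that $z$ is regular if and only if $\h_1({}^1M_*)=0$; again by Theorem~\ref{prop:H10} this vanishing holds, so $z$ is automatically regular. This matches the statement, which in the case $c_{V_3}=-1$ imposes no additional $c_{V_3}$-regularity condition.

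There is essentially no obstacle here: once Theorem~\ref{prop:H10} has produced the identification of ${}^1M_*$ with the beginning of the standard projective resolution \eqref{eq:D_resolution} of $S$ over $D$, the theorem is a formal corollary. The only small subtlety is the bookkeeping to ensure that the resolution $A\otimes V_*$ used in defining $(M_*,\delta_*)$ is taken to be of the shape \eqref{eq:special_resolution-V3} with $V_2=R$, so that the hypotheses of Theorem~\ref{prop:H10} apply verbatim; this is legitimate since any such resolution may be used in Theorem~\ref{th:regandcompl} and Remark~\ref{re:c(A)=-1}.
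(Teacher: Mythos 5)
Your proposal is correct and follows essentially the same route as the paper: the authors likewise combine Theorem \ref{prop:H10} (which gives $\h_1({}^1M_*)=0$ unconditionally in this setting) with Theorem \ref{th:regandcompl} for the case $c_{V_3}\geq 1$ and Remark \ref{re:c(A)=-1} for the case $c_{V_3}=-1$ (where $V_3=0$). No gaps to report.
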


\begin{proof}
	Recall that, by Remark \ref{re:c(A)=-1}, for any resolution $A\ot V_*$ of  $S$ as in \eqref{eq:special_resolution}, with $V_n^i=0$ for $n\geq 3$ and $i\leq 1$, then either $c_{V_3}=-1$ or $c_{V_3}\geq 1$. Thus, in the former case, $z$ is regular as $V_3=0$. The theorem now is a direct consequence of Theorem \ref{th:regandcompl}, since by Theorem \ref{prop:H10} we have $H_1(^1M_*)=0$.
\end{proof}

\section{The Poincar\'e-Birkhoff-Witt Theorem and applications.}\label{sec:4}

	In this section we will use the homological characterization of regularity of $1$-degree homogeneous central elements  to prove the main results of the paper, several versions of Poincar\'e-Birkhoff-Witt Theorem, and to derive some applications of it.
	
	In the most general form, it can be stated for quotients of arbitrary strongly graded connected $S$-rings.
	
\begin{theorem}\label{th:PBW}
	Let $P$ denote an $S$-subbimodule of a connected strongly graded $S$-ring  $T$ such that $P\cap S=0$ and let $A:=T/\ig{R_P}$. We assume that $A^1$ is left $S$-projective and we fix a projective resolution $A\ot V_*$ as in \eqref{eq:special_resolution}, with $V_n^i=0$ for $n\geq 3$ and $i\leq 1$. The bimodule  $P$ is of PBW-type if and only if $H_1({^1M_*})=0$ and either $c_{V_3}=-1$ or $P$ satisfies $(\cJ_1)-(\cJ_{c_{V_3}})$, where $(^1M_*,\delta_*)$ corresponds  to the central extension  $D:=T[z]/\langle P^*\rangle$.
\end{theorem}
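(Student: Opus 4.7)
The plan is to chain together three equivalences already established in the paper: PBW-type versus regularity of $z$ in $D(P)$ (Theorem~\ref{th:Rees}), regularity of $z$ versus the homological conditions involving the complex $({}^1M_*,\delta_*)$ and $V_3$ (Theorem~\ref{th:regandcompl}), and degree-wise regularity of $z$ versus the Jacobi relations (Theorem~\ref{Th:JacAnn}). First I would check that $D:=T[z]/\langle P^*\rangle$ satisfies the hypotheses of Theorem~\ref{th:regandcompl}. Since $T$ is connected and strongly graded, so is $T[z]$; moreover $P\cap S=0$ forces every nonzero $p\in P$ to have positive top degree, so $P^*\subseteq T[z]^{\geq 1}$ and consequently $D$ is a connected strongly graded $S$-ring. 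By Lemma~\ref{le:ev_1}~(2) we have $D/zD\cong A$, and $z\in D^1$ is central by construction; left $S$-projectivity of $A^1$ is built into the statement.

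Next I would run the chain. Theorem~\ref{th:Rees} gives that $P$ is of PBW-type if and only if $\an{}=0$, i.e.\ if and only if $z$ is regular in $D$. Theorem~\ref{th:regandcompl} then translates this into the simultaneous conditions $\h_1({}^1M_*)=0$ and $c_{V_3}$-regularity of $z$. Finally, Theorem~\ref{Th:JacAnn} equates $z$ being $n$-regular with the validity of $(\cJ_k)$ for every $0\leq k\leq n$.

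The remaining work is purely bookkeeping around the range of indices and the boundary case $c_{V_3}=-1$, which I expect to be the only mildly delicate point. If $c_{V_3}=-1$, that is $V_3=0$, then $c_{V_3}$-regularity is vacuous and the statement reduces to $\h_1({}^1M_*)=0$, matching the first clause of the disjunction. If $V_3\neq 0$, the standing assumption $V_n^i=0$ for $n\geq 3$ and $i\leq 1$ forces $c_{V_3}\geq 1$. In that case $(\cJ_0)$ is automatic: indeed $P\cap S=0$ yields $P_0=0$, so by the recursion~\eqref{eq:P_n} we get $P_1=P^{\leq 1}$, whence $P_1\cap T^{\leq 0}\subseteq P\cap S=0=P_0$. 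Therefore the only genuine Jacobi conditions in the range $0\leq k\leq c_{V_3}$ are $(\cJ_1),\dots,(\cJ_{c_{V_3}})$, exactly as claimed.
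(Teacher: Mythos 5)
Your proposal is correct and follows essentially the same route as the paper: the paper also establishes the equivalence by chaining the PBW-type $\Leftrightarrow$ Jacobi $\Leftrightarrow$ degree-wise regularity equivalences (Theorems \ref{Th:PBWJac} and \ref{Th:JacAnn}) with the homological criterion for regularity (Theorems \ref{th:regandcompl} and \ref{th:mainth}, plus Remark \ref{re:c(A)=-1} for the $c_{V_3}=-1$ case). Your only departure is to invoke the consolidated Theorem \ref{th:Rees} for the step ``PBW-type $\Leftrightarrow$ $\an{}=0$'', which is a mild shortcut since that theorem is itself deduced from the same two ingredients; your explicit verification that $(\cJ_0)$ is automatic under $P\cap S=0$ and the index bookkeeping match what the paper does implicitly.
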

	
	\begin{proof} Note that the relations $P\cap S=0$ and $R_P^0=0$ are equivalent, and that they imply $(\cJ_0)$.  Thus, if  $P\cap S=0$, then $D^0= S= A^0$, as it is required in Section \ref{sec:3}.
	We have already noticed that $z$ is regular, provided that $c_{V_3}=-1$ and the first homology group of $^1M_*$ vanishes. If $c_{V_3}\geq 1$ and $P$ satisfies the Jacobi conditions $(\cJ_{n})$ for all $n\leq c_{V_3}$, then the central element $z$ in $D$ is $c_{V_3}$-regular, cf. Theorem \ref{Th:JacAnn}. Taking into account Theorem \ref{th:regandcompl}, it follows that $z$ is regular in $D$. Thus, using once again Theorem \ref{Th:JacAnn}, the Jacobi conditions $(\cJ_n)$ hold for all $n\in\N$, which means that $P$ is of $PBW$-type, cf. Theorem \ref{Th:PBWJac}. The converse follows immediately by applying Theorem \ref{Th:PBWJac}, Theorem \ref{Th:JacAnn} and Theorem \ref{th:mainth}.
	\end{proof}

\begin{fact}[The homological complexity of $A$.]\label{fa:notation_PBW}
	From now on we assume that there exists a bimodule of relations $R\subseteq T_A$ for $A$ and that $A^1$ and $R$ are projective as left $S$-modules. We will always work with   resolutions $A\ot V_*$ as in  \eqref{eq:special_resolution},  with $V_n^i=0$ for $n\geq 3$ and $i=0,1$. We denote  the class of all resolutions of this type by $\mathfrak{P}$.
	
	We define the \textit{homological complexity} of $A$ as the complexity of the graded module $\tor_{3}^A(S,S)$. The homological complexity of $A$  will be denoted by $c(A)$. Thus, by definition, if $\tor_{3}^A(S,S)=0$, then $c(A)=-1$. Otherwise, we have
\begin{equation}\label{eq:lbound_c(A)}
	c(A)= \sup\{n-1\mid\tor_{3,n}^A(S,S)\neq 0\}.
\end{equation}
	In particular, we deduce that $c(A)\geq 0$, provided that $\tor_{3}^A(S,S)\neq 0$.
	
	The homological complexity of $A$ should not be confused with the complexity of $A$ as a graded $S$-module, that is $c_A:=\sup\{n-1\mid A^n\neq 0\}$.
	
	We claim that $c(A)$ may be computed using the following  formula:
\begin{equation}\label{eq:complexity_inf}
	c(A)=\inf\{c_{V_3}\mid A\ot V_*\in \mathfrak{P}\}.
\end{equation}
	Indeed, let us denote the right-hand side of the above relation by $c'(A)$. By \S\ref{fa:special_resolution}, there exists a resolution $A\ot V_*$ in $\mathfrak{P}$, not necessarily minimal, such that $c(A)=c_{V_3}$. Then $c'(A)\leq c_{V_3}= c(A)$.
	
	To prove the other inequality, we may assume that $c(A)\geq 0$, otherwise \eqref{eq:complexity_inf} trivially holds.  We pick an arbitrary projective resolution $A\ot W_*$  in $\mathfrak{P}$. Then $W_3^{c(A)+1} \neq 0$,  since $\tor_{3}^A(S,S)$ is a graded subquotient of $W_3$. We get $c_{W_3}\geq c(A)$, so $c'(A)\geq c(A)$.
	
	If $R\neq 0$, it is worthwhile to mention that  the relation $c(A)=-1$ is equivalent to the fact  that the projective dimension of $S$ is equal to $2$. Let us assume that  $\tor_3^A(S,S)=0$. We fix a resolution $A\ot V_*$ in $\mathfrak{P}$ such that $c_{V_3}=-1$, that is $V_3=0$. This means that the map $d_2$ in \eqref{eq:A-resolution} is injective. Therefore, this sequence yields us a minimal resolution of $S$ of length $2$. The other implication is obvious.

\end{fact}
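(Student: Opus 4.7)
The plan is to establish both the equality $c(A)=\inf\{c_{V_3}\mid A\ot V_*\in\mathfrak{P}\}$ and the equivalence $c(A)=-1\iff\Pd_A(S)=2$ (under the hypothesis $R\neq 0$) by exploiting the tailored resolution constructed in \S\ref{fa:special_resolution} together with the standard fact that, for any $A\ot V_*\in\mathfrak{P}$, the derived functor $\Tor^A_*(S,S)$ is the homology of the complex obtained by applying $S\ot_A-$ termwise.

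To prove $c(A)\leq\inf\{c_{V_3}\}$ I would fix an arbitrary $A\ot V_*\in\mathfrak{P}$ and note that $S\ot_A(A\ot V_n)\cong V_n$ as graded $S$-modules, so that in each internal degree $n$ the component $\Tor^A_{3,n}(S,S)$ is a subquotient of $V_3^n$. Hence vanishing of $V_3^n$ in degrees $n>c_{V_3}+1$ forces $\Tor^A_{3,n}(S,S)=0$ there, yielding $c(A)\leq c_{V_3}$. For the opposite inequality I would exhibit a resolution realizing the infimum by re-running the construction of \S\ref{fa:special_resolution}: set $J:=\{n\in\mathbb{N}\mid \Tor^A_{3,n}(S,S)\neq 0\}$, choose projective $S$-modules $V_3^n$ with epimorphisms $V_3^n\twoheadrightarrow K_2^n$ for $n\in J$, and set $V_3^n=0$ for $n\notin J$. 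The main obstacle is verifying that the resulting $A$-linear map $u:A\ot V_3\to K_2$ remains surjective despite dropping generators in degrees outside $J$; this relies on the identification $(K_2/A^+K_2)^n\cong\Tor^A_{3,n}(S,S)$ (established in \S\ref{fa:special_resolution}), which forces $K_2/A^+K_2$ to be concentrated in $J$, and on the graded Nakayama Lemma~\ref{le:Nakayama} applied to the cokernel of $u$. Completing the resolution in higher degrees in the standard way, one obtains an element of $\mathfrak{P}$ with $c_{V_3}=c(A)$.

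For the second claim, $\Pd_A(S)=2\Rightarrow c(A)=-1$ is immediate since $\Tor^A_n(S,S)$ vanishes above the projective dimension. Conversely, if $c(A)=-1$ then $J=\emptyset$ in the construction above, so one may take $V_3=0$, making $d_2$ injective and turning \eqref{eq:A-resolution} into a projective resolution of length two; hence $\Pd_A(S)\leq 2$. The reverse inequality $\Pd_A(S)\geq 2$ is where the hypothesis $R\neq 0$ enters: applying $S\ot_A-$ to the length-two resolution produces the complex $R\xrightarrow{\bar{d}_2}A^1\xrightarrow{\bar{d}_1}S$ in which $\bar{d}_1=0$ (because $A^1\subseteq A^+$) and $\bar{d}_2=0$ (because $R\subseteq T_A^{\geq 2}$ forces each factor $\pi_A(r_{T_A})$ appearing in the expression for $d_2(1\ot r)$ given in Remark \ref{re:d_2} to lie in $A^+$). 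Therefore $\Tor^A_2(S,S)\cong R\neq 0$, ruling out $\Pd_A(S)\leq 1$ and concluding that $\Pd_A(S)=2$.
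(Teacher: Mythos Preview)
Your proof is correct and follows essentially the same route as the paper: both directions of the infimum formula are obtained exactly as you describe, and the equivalence with $\Pd_A(S)=2$ is handled the same way, with the paper simply asserting that \eqref{eq:A-resolution} becomes a \emph{minimal} resolution of length~$2$ while you spell out the underlying reason, namely that $S\ot_A d_2=0$ forces $\Tor_2^A(S,S)\cong R\neq 0$.
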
\vspace*{1ex}		

\begin{remark}\label{re:c(AxV*)}
	Recall that a\textit{ minimal  resolution} of $S$ is an exact sequence $(A\ot V_*,d_*)$ of $A$-graded module, such that $V_n$ is projective and $\ker(d_n)\subseteq A^+\ot V_n$ (equivalently, $S\ot_A d_{n+1}=0$), for all $n\geq 0$. For these resolutions (if they exist),   $\Tor_{n}^A (S,S)$ and $V_n$ are isomorphic as graded modules. Moreover, for any strongly graded connected $S$-ring we have  $A^1=\tor_1^A(S,S)$. Thus, if there exists a minimal resolution $(A\ot V_*,d_*)$, then it is an exact sequence as in \eqref{eq:special_resolution} and $A^1$ is projective as a left $S$-module. In this case  $c(A)=c_{V_3}$. Furthermore, if $S$ is a field, then our definition of complexity coincides with that one from \cite{CS}.
	
	In light of \cite{Ei}, for any semisimple ring $S$, there exists a minimal resolution of $S$. In general, to determine it explicitly  may be an intricate problem. From this point of view,  it is an advantage that in Theorem \ref{th:PBW} we can use \textit{any}  resolution $A\ot V_*$ as in \eqref{eq:special_resolution}. On the other hand, working with a resolution $A\ot V_*$ such that $c_{V_3}=c(A)$, guarantees that the number of Jacobi relations that we  must check is minimum.
\end{remark}

We are now ready to prove a second version of the Poincar\'e-Birkhoff-Witt Theorem. We use the same notation as in the previous section: $A$ is a strongly graded connected $S$-ring, $\pi_A:T_A\to A$ denotes the canonical morphism of graded $S$-rings from the tensor $S$-ring of $A^1$ to $A$ and $I_A=\ker(\pi_A)$.
\begin{theorem}\label{co:PBW}
	Let $R$ be a generating subbimodule of $I_A$. We assume that $A^1$ and $R$ are left $S$-projective.
\begin{enumerate}
	\item If $P$ is a subbimodule of $T_A$ such that $R\subseteq R_P$, then  $U(P)$ is a PBW-deformation of $A$ if and only if the ideals generated by $R$ and $R_P$ coincides, $H_1({^1M_*})=0$ and either $c(A)=-1$ or $P$ satisfies $(\cJ_1)-(\cJ_{c(A)})$, where $(^1M_*,\delta_*)$ corresponds to the central extension  $D:=T[z]/\langle P^*\rangle$.
	
	\item Let $\alpha:R\to T_A$ be a map of filtered $S$-bimodules. If $P:=\alpha (R)$ and $R$ is a bimodule of relations for  $A$, then $U(P)$ is a PBW-deformation of $A$ if and only if either $c(A)=-1$ or $P$ satisfies $(\cJ_1)-(\cJ_{c(A)})$.
\end{enumerate}		
		
\end{theorem}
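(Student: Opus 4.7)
The plan is to deduce both parts from Theorem \ref{th:PBW} combined with Remark \ref{rem:large}, using the formula \eqref{eq:complexity_inf} that expresses $c(A)$ as the infimum of $c_{V_3}$ over $\mathfrak{P}$ in order to obtain the optimal (minimal) list of Jacobi conditions.

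For part (1), by Remark \ref{rem:large} the statement ``$U(P)$ is a PBW-deformation of $A=A(R)$'' is equivalent to the conjunction ``$P$ is of $PBW$-type'' and ``$\langle R\rangle=\langle R_P\rangle$''. Once the latter identity is in force, the technical hypothesis $P\cap S=0$ required to apply Theorem \ref{th:PBW} is automatic: since $R\subseteq I_A\subseteq T_A^{+}$, one has $\langle R_P\rangle\cap S=\langle R\rangle\cap S=0$, whence $R_P^0=0$. Now the standing hypothesis of \S\ref{fa:notation_PBW} supplies some projective bimodule of relations, so by the construction of \S\ref{fa:special_resolution} we can pick a resolution $A\otimes V_*\in\mathfrak{P}$ realizing $c_{V_3}=c(A)$. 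Applying Theorem \ref{th:PBW} with this distinguished resolution rephrases ``$P$ is of $PBW$-type'' as ``$H_1({}^1M_\ast)=0$ and either $c(A)=-1$ or $P$ satisfies $(\cJ_1)$--$(\cJ_{c(A)})$'', which is exactly what part (1) claims once combined with $\langle R\rangle=\langle R_P\rangle$.

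For part (2), the identity $\langle R\rangle=\langle R_P\rangle$ is automatic: since $P=\alpha(R)$ for the filtered map $\alpha$, Example \ref{ex:alpha} yields $R_P=R$. Moreover, all hypotheses of Theorem \ref{prop:H10} are met ($R$ is a bimodule of relations, $R$ and $A^1$ are left $S$-projective, and $D$ is the central extension associated to $\alpha$), so that theorem gives $H_1({}^1M_\ast)=0$ for free. Substituting these two gratis conditions into part (1) collapses the criterion to the Jacobi conditions alone, which is what part (2) asserts.

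The main conceptual point, rather than a computational obstacle, is recognising that the $H_1({}^1M_\ast)=0$ clause in Theorem \ref{th:PBW} can be dropped in the setting of part (2); this genuinely requires Theorem \ref{prop:H10}, whose proof identifies the truncated complex $({}^1M_\ast,\delta_\ast)$ with the beginning \eqref{eq:D_resolution} of a projective resolution of $S$ over $D$. Beyond this, the argument is essentially assembly: choosing the right resolution of $S$ over $A$ so as to make the number of Jacobi conditions minimal, and using Remark \ref{rem:large} to separate the homogeneous-quotient matching $\langle R\rangle=\langle R_P\rangle$ from the $PBW$-type condition.
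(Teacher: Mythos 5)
Your proposal is correct and follows essentially the same route as the paper. Both arguments (i) use Remark \ref{rem:large} to split ``$U(P)$ is a PBW-deformation of $A$'' into ``$\ig{R}=\ig{R_P}$'' plus ``$P$ is of PBW-type'', (ii) observe that $P\cap S=R_P^0\subseteq\ig{R}^0=I_A^0=0$ so Theorem \ref{th:PBW} applies, (iii) choose a resolution in $\mathfrak{P}$ realizing $c_{V_3}=c(A)$, and (iv) for part (2) obtain $R_P=R$ from Example \ref{ex:alpha} and the vanishing of $H_1({}^1M_*)$ from Theorem \ref{prop:H10}. The only difference is stylistic: you spell out why $P\cap S=0$ and why $H_1$ drops out, whereas the paper compresses these into a single line; the content is identical.
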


\begin{proof}
	Recall that $U(P)$ is a $PBW$-deformation of $A$ if and only if $\ig{R}=\ig{R_P}$ and $P$ is of $PBW$-type. In particular,
\[
	P\cap S=R_P^0=\ig{R_P}^0=\ig{R}^0=I_A^0=0.
\]
	Thus, the first part of the theorem follows by Theorem \ref{th:PBW}, where the resolution $A\ot V_*$ is taken such that $c(A)=c_{V_3}$. The second part is a consequence of (1), remarking that $D:=T[z]/\langle P^*\rangle=T[z]/\langle \alpha_z(R)\rangle$. Indeed, for a bimodule $P$ which is associated to a filtered morphism $\alpha$, we know that $R=R_P$ and $H_1\left({^1M_*}\right)=0$, cf. Theorem \ref{prop:H10}.
\end{proof}

\begin{theorem}\label{te:PBW}
Let $A$ be a strongly graded connected $S$-ring, which has a bimodule of  relations $R$.  Let $P$ be a subbimodule of $T_A$ such that $R\subseteq R_P$. We assume that $R$ and $A^1$ are left projective, and  $R_P$ is a projective $S$-bimodule. The $S$-ring $U(P)$ is a $PBW$-deformation of $A$ if and only if $\ig{R}=\ig{R_P}$, there exists a morphism of filtered $S$-bimodules $\alpha:R\to T_A$ such that $\alpha(R)\subseteq P\subseteq \ig{\alpha(R)}$ and either $c(A)=-1$ or $\alpha(R)$ satisfies the Jacobi conditions $(\cJ_1)-(\cJ_{c(A)})$.
\end{theorem}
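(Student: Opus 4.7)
The plan is to reduce the theorem to a combination of Corollary \ref{cor:largeR}, Example \ref{ex:alpha}, and Theorem \ref{co:PBW}\,(2). The enabling observation is that, by Remark \ref{rem:isos}, one has an isomorphism $\gr P\cong R_P$ of graded $S$-bimodules, so the assumed projectivity of $R_P$ as an $S$-bimodule makes $\gr P$ projective. The remark following Lemma \ref{le:strict} then guarantees that $P^{\leq n}$ is a direct summand of $P^{\leq n+1}$ for every $n$, which is precisely the standing hypothesis of Corollary \ref{cor:largeR}.

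For the direct implication, I would assume that $U(P)$ is a $PBW$-deformation of $A=A(R)$. Corollary \ref{cor:largeR} then supplies both $\ig{R}=\ig{R_P}$ and a filtered morphism $\alpha:R\to T_A$ such that $P':=\alpha(R)$ is a sub-bimodule of $P$ of $PBW$-type which generates $\ig{P}$. From $\ig{P'}=\ig{P}$ and $P'\subseteq P$, it follows immediately that $\alpha(R)\subseteq P\subseteq\ig{\alpha(R)}$. On the other hand, Example \ref{ex:alpha} yields $R_{P'}=R$, so $P'$ being of $PBW$-type is equivalent to saying that $U(P')$ is a $PBW$-deformation of $A$; feeding this into Theorem \ref{co:PBW}\,(2) produces the required dichotomy: either $c(A)=-1$ or $\alpha(R)$ satisfies $(\cJ_1)$--$(\cJ_{c(A)})$.

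For the converse, I would set $P':=\alpha(R)$. The identification $R_{P'}=R$ from Example \ref{ex:alpha} lets one apply Theorem \ref{co:PBW}\,(2) with the hypothesis on the Jacobi conditions to conclude that $U(P')$ is a $PBW$-deformation of $A$; in particular $P'$ is of $PBW$-type. The nested inclusions $\alpha(R)\subseteq P\subseteq\ig{\alpha(R)}$ force $\ig{P'}=\ig{P}$, so $P'$ generates $\ig{P}$ as required. Combined with $\ig{R}=\ig{R_P}$ and the direct summand property noted above, the backward half of Corollary \ref{cor:largeR} yields that $U(P)$ is a $PBW$-deformation of $A$.

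I do not anticipate a substantive obstacle, since the theorem is essentially a streamlined packaging of results already established. The two subtle points are extracting the direct summand hypothesis from the projectivity of $R_P$ and the ability to replace $P$ by the smaller, more tractable $P'=\alpha(R)$; these are handled, respectively, by invoking Remark \ref{rem:isos} together with the remarks after Lemma \ref{le:strict}, and by invoking Corollary \ref{cor:largeR}. The main care needed is to ensure that throughout the argument the Jacobi conditions are read on the single bimodule $\alpha(R)$, not on $P$ itself, and that the equality $R_{P'}=R$ is used uniformly in both directions so that Theorem \ref{co:PBW}\,(2) indeed characterises the $PBW$-type property of $P'$.
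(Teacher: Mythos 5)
Your proposal is correct and follows essentially the same route as the paper's proof: use $\gr P\cong R_P$ (Remark 2.2) to get the direct-summand hypothesis from the projectivity of $R_P$, reduce via Corollary 1.18 to the existence of a filtered $\alpha$ with $\alpha(R)\subseteq P\subseteq\ig{\alpha(R)}$ and $\alpha(R)$ of $PBW$-type, and then invoke $R_{\alpha(R)}=R$ to apply Theorem 5.4(2). The paper presents this more tersely as a single biconditional chain, while you spell out the two implications, but the logical content and key invocations are identical.
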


\begin{proof}
	Since $R_P$ and $\gr{P}$ are isomorphic graded bimodules, it follows that the components of $\gr {P}$ are projective bimodules. Therefore, $P^{\leq n}$ is a direct summand of $P^{\leq n+1}$, for all $n$. According to Corollary \ref{cor:largeR}, $U(P)$ is a $PBW$-deformation if and only if $\ig{R}=\ig{R_P}$ and there exists a morphism $\alpha : R \rightarrow T_A$ of filtered $S$-bimodules  such that $\alpha(R)\subseteq P\subseteq \ig{\alpha(R)}$ and $\alpha(R)$ is of PBW-type. Note that $R_{P'}=R$, where $P'=\alpha(R)$, so we can now apply Theorem \ref{co:PBW} (2).
\end{proof}

\renewcommand\thesubsection{\thesection.\Alph{subsection}}

\subsection{Pure bimodule of relations}\label{fa:Pure}\hfill\vspace*{1.5ex}
		
	As a first application of our main results, we will consider the case when $A$ is a strongly graded connected $S$-ring such that the ideal $I_A\subseteq T_A$  is generated by an $n$-pure   $S$-subbimodule $R$.
Since $I_A\subseteq T_A^{\geq 2}$, we have $n\geq 2$.
	We know that  $R$ is a bimodule of relations for $A$, cf. Corollary \ref{co:Pure}.

	Assume that $R$ and $A^1$ are projective as left $S$-modules. Then, by definition of the homological complexity, either $c(A)=-1$ or $c(A)\geq n$. Indeed, $K_2=\ker(d_2)\subseteq A^+ \ot R$. Since $(A^+ \ot R)^{\leq n}=0$,  we have $K_2^i=0$, for all $i\leq n$. Thus $K_2^{i}=A^1K_2^{i-1}$ for the same values of $i$ as before. Hence, in view of \S\ref{fa:special_resolution}, we get $\tor_{3,i}^A(S,S)=0$, for all $i\leq n$.

	We now fix an $S$-subbimodule $P\subseteq T_A^{\leq n}$, as in Example \ref{ex:BG}. Thus, by construction, $R:=p^n(P)$ is $n$-pure. Moreover, $\Phi_{P,R}$ is an isomorphism if and only if $P^{\leq n-1}=0$ and $P$ is of $PBW$-type. By Corollary \ref{co:BG}, we know that the vanishing of $P^{\leq n-1}$ is equivalent to the existence of a morphism of filtered bimodules $\alpha:R\to T_A$ such that $P=\alpha(R)$. In this case we also have $R_P=R$.

\begin{theorem}\label{te:PBW_pure}
	We keep the above notation. We assume that $\Tor_3^A(S,S)$ is $(n+1)$-pure  and that $A^1$ and $R$ are projective.  Then $U(P)$ is of $PBW$-deformation of $A$, if and only either $\Tor_3^A(S,S)=0$ or
	\[
	P^{\leq n-1}=0 \quad\text{and}\quad (VP+PV)^{\leq n}\subseteq P.
	\]
\end{theorem}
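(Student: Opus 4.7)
The strategy is to reduce to Theorem~\ref{co:PBW}(2) after identifying $R$ as a bimodule of relations for $A$ and then computing the Jacobi filtration $\{P_k\}$ explicitly under the hypothesis $P^{\leq n-1}=0$. First, because $R=p^n(P)=R_P^n$ is pure of degree $n$, Corollary~\ref{co:Pure} ensures it is an $S$-bimodule of relations for $A$. By Example~\ref{ex:BG} (equivalently Corollary~\ref{co:BG}), $U(P)$ is a $PBW$-deformation of $A$ if and only if $P^{\leq n-1}=0$ and $P$ is of $PBW$-type; moreover the vanishing of $P^{\leq n-1}$ is exactly the condition that $P=\alpha(R)$ for some morphism of filtered $S$-bimodules $\alpha:R\to T_A$, in which case $R_P=R$.

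Next, I would pin down the homological complexity. The preamble of \S\ref{fa:Pure} shows that $c(A)\geq n$ whenever $\Tor_3^A(S,S)\neq 0$, and the $(n+1)$-purity hypothesis immediately gives $c(A)\leq n$ in that case; altogether $c(A)\in\{-1,n\}$, with $c(A)=-1$ exactly when $\Tor_3^A(S,S)=0$. Now, assuming $P=\alpha(R)$, Theorem~\ref{co:PBW}(2) asserts that $P$ is of $PBW$-type if and only if either $c(A)=-1$ or $P$ satisfies $(\mathcal{J}_1),\dots,(\mathcal{J}_{c(A)})$. Thus the remaining task is the translation of these Jacobi conditions, when $c(A)=n$, into the single explicit relation $(VP+PV)^{\leq n}\subseteq P$.

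The core computation proceeds as follows: under $P^{\leq n-1}=0$ one has $P^{\leq k}=0$ for $k<n$ and $P^{\leq k}=P$ for $k\geq n$, so the recursion
\[
P_0=P\cap T^0=0,\qquad P_{k+1}=VP_k+P_kV+P^{\leq k+1}
\]
from Remark~\ref{rem:Qn} yields by induction $P_k=0$ for every $k<n$, $P_n=P$, and $P_{n+1}=VP+PV+P$. Hence $(\mathcal{J}_k)$ is trivial for $k\leq n-2$ (both sides are zero) and for $k=n-1$ (since $P_n\cap T^{\leq n-1}=P\cap T^{\leq n-1}=0=P_{n-1}$). For $k=n$, any element of $P_{n+1}\cap T^{\leq n}$ decomposes as $y+p$ with $y\in(VP+PV)\cap T^{\leq n}$ and $p\in P$ (because $P\subseteq T^{\leq n}$), so $(\mathcal{J}_n)$ is equivalent to $(VP+PV)^{\leq n}\subseteq P$.

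Combining the three steps gives the theorem: $U(P)$ is a $PBW$-deformation of $A$ precisely when $P^{\leq n-1}=0$ and, in addition, either $\Tor_3^A(S,S)=0$ (so no Jacobi relation is needed) or $(VP+PV)^{\leq n}\subseteq P$ (the unique surviving Jacobi condition $(\mathcal{J}_n)$). The substantive work is concentrated in the bookkeeping computation of $\{P_k\}$ and in checking that the single degree $n$ Jacobi relation is the only constraint coming from $(\mathcal{J}_1),\dots,(\mathcal{J}_{c(A)})$; everything else follows by direct appeal to Corollary~\ref{co:Pure}, Corollary~\ref{co:BG} and Theorem~\ref{co:PBW}(2).
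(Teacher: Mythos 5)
Your argument is correct and essentially reproduces the paper's proof: identify $R$ as a pure bimodule of relations (Corollary \ref{co:Pure}), reduce via Corollary \ref{co:BG} and Theorem \ref{co:PBW}(2) to the Jacobi conditions with $c(A)=n$ (or $-1$), compute the filtration $P_k=0$ for $k<n$ and $P_n=P$, and observe that only $(\cJ_n)$ is nonvacuous. The only divergence is in the final equivalence $(\cJ_n)\Leftrightarrow (VP+PV)^{\leq n}\subseteq P$: the paper appeals to the general-purpose Proposition \ref{pr:minimal'}(2) (which rests on Lemma \ref{lem:grsum}), whereas you give a short direct argument exploiting $P\subseteq T^{\leq n}$ (so that in any decomposition $w=y+p$ of an element $w\in P_{n+1}\cap T^{\leq n}$ with $y\in VP+PV$ and $p\in P$, one has $y=w-p\in T^{\leq n}$ automatically), which is slightly more elementary and works equally well in this pure setting.
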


\begin{proof}
	We can suppose that $\Tor_3^A(S,S)\neq 0$. We have noticed  that  $P^{\leq n-1}=0$ is a necessary condition for $\Phi_{P,R}$ being bijective so we can assume this relation is true. In particular $P=\alpha(R)$ for some morphism of filtered bimodules $\alpha:R\to P$. By the foregoing remarks it follows that $c(A)=n$.  Since $A^1$ and $R$ are projective, by Theorem \ref{co:PBW}, $P$ is of $PBW$-type if and only if $P$ satisfies the conditions $(\cJ_1)-(\cJ_n)$.  Hence, it is enough to prove that the relations $(\cJ_1)-(\cJ_n)$ holds, if and only if the second relation from the statement of the theorem is true. Since $P^{\leq n-1}=0$, we deduce by induction that $P_k=0$ for all $k\leq n-1$. On the other hand, by hypothesis and definition of $P_n$, we get $P_n=P$. Hence, the relations $(\cJ_1)-(\cJ_{n-1})$ hold and  $(\cJ_n)$ is equivalent to the second relation from the statement of the  theorem, cf. Proposition \ref{pr:minimal'} (2).
\end{proof}

\begin{remark}
	A similar result was obtained in \cite[Theorem 3.4]{BeGi}. In \textit{loc. cit.}, $S$ is a von Neumann regular ring, but  $A^1$ and $R$ are  not necessarily projective as left $S$-modules.
\end{remark}	\vspace*{1ex}

	We have seen that, for $P$ as in Example  \ref{ex:BG}, the condition $P^{\leq n-1}=0$ is equivalent to the existence of a morphism of filtered bimodules $\alpha :R\to T_S(V)$ such that $P=\alpha(R)$. We want now  to show that  $(VP+PV)^{\leq n}\subseteq P$ holds if and only for any $x\in X :=(R\ot V)\bigcap (V\ot R)$  and $0<i<n$ we have:
\begin{align*}
	&(V\ot \alpha_1-\alpha_1\ot V)(x)\in R \label{jpr0}\tag{$\cJ'_0$}\\
	 &\alpha_i  (V\ot\alpha_1 -\alpha_1\ot V)(x)=-(V\ot\alpha_{i+1} -\alpha_{i+1}\ot V)(x) \label{jpri}\tag{$\cJ'_i$}\\
	 &\alpha_n  (V\ot\alpha_1 -\alpha_1\ot V)(x)=0.\label{jprn}\tag{$\cJ'_n$}
\end{align*}
	In this case we will say that $\alpha$ satisfies the conditions \eqref{jpr0}--\eqref{jprn}.
\begin{theorem}[compare with {\cite[Therem 3.4]{BeGi}}]\label{te:PBW-pure}
	We keep the notation from Example \ref{ex:BG}.   Let $A:=A(R)$. We assume that  $A^1$ and $R$ are left projective  and that $\Tor_3^{A}(S,S)$ is $(n+1)$-pure.  Then $U(P)$ is a $PBW$-deformation of $A$, if and only if there is a morphism of filtered bimodules $\alpha:R\to T_S(V)$ such that $P:=\alpha(R)$ and either $\Tor_3^{A}(S,S)=0$ or the conditions \eqref{jpr0}--\eqref{jprn} are satisfied.
\end{theorem}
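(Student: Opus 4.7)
The strategy is to deduce the statement from Theorem \ref{te:PBW_pure} by unpacking its Jacobi-type condition $(VP+PV)^{\leq n} \subseteq P$ into explicit relations on the components of $\alpha$. Since $R = R_P^n$ is $n$-pure, Corollary \ref{co:Pure} guarantees that $R$ is a bimodule of relations for $A$, so Theorem \ref{te:PBW_pure} applies and asserts, in the nontrivial case $\Tor_3^A(S,S) \neq 0$, that $U(P)$ is a PBW-deformation of $A$ if and only if $P^{\leq n-1} = 0$ and $(VP+PV)^{\leq n} \subseteq P$. By Corollary \ref{co:BG}, the condition $P^{\leq n-1} = 0$ is equivalent to the existence of a filtered morphism of $S$-bimodules $\alpha : R \to T_S(V)$ with $P = \alpha(R)$ (and $\alpha_0$ the inclusion, as always assumed in the paper), which recasts the statement in the $\alpha$-form.

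The core of the argument is therefore to show that, once $P = \alpha(R)$, the inclusion $(VP+PV)^{\leq n} \subseteq P$ is equivalent to the conjunction of $(\cJ'_0)$--$(\cJ'_n)$. A generic element of $VP + PV$ has the form $\xi = \sum_k v_k \alpha(r_k) + \sum_j \alpha(r_j') v_j'$ for finite families $v_k, v_j' \in V$ and $r_k, r_j' \in R$. The requirement $\xi \in T^{\leq n}$ is precisely that the top-degree (degree $n+1$) component of $\xi$ vanishes, which says that $x := \sum_k v_k \otimes r_k \in V \otimes R$ simultaneously equals $-\sum_j r_j' \otimes v_j' \in R \otimes V$; equivalently $x \in X = (V \otimes R) \cap (R \otimes V)$. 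Expanding $\alpha = \sum_{i \ge 0}\alpha_i$ with $\deg(\alpha_i)=-i$ and $\alpha_0$ the inclusion, the component of $\xi$ in degree $n+1-i$ is then seen to be $(V \otimes \alpha_i - \alpha_i \otimes V)(x)$ for every $1 \le i \le n$, depending only on $x$ and not on the chosen representation. The requirement $\xi \in P = \alpha(R)$ becomes $\xi = \alpha(r)$ for some $r \in R^n$, whose component in degree $n+1-i$ is $\alpha_{i-1}(r)$. Matching degrees from top to bottom forces first $r = (V \otimes \alpha_1 - \alpha_1 \otimes V)(x) \in R$, which is $(\cJ'_0)$, then the tower of identities relating $\alpha_i(r)$ to $(V \otimes \alpha_{i+1} - \alpha_{i+1} \otimes V)(x)$ for $1 \le i \le n-1$, which give $(\cJ'_i)$, and finally $\alpha_n(r) = 0$ from the vanishing of the degree-$0$ part of $\xi$, which is $(\cJ'_n)$. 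The converse direction is immediate: given the $(\cJ'_i)$, the component-wise analysis shows that any $\xi \in (VP+PV)^{\leq n}$ reassembles as $\alpha(r)$ with $r := (V \otimes \alpha_1 - \alpha_1 \otimes V)(x) \in R$.

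The argument is essentially degree-by-degree bookkeeping once the reductions via Theorem \ref{te:PBW_pure} and Corollary \ref{co:BG} are in place. The only subtle points are (i) that the map $\xi \mapsto x$ is well-defined, i.e.\ the same $x$ arises from any representation of $\xi$ as $\sum v_k \alpha(r_k)+\sum \alpha(r_j')v_j'$ with cancelling top term, which follows since $(V \otimes \alpha_i)$ and $(\alpha_i \otimes V)$ factor through $V \otimes R$ and $R \otimes V$ respectively; and (ii) surjectivity of $\xi \mapsto x$, which is clear by producing $\xi$ directly from any presentation of an element $x \in X$ as $\sum v_k \otimes r_k = -\sum r_j' \otimes v_j'$. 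The main obstacle is the careful matching of signs in $(\cJ'_i)$ for intermediate indices, which — though routine — must be tracked precisely when comparing the two expansions of the top-degree component of $\xi$ inside $T^{n+1}$.
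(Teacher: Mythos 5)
Your proposal is correct and follows essentially the same route as the paper's: both first reduce via Theorem~\ref{te:PBW_pure} and Corollary~\ref{co:BG}, then establish the identity $(V\ot P + P\ot V)^{\leq n}=(V\ot\alpha-\alpha\ot V)(X)$, and finally compare homogeneous components of $(V\ot\alpha-\alpha\ot V)(x)$ and $\alpha(r)$ degree by degree. The ``sign bookkeeping'' you flag as the remaining obstacle is the same bookkeeping the paper leaves implicit in the phrase ``by equating the homogeneous components,'' so the two arguments are essentially identical.
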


\begin{proof}
	It is enough to prove that, for any morphism of filtered bimodules $\alpha:R\to T_S(V)$ such that $P=\alpha(R)$, the relation $(V\ot P+ P\ot V)^{\leq n}\subseteq P$ holds if and only if $\alpha$ satisfies  \eqref{jpr0}--\eqref{jprn}. We first show that:
	\begin{equation}\label{eq:alpha(R)}
	(V\ot P+ P\ot V)^{\leq n}=(V\ot \alpha-\alpha\ot V)(X ).
	\end{equation}
	Let $t\in (V\ot P+ P\ot V)^{\leq n}$. There exist $v'_1,\dots v'_p,v''_1,\dots,v''_q\in V$ and $r'_1,\dots r'_p,r''_1,\dots,r''_q\in R$ such that
	\[
	t=\sum_{i=1}^p v'_i\ot \alpha(r'_i) +\sum_{j=1}^q \alpha(r''_j)\ot v''_j.
	\]
	Since $t\in T_S(V)^{\leq n}$ it follows that $\sum_{i=1}^p v'_i\ot  r'_i =-\sum_{j=1}^q  r''_j \ot v''_j$. Therefore $x:=\sum_{i=1}^p v'_i\ot  r'_i $ belongs to $X $ and $t=(V\ot \alpha-\alpha\ot V)(x)$. The other inclusion can be proved in a similar way. Let $t=(V\ot \alpha-\alpha\ot V)(x)$ be an element in the right hand-side of the relation that we are proving. Obviously, $t\in V\ot P+ P\ot V$. Since $\alpha_0(r)=r$, for all $r\in R$, and $\deg(\alpha_i)=-i$,  it is not difficult to show that $t$ is also an element in $T_S^{\leq n}(V)$.
	
	Let us assume that $P$ satisfies the relation $(V\ot P+ P\ot V)^{\leq n}\subseteq P$. Let $x\in X $. By \eqref{eq:alpha(R)}, there exists $r\in R$ such that $(V\ot \alpha-\alpha\ot V)(x)=\alpha(r)$. By equating the homogeneous components of the two sides of this relation, we deduce that $\alpha$ satisfies \eqref{jpr0}--\eqref{jprn}.
	
	Conversely, let $x\in X $ and $t:=(V\ot \alpha-\alpha\ot V)(x)$. By \eqref{jpr0} it follows that $r:=(V\ot \alpha_1-\alpha_1\ot V)(x)$ is an element in $R$. Using the relations \eqref{jpri}, for $i=1,\ldots,n$, we deduce that $t=\alpha(r)\in P$.
\end{proof}

\subsection{\texorpdfstring{$PBW$}{PBW}-deformations of twisted tensor products.}\hfill\vspace*{1.3ex}

	In the second application of our main result we investigate the $PBW$-deformations of a twisted tensor product $\bts$, where $A$ is a strongly graded connected $\mK$-ring and $S$ is a $\mK$-ring.
\begin{fact}[Some categorical constructions related to twisting maps.] \label{fa:category}
	We fix a ring $\mK$ and a $\mK$-ring $S$. We denote by $\TS $ the category whose objects are couples $(V,\mt_V)$, where $V$ is a $\mK$-bimodule and $\mt_V:S\otK V\to V\otK S$ is a $\mK$-bilinear map which satisfies the relations:
	\begin{equation} \label{eq:t_W}
	\mt_V   (m_S\otK V) =(V\otK m_S)   ( \mt_V \otK S)  (S \otK  \mt_V)\qquad \text{and}  \qquad  \mt_V (1\otK v)=v\otK 1.
	\end{equation}
	A morphism in $\TS $ from $(V,\mt_V)$ to $(W, \mt_W)$ is a $\mK$-bilinear map $f:V\to W$ which  commutes with $\mt_V$ and $\mt_W$, that is
	$$(f\otK S) \mt_V=\mt_W  (S\otK f).$$
	If $(V,\mt_V)$  is an object in $\TS$, then $S$ acts  on  $V\otK S$ to the right via the multiplication of $S$. On the other hand, using the notation $\mt_V(s\otK v)=\sum v_{\mt_V}\otK s_{\mt_V}$, we define the \textit{left} $\mt_V$\textit{-twisted action} on $V\otK S$ by:
	\[
	s\cdot(v\otK s'):=\sum v_{\mt_V}\otK s_{\mt_V} s'.
	\]
	We will denote the resulting $S$-bimodule by $V\ot_{\mt_V} S$. The mapping $(V,\mt_V)\mapsto V\ot_{\mt_V} S$ defines a functor $\Psi$ from $\TS $ to the category of $S$-bimodules $\SMS$. The functor maps a morphism $f$ in $\TS $ to $f\otK S$. Taking the left regular action on $S\otK V$,  it follows that $t_V$ is a morphism of left $S$-modules, where on the codomain of $t_V$ one takes the left $\mt_V$-twisted action.
	
	We  point out that, for a field $\mK$ and a finite dimensional $\mK$-algebra $S$ , the maps satisfying \eqref{eq:t_W} are used in \cite{Ta},  in a left-right symmetric version, to characterize the $S$-bimodule structures on $V\otK S$, regarded as a right module via regular action.
	
	The category $\TS $ is monoidal with respect to the tensor product:
	\[
	(V,\mt_V)\ot (W,\mt_W):=(V\otK W, \mt_{V\otK W}), \qquad \mt_{V\otK W}:=(V\otK \mt_W)  (\mt_V\otK W).
	\]
	The unit object is $(\mK, \mt_\mK)$, where $\mt_\mK:S\otK \mK\to  \mK\otK  S$ is the canonical isomorphism. Taking on the category $\SMS$ the tensor product of $S$-bimodules, it follows that  $\Psi$  is a monoidal functor.
	
	Let $(V,\mt_V)$ be an object  in $\TS $ and let $X$ be an arbitrary $S$-bimodule. A  $\mK$-bilinear map $g: V\to X$  is called $\mt_V$-\textit{invariant} if and only if
	\begin{equation}\label{eq:g_invariant}
	s\cdot g(v)=\sum  g(v_{\mt_V})\cdot s_{\mt_V}.
	\end{equation}
	The set of $\mt_V$-in variant maps will be denoted by $\Hom_{\mK-\mK}^{\mt_V}(V, X)$.

	We mention that $\Hom_{\mK-\mK}(V, X)$ becomes an $S$-bimodule where the left action is induced by the one of $X$ while the right one associates to $g:V\to X$ and $s\in S$ the map $gs:v\mapsto\sum  g(v_{\mt_V})\cdot s_{\mt_V}$. Thus,  $\Hom_{\mK-\mK}^{\mt_V}(V, X)=\Hom_{\mK-\mK}(V, X)^S$. Recall that, by definition, the \textit{centralizer }of an $S$-bimodule $M$, is the abelian group $M^S$ containing the elements $m\in M$ such that $sm=ms$, for all $s\in S$.

	For future references, some basic properties of $\mt_V$-invariant maps  are collected in the next lemma.
\end{fact}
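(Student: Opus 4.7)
The plan is to verify, in order, the four assertions bundled inside Fact \ref{fa:category}: that $\Psi$ is a well-defined functor $\TS\to\SMS$; that the prescribed tensor product makes $\TS$ monoidal; that $\Psi$ is in fact a monoidal functor; and that the equality $\Hom_{\mK-\mK}^{\mt_V}(V,X)=\Hom_{\mK-\mK}(V,X)^S$ holds. Each of these reduces to unwinding the two defining axioms in \eqref{eq:t_W}, so the work is essentially bookkeeping.

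First, to show $V\ot_{\mt_V}S$ is an $S$-bimodule, right $S$-linearity of the twisted left action is transparent because $\mt_V$ only touches the first tensor factor and the auxiliary $s'$ remains on the right, so it commutes with the regular right action of $S$. The associativity $(s_1s_2)\cdot(v\otK s')=s_1\cdot(s_2\cdot(v\otK s'))$ is precisely the first identity $\mt_V(m_S\otK V)=(V\otK m_S)(\mt_V\otK S)(S\otK\mt_V)$ applied to $s_1\otK s_2\otK v$, and the unit axiom $1\cdot(v\otK s)=v\otK s$ is the normalization $\mt_V(1\otK v)=v\otK 1$. For a morphism $f\colon(V,\mt_V)\to(W,\mt_W)$ in $\TS$, the map $f\otK S$ is automatically right $S$-linear, and its left $S$-linearity is a one-line computation using exactly $(f\otK S)\mt_V=\mt_W(S\otK f)$; functoriality on composition is immediate from functoriality of $-\otK S$.

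Second, for the monoidal structure on $\TS$ the only genuine check is that $\mt_{V\otK W}:=(V\otK\mt_W)(\mt_V\otK W)$ again satisfies \eqref{eq:t_W}: the unit axiom follows by applying unitality first for $\mt_V$ and then $\mt_W$, and the associativity axiom reduces, via a diagram chase, to two applications of the associativity axioms for $\mt_V$ and $\mt_W$ separately. Once this is in place, the associator and unitor of $\mK$-bimodules automatically intertwine the composed twists, so they lift to morphisms in $\TS$, and the pentagon and triangle come for free from those of $\mK$-bimodules. To see that $\Psi$ is monoidal I would define the coherence isomorphisms $\Psi(\mK)=\mK\ot_{\mt_\mK}S\cong S$ (obvious) and $\varphi_{V,W}\colon(V\otK W)\ot_{\mt_{V\otK W}}S\to(V\ot_{\mt_V}S)\ot_S(W\ot_{\mt_W}S)$ by $(v\otK w)\otK s\mapsto(v\otK 1)\ot_S(w\otK s)$; well-definedness over $\ot_S$ and invertibility both rest on the factorization $\mt_{V\otK W}=(V\otK\mt_W)(\mt_V\otK W)$, which encodes moving $s$ first past $w$ via $\mt_W$ and then past $v$ via $\mt_V$. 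Naturality of $\varphi_{V,W}$ in each argument and the monoidal coherence diagrams for $\Psi$ are then formal.

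Third, for the centralizer description I would verify that the formula $(gs)(v):=\sum g(v_{\mt_V})s_{\mt_V}$ actually defines a right $S$-action on $\Hom_{\mK-\mK}(V,X)$: right associativity $(gs_1)s_2=g(s_1s_2)$ is the first axiom of \eqref{eq:t_W}, and $g\cdot 1=g$ is the second. Together with the standard left action $(sg)(v):=s\cdot g(v)$, which is manifestly $\mK$-bilinear in $g$ because $X$ is an $S$-bimodule, this gives the claimed bimodule structure, and the centralizer equation $sg=gs$ evaluated at $v\in V$ becomes $s\cdot g(v)=\sum g(v_{\mt_V})s_{\mt_V}$, which is exactly \eqref{eq:g_invariant}. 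The main bookkeeping obstacle, and the only place where I expect real care to be needed, is the verification of associativity for $\mt_{V\otK W}$ and the well-definedness of $\varphi_{V,W}$ over the balanced tensor product, both of which require tracking precisely the order in which $s\in S$ passes via $\mt_V$ and $\mt_W$ past the two tensor factors.
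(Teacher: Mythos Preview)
Your verification is correct and complete. The paper does not supply a proof of this Fact at all: it is presented as a package of definitions together with routine claims (well-definedness of the bimodule $V\ot_{\mt_V}S$, functoriality and monoidality of $\Psi$, the centralizer description of $\mt_V$-invariant maps) that are left to the reader, with the nontrivial consequences deferred to the subsequent Lemma~\ref{le:Psi}. Your outline unwinds exactly the axioms in \eqref{eq:t_W} at each step, which is precisely what is needed; in particular your identification of the monoidal structure map $\varphi_{V,W}$ agrees with the isomorphisms $\psi_{V_1,V_2}$ that the paper uses implicitly in Lemma~\ref{le:Psi} and later in \S\ref{fa:example_twisted}.
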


\begin{lemma} \label{le:Psi}
	Let $f_i:V_i\ot_{\mt_{V_i}} S\to X_i\ot_{\mt_{X_i}} S$  denote a morphism in $\SMS$, where $(V_i,\mt_{V_i})$, $(X_i,\mt_{X_i})$ are objects in $\TS $ and $i=1,2$. Let $ \psi_{V_1,V_2}:\Psi(V_1\otK V_2,\mt_{V_1\otK V_2})\to\Psi (V_1,\mt_{V_1}) \ot \Psi(V_2,\mt_{V_2} )$  and  $ \psi_{X_1,X_2}: \Psi(X_1\otK X_2,\mt_{X_1\otK X_2})\to\Psi (X_1,\mt_{X_1}) \ot \Psi(X_2,\mt_{X_2} )$ denote the canonical isomorphisms.
\begin{enumerate}
	\item  There exists a canonical isomorphism of abelian groups:
	\[
		(-)^{\wh { }}: \Hom_{S-S}(V\ot_{\mt_V} S, X)\to  \Hom_{\mK-\mK}^{\mt_V}(V, X),\qquad f\mapsto \wh f,
	\]
	where $\wh f:V\to X$ is  the map $\wh f(v)=f(v\otK 1)$.
		
	\item  With the above notation, we have:
	\[
		\big(\psi_{X_1,X_2}^{-1}(f_1\ot f_2)\psi_{V_1,V_2}\big)^{\wh{\ }}=(X_1\otK X_2 \otK m_S)  (X_1\otK \mt_{X_2}\otK S)  (\wh f_1\otK \wh f_2).
	\]
		
	\item If the codomain of $f_1$ and the domain of $f_2$ coincides, then $(f_2  f_1)^{\wh{\ }}=(X_2\otK m_S)  (\wh f_2\otK S)  \wh f_1$.
		
	\item  Let $(V,\mt_V)$ be an object in $\TS$. If $U$ is a $\mt_V$-invariant subbimodule, i.e.  $\mt_V(S\otK U)\subseteq U\otK S$, then $(U,\mt_U)$ is an object in $\TS$, where $\mt_U$ is the restriction of $\mt_V$ to $S \otK {U}$.
		
	\item Let $f:\otp V\to X$ be a morphism of $S$-bimodules. Then, for any $U$ as in (4), we have  $\wh f(U)S=f(\otp U)$.
\end{enumerate}
\end{lemma}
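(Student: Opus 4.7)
The plan is to establish the five parts in order, relying throughout on the explicit formula $s\cdot(v\otK s')=\sum v_{\mt_V}\otK s_{\mt_V}s'$ for the left $\mt_V$-twisted action and on an explicit description of the canonical isomorphism $\psi_{V_1,V_2}$.

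For part (1), I first check that the assignment $f\mapsto \wh f$ lands in $\Hom_{\mK-\mK}^{\mt_V}(V,X)$: $\mK$-bilinearity is automatic from $S$-bilinearity, while $\mt_V$-invariance is a one-line computation using the displayed action formula. Conversely, given a $\mt_V$-invariant $\mK$-bilinear map $g:V\to X$, the formula $\bar g(v\otK s):=g(v)\cdot s$ defines an $S$-bilinear map whose right $S$-linearity is immediate and whose left $S$-linearity is precisely the $\mt_V$-invariance of $g$. Verifying that the two assignments $f\mapsto \wh f$ and $g\mapsto \bar g$ are mutually inverse reduces to a direct evaluation on $v\otK s$, using the right $S$-linearity of $f$.

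Parts (3), (4), (5) then reduce to short calculations. In (4) the restriction $\mt_U$ is well-defined by the hypothesis $\mt_V(S\otK U)\subseteq U\otK S$ and inherits the axioms \eqref{eq:t_W} from $\mt_V$. In (5) the key identity $f(u\otK s)=f((u\otK 1)\cdot s)=\wh f(u)\cdot s$ shows that $f(\otp U)$ and $\wh f(U)S$ have the same generators. In (3) I write $\wh f_1(v)=\sum x\otK s$ and apply the right $S$-linearity of $f_2$ to obtain $f_2(\wh f_1(v))=\sum \wh f_2(x)\cdot s$, which is exactly $(X_2\otK m_S)(\wh f_2\otK S)\wh f_1(v)$.

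The main obstacle is part (2), which requires an explicit formula for $\psi_{V_1,V_2}$. I first have to verify that the assignment $(v_1\otK v_2)\otK s\mapsto (v_1\otK 1)\ot_S(v_2\otK s)$ defines a well-defined, $S$-bilinear bijection from $\Psi(V_1\otK V_2)$ to $\Psi(V_1)\ot_S\Psi(V_2)$; the only nontrivial point is left $S$-linearity, which relies precisely on the identity $\mt_{V_1\otK V_2}=(V_1\otK\mt_{V_2})(\mt_{V_1}\otK V_2)$. Its inverse can then be written, by moving $s_1$ across $\ot_S$ via $\mt_{X_2}$, as $(x_1\otK s_1)\ot_S(x_2\otK s_2)\mapsto \sum x_1\otK(x_2)_{\mt_{X_2}}\otK(s_1)_{\mt_{X_2}}s_2$. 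Tracing $(v_1\otK v_2)\otK 1$ successively through $\psi_{V_1,V_2}$, $f_1\ot f_2$, and $\psi_{X_1,X_2}^{-1}$ produces the asserted formula. Once the description of $\psi_{V_1,V_2}$ is in hand, everything else is a routine tracking of elements.
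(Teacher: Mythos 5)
Your proposal is correct throughout, and the heart of the matter — the element-chase in part (2) through $\psi_{V_1,V_2}$ and $\psi_{X_1,X_2}^{-1}$ — is essentially the calculation the paper carries out. Where you diverge is in parts (1) and (3). For (1) the paper gives a short conceptual argument, factoring $(-)^{\wh{\ }}$ as the composition
\[
\Hom_{S-S}(V\ot_{\mt_V} S, X)=\Hom_{\mK-S}(V\ot_{\mt_V} S, X)^S\cong \Hom_{\mK-\mK}(V, X)^S=\Hom_{\mK-\mK}^{\mt_V}(V, X),
\]
i.e.\ passing to $S$-centralizers and applying the free--forgetful adjunction for the right $S$-action; you instead exhibit the inverse $g\mapsto\bar g$ with $\bar g(v\otK s)=g(v)\cdot s$ and verify by hand that it lands in $\Hom_{S-S}$. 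These are the same content — your explicit computation is exactly what makes the paper's middle isomorphism and the centralizer identification go through — but yours is self-contained, while the paper's is shorter once the machinery of \S\ref{fa:category} (in particular the observation $\Hom_{\mK-\mK}^{\mt_V}(V,X)=\Hom_{\mK-\mK}(V,X)^S$) is set up. For (3) the paper simply cites relation \eqref{eq:f_hat} from \S\ref{ssec:u-v}, which is that identity in the special case $\Gamma=S$; your direct check is a perfectly acceptable substitute. One small extra you supply, which the paper takes for granted, is the verification that $\psi_{V_1,V_2}$ is well-defined and left $S$-linear and the explicit formula for its inverse (whose well-definedness over $\ot_S$ rests on the first axiom of \eqref{eq:t_W}); making that explicit is not wasted effort, since the paper uses this formula implicitly in its computation for (2).
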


\begin{proof}
	The first assertion immediately follows  by the fact that $(-)^{\wh{\ }}$ is the composition of the following sequence of isomorphisms:
\[
\Hom_{S-S}(V\ot_{\mt_V} S, X)=\Hom_{\mK-S}(V\ot_{\mt_V} S, X)^S\cong \Hom_{\mK-\mK}(V, X)^S=\Hom_{\mK-\mK}^{\mt_V}(V, X).
\]

	Let us prove (2). Since $ {f_1}$ and $f_2$ are morphisms of $S$-bimodules and the left $S$-module structure of  $\Psi(Y,\mt_{Y})$  is the $\mt_{Y}$-twisted action, we have:
	\[
	( {f_1}\ot {f_2})((v_1\otK 1)\ot  (v_2\otK 1))=\wh  {f_1}(v_1)\ot \wh {f_2}(v_2)=\sum_{j_1=1}^{p_1}\sum_{j_2=1}^{p_2} (x_1^{j_1}\otK 1)\ot [(x_2^{j_2})_{\mt_{X_2}}\otK (s_1^{j_1})_{\mt_{X_2}}s_2^{j_2}],
	\]
	where $\wh  {f_i}(v_i)=\sum_{j_i=1}^{p_i} x_i^{j_i}\otK s_i^{j_i}$, for certain $x_i^{j_i}\in X_i$ and $s_i^{j_i} \in S$. Thus
	\begin{align*}
	( 	\big(\psi_{X_1,X_2}^{-1}(f_1\ot f_2)\psi_{V_1,V_2}\big)^{\wh{\ }}(v_1\otK v_2)
	& =\psi_{X_1,X_2}^{-1}\big({f_1}(v_1\otK1)\ot {f_2}(v_2\otK 1)\big)\\
	& =\sum_{j_1=1}^{p_1}\sum_{j_2=1}^{p_2} x_1^{j_1}\otK (x_2^{j_2})_{\mt_{X_2}}\otK (s_1^{j_1})_{\mt_{X_2}}s_2^{j_2}.
	\end{align*}
	In a similar way we can prove the relation:
	\[
	\big[(X_1\otK  X_2\otK m_S)  (V_1\otK \mt_{X_2}\otK S)  (\wh f_1 \otK \wh {f_2})\big](v_1\otK v_2)=\sum_{j_1=1}^{p_1}\sum_{j_2=1}^{p_2} x_1^{j_1}\otK (x_2^{j_2})_{\mt_{X_2}}\otK (s_1^{j_1})_{\mt_{X_2}}s_2^{j_2},
	\]	
	so the proof of (2) is complete now.
	
	The identity of (3) is a precisely \eqref{eq:f_hat}. The fourth part of the lemma is obvious. Let us prove (5). For any $v\in U$ and $s\in S$, we have  $f(v\otK s)=\wh f(v)\cdot s$. Henceforth, $f(\otp U)\subseteq \wh f(U)S$. To conclude the proof we remark that
	$f(\otp U)$ is an $S$-subbimodule of $X$ which contains $\wh f(U)$.
\end{proof}

\begin{remark}\label{re:symmetric}
	The notions that we introduced in the previous subsection admit a left-right symmetric version. More precisely, instead of working with the category $\TS $ we can use the category $\ST $ whose objects are couples $(V, \mt'_V)$, with $V$ a $\mK$-bimodule and $\mt'_V:V\otK S\to S\otK V$ a $\mK$-bimodule map which satisfies the appropriate left-right symmetric versions of relations \eqref{eq:t_W}. Doing this change, we have also to replace the functor $\Psi$ with the functor $\Psi':\ST \to\SMS$, defined by $(V,\mt'_V)\mapsto S\ot_{\mt'_V} V$ and $f\mapsto S\otK f$. On $S\ot V$ one takes  the left regular and the right $\mt'_V$-twisted actions:
	\[
	s'\cdot(s\otK v)=s's\otK v,\qquad (s'\otK v)\cdot s=\sum s's_{\mt'_V} \otK v_{\mt'_V}.
	\]
	The map $\mt'_V$ is $S$-linear, with respect to the regular action on $V\otK S$ and $\mt'_V$-twisted  action on $S\otK V$.
\end{remark}

\begin{fact}[Twisted tensor products.]
	Twisted tensor products can be defined in several different ways, see for instance \cite{JPS} and the references therein. Here we suggest a new approach, which is based on the fact that $\TS$ is a monoidal category, so one may speak about algebras (or monoids) in $\TS$.
	
	As before we fix a ring $\mK$ and a $\mK$-ring $S$. By definition, a monoid in $\TS$ is an object $(A,\mt)$ together with a multiplication  $m_A: (A,\mt)\ot (A,\mt)\to (A,\mt)$ and a unit $u_A:(\mK, \mt_\mK)\to (A,\mt)$  that are morphisms in $\TS$ satisfying the \textit{associative}  and \textit{identity} axioms. In particular they are $\mK$-bilinear maps. As usual, we denote $u_A(1)$ by $1$. The couple  $(A,\mt)$ is an object in the category if and only if the relations \eqref{twist2} below hold. Moreover, we can show that $m_A$ and $u_A$ are morphisms in the category if and only if the equations  \eqref{twist1} are true as well. Of course the latter relations mean that  $(S,\mt)$ is an object of $\AT$.
	\begin{align}
	& \mt   (m_{S}\otK  A )=( A \otK m_{S})  (\mt \otK S)  (S\otK \mt )  \quad\text{and}\quad \mt (1\otK a)=a\otK 1 \label{twist2},\\
	& \mt   (S\otK m_{ A }) =(m_{ A }\otK S)  ( A \otK \mt )  (\mt \otK  A )
	\quad\text{and} \quad \mt (s\otK 1)=1\otK s.  \label{twist1}
	\end{align}
	Clearly, the multiplication of  $(A,\mt)$ is associative and $u_A$ satisfies the axiom of the unit if and only if $(A,m_A,u_A)$ is a $\mK$-ring.
	
	All in all, a monoid in $\TS$ consists in a $\mK$-ring $(A,m_A,u_A)$ together with a morphism of $\mK$-bimodules $\mt:S\otK A\to A\otK S$ such that $(A,\mt)$ and $(S,\mt)$  are objects in $\TS$  and  $\AT$, respectively. In this case, we say that $\mt$ is a \textit{twisting map}.  Taking into account the properties of the objects in the above mentioned categories, it follows that $\mt$ is left $S$-linear and right $A$-linear.
	
	Since $(A,\mt)$ is a monoid in $\TS$ and $\Psi$ is a monoidal functor, it follows that $\bts:=\Psi(A,\mt)$ is a monoid in $\SMS$, called the \textit{twisted tensor product} of $ A $ and $S$. The $S$-bimodule structure of $\bts$ is given by the left $\mt$-twisted and the right regular $S$-actions on $A\otK S$.	
	
	Up to the identifications $\mK \otK S\cong S$ and $\Psi(A,\mt)\ot \Psi(A,\mt)\cong \Psi(A\ot A,\mt_{A\otK A})$, the unit and the multiplication of  $\bts$ are the $S$-bimodule maps $u:=\Psi(u_A)$ and $m:=\Psi(m_A)$, respectively. Thus
\[
	(a\otK s)\cdot_\mt (a'\otK s')=\sum aa'_{\mt}\otK s_{\mt}s'
\]
	and  $u(s)=1\otK s$, for all $s\in S$. It is easy to see that $u$ is a morphism of rings.
	
	In the case when $\mt:S\otK A\to A\otK S$ is an invertible  twisting map  and $\mt'$ is the inverse of $\mt$, one proves that $\mt'$ is a twisting map too. Hence $(A,\mt')$ and $(S,\mt')$ are objects in $\ST$ and $\TA$, respectively, and the twisted tensor product $S\ot_{\mt'}  A $ makes sense. As a consequence, we deduce that $\mt'$ is right $S$-linear and left $A$-linear. In particular, both left and right $\mt$-twisted $S$-actions are isomorphic with the regular ones. By symmetry, a similar results holds for the $A$-module structures. Thus, if $ A $ is a projective (flat) left $\mK$-module, then the left $\mt$-twisted $S$-action on $ A \otK S$ is projective (flat) as well. Similarly, if $ A $ is a projective (flat) right $\mK$-module then the right $\mt'$-twisted $S$-action on $S\otK  A $ is projective (flat).
	
	If $ A :=\oplus_{n\in\N}  A ^n$ is a graded connected $\mK$-ring and the  twisting map  $\mt$  is compatible with the grading, in the sense that  all components $A ^n$ are $\mt$-invariant subbimodules, then $ A \ott S$ is graded with respect to the decomposition $ A \ott S=\oplus_{n\in\N} (A ^n\otK S)$. Note that the $0$-degree component can be identified with $S$, so $\bts$ is a graded connected $S$-ring. The restriction of $\mt$ to $S\otK  A ^n$ will be denoted by $\mt^n$.
	
	If $ A $ is strongly graded, then $\bts$ is also strongly graded. For more details on (graded) twisted tensor products the reader is referred to \cite[Section 4]{JPS}.
\end{fact}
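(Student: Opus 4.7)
The plan is to verify each of the assertions in the Fact by carefully unfolding the monoidal structure on $\TS$ and then applying the monoidal functor $\Psi$ to transport the monoid structure to $\SMS$. First, I would prove the equivalence between $(A,\mt)$ being an object of $\TS$ and the relations \eqref{twist2}: this is immediate, since \eqref{twist2} is precisely the instantiation of \eqref{eq:t_W} with $V=A$ and $\mt_V=\mt$. For the second equivalence, I would expand the morphism condition $(f\otK S)\mt_V=\mt_W(S\otK f)$ applied to $m_A\colon (A,\mt)\ot(A,\mt)\to (A,\mt)$: using $\mt_{A\otK A}=(A\otK\mt)(\mt\otK A)$ from the monoidal product in $\TS$, the condition collapses to the first relation in \eqref{twist1}. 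Likewise, for $u_A\colon (\mK,\mt_\mK)\to (A,\mt)$, using that $\mt_\mK$ is the canonical swap, the morphism condition reduces exactly to $\mt(s\otK 1)=1\otK s$.

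Next, I would verify that a monoid in $\TS$ is nothing but a $\mK$-ring $(A,m_A,u_A)$ equipped with a twisting map $\mt$. Since $m_A$ and $u_A$ are in particular $\mK$-bilinear, associativity and the unit axioms for the monoid read exactly as the associativity and unit axioms of a ring. The left $S$-linearity and right $A$-linearity of $\mt$ follow from \eqref{twist2} and \eqref{twist1} respectively: writing $\mt(s\otK a)=\sum a_\mt\otK s_\mt$, the first identity of \eqref{twist2} says $\mt(ss'\otK a)=\sum (a_\mt)_\mt\otK s_\mt s'_\mt$, which matches the twisted left action $s\cdot \mt(s'\otK a)$; the first identity of \eqref{twist1} says $\mt(s\otK aa')=\sum a_\mt a'_\mt\otK (s_\mt)_\mt$, which one checks is the correct right $A$-linearity condition. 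The hard part here is just bookkeeping with the Sweedler-type notation.

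Then I would apply the monoidal functor $\Psi$ to the monoid $(A,\mt)$ in $\TS$: since $\Psi$ is monoidal, $\Psi(A,\mt)=\bts$ inherits a monoid structure in $\SMS$, with multiplication $m:=\Psi(m_A)$ and unit $u:=\Psi(u_A)$. Using the canonical isomorphism $\psi_{A,A}$ together with Lemma \ref{le:Psi}(2), one computes directly that $m$ is given by $(a\otK s)\cdot_\mt(a'\otK s')=\sum aa'_\mt\otK s_\mt s'$ and that $u(s)=1\otK s$, which is visibly a ring morphism. For the invertibility claim, if $\mt':A\otK S\to S\otK A$ is a two-sided inverse of $\mt$, I would apply $\mt'$ to both sides of the identities \eqref{twist1}--\eqref{twist2} and rearrange to obtain the left-right symmetric versions, so that $(A,\mt')\in\ST$ and $(S,\mt')\in\TA$; equivalently, $\mt'$ is a twisting map. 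The linearity consequences (e.g.\ $\mt'$ being right $S$-linear and left $A$-linear) follow by the symmetric argument, and in turn show that both twisted $S$- and $A$-actions on $A\otK S$ and $S\otK A$ are isomorphic to the regular ones, whence projectivity/flatness transfers as claimed.

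Finally, for the graded case I would use that if every $A^n$ is $\mt$-invariant, then by Lemma \ref{le:Psi}(4) each $(A^n,\mt^n)$ is a subobject in $\TS$, and $\Psi$ preserves the decomposition, giving $\bts=\bigoplus_{n\in\N}(A^n\otK S)$ as a graded $S$-bimodule; the multiplication formula above respects this grading, so $\bts$ is a graded $S$-ring, connected because $A^0=\mK$ identifies the degree-zero part with $S$. For the strongly graded claim, I would directly use the multiplication formula: if $A^n A^m=A^{n+m}$, then any $a''\otK s''\in A^{n+m}\otK S$ can be written as $\sum a_i a'_i\otK s''$, and choosing preimages under $\mt$ (which we may do since $\mt$ is bijective by construction when $A$ is strongly graded, or simply by expanding the action) one realizes $a''\otK s''$ as an element of $(A^n\otK S)\cdot_\mt(A^m\otK S)$. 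The only delicate point in the whole Fact is precisely this last step, where one must track that the twisting map does not destroy the strong gradation property; this is where invoking the explicit multiplication formula is essential.
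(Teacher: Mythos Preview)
Your verification is essentially correct and more detailed than what the paper offers: the paper treats this block as expository, stating the claims and deferring the details to \cite[Section 4]{JPS} rather than giving an argument. Your unpacking of the monoid conditions, the identification of \eqref{twist1}--\eqref{twist2}, and the transport along the monoidal functor $\Psi$ are all sound and match the spirit of the construction.

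There is, however, one genuine slip in your handling of the strongly graded claim. You write that one may ``choose preimages under $\mt$ since $\mt$ is bijective by construction when $A$ is strongly graded.'' This is not true: nothing in the hypotheses forces $\mt$ to be invertible, and strong gradation of $A$ certainly does not imply it. The correct (and simpler) argument uses the unit relation $\mt(1\otK a')=a'\otK 1$ from \eqref{twist2}: plugging $s=1$ into the multiplication formula gives
\[
(a\otK 1)\cdot_\mt(a'\otK s')=aa'\otK s'.
\]
Hence if $A^nA^m=A^{n+m}$, any $a''\otK s''\in A^{n+m}\otK S$ can be written as a finite sum $\sum_i (a_i\otK 1)\cdot_\mt(a'_i\otK s'')$ with $a_i\in A^n$ and $a'_i\in A^m$, which shows $(A^n\otK S)\cdot_\mt(A^m\otK S)=A^{n+m}\otK S$. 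No invertibility of $\mt$ is needed.
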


\begin{fact}[Examples of (graded) twisted tensor products.]\label{fa:example_twisted}
	By \cite[Proposition 4.9]{JPS}, we know that for every object $(V,\mt_V)$ in $\TS$ there exists a graded twisting map $\mt^*: S\otK T_\mK(V)\to T_\mK(V)\otK S$, which lifts $\mt_V$. Moreover, if $\mt_V$ is invertible, then $\mt^*$ is invertible as well.
	
	We can explain the construction of $\mt^*$ very easy using the fact $\TS$ is a monoidal category. Indeed, we take $\mt^0$ and $\mt^1$ to be $\id_\mK$ and $\mt_V$, respectively. On the other hand, for $n\geq 2$, the $n$-th tensor power $(V,\mt_V)^{\otK n}$ in $\TS$ is a couple $(V^{\otK n},\mt_{V^{\otK n}})$, where $\mt_{V^{\otK n}}:S\otK V^{\otK n}\to V^{\otK n}\otK S$ satisfies the relations \eqref{eq:t_W}. We set $\mt^n:=\mt_{V^{\otK n}}$.   The component $\mt^n$ maps a tensor monomial in $S\ot_\mK V^{\ot_\mK n}$ to the element in $V^{\ot_\mK n}\ot_\mK S$ obtained using repeatedly $\mt_V$ to move the factor in $S$ until it reaches the most-right position.		It is not difficult to see that  $\mt^*$ is a graded twisting map, so the graded connected $S$-ring $T_\mK(V)\ot_{\mt^*} S$ exists.

	Since $(V^{\otK n},\mt^n)$ is an object in $\TS$, it follows that $V^{\otK n}\otK S$ is an $S$-bimodule with respect to the left $\mt^n$-twisted action and the right regular action.
	
	Let $\ol V:=V\otK S$ and $\ol T:=T_S(V\otK S)$. We will also use the notation  $T:=T_\mK(V)$. Clearly, the $\mK$-bilinear map  $\psi^n: T^n \otK S\to \ol T^n$ defined by
	\[
	\psi^n(v_1\otK\cdots\otK v_n\otK s)=(v_1 \otK 1_S)\ot \cdots \ot (v_{n-1} \otK 1_S)\ot (v_n \otK s)
	\]
	is an isomorphism. 	The family $\{\psi^n_T\}_{n\in\N} $ defines an isomorphism of graded $S$-rings $\psi_T:T\ot_{\mt^*} S\to \ol T$.
	
	We now assume that $R$ is a $\mt^*$-invariant graded subbimodule of $T$, that is $\mt^n(S\otK R^n)= {R^n\otK S}$, for all $n$. Hence, $V^{\otK p}\otK R^q\otK V^{\otK r}$ is $\mt^n$-invariant for all $p,q$ and $r$ with $p+q+r=n$.
	
	Let $I$ be the the ideal generated by $R$ and let $ A :=T/I$. It follows that $I$ is $\mt^*$-invariant, so  $\mt^*$ factorizes through a  graded morphism $\mt:S\otK  A \to  A \otK S$ of $\mK$-bimodules. It is not difficult to see that $\mt$ is a graded twisting map. If, in addition, $S$ is a flat as a left $\mK$-module, then we have the following identifications of graded $S$-rings:
	\[
	A \ot_{ \mt} S= \frac{T}{I}\ot_{ \mt} S \cong \frac{T\ot_{\mt^*} S}{I \otK S} \cong \frac{\psi_T(T \otK S)}{\psi_T(I \otK S)}= \frac{\ol T}{\ig{\psi_T(R \otK S)}}.
	\]
	
	Let $\ol A:=\bts$. Note that, if $R$ is a bimodule of relations for $A$, then $\ol R:=\psi_T (R\otK S)$ is an $S$-bimodule of relations for $\ol A$. Indeed, if $\ol I= \psi_T(I \otK S)$, then $\ol I:=\ig{\ol R}$ and $\psi_T(\tilde{I}\otK S)=\widetilde{\ol I}$. Clearly $\ol A\cong \ol T / \ol I$.

\end{fact}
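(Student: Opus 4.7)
The plan is to verify the two defining properties of a bimodule of relations (Definition \ref{def:minimality}) for $\ol R$: that $\ol R$ generates $\ol I$ as an ideal of $\ol T$, and that $\ol R\cap\widetilde{\ol I}=0$. Both verifications will be carried out on the side of $T\otimes_{\mt^*}S$, where the $\mt^*$-invariance of the objects in play makes the structure transparent, and then transported through the graded $S$-ring isomorphism $\psi_T\colon T\otimes_{\mt^*}S\to\ol T$. The standing assumption that $S$ is flat as a left $\mK$-module (already used to identify $\ol A$ with $\ol T/\ol I$) ensures that tensor products of $\mK$-subbimodules inject into $T\otK S$ and that the direct sum $R\oplus\tilde I=I$ survives tensoring with $S$.

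First I would show $\ol I=\psi_T(I\otK S)$ and $\ig{\ol R}=\ol I$. Since $I$ is $\mt^*$-invariant, $I\otK S$ is a two-sided ideal of $T\otimes_{\mt^*}S$, and the isomorphism $\bts\cong(T\otimes_{\mt^*}S)/(I\otK S)$ established in the text gives the first identity upon applying $\psi_T$. For the second, using the twisted multiplication formula $(a\otK s)\cdot_{\mt^*}(a'\otK s')=\sum aa'_{\mt^*}\otK s_{\mt^*}s'$ together with the $\mt^*$-invariance of each $T^n$, a short induction on the degree shows that the ideal of $T\otimes_{\mt^*}S$ generated by $R\otK S$ equals $I\otK S$; applying the ring isomorphism $\psi_T$ then yields $\ig{\ol R}=\ol I$.

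Next I would establish $\psi_T(\tilde I\otK S)=\widetilde{\ol I}$ and conclude. The identities $(I\otK S)(T^1\otK S)=IT^1\otK S$ and $(T^1\otK S)(I\otK S)=T^1I\otK S$ inside $T\otimes_{\mt^*}S$ follow from the $\mt^*$-invariance of $T^1$ and $I$ combined with the explicit twisted product; summing them and pushing through $\psi_T$ produces the required equality $\widetilde{\ol I}=\psi_T(\tilde I\otK S)$. The vanishing of $\ol R\cap\widetilde{\ol I}$ is then immediate: by $\mK$-flatness of $S$, the decomposition $R\oplus\tilde I=I$ tensors to $(R\otK S)\oplus(\tilde I\otK S)=I\otK S$, so the intersection on the left is zero, and the injection $\psi_T$ transports this to $\ol R\cap\widetilde{\ol I}=0$. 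The main bookkeeping obstacle is verifying that ideal generation and the operation $\tilde{(-)}$ commute with $(-)\otK S$ inside the twisted tensor product; this is exactly where the $\mt^*$-invariance of $I$ and $T^1$ is essential, and once those two compatibilities are in hand the rest is flatness plus naturality of $\psi_T$.
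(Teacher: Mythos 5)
Your proposal is correct and follows essentially the same route as the paper's own (very terse) argument: identify $\ol I=\psi_T(I\otK S)$, verify $\ig{\ol R}=\ol I$, verify $\widetilde{\ol I}=\psi_T(\tilde I\otK S)$ using the $\mt^*$-invariance of $T^1$ and $I$ in the twisted product, and then transport the direct-sum decomposition $R\oplus\tilde I=I$ through $(-)\otK S$ and $\psi_T$. Your writeup just makes explicit the flatness hypothesis and the two multiplicative identities that the paper leaves to the reader.
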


\begin{fact}[The relationship between $\tor_*^A(\mK,\mK)$ and $\tor_*^{\ol A}(S,S)$.]\label{fa:bar_ttp}
	We suppose that $\mt: S\otK  A \to  A \otK S$ is a graded twisting map. Our goal is to investigate the homogeneous components of $\Tor_3^{{\ol A} }(S,S)$ with respect to the internal grading.
	
	Let us assume that $S$ is a flat left $\mK$-module. We choose a projective graded resolution $(W_*\otK A,d_*)$ of $\mK$  as a right $A$-module. Since $S$ is flat, the complex $(W_*\otK A\otK S,d_*\otK S)$ is acyclic and its homology in degree zero coincides with $A$. For every $n$, we identify $W_n\otK A\otK S$ and $W_n\otK \ol A$ as right $\ol A$-modules.
	
	We claim that the above complex yields us a projective resolution of $S$ as a right $\ol A$-module. We have to prove that $d_*\otK S$ is right $\ol A$-linear. For $w\in W_n$, $a,a'\in A$ and $s,s'\in S$ we get
\[
	(d_n\otK S)\big((w\otK a\otK s)\cdot (a'\otK s')\big)=\sum d_n(w\otK aa'_\mt)\otK s_\mt s'=\sum_{i=1}^p w_i\otK a_i aa'_\mt\otK s_\mt s',
\]
	where $d_n(w\otK 1)= \sum_{i=1}^p w_i\otK a_i$. By a  similar computation we obtain
\[
		\big((d_n\otK S)(w\otK a\otK s)\big)\cdot (a'\otK s')=\sum_{i=1}^p	(w_i\otK a_i a\otK s)\cdot (a'\otK s')=\sum_{i=1}^p w_i\otK a_i aa'_\mt\otK s_\mt s'.
\]
	By comparing the right-most terms of the above sequences of identities we conclude that $d_*\otK S$ is right $\ol A$-linear.  Clearly, these maps respect the grading on $W_*\otK A\otK S$, since $d_*$ do so.
	
	To compute $\tor_n(\mK,\mK)$ we apply the functor $(-)\ot_A \mK$ to the resolution $(W\otK A, d)$. If $(W_*,d_*')$ is the resulting complex, then $d_n'(w)=\sum_{i=1}^p w_ia_i^0$, where we have written $d_n(w\otK 1)$ as a sum of tensor monomials as above, and $a_i^0$ denotes the component of degree $0$ of $a_i$.
	
	To compute $\tor_n^{\ol A}(S,S)$  we now apply the functor $(-)\ot_{\ol A} S$ to the resolution $(W_*\otK \ol A, d_*\otK S)$. It is not difficult to see that the outcome complex is precisely $(W_*\otK S,d_*'\otK S)$.
	
	Therefore, since $S$ is flat as a left $\mK$-module, we get:
	\[
	\tor_{n,m}^{\ol A} (S,S)\cong \tor_{n,m}^ A (\mK,\mK)\otK S.
	\]
	In particular, if $\tor_{n}^ A (\mK,\mK)$ is $m$-pure, then $	\tor_{n}^{\ol A} (S,S)$ is $m$-pure as well.
\end{fact}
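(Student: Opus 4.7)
The plan is to produce a projective resolution of $S$ as a right $\ol A$-module by starting with a graded projective resolution of $\mK$ as a right $A$-module and twisting it by $S$, then compute $\Tor^{\ol A}_*(S,S)$ directly. The flatness of $S$ as a left $\mK$-module is what makes everything commute with the relevant tensor products.

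Concretely, first I would pick a graded projective resolution $(W_* \otK A, d_*)$ of $\mK$ as a right $A$-module, with $W_n$ projective graded $\mK$-(bi)modules and $d_*$ graded right-$A$-linear (the right-module analogue of the construction in \S\ref{killing01}). Tensoring on the right with $S$ over $\mK$ preserves exactness by the flatness hypothesis, so $(W_* \otK A \otK S,\, d_* \otK S)$ is acyclic with $H_0 \cong S$. I would then identify $W_n \otK A \otK S$ with $W_n \otK \ol A$ and equip it with the right $\ol A$-module structure coming from multiplication of $\ol A$ on the last two tensor factors, that is, $(w \otK a \otK s)\cdot(a' \otK s') = \sum w \otK a a'_\mt \otK s_\mt s'$.

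The key technical point is to verify that $d_n \otK S$ is right $\ol A$-linear under this structure. Writing $d_n(w \otK 1) = \sum w_i \otK a_i$, both $(d_n \otK S)((w \otK a \otK s)\cdot(a' \otK s'))$ and $((d_n \otK S)(w \otK a \otK s))\cdot(a' \otK s')$ unravel to $\sum w_i \otK a_i a a'_\mt \otK s_\mt s'$, where the only nontrivial input is the right $A$-linearity of $d_n$ together with the twisting relation \eqref{twist1} that lets us migrate $\mt$ past the multiplication in $A$. Projectivity of $W_n \otK \ol A$ over $\ol A$ is automatic since $W_n$ is $\mK$-projective. Thus $(W_* \otK \ol A,\, d_* \otK S)$ is a projective resolution of $S$ over $\ol A$.

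Finally I would apply $(-) \ot_{\ol A} S$. The canonical isomorphism $W_n \otK \ol A \otK_{\ol A} S \cong W_n \otK S$ sends the induced differential to $d'_n \otK S$, where $d'_n$ is obtained from $d_n$ by contracting the $A$-factor with the augmentation $A \to \mK$ (extracting the degree-$0$ part of each $a_i$); this is precisely the complex that computes $\Tor^A_*(\mK,\mK)$. A second application of flatness of $S$ over $\mK$ commutes $H_n$ with $(-) \otK S$, yielding $\Tor^{\ol A}_n(S,S) \cong \Tor^A_n(\mK,\mK) \otK S$. Since all constructions respect the internal grading, the bidegree version $\Tor^{\ol A}_{n,m}(S,S) \cong \Tor^A_{n,m}(\mK,\mK) \otK S$ follows, and the purity corollary is immediate. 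The main obstacle is the right $\ol A$-linearity of $d_n \otK S$; everything else is bookkeeping once the correct module structures are in place.
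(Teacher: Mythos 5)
Your proposal follows the paper's argument exactly: same choice of resolution $(W_*\otK A,d_*)$, same twisting by $S$, same identification $W_n\otK A\otK S\cong W_n\otK\ol A$, the same right-$\ol A$-linearity verification for $d_*\otK S$ (where the only real input is right $A$-linearity of $d_n$; the relation \eqref{twist1} is used in making $\ol A$ a ring, not in an extra migration step here), the same application of $(-)\ot_{\ol A} S$ yielding $(W_*\otK S,\,d_*'\otK S)$, and a second use of flatness of $S$ to commute homology with $-\otK S$. The proof is correct and essentially identical to the paper's.
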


\begin{fact}[Notation and assumptions.]\label{fa:notation_WW}
	From now on, ${\ol A} := A \ot_{\mt} S$ is a graded twisted tensor product as in \S\ref{fa:example_twisted}. Thus, $(V,\mt_V)$ is an object in $\TS$ such that $\mt_V$ is graded. Let $\mt^*:S\otK T\to T\otK S$ denote the lifting of $\mt_V$, where $T:=T_\mK(V)$. We fix a $\mt^*$-invariant  $\mK$-bimodule of relations $R$ for  $A$. Thus, $A=T/I$,  where $I=\ig{R}$.  Let $\mt: S\otK A\to A\otK S$ denote the induced twisting map. By notation, $\ol V:=V\otK S$ and  $\ol T:=T_S(\ol V)$. Recall that there exists an isomorphism $\psi_T:T\ot_{\mt^*} S\to  \ol T$ of graded connected $S$-rings. Let $\ol R:=\psi_T(R\otK S)$ and $\ol I:=\psi_T(I\otK S)$. We have seen that $\ol R$ is an $S$-bimodule of relations for $\ol A$. Furthermore, $\ol A\simeq \ol T/ \ol I$.
	
	Finally, let us  assume that $S$ is a flat left $\mK$-module, that $A^1$ and $R$ are projective as right $\mK$-modules, $R$ is $n$-pure and $\tor_3^A(\mK,\mK)$ is $n+1$-pure.  By definition, it follows that ${\ol A}^1$ and ${\ol R}$ are projective as right $S$-modules. Since $\psi_T : T\otK S\to \ol T $ is a morphism of graded $S$-bimodules, $\ol{R}$ is $n$-pure. By \S\ref{fa:bar_ttp}, the graded module $\tor_3^{\ol A}(S,S)$ is $n+1$-pure.
	
	We fix an $S$-subbimodule $\ol P$ of $\ol{T}$ such that $\ol{R}= p_{\ol T}^n(\ol P)$, i.e. as in Example \ref{ex:BG}, where $p_{\ol T}^n$ stands for the projection from $\ol T$ to $\ol T^n$. Let $P:=\psi^{-1}_T(\ol P)$ and let $\psi_P$ denote the restriction of $\psi_T$ to $P$.
	
	In view of the right-hand side version of Theorem \ref{te:PBW-pure}, we deduce the following.
\end{fact}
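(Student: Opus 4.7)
The plan is to apply the right-hand side version of Theorem \ref{te:PBW-pure} directly to the graded connected $S$-ring $\ol A$, with $\ol R$ as bimodule of relations and $\ol P$ as chosen bimodule, and then to transport the resulting conditions back through $\psi_T$ to obtain a characterization in terms of the original datum $(V,\mt_V,R,P)$. First, I would verify that every hypothesis in the right-hand version of Theorem \ref{te:PBW-pure} is satisfied: the projectivity of $\ol A^1 = V\otK S$ and $\ol R$ as \emph{right} $S$-modules follows because tensoring by $S$ on the right of a projective right $\mK$-module produces a projective right $S$-module, and both $A^1$ and $R$ are projective right $\mK$-modules by the standing assumption in \S\ref{fa:notation_WW}. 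The $n$-purity of $\ol R$ follows because $\psi_T$ is a graded isomorphism and $R$ is $n$-pure, and the $(n{+}1)$-purity of $\tor_3^{\ol A}(S,S)$ follows from the isomorphism $\tor_{3,m}^{\ol A}(S,S)\cong \tor_{3,m}^A(\mk,\mk)\otK S$ proved in \S\ref{fa:bar_ttp}, combined with the flatness of $S$ over $\mk$.

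Having done this, Theorem \ref{te:PBW-pure} in its right-hand version yields that $U(\ol P)$ is a $PBW$-deformation of $\ol A$ if and only if either $\tor_3^{\ol A}(S,S)=0$, or there exists a morphism of filtered $S$-bimodules $\ol\alpha:\ol R\to \ol T$ such that $\ol P = \ol\alpha(\ol R)$ and the right-hand analogues of $(\cJ_0')$--$(\cJ_n')$ hold. Next I would translate this back. Using that $\ol R = \psi_T(R\otK S)$, $\ol P = \psi_T(\psi_T^{-1}\ol P)$, and that $\psi_T$ is a morphism of graded $S$-rings, the map $\ol\alpha$ corresponds, via the adjunction of Lemma \ref{le:Psi}(1), to a $\mt^*$-invariant (in the sense of \eqref{eq:g_invariant}) $\mk$-bilinear map $\wh{\ol\alpha}:R\to T$; one checks that $\ol\alpha$ is filtered precisely when $\wh{\ol\alpha}$ is filtered in the corresponding sense on $T$, i.e. $\wh{\ol\alpha}(R)\subseteq T^{\leq n}$ with zero-degree component equal to the inclusion of $R$ into $T$.

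The third step is to rewrite the right-hand Jacobi conditions $(\cJ_0')$--$(\cJ_n')$ on $\ol\alpha$ in terms of the components $\wh{\ol\alpha}_i$ of $\wh{\ol\alpha}$. By Lemma \ref{le:Psi}(2)--(3) and the compatibility of $\psi_T$ with the multiplication of $\ol T$ (which up to the twist $\mt^*$ realizes the multiplication of $T$), composition and tensoring of morphisms in $\TS$ translate into the expected formulas on the underlying $\mk$-bilinear maps, with an extra insertion of the iterates of $\mt_V$ to move scalars past the tensor factors in $V^{\otK k}$. Consequently, the Jacobi-type equations on $\ol\alpha$ become Jacobi-type equations on $\wh{\ol\alpha}$ that differ from the untwisted ones \eqref{jpr0}--\eqref{jprn} only by suitable insertions of $\mt^*$ acting on the intersection $X:=(R\otK V)\bgi (V\otK R)$.

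The main obstacle will be the bookkeeping in this last translation: writing out, uniformly in $i$, how the right-hand versions of $(\cJ_i')$ transform under $\psi_T$ and verifying that the resulting conditions depend only on $\mt_V$ (not on higher iterates), because $R\otK V$ and $V\otK R$ are $\mt^*$-invariant by $\mt^*$-invariance of $R$ and the tensor structure on $\TS$. Once this is established, the theorem follows by combining the equivalence produced by the right-hand Theorem \ref{te:PBW-pure} with the bijection $(-)^{\wh{\ }}$ of Lemma \ref{le:Psi}(1), applied to the domain $R$ and codomain $T$, which identifies $\ol\alpha$'s satisfying the right-hand Jacobi conditions with $\mt^*$-invariant filtered maps $\alpha:R\to T$ satisfying the corresponding twisted Jacobi conditions.
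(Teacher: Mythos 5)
Your verification of the three embedded claims is exactly the paper's reasoning: right $S$-projectivity of $\ol A^1$ and $\ol R$ comes from tensoring the projective right $\mK$-modules $A^1$ and $R$ by $S$, $n$-purity of $\ol R$ comes from $\psi_T$ being a graded isomorphism, and $(n{+}1)$-purity of $\tor_3^{\ol A}(S,S)$ comes from the comparison $\tor_{3,m}^{\ol A}(S,S)\cong \tor_{3,m}^A(\mK,\mK)\otK S$ of \S\ref{fa:bar_ttp} together with flatness of $S$. The remainder of your proposal --- translating the filtered map $\ol\alpha$ and the Jacobi conditions $(\cJ'_i)$ through $\psi_T$ and Lemma~\ref{le:Psi} into conditions on a $\mt_R$-invariant map $\alpha_R:R\to T\otK S$ --- is correct as a sketch but is not part of this Fact or of Theorem~\ref{te:PBW_twisted}, which only applies the right-hand version of Theorem~\ref{te:PBW-pure} directly to $(\ol A,\ol R,\ol P)$; that translation is carried out in \S\ref{fa:PBW_twisted} and is the content of Theorem~\ref{te:PBW-winterspoon}.
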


\begin{theorem}\label{te:PBW_twisted}
	We keep the notation and assumptions from \S\ref{fa:notation_WW}. The $S$-ring  $\ol U:=\ol{T}/\ig{\ol P}$ is a $PBW$-deformation of ${\ol A}$, if and only if there is a morphism of filtered bimodules $\alpha:\ol{R}\to \ol{T}$, which satisfies the Jacobi conditions \eqref{jpr0}--\eqref{jprn}  and the relation $\ol P:=\ol{\alpha}(\ol{R})$.
\end{theorem}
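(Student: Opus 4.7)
The plan is to deduce Theorem \ref{te:PBW_twisted} as a direct application of the right-handed version of Theorem \ref{te:PBW-pure}, whose existence is guaranteed by Remark \ref{re:symmetric}. The role of the tensor $\mK$-ring in \ref{te:PBW-pure} will be played here by $\ol T = T_S(\ol V)$, the bimodule of relations by $\ol R$, and the deformation subbimodule by $\ol P$; the isomorphism $\psi_T:T\ot_{\mt^*}S\to \ol T$ translates all the relevant objects from the ordinary tensor setting into the twisted-tensor one.

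First I would verify the hypotheses of the right-handed version of Theorem \ref{te:PBW-pure} for the data $(\ol A,\ol R,\ol P)$. By the standing assumptions in \S\ref{fa:notation_WW}, both $A^1 = V$ and $R$ are projective as right $\mK$-modules, so their scalar extensions $\ol A^1 = V\otK S$ and $\ol R = \psi_T(R\otK S)$ are projective as right $S$-modules, since extension of scalars preserves projectivity of the right factor. The $n$-purity of $\ol R$ is immediate from $n$-purity of $R$ together with the fact that $\psi_T$ is a graded isomorphism. The key non-trivial input is the $(n+1)$-purity of $\tor_3^{\ol A}(S,S)$, which I would obtain from the base-change isomorphism $\tor_{k,m}^{\ol A}(S,S)\cong \tor_{k,m}^A(\mK,\mK)\otK S$ established in \S\ref{fa:bar_ttp} (valid because $S$ is left $\mK$-flat) combined with the hypothesis that $\tor_3^A(\mK,\mK)$ is $(n+1)$-pure.

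Once these hypotheses are in place, the right-handed analogue of Theorem \ref{te:PBW-pure} yields directly that $\ol U = \ol T/\langle \ol P\rangle$ is a $PBW$-deformation of $\ol A$ if and only if there exists a filtered $S$-bimodule map $\alpha:\ol R\to \ol T$ with $\ol P = \alpha(\ol R)$ satisfying the Jacobi conditions, which in the right-handed formulation are precisely the identities \eqref{jpr0}--\eqref{jprn} on the filtered components $\alpha_i$ of $\alpha$. The alternative case $\tor_3^{\ol A}(S,S) = 0$ that appears in \ref{te:PBW-pure} renders those identities vacuous and is silently absorbed into the statement. I do not foresee any genuine obstacle: the argument reduces to checking that each hypothesis of \ref{te:PBW-pure} survives the extension of scalars along the twisting map $\mt$, a task entirely handled by the combination of \S\ref{fa:notation_WW} and \S\ref{fa:bar_ttp}, plus the bookkeeping of transposing left-module statements to right-module ones via Remark \ref{re:symmetric}.
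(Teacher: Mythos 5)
Your proposal is correct and follows exactly the paper's route: the paper likewise deduces Theorem \ref{te:PBW_twisted} by invoking the right-handed version of Theorem \ref{te:PBW-pure}, with the hypotheses (right $S$-projectivity of $\ol A^1$ and $\ol R$, $n$-purity of $\ol R$, and $(n+1)$-purity of $\tor_3^{\ol A}(S,S)$ via the base-change isomorphism of \S\ref{fa:bar_ttp}) checked in \S\ref{fa:notation_WW} just as you do. Your remark about the silently absorbed case $\tor_3^{\ol A}(S,S)=0$ matches the paper's own handling of that disjunction.
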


\begin{remark}
	Specializing the above result to the case when $S$ is a field and $A$ is a Koszul algebra we get \cite[Theorem 4.6.1]{HvOZ}. Indeed, $A$ is Koszul if and only if $\tor_n^A(S,S)$ is concentrated in internal degree $n$, see \cite[Proposition 4.3]{RS}. In particular,  $A$ is quadratic ($A:=T/\ig{R}$ for some $R\subseteq T^2$), because  $\tor_2^A(S,S)$ is concentrated in internal degree $2$, cf. \cite[Theorem 2.3.2]{BGS}. Moreover, since $S$ is a field, there exists a minimal resolution $A\ot V_*$ of $S$, which can be used to compute the homological complexity of $A$:
\[
	c(A)=c_{V_3}=\sup\{n-1\mid V_3^n\neq 0\}=\sup\{n-1\mid \tor_{3,n}^A(S,S)\neq 0\}=2.
\]
	The above cited result now follows by Theorem \ref{te:PBW_twisted}. For smash products of Koszul algebras by finite dimensional Hopf algebras,  Theorem \ref{te:PBW_twisted} coincides with \cite[Theorem 3.1]{WW}.
\end{remark}

\begin{fact}[$PBW$-deformations of a twisted tensor product.] \label{fa:PBW_twisted}
	Note that we always identify $T\otK S$ and $\Psi(T, \mt^*)$ as $S$-bimodules. Similarly, $R\otK S=\Psi(R,\mt_R)$, where $\mt_R$ is the restriction of $\mt^*$ to $R\otK S$. Hence, given $\alpha\in \Hom_{S-S}(\ol R, \ol T)$,  the morphism $\alpha':=\psi ^{-1}_T  \alpha \psi_R$ is a morphism of $S$-bimodules from $\otp R$ to $T\ot_{ \mt^*} S$.   In view of the isomorphism from Lemma \ref{le:Psi} (1), to $\alpha'\in	\Hom_{S-S}(\otp R, \otp T)$ it corresponds a unique $\mt_R$-invariant $\mK$-bilinear map from $R$ to $T\otK S$, namely $\alpha_R:={\wh{\alpha'}}$.
	
	Our  goal is to reformulate in terms of $\alpha_R$ the properties of $\alpha$ which encode the fact that $\ol U$ is a $PBW$-deformation of ${\ol A} $, as in Theorem \ref{te:PBW_twisted}. In order to do that, we fix some $\alpha\in \Hom_{S-S}(\ol R, \ol T)$ and we denote by $\alpha_R$ the corresponding map in $\Hom_{\mK,\mK}^{\mt_R}(R,T\ot S)$. To clarify the notation we consider the diagram from Figure \ref{fig_2}.
\begin{figure}[ht]
\[\xymatrix@C=30pt @R=30pt {
	&P \ar[r]^{\psi_P}_\sim\ar@{_(->}[d] &\ol P \ar@{_(->}[d]\\
	& T\ot_{ \mt^*} S   \ar[r]^{\psi_T}_\sim&\ol T\\
	R \ar[ur]^{\alpha_R}\ar[r] & \otp R \ar[r]_-{\psi_R}^-\sim \ar[u]_{\alpha'}&\ol R\ar[u]_\alpha}
\]
\caption{}\label{fig_2}
\end{figure}

	Since $\psi_T$ is an isomorphism of graded bimodules, $\alpha$ is a map of filtered $S$-bimodules such that $\alpha_0(x)=x$ for all $x\in \ol R$, if and only if $\alpha_R$ is a map of filtered $\mK$-bimodules and $\alpha_0'(r\otK 1)=r\otK 1$, for all $r\in R$. Here, on  $T\otK S$ we take the filtration $F^p(T\otK S)=T^{\leq p}\otK S$. If the components of $\alpha$ are $\{\alpha_i\}_{i\leq n}$, with $\alpha_i:\ol R\to \ol T^{n-i}$, then the components of $\alpha_R$ are $\alpha_{R,i}:R\to T^{n-i}\otK S$, where $\alpha_{R,i}=(\psi ^{-1}_T \ol \alpha_i \psi_R )^{\wh{\ }}=\wh{\alpha'_i}$.
	
	We note that, by construction,  $\alpha_R$ is the unique $\mt_R$-invariant map such that $\alpha' (r\otK s)=\alpha_R(r)s$. Obviously, $\alpha_R$ is $\mt_R$-invariant if and only if its components $\{\alpha_{R,i}\}_{i\leq n}$ are so.
	
	Furthermore, the relation $\alpha(\ol R)=\ol P$ holds if and only if  $\alpha'(R\otK S)=P$. By Lemma \ref{le:Psi} (5), it follows that $\alpha(\ol R)=\ol P$ if and only if $\alpha_R(R)S=P$.
	
	For translating  the  Jacobi conditions \eqref{jpr0}--\eqref{jprn} we need some more notation. First, we set
	\[\textstyle
	X:=\left(R\otK V\right)\bigcap (V\otK R)\subseteq T,\qquad \ol X:=(\ol R\ot \ol V)\bigcap (\ol V\ot \ol R)\subseteq \ol T.
	\]
	Moreover, $\psi_T(R\otK V\otK S)= \ol R\ot \ol V$ and $\psi_T(V\otK R\otK S)=\ol V\ot \ol R$. Since $\psi$ is bijective and $S$ is flat as a left $\mK$-module, we deduce that $\psi_T(X\otK S)=\ol X$. On the other hand, $X$ is $\mt^*$-invariant. Thus, we can define $\mt_X$ as the restriction of $\mt^*$ to $S\otK X$ and $(X,\mt_X)$ is an object in $\TS$.
	
	If $\ol \vartheta: \ol X\to \ol T$ denotes the map $ \vartheta:=\ol V\ot \alpha-  \alpha \ot\ol V$, then we can define $\vartheta'$ and $\vartheta_X$ by  $\vartheta':=\psi^{-1}_T \vartheta \psi_X$ and $\vartheta_X:=\wh{\vartheta'}$, as in the following diagram.
\[\xymatrix@C=30pt @R=30pt {
	&  T\ot_{\mt^*} S   \ar[r]^-{\psi_T}_-\sim&\ol T\\
	X \ar[ur]^{\vartheta_X}\ar[r] & \otp X \ar[r]_-{\psi_X}^-\sim \ar[u]_{\vartheta'}&\ol X\ar[u]_\vartheta}
\]
	Note that $\vartheta_X$ is a morphism of filtered bimodules. To compute its components  we use the following diagram in the category $\SMS$, where we take $\varphi_i$ to be the unique $S$-bilinear map which makes the most-left square commutative.
\[\xymatrix@C=40pt @R=30pt {
	&(R\otK V)\otK \ar[r]_-{\psi_{R,V}}\ar@{.>}[d]_{\varphi_i}\ar@/^2.0pc/@[][rr]^{\psi_T^{n+1}\restr{(R\otK V)\otK S}}S   & (R\otK S)\ot \ol V  \ar[r]_-{\psi_R\ot\ol V}\ar[d]_{\alpha'_i\ot\ol V}& \ol R\ot \ol V \ar[d]^{\alpha_i\ot\ol V}\\
	&(T^{n-i}\otK V)\otK S \ar[r]^-{\psi_{T^{n-i},V}}\ar@/^-2.0pc/@[][rr]_{\psi_T^{n-i+1}}  & (T^{n-i}\otK S)\ot \ol V  \ar[r]^-{\psi_{T^{n-i}}\ot\ol V} & \ol T^{n-i}\ot \ol V  }
\]
	The most-right square is commutative, by definition of $\alpha'$. On the other hand, by a straightforward computation, we can  prove that both `triangles' are commutative as well. By Lemma \ref{le:Psi} (2) we get
	\begin{align*}
	\wh{\varphi_i}& = \big[\psi_{T^{n-i},V}^{-1}(\alpha'_i\ot \ol V)\psi_{R,V}\big]^{\wh{\ }}=(T^{n-i}\otK V\otK m_S)(T^{n-i}\otK\mt_{V}\otK S)(\wh{\alpha'_i}\otK \wh{\ol V})\\
	&= (T^{n-i}\otK\mt_{V})( \alpha_{R,i}\otK V).
	\end{align*}
	Note that for the ultimate equation we used the fact that $\wh{\ol V}:V\to \ol V$ maps $v$ to $v\otK 1$. For $x\in X\subseteq T^{n+1}$ and $s\in S$, we deduce the relation:
	\[
	\big[(\psi_{T}^{n-i+1})^{-1}(\alpha_i\ot \ol V)\psi_{X}\big](x\otK s)=\varphi_i(x\otK s)= \wh \varphi_i(x)s=[(T^{n-i}\otK\mt_{V})( \alpha_{R,i}\otK V)](x)s.
	\]
	For $x$ and $s$ as above, proceeding in a similar  way,  we show that the equation below  holds as well.
	\[
	\big[(\psi_{T}^{n-i+1})^{-1}(\ol V\ot \alpha_i)\psi_{X}\big](x\otK s)=(V \otK  \alpha_{R,i})(x)s.
	\]	
	Thus $\vartheta_{X,i}: X \to T^{n+1}\otK S$, the component of degree $-i$ of $\vartheta_X$, satisfies the following equality  of functions defined on $X$
	\[
	\vartheta_{X,i}= V\otK\alpha_{R,i}-(T^{n-i}\otK \mt_V)( \alpha_{R,i}\otK V).
	\]
	Obviously, the inclusion $\vartheta_1(\ol X)\subseteq \ol R$ holds if and only if $\vartheta_{X,1}(X)\subseteq R\otK S$. In conclusion, $\ol \alpha_1$ satisfies \eqref{jpr0} if and only if $\alpha_{R,1}:R\to T^{n-1}\ot S$ verifies the condition:
	\begin{equation*}
	\big[V\otK \alpha_{R,1}-(T\otK \mt_V) (\alpha_{R,1}\otK V)\big](X)\subseteq R\otK S. \tag{$\cJ_0''$}\label{jsc0}
	\end{equation*}
	Assuming that \eqref{jpr0} holds, $\vartheta_1$  can be viewed as a map from $\ol X$ to $\ol R$. Therefore, \eqref{jpri} holds for some $i<n$ if and only if $[((\psi_T^{n-i+1})^{-1}\alpha_i\psi_R)(\psi_R^{-1}\vartheta_1\psi_X)]^{\,\wh{ }}= -[(\psi_T^{n-i+1})^{-1}\vartheta_{i+1}\psi_X]^{\,\wh{\ }}$. Furthermore, the letter identity is equivalent to $ (\alpha'_i\vartheta'_1)^{\wh{\ }}=-\vartheta_{X,i+1}$. Applying Lemma \ref{le:Psi} (3), we conclude that \eqref{jpri} is equivalent to the following identity  of functions defined on $X$:
	\begin{equation*}
	(T\underset{ \mK}{\otimes} m_S) ( \alpha_{R,i}\underset{ \mK}{\otimes} S)[ V\underset{ \mK}{\otimes}\alpha_{R,1} -(T^{n-1}\underset{ \mK}{\otimes}\mt_V)  (\alpha_{R,1}\underset{ \mK}{\otimes} V)]=-[V\underset{ \mK}{\otimes}\alpha_{R,i+1} -(T^{n-i-1}\underset{ \mK}{\otimes} \mt_V)  (\alpha_{R,i+1}\underset{ \mK}{\otimes} V)]. \tag{$\cJ''_i$\label{jsci}}
	\end{equation*} 	
	Proceeding in a similar way we can show that \eqref{jprn} is equivalent to the following identity of functions defined on $X$:
	\begin{equation*}
	(T\otK m_S) ( \alpha_{R,n}\otK S)[ V\otK\alpha_{R,1} -(T^{n-1}\otK \mt_V)  (\alpha_{R,1}\otK V)]=0.\tag{$\cJ''_n$}\label{jscn}
	\end{equation*}	
\end{fact}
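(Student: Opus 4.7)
The plan is to show that, under the adjunction of Lemma \ref{le:Psi}(1), the data in Theorem \ref{te:PBW_twisted} (a filtered map $\alpha:\ol R\to\ol T$ with $\ol P=\alpha(\ol R)$ satisfying \eqref{jpr0}--\eqref{jprn}) corresponds bijectively to the data stated in \S\ref{fa:PBW_twisted} (a filtered $\mt_R$-invariant map $\alpha_R:R\to T\otK S$ with $\alpha_R(R)S=P$ satisfying \eqref{jsc0}--\eqref{jscn}). The strategy is to peel off the three ingredients in order: gradings/filtration, image, and Jacobi conditions.

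First, I would verify that the assignment $\alpha\mapsto\alpha_R$ respects gradings and filtrations. Since $\psi_T$ and $\psi_R$ are graded isomorphisms, $\alpha$ decomposes into homogeneous components $\alpha_i:\ol R\to\ol T^{n-i}$ iff $\alpha'$ decomposes into $\alpha'_i:\otp R\to T^{n-i}\otK S$, and applying the functor $(-)^{\wh{\ }}$ of Lemma \ref{le:Psi}(1) yields components $\alpha_{R,i}=\wh{\alpha'_i}:R\to T^{n-i}\otK S$, each $\mt_R$-invariant since the hat map lands in $\Hom_{\mK-\mK}^{\mt_R}$. The normalization $\alpha_0|_{\ol R}=\mr{inclusion}$ passes through $\psi_T,\psi_R$ to $\alpha_{R,0}(r)=r\otK 1$, which is the required filtered normalization for $\alpha_R$.

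Second, $\ol P=\alpha(\ol R)$ unwinds as follows. By the definition $P=\psi_T^{-1}(\ol P)$ and $\alpha'=\psi_T^{-1}\alpha\,\psi_R$, the equality $\ol P=\alpha(\ol R)$ is equivalent to $P=\alpha'(\otp R)$. Since $R$ is $\mt^*$-invariant, $(R,\mt_R)$ is an object of $\TS$ by Lemma \ref{le:Psi}(4), so Lemma \ref{le:Psi}(5) applied to $U=R$ gives $\alpha'(\otp R)=\wh{\alpha'}(R)\cdot S=\alpha_R(R)\cdot S$.

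Third, and this is the main computational step, I would translate the Jacobi conditions. The key is the component formula
\[
\vartheta_{X,i}=V\otK\alpha_{R,i}-(T^{n-i}\otK\mt_V)\,(\alpha_{R,i}\otK V),
\]
which the excerpt establishes via the auxiliary map $\varphi_i$ and Lemma \ref{le:Psi}(2): the twist $\mt_V$ appears precisely because passing $\alpha'_i\ot\ol V$ through the $\psi$-isomorphisms requires moving the $S$-factor of $\alpha_{R,i}(r)$ past the $V$-factor on the right. Granted this formula, \eqref{jpr0} $\Leftrightarrow$ \eqref{jsc0} follows immediately from $\psi_T(X\otK S)=\ol X$ and $\psi_T(R\otK S)=\ol R$ together with injectivity of $\psi_T$. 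For \eqref{jpri}, after restricting $\vartheta_1$ to land in $\ol R$, I would apply Lemma \ref{le:Psi}(3) to the composition $\alpha'_i\circ\vartheta'_1$ to obtain the left-hand side of \eqref{jsci} in the form $(T\otK m_S)(\alpha_{R,i}\otK S)\,\vartheta_{X,1}$, while the right-hand side $-\vartheta_{X,i+1}$ comes directly from the component formula applied at index $i+1$. The case \eqref{jprn} $\Leftrightarrow$ \eqref{jscn} is identical with $\alpha_n$ replacing $\alpha_i$ and the target being $0$.

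The main technical obstacle is bookkeeping the two compatibility formulas from Lemma \ref{le:Psi}(2)--(3) alongside the identifications $\psi_{T^m,V}$ and $\psi_{R,V}$; once these are set up cleanly, the appearance of the extra twist $\mt_V$ in \eqref{jsc0}--\eqref{jscn} (absent from the untwisted conditions \eqref{jpr0}--\eqref{jprn}) is forced, and every implication is a formal unwinding. No finiteness or projectivity hypotheses beyond what \S\ref{fa:notation_WW} already provides are needed for this reformulation, since flatness of $S$ over $\mK$ is only used to identify $\psi_T(X\otK S)$ with $\ol X$.
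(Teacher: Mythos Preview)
Your proposal is correct and follows essentially the same approach as the paper: you set up the correspondence $\alpha\mapsto\alpha_R$ via Lemma~\ref{le:Psi}(1), handle the image via Lemma~\ref{le:Psi}(4)--(5), derive the component formula for $\vartheta_{X,i}$ through the auxiliary map $\varphi_i$ and Lemma~\ref{le:Psi}(2), and then translate each Jacobi condition using Lemma~\ref{le:Psi}(3) for the compositions. This is exactly the structure of the paper's argument, and you have correctly identified where each ingredient enters.
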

As a direct application of Theorem \ref{te:PBW-pure} we obtain the following result.
\begin{theorem}\label{te:PBW-winterspoon}
	Keeping the notation and assumptions from \ref{fa:notation_WW}, the $S$-ring $\ol U$ is a $PBW$-deformation of $\ol A$ if and only if there exists a morphism of filtered $\mK$-bimodules $\alpha_R:R\to T\otK S$ such that $\alpha_R(R)S=P$ and either $\Tor_3^A(\mK,\mK)=0$ or \eqref{jsc0}--\eqref{jscn} holds.
\end{theorem}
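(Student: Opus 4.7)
The plan is to deduce Theorem~\ref{te:PBW-winterspoon} from Theorem~\ref{te:PBW_twisted} by translating every ingredient appearing in the latter (a morphism $\alpha:\ol R\to\ol T$, the condition $\alpha(\ol R)=\ol P$, and the Jacobi conditions \eqref{jpr0}--\eqref{jprn}) into its counterpart on the $\mK$-side via the correspondence developed in \S\ref{fa:PBW_twisted}. First, I would observe that the dichotomy $\Tor_3^{\ol A}(S,S)=0$ versus the Jacobi conditions in Theorem~\ref{te:PBW_twisted} can be replaced by the dichotomy $\Tor_3^A(\mK,\mK)=0$ versus \eqref{jsc0}--\eqref{jscn}, because \S\ref{fa:bar_ttp} gives the graded isomorphism $\Tor_{3}^{\ol A}(S,S)\cong\Tor_{3}^{A}(\mK,\mK)\otK S$, so one vanishes if and only if the other does (here one uses flatness of $S$ over $\mK$, which is part of \S\ref{fa:notation_WW}).

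Next I would set up the bijection between data. Using the tensor-hom adjunction in the form of Lemma~\ref{le:Psi}(1), assigning to $\alpha\in\Hom_{S\text{-}S}(\ol R,\ol T)$ the map
\[
\alpha_R:=\bigl(\psi_T^{-1}\alpha\,\psi_R\bigr)^{\wh{\ }}\in\Hom_{\mK\text{-}\mK}^{\mt_R}(R,T\otK S)
\]
produces a bijection onto the set of $\mt_R$-invariant $\mK$-bilinear maps. I would then check three compatibilities, all essentially unpacked in \S\ref{fa:PBW_twisted}: (i) $\alpha$ is filtered with $\alpha_0$ the inclusion if and only if $\alpha_R$ is filtered with $\alpha_{R,0}(r)=r\otK 1$, because $\psi_T$ is a graded isomorphism and the components satisfy $\alpha_{R,i}=\wh{\alpha'_i}$; (ii) the identity $\alpha(\ol R)=\ol P$ is equivalent to $\alpha_R(R)S=P$, by applying Lemma~\ref{le:Psi}(5) to the $\mt_R$-invariant subbimodule $R\subseteq T$; (iii) the correspondence is natural enough that Theorem~\ref{te:PBW_twisted} applied to $\alpha$ can be restated entirely in terms of $\alpha_R$.

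The main technical step, and the one which I expect to be the only real obstacle, is translating each Jacobi condition $(\cJ'_i)$ on $\alpha$ into the condition $(\cJ''_i)$ on $\alpha_R$. For this I would use the intertwiner $\vartheta:=\ol V\otimes\alpha-\alpha\otimes\ol V:\ol X\to\ol T$ introduced in \S\ref{fa:PBW_twisted}, together with the object $(X,\mt_X)$ of $\TS$ obtained by restricting $\mt^*$ to the $\mt^*$-invariant subbimodule $X=(R\otK V)\cap(V\otK R)$ (the $\mt^*$-invariance of $X$ follows from that of $R$ and $V$). Setting $\vartheta_X:=(\psi_T^{-1}\vartheta\,\psi_X)^{\wh{\ }}$ and computing its components by means of Lemma~\ref{le:Psi}(2)--(3) (exactly the computation carried out in \S\ref{fa:PBW_twisted}) yields
\[
\vartheta_{X,i}=V\otK\alpha_{R,i}-(T^{n-i}\otK\mt_V)(\alpha_{R,i}\otK V),
\]
so that $\vartheta_1(\ol X)\subseteq\ol R$ translates to $(\cJ_0'')$, the inductive equalities $\alpha_i\vartheta_1=-\vartheta_{i+1}$ translate via Lemma~\ref{le:Psi}(3) to $(\cJ_i'')$ for $0<i<n$, and the terminal vanishing $\alpha_n\vartheta_1=0$ translates to $(\cJ_n'')$.

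Putting these translations together, Theorem~\ref{te:PBW_twisted} asserts that $\ol U$ is a $PBW$-deformation of $\ol A$ if and only if either $\Tor_3^A(\mK,\mK)=0$ or there exists a filtered $S$-bimodule map $\alpha:\ol R\to\ol T$ with $\alpha(\ol R)=\ol P$ satisfying \eqref{jpr0}--\eqref{jprn}; by the bijection and the three translations above, this is equivalent to the existence of a filtered $\mK$-bimodule map $\alpha_R:R\to T\otK S$ with $\alpha_R(R)S=P$ satisfying \eqref{jsc0}--\eqref{jscn}, which is the statement of the theorem.
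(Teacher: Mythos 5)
Your proposal is correct and follows essentially the same route as the paper: the translation dictionary you invoke (the bijection $\alpha\leftrightarrow\alpha_R$ via Lemma \ref{le:Psi}(1), the equivalence $\alpha(\ol R)=\ol P\Leftrightarrow\alpha_R(R)S=P$ via Lemma \ref{le:Psi}(5), and the componentwise identification of \eqref{jpr0}--\eqref{jprn} with \eqref{jsc0}--\eqref{jscn} through $\vartheta_X$ and Lemma \ref{le:Psi}(2)--(3)) is precisely the content of \S\ref{fa:PBW_twisted}, and the paper then cites Theorem \ref{te:PBW-pure} (equivalently Theorem \ref{te:PBW_twisted}) together with the $\Tor$-comparison of \S\ref{fa:bar_ttp}, exactly as you do.
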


A similar result was proved for certain smash products in \cite[Theorem 0.4]{WW}. It can be seen as  a particular case of Theorem \ref{te:PBW-winterspoon}, since any smash product as in the aforementioned paper is a twisted tensor product with invertible twisting map.

Indeed, let $H$ be  a Hopf algebra over a field $\mk$. We assume that $H$ acts on a graded $\mk$-algebra such that the components of $A$ are $H$-submodules. Hence one can define \textit{the smash product}, which is a graded algebra structure on $A\otk H$ with unit $1\ot 1$. The multiplication is given by:
\[
	(a\otk h)\cdot (a'\otk h')=\sum a(h_1\cdot a')\otk h_2h',
\]
where $\Delta(h)=\sum h_1\otk h_2$ is the Sweedler notation  for the comultiplication of $H$. The smash product is denoted by $A\# H$. For more details on the graded smash products the reader is referred to \cite[\S1.1]{WW}.

Let  $\mt:H\otk A\to A\otk H$ denote the map $h\otk a\mapsto \sum (h_1\cdot a)\otk h_2$. It is easy to see that $\mt$ is a graded twisting map such that $A\# H\cong A\ott H$. Moreover, if  the antipode  $\boldsymbol S$ of $H$ is bijective, then $\mt$ is invertible and its inverse $\mt^{-1}$ is defined by $a\otk h\mapsto\sum h_2\otk\boldsymbol S^{-1}(h_1)\cdot a$. In conclusion, we are able to apply Theorem \ref{te:PBW-winterspoon} for the smash products $A\# H$ as in \cite{WW}.

\subsection{The Poincar\'e-Birkhoff-Witt Theorem for special classes of rings.}\hfill\vspace*{1.3ex}

	In the previous applications we  imposed almost no restriction on the base ring $S$. Instead, we asked that the graded $S$-ring $A$ to fulfill certain conditions.  In the next applications we will restrict ourselves to the case when $S$ belongs to some special classes of rings. In this way some of the restrictions that we imposed to $A$ can be relaxed.
\begin{fact}[Rings of weak dimension $0$.]	
	Let $S$ be an algebra over a field $\mk$.  When  we work with $\mk$-algebras, by an $S$-bimodule we will always mean a left module over $S^e=S\otk S^{op}$, the enveloping algebra of $S$.
	
	We now consider the case when the weak dimension of $S$ is zero,  that is $S$ is $S^e$-flat. As it is shown in \cite{Ha}, there is a strong relationship between the $\mk$-algebras of weak dimension zero and  \textit{von Neumann regular algebras}. Recall that, by definition, $S$ is regular von Neumann if and only if the equation $sxs=s$ has a solution, for every $s\in S$. It is well-known that $S$ is regular von Neumann if and only if the weak global dimension of $S$ is zero (all left $S$-modules  are flat or, equivalently, all right $S$-modules are flat). For simplicity, we will call them just \textit{regular}.
	
	Turning back to algebras of weak dimension zero, by \cite[Theorem 1]{Ha}, the enveloping algebra of such an algebra is regular and conversely. Moreover, if $S^e$ is regular then $S$ is regular as well, since the weak global dimension of $S$ is less than or equal to the weak dimension of $S$, cf. \cite[Lemma 2]{Ha}.
	
	 Throughout the remaining part of this subsection we fix a $\mk$-algebra $S$ of weak dimension zero.  Thus, left $S^e$-module and left $S$-modules are all flat.
	
	It is well-known that a module over an arbitrary ring is projective and finitely generated if and only if it is flat and finitely presented.

Moreover, it is easy to see that a quotient of a finitely presented module (still over an arbitrary ring) by a finitely generated submodule is finitely presented. By the foregoing, for regular rings, a module is projective and finitely generated if and only if it is finitely presented. Thus, in this case, it follows that any quotient of a projective noetherian module is also projective noetherian. Consequently, every submodule of a projective noetherian module is always a direct summand and, as such, it is projective and noetherian.
	
	We can apply the above results both for $S$ and $S^e$, since they are regular in the present setting. If $M$ is a projective noetherian $S^e$-module and $N$ is an $S^e$-submodule, then  $M/N$ is $S^e$-projective and $N$ is a direct summand. Note that $N$ is noetherian (finitely generated) as an $S^e$-module, provided that $N$ is noetherian (finitely generated) as a left $S$-module. For simplicity, we will say that such an $N$ is left noetherian  (left finitely generated). 	A graded module over an arbitrary ring will be called \textit{componentwise  noetherian} if its homogeneous components are all noetherian. Note that a componentwise  noetherian module is not  noetherian in general, excepting the case when it has only a finite number of non trivial components.
	
	Analogously, if $M$ is a projective $S^e$-module, then we will say for short that  it is $S^e$-projective. On the other hand, $M$ is said to be left $S$-projective, if it is projective as a left $S$-module. Note that any projective $S^e$-module is both left and right projective. The converse does not hold in general. For example, $S$ is always left and right projective, but it is $S^e$-projective if and only of $S$ is separable.
	
	Let us now fix two projective $S^e$-modules  $X_i$, where $i=1,2$.  The tensor-hom adjunction formula \eqref{eq:adjunction} yields us the relation:
$$
	\Hom_{S,S}(X_1\ot X_2,-)\cong\Hom_{S,S}(X_2,\Hom_{S,-}(X_1,-)).
$$
	Since $X_2$ is a projective $S^e$-module and $X_1$ is right projective, we conclude that $X_1\ot X_2$ is  $S^e$-projective. Therefore, if $V$ is a $S^e$-projective, then so is $V^{\ot n}$ for $n>0$.
	
	On the other hand, if $X_1$ is  left noetherian  and $X_2$ is left finitely generated, then we claim that $X_1\ot X_2$ is left noetherian. Indeed,  $X_2$ is a quotient module of some  $S^n$. Thus $X_1\ot X_2$ is left noetherian, being a quotient of  $X_1^n$. Since the latter module is left noetherian we conclude the proof of our claim. In particular, if $V$ is a left noetherian $S^e$-module, then so is $V^{\ot n}$ for any $n>0$.\vspace{1ex}
	
	Let $A$ be a strongly graded connected $S$-ring, where $S$ is a regular algebra. By $\pi_A:T_A\to A$  we denote, as usual, the canonical $S$-ring morphism from the tensor $S$-ring $T_A$ of $A^1$ to $A$. Let $I_A$ denote the kernel of $\pi_A$.  We assume that $A^1$ is $S^e$-projective and left noetherian.
	
	By the foregoing remarks, $T_A^+$ is  projective and  componentwise noetherian (both as  $S$-module and  $S^e$-module). It follows that $A^+$ is projective and componentwise noetherian (also as $S$-module and  $S^e$-module), being a quotient of  $T_A^+$. Since $I_A\subseteq T_A^+$, it follows that $I_A$ is a projective and componentwise noetherian $S^e$-module. In conclusion, $\wt I_A=A^1I_A+I_AA^1$ is a direct summand of $I_A$ as an $S^e$-module. By Proposition \ref{pr:minimal} (1), if $R$ is a graded $S^e$-submodule complement of $\wt I_A$ in $I_A$, then $R$ is a bimodule of relations for $A$ and $R$ is $S^e$-projective. Thus, $R$ is projective as  left and right $S$-module.
	
	We claim that $S$ has a minimal resolution as a left $A$-module. We will prove by induction that we can construct an exact sequence:
	\begin{equation}\label{eq:La-minimal}
	A \ot V_n\xrightarrow{d_n} A \ot V_{n-1}\xrightarrow{d_{n-1}}\cdots  \xrightarrow{d_3} A \ot V_2\xrightarrow{d_2} A \ot A^1\xrightarrow{d_1} A  \xrightarrow{d_0}S\rightarrow 0,
	\end{equation}
	where all $V_i$ and $K_i:=\ker(d_i)$ are projective and componentwise noetherian, and $\ker(d_i)\subseteq A^+\ot V_i$, for all $i=1,\dots, n$.
	
	For $n=0$, we can take $d_0$ to be the projection onto $A^0$, since we have noticed that $K_0=A^+$ is left projective and componentwise noetherian. Let us assume that we have  constructed \eqref{eq:La-minimal} up to degree $n$.
	
	By induction hypothesis, $K_n$  is  projective and componentwise noetherian. It follows that $K_n/(A^+K_n)$ is projective, so $A^+K_n$  is a direct summand of $K_n$. Let $V_{n+1}$ be a graded submodule complement of $A^+K_n$ in $K_n$. Clearly $V_{n+1}$ is projective and componentwise noetherian. By Lemma \ref{le:minimal} there is a graded morphism $d_{n+1}:A\ot V_{n+1}\to A\ot V_n$ such that, by adding it to \eqref{eq:La-minimal}, we get an exact sequence and $\ker(d_{n+1})\subseteq A^+\ot V_{n+1}$.
	
	By Lemma \ref{le:minimal}, the kernel $K_{n+1}$ of $d_{n+1}$ is projective. It remains to prove that $K_{n+1}$ is componentwise noetherian.
	The left $S$-module $A^p\ot V^{m-p}_k$ is noetherian,  since $A^p$ and  $V_k^{m-p}$ are  left noetherian and  finitely generated, respectively. Thus   $(A\ot V_{n+1})^k=V_{n+1}^k\oplus[\oplus_{p=1}^k(A^p\ot V^{k-p}_{n+1})] $  is noetherian.  It follows that $A\ot V_{n+1}$ is componentwise noetherian. Since $K_{n+1}$ is a graded submodule of $A\ot V_{n+1}$, we conclude that it is componentwise noetherian.
	
	Clearly, \eqref{eq:La-minimal}  is the part in degree up to $n$ of  a minimal resolution of $S$. \vspace*{1ex}
	
	Let $P$ be an $S^e$-submodule of $T$ such that $R\subseteq R_P$. Obviously,   $R^{n}_P$ is $S^e$-projective, being a submodule of $T^n$ which is $S^e$-projective and left noetherian. Thus $\gr P\cong R_P$ is projective as well. We conclude that $P^{\leq n}$ is a direct summand of $P^{\leq n+1}$ as an $S^e$-module, for all $n$.  As a direct application of Theorem \ref{te:PBW}, we get the following result.	
\end{fact}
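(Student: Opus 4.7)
The plan is to establish the module-theoretic machinery in stages, starting from the hypothesis that $S$ has weak dimension zero, and then bootstrap it up the tensor-ring filtration to obtain first a bimodule of relations, then a minimal left $A$-projective resolution of $S$, and finally the direct summand property for $P^{\leq n}\subseteq P^{\leq n+1}$ needed to invoke Theorem \ref{te:PBW}.

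First I would unpack the consequences of weak dimension zero. By Haber's theorem \cite[Theorem 1]{Ha} and \cite[Lemma 2]{Ha}, the enveloping algebra $S^e$ is regular, so every left $S^e$-module (in particular every $S$-bimodule) is flat, and the same holds on one side for left $S$-modules. Over any ring, the equivalence ``projective + finitely generated $\Leftrightarrow$ flat + finitely presented'' is standard; since flatness is automatic here, both conditions collapse to finite presentation. Because a quotient of a finitely presented module by a finitely generated submodule is finitely presented, any quotient of a projective noetherian $S^e$-module remains projective and noetherian; and since such a quotient is projective, its submodule of origin splits off as a direct summand. This is the key structural fact I will use repeatedly.

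Next I would handle tensor-product preservation. For $S^e$-projective $X_1,X_2$, the adjunction \eqref{eq:adjunction} gives $\Hom_{S,S}(X_1\otimes X_2,-)\cong \Hom_{S,S}(X_2,\Hom_{S,-}(X_1,-))$; exactness of the outer functor follows from $S^e$-projectivity of $X_2$, and exactness of the inner one from left $S$-projectivity of $X_1$ (itself a consequence of $S^e$-projectivity). For noetherianness, write a left finitely generated $X_2$ as a quotient of $S^n$, so $X_1\otimes X_2$ is a quotient of the left noetherian $X_1^n$ and is therefore left noetherian. Iterating, $(A^1)^{\otimes n}$ is both $S^e$-projective and left noetherian under the standing hypotheses on $A^1$, so $T_A^+$ is $S^e$-projective and componentwise noetherian; these properties pass to the quotient $A^+$ and the submodule $I_A$. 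Then $\widetilde{I_A}=A^1I_A+I_AA^1$ inherits them, and the first paragraph forces $\widetilde{I_A}$ to be an $S^e$-summand of $I_A$; a graded complement $R$ is an $S^e$-projective bimodule of relations by Proposition \ref{pr:minimal}(1).

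For the minimal resolution I would induct, starting with $d_0:A\to S$ and $K_0=A^+$, which is $S^e$-projective and componentwise noetherian. At each stage, since $K_n$ shares these properties, the quotient $K_n/A^+K_n$ is projective and $A^+K_n$ admits a graded complement $V_{n+1}$ inside $K_n$; applying Lemma \ref{le:minimal} with this $V_{n+1}$ produces $d_{n+1}:A\otimes V_{n+1}\to A\otimes V_n$ whose kernel lies in $A^+\otimes V_{n+1}$ and is $S^e$-projective. The main subtlety, and the step I expect to require the most care, is verifying componentwise noetherianness of $K_{n+1}$: each degree of $A\otimes V_{n+1}$ is the finite sum $\bigoplus_{p=0}^k A^p\otimes V_{n+1}^{k-p}$, a finite direct sum of tensor products of a left noetherian module with a finitely generated one, hence left noetherian, so its graded submodule $K_{n+1}$ is componentwise noetherian. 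This keeps the induction running.

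Finally, for $P$ with $R\subseteq R_P$: each $R_P^n$ is an $S^e$-submodule of $T_A^n$, which is $S^e$-projective and left noetherian, so by the first paragraph $R_P^n$ is itself $S^e$-projective. Hence $\gr P\cong R_P$ is $S^e$-projective component by component, and each short exact sequence $0\to P^{\leq n-1}\to P^{\leq n}\to \gr^n P\to 0$ splits as $S^e$-modules, giving the desired direct summand property and enabling the application of Theorem \ref{te:PBW} that follows.
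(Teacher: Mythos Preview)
Your proposal is correct and follows essentially the same approach as the paper: the same regularity-via-Harada reduction, the same tensor-hom adjunction argument for $S^e$-projectivity of tensor products, the same quotient-of-$X_1^n$ argument for left noetherianness, the same induction invoking Lemma~\ref{le:minimal} (including the identical verification that $(A\otimes V_{n+1})^k$ is left noetherian degreewise), and the same final splitting argument via projectivity of $\gr^n P\cong R_P^n$. The only cosmetic discrepancy is that the paper invokes \emph{right} $S$-projectivity of $X_1$ where you invoke left $S$-projectivity, but since both follow from $S^e$-projectivity this is immaterial; also note the cited author is Harada, not Haber.
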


\begin{theorem}\label{te:wd0}
	Let $S$ be a $\mk$-algebra of weak dimension zero. Let $A$ be a strongly graded connected $S$-ring such that $A^1$ is left noetherian. Then there exists a bimodule of relations $R$ for $A$ which is $S^e$-projective and left noetherian. If $P$ denotes a  subbimodule of $T_A$ such that $R\subseteq R_P$, then $U(P)$ is a $PBW$-deformation of $A$ if and only if $R_P\subseteq \ig{R}$, there exists a morphism of filtered $S$-bimodules $\alpha:R\to T_A$ such that $\alpha(R)\subseteq P\subseteq \ig{\alpha(R)}$ and either $c(A)=-1$ or $\alpha(R)$ satisfies the Jacobi conditions $(\cJ_1)-(\cJ_{c(A)})$.
\end{theorem}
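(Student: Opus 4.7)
The plan is to deduce this theorem as a direct specialization of Theorem \ref{te:PBW}. The preamble preceding the statement has already done the heavy lifting, so the proof amounts to organizing that work and checking that each hypothesis of \ref{te:PBW} is met under the weak-dimension-zero assumption on $S$. Concretely, I need to verify: (i) the existence of a bimodule of relations $R$ for $A$ that is left projective, (ii) left projectivity of $A^1$, and (iii) projectivity of $R_P$ as an $S$-bimodule. Then the conclusion about PBW-deformations follows verbatim from Theorem \ref{te:PBW}, modulo observing that $\ig{R}=\ig{R_P}$ is equivalent to $R_P\subseteq \ig{R}$ when $R\subseteq R_P$ (the reverse inclusion is automatic).

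First I would recall the consequences of $S^e$ being von Neumann regular (equivalent to $S$ having weak dimension zero by the result of Harada cited in the preamble): every $S^e$-module is flat, so for finitely generated $S^e$-modules the notions of projective and finitely presented coincide, and every submodule of a finitely generated projective $S^e$-module is itself a direct summand (hence projective). Together with the fact that the tensor product over $S$ of a projective $S^e$-module with a left-projective one remains $S^e$-projective (via the tensor-hom adjunction \eqref{eq:adjunction}), this yields that $T_A$ and $T_A^+$ are $S^e$-projective and componentwise left noetherian, and the same holds for the quotient $A^+$ and for the submodule $I_A\subseteq T_A^+$. Hence $\widetilde{I_A}$ is a direct summand of $I_A$ in the category of graded $S^e$-modules, and Proposition \ref{pr:minimal}(1) furnishes a bimodule of relations $R$ for $A$ that is $S^e$-projective. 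Finally, since $S$ is regular, left noetherian modules are left projective, which gives left projectivity of $A^1$.

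Next I would verify that $R_P$ is a projective $S$-bimodule. Since $R\subseteq R_P$ and $T^n$ is itself $S^e$-projective and left noetherian, each $R_P^n=p^n(P^{\leq n})\subseteq T^n$ is $S^e$-projective by the direct-summand property recalled above. Summing over $n$ gives projectivity of $R_P$ as a graded $S$-bimodule.

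With all the hypotheses of Theorem \ref{te:PBW} verified, applying it directly yields the stated equivalence. The condition $\ig{R}=\ig{R_P}$ there becomes $R_P\subseteq \ig{R}$ because the inclusion $R\subseteq R_P$ gives $\ig{R}\subseteq \ig{R_P}$ for free, and $R_P\subseteq \ig{R}$ forces the reverse inclusion. Since no homological computation is required here beyond what was already established in the preamble, there is no substantive obstacle; the only care needed is bookkeeping to ensure that all the projectivity and noetherian conclusions propagate correctly through the constructions used to build $R$ and to verify projectivity of $R_P$.
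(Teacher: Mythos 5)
Your proof follows the same route as the paper's: verify the hypotheses of Theorem \ref{te:PBW} via the machinery set up in the preceding preamble, then apply it directly, together with the elementary observation that, given $R\subseteq R_P$, the equality $\ig{R}=\ig{R_P}$ is equivalent to $R_P\subseteq\ig{R}$. There is, however, one false claim in your verification: you assert that ``since $S$ is regular, left noetherian modules are left projective.'' This does not hold over a general von Neumann regular ring. The preamble only proves, and only uses, the weaker statement that submodules and quotients of \emph{projective} noetherian modules remain projective noetherian (being direct summands); nowhere does it claim that noetherianity alone forces projectivity, and in fact simple (hence noetherian) modules over a non-semisimple regular ring are typically not projective.

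The way the paper actually proceeds is by assuming, explicitly in the preamble, that $A^1$ is $S^e$-projective in addition to left noetherian. That hypothesis has silently been dropped from the statement of Theorem \ref{te:wd0}, but it is plainly in force for the whole discussion; it is needed to launch the chain $A^1 \Rightarrow T_A^n=(A^1)^{\ot n} \Rightarrow I_A^n \Rightarrow R^n$ and $\Rightarrow R_P^n$ of projective noetherian $S^e$-modules that you use later in your own argument (e.g.\ when you invoke that each $R_P^n\subseteq T_A^n$ is $S^e$-projective because $T_A^n$ is projective noetherian). It also yields left $S$-projectivity of $A^1$ for free: since $S^e=S\otk S^{op}$ is a free left $S$-module, any $S^e$-projective module is left $S$-projective. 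Replace your false ``noetherian $\Rightarrow$ projective'' step by an appeal to this $S^e$-projectivity hypothesis, and the rest of your verification -- which mirrors the paper's argument -- goes through as intended.
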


\begin{fact}[Rings of dimension $0$.]
By definition, the dimension of a $\mk$-algebra $S$ is zero if and only if $S$ is $S^e$-projective, that is $S$ is separable. 	By \cite[Theorem 9.2.11]{We}, a separable algebra is finite dimensional as a $\mk$-linear space. In particular, such an algebra is left noetherian. A separable algebra $S$ is semisimple too. Indeed, $S$ is flat as an $S^e$-module, so $S$ is regular and noetherian. By \cite[Theorem 4.2.2]{We}  it follows that $S$ is semisimple. Note also that any $S^e$-module is projective.

Let $A$ be a strongly graded connected $S$-ring $A$, where $S$ is a separable algebra over a field  $\mk$. In view of the above properties of separable algebras, it follows that there exists a bimodule of relations for $A$, say $R$, since $\wt I_A$ is  a direct summand of $I_A$ as an $S^e$-module. Obviously,  $A^1$ and $R$ are left projective, and $R_P$ is $S^e$-projective, for any $S^e$-module $P$. Therefore, as another application of Theorem  \ref{te:PBW}, we get the following.	
\end{fact}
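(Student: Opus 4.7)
The plan is to derive the result as an immediate corollary of Theorem \ref{te:PBW}, using the fact that separability of $S$ makes every projectivity hypothesis in that theorem automatic. By definition $S$ is separable exactly when $S$ is projective as an $S^e$-module, and by the properties collected in the preceding Fact, every $S^e$-module is then projective and $S$ itself is semisimple (so every left or right $S$-module is also projective).

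First I would produce a graded bimodule of relations $R$ for $A$. The graded $S^e$-module $I_A/\wt I_A$ is projective, so the canonical projection $I_A \twoheadrightarrow I_A/\wt I_A$ admits a graded $S$-bilinear section; its image $R$ is a graded $S$-subbimodule of $I_A$ with $R \oplus \wt I_A = I_A$, which by Proposition \ref{pr:minimal}~(1) is a bimodule of relations, itself projective over $S^e$ and in particular projective as a left $S$-module. Likewise $A^1$, being an $S$-module, is left $S$-projective.

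Next I would check the remaining hypotheses of Theorem \ref{te:PBW} for an arbitrary $S$-subbimodule $P \subseteq T_A$ satisfying $R \subseteq R_P$. Each component $R_P^n = p^n(P^{\leq n}) \subseteq T_A^n$ is an $S^e$-submodule of an $S^e$-module, hence is itself $S^e$-projective. Via the isomorphism $\gr P \cong R_P$ of Remark \ref{rem:isos}, every graded component of $\gr P$ is $S^e$-projective, so by Lemma \ref{le:strict} the inclusion $P^{\leq n} \hookrightarrow P^{\leq n+1}$ splits as a map of $S$-bimodules for every $n$.

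With all hypotheses of Theorem \ref{te:PBW} satisfied, the theorem applies directly and yields the desired characterization: $U(P)$ is a $PBW$-deformation of $A$ if and only if $\ig{R} = \ig{R_P}$, there exists a filtered $S$-bimodule map $\alpha\colon R \to T_A$ with $\alpha(R) \subseteq P \subseteq \ig{\alpha(R)}$, and either $c(A) = -1$ or $\alpha(R)$ satisfies $(\cJ_1)$--$(\cJ_{c(A)})$. There is no real obstacle: the argument is purely a verification that separability collapses every projectivity and splitting condition of Theorem \ref{te:PBW} to a tautology, so no new homological or combinatorial input is required.
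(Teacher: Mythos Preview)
Your proposal is correct and follows essentially the same approach as the paper: you verify that separability of $S$ forces every $S^e$-module to be projective, so a bimodule of relations $R$ exists via Proposition~\ref{pr:minimal}(1), the left $S$-projectivity of $A^1$ and $R$ and the $S^e$-projectivity of $R_P$ are automatic, and Theorem~\ref{te:PBW} then applies directly. The paper's argument is exactly this, stated more tersely in the Fact block preceding Theorem~\ref{PBW_separable}.
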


\begin{theorem}\label{PBW_separable}
	Let $S$ be a separable $\mk$-algebra and let $A$ denote a strongly graded connected $S$-ring. There exists a bimodule of relations $R$ for $A$.  If $P$ is a subbimodule of $T_A$ such that $R\subseteq R_P$, then $U(P)$ is a $PBW$-deformation of $A$ if and only if $\ig{R}=\ig{R_P}$, there exists a morphism of filtered $S$-bimodules $\alpha:R\to T_A$ such that $\alpha(R)\subseteq P\subseteq \ig{\alpha(R)}$ and either $c(A)=-1$ or $\alpha(R)$ satisfies the Jacobi conditions $(\cJ_1)-(\cJ_{c(A)})$.
\end{theorem}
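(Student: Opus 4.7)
The plan is to reduce everything to a direct application of Theorem \ref{te:PBW}, using the basic homological properties of a separable $\mk$-algebra $S$. Recall that $S$ is separable iff $S$ is projective as a module over $S^e=S\otk S^{op}$, which is equivalent to the fact that \emph{every} $S$-bimodule is projective. Since the statement is an ``iff'' obtained as a specialization of Theorem \ref{te:PBW}, the work consists entirely in checking the three projectivity assumptions required by that theorem: existence of a bimodule of relations $R$, projectivity of $A^{1}$ as a left $S$-module, and projectivity of $R_{P}$ as a bimodule.

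First I would produce the bimodule of relations. The $S^{e}$-module $I_{A}/\wt I_{A}$ is projective (by separability), so the canonical projection $I_{A}\to I_{A}/\wt I_{A}$ has an $S$-bilinear section. This is exactly the content of Proposition \ref{pr:minimal}(1), and it furnishes a graded $S^{e}$-submodule $R\subseteq I_{A}$ with $R\oplus\wt I_{A}=I_{A}$ which, moreover, is $S^{e}$-projective. In particular $R$ is projective as a left $S$-module. Thus the existence of a bimodule of relations claimed in the first sentence of the theorem holds, and the projectivity hypothesis on $R$ required by Theorem \ref{te:PBW} is automatic.

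Next, $A^{1}$ is an $S$-bimodule, hence it is $S^{e}$-projective by separability, and therefore also projective as a left $S$-module. This takes care of the hypothesis on $A^{1}$. Finally, for any $S$-subbimodule $P\subseteq T_{A}$ with $R\subseteq R_{P}$, the graded bimodule $R_{P}=\bigoplus_{n\in\N}R_{P}^{n}$ is itself an $S$-bimodule, so by separability again it is projective as a bimodule. Hence the third hypothesis of Theorem \ref{te:PBW} is also satisfied.

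With all three projectivity assumptions in place, Theorem \ref{te:PBW} applies directly and gives exactly the equivalence stated: $U(P)$ is a $PBW$-deformation of $A$ iff $\ig{R}=\ig{R_{P}}$, there exists a filtered $S$-bilinear map $\alpha\colon R\to T_{A}$ with $\alpha(R)\subseteq P\subseteq\ig{\alpha(R)}$, and $\alpha(R)$ satisfies $(\cJ_{1})\text{--}(\cJ_{c(A)})$ (or $c(A)=-1$). There is no real obstacle here: the only conceptual point is that separability is \emph{exactly} what is needed to make every bimodule that appears in the hypotheses of Theorem \ref{te:PBW} projective at once, so the proof is essentially a bookkeeping check. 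The ``hard work'' has already been carried out in Theorem \ref{te:PBW} (and before that in Theorems \ref{Th:PBWJac}, \ref{Th:JacAnn}, \ref{th:mainth}, \ref{prop:H10}, together with Proposition \ref{pr:minimal} and Corollary \ref{cor:largeR}).
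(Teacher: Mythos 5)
Your proposal is correct and follows essentially the same route as the paper: the paper also notes that separability makes every $S$-bimodule (in particular $I_A/\wt I_A$, $A^1$, $R$ and $R_P$) projective, obtains the bimodule of relations exactly as in Proposition \ref{pr:minimal}(1), and then invokes Theorem \ref{te:PBW} verbatim. Nothing is missing.
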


In the particular case when $S=\mk$ and the bimodule $P$ is as an Example \ref{ex:CS}, the above theorem is precisely \cite[Theorem 4.2]{CS}.

\begin{fact}[Multi-Koszul algebras.]\label{ex:herscovich}
	Multi-Koszul algebras were introduced in \cite{Herscovich} and generalize usual Koszul-algebras. Let us  briefly recall an important characterization of multi-Koszul algebras, which can be interpreted as their definition.
	
	Let $\Bbbk$ be a field. We fix  a finite-dimensional vector space  $V$ and we take $I$  to be a homogeneous ideal of the tensor algebra $T_\mk(V)$. Let us denote the quotient $T_\mk(V)/I$ by ${A}$. We choose a a bimodule of relations $R$ for $I$, which exists because we are working with algebras over a field. This is equivalent to require that $R$ is a space of relations of ${A} $ in the sense of \cite[\S3]{Herscovich}.
	
	We assume further that $R$ is finite-dimensional, i.e. there exists a finite set $\mathfrak{S}$ whose elements are integers $s\geq 2$, such that $R=\bigoplus_{s\in \mathfrak{S}}R^s$ and $R^s\subseteq V^{\otimes s}$.
	
	In view of \cite[Proposition 3.12]{Herscovich}, $A$ is multi-Koszul if and only if, for every $i\in \mathbb{N}$, we have
$$
	\Tor_{i}^{\A}(\Bbbk,\Bbbk)\cong \bigoplus_{s\in \mathfrak{S}}\bigcap\limits_{j=0}^{n_s(i)-s}V^{\otimes j}\otimes R^s\otimes V^{\otimes (n_s(i)-s-j)},
$$
	as graded vector spaces, where  $n_s(2j) := sj$ and  $n_s(2j + 1) := sj + 1$, cf. \cite[p. 202]{Herscovich}.
	
	In particular, if $A$ is multi-Koszul, for $s\in \mathfrak{S}$, we have
	\begin{equation*}\textstyle
		\Tor_{3,s+1}^{A}(\Bbbk,\Bbbk)\cong \left(R^s\otimes V\right) \bgi \left(V\otimes R^s\right)
	\end{equation*}
	while $	\Tor_{3,s+1}^{A}(\Bbbk,\Bbbk)=0$, for $s\notin \mathfrak{S}$. Because we are working with algebras over fields, the homological complexity of $A$ can be computed using the formula $c(A)=\sup\{n-1\mid 	\Tor_{3,n}^{A}(\Bbbk,\Bbbk)\neq 0\}$. Therefore, $c(\A)\leq N$,  where  $N:=\max\mathfrak{S}$.
	
	In particular, if $A$ is $N$-Koszul, i.e. $A$ is multi-Koszul and  $\mathfrak{S}=\{N\}$, then $c(A)\in\{-1,N\}$ and both values are possible. The $N$-Koszul algebras were introduced in  \cite{B2} and they generalize ordinary Koszul algebras, which coincides with $2$-Koszul algebras.
	
	For multi-Koszul algebras we get the following version of Poincar\'e-Birkhoff-Witt Theorem.
\end{fact}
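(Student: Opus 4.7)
The plan is to reduce the statement to a direct application of Theorem \ref{PBW_separable} (with $S=\Bbbk$, trivially a separable $\Bbbk$-algebra) once the complexity bound $c(A)\leq N$ has been established, where $N:=\max \mathfrak{S}$.

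First, verify the hypotheses of Theorem \ref{PBW_separable}. The base ring $\Bbbk$ is a field, hence a separable $\Bbbk$-algebra, so a bimodule of relations $R$ for $A$ exists (here we may take the $R$ fixed in \S\ref{ex:herscovich}). Since $A^1=V$ and $R$ are $\Bbbk$-vector spaces, they are automatically left $\Bbbk$-projective, so the hypotheses of Theorem \ref{PBW_separable} are satisfied.

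Second, extract the bound $c(A)\leq N$ from the Herscovich decomposition recalled in \S\ref{ex:herscovich}. The isomorphism
\[
\Tor_{i}^{A}(\Bbbk,\Bbbk)\cong \bigoplus_{s\in \mathfrak{S}}\bigcap_{j=0}^{n_s(i)-s}V^{\otimes j}\otimes R^s\otimes V^{\otimes (n_s(i)-s-j)}
\]
forces $\Tor_{3,m}^{A}(\Bbbk,\Bbbk)$ to vanish outside the values $m=n_s(3)=s+1$ with $s\in \mathfrak{S}$. Because the field $\Bbbk$ is semisimple, the definition of complexity yields
\[
c(A)=\sup\{m-1\mid \Tor_{3,m}^{A}(\Bbbk,\Bbbk)\neq 0\}\leq \max \mathfrak{S}=N.
\]

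Finally, combine these. By Theorem \ref{PBW_separable}, $U(P)$ is a $PBW$-deformation of $A$ if and only if $\ig{R}=\ig{R_P}$, there exists a filtered $S$-bimodule map $\alpha\colon R\to T_\Bbbk(V)$ with $\alpha(R)\subseteq P\subseteq \ig{\alpha(R)}$, and either $c(A)=-1$ or $\alpha(R)$ satisfies $(\cJ_1)$--$(\cJ_{c(A)})$. Since $c(A)\leq N$, the conditions $(\cJ_1)$--$(\cJ_{N})$ imply $(\cJ_1)$--$(\cJ_{c(A)})$, giving sufficiency; conversely, if $U(P)$ is a $PBW$-deformation then $\alpha(R)$ is of $PBW$-type, so by Theorem \ref{Th:PBWJac} all Jacobi conditions hold and in particular $(\cJ_1)$--$(\cJ_{N})$ are satisfied, giving necessity. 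The only substantive point is reading off the complexity bound from the Herscovich formula; no genuine obstacle is expected beyond careful bookkeeping with the indices $n_s(i)$.
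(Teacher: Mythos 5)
Your proposal is correct and follows essentially the same route as the paper: you read off from the Herscovich formula (with $n_s(3)=s+1$) that $\Tor_3^A(\Bbbk,\Bbbk)$ is concentrated in internal degrees $s+1$, $s\in\mathfrak{S}$, hence $c(A)\leq N$, and then you invoke the separable-base PBW theorem (Theorem \ref{PBW_separable}) to trade the conditions $(\cJ_1)$--$(\cJ_{c(A)})$ for $(\cJ_1)$--$(\cJ_{N})$, exactly as in the paper's proof of Theorem \ref{te:PBW_multi_Koszul}. The only cosmetic difference is that for necessity you pass through ``$\alpha(R)$ of PBW-type'' and Theorem \ref{Th:PBWJac}, while the paper cites the argument in the proof of Theorem \ref{th:PBW}; these amount to the same chain of results.
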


\begin{theorem}\label{te:PBW_multi_Koszul}
	Let $A$ a be an $\mathfrak{S}$-multi-Koszul algebra over a field $\mk$ and let $R$ denote a  space (bimodule)  of  relations for $A$.  Let $P$ be a subspace of $T_\mk(A^1)$ such that $R\subseteq R_P$. The $\mk$-algebra $U(P)$ is a $PBW$-deformation of $A$ if and only if $\ig{R}=\ig{R_P}$, there exists a morphism of filtered linear spaces $\alpha:R\to T_\mk(A^1)$ such that $\alpha(R)\subseteq P\subseteq \ig{\alpha(R)}$ and either $c(A)=-1$ or $P'$ satisfies the Jacobi conditions $(\cJ_1)-(\cJ_{N})$, where $N=\max\mathfrak{S}$.
\end{theorem}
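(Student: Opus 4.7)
The plan is to derive Theorem \ref{te:PBW_multi_Koszul} as a direct corollary of Theorem \ref{te:PBW} (or Theorem \ref{co:PBW}), exploiting the upper bound on the homological complexity of a multi-Koszul algebra established in \S\ref{ex:herscovich}.

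First I would dispose of the hypotheses on projectivity. Since $S=\mk$ is a field, every $\mk$-vector space is automatically projective as a left $\mk$-module, so the standing assumptions of Theorem \ref{te:PBW} that $A^1$ and $R$ are left $S$-projective are trivially met, and a bimodule of relations $R$ for $A$ always exists (as $\widetilde{I}_A$ is a direct summand of $I_A$ in $\Bbbk$-vector spaces). Moreover, $R_P$ is a vector subspace of $T_\mk(A^1)$, hence projective too, so the projectivity hypothesis on $R_P$ in Theorem \ref{te:PBW} is also automatic.

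Next I would invoke the description of $\Tor_3^{A}(\mk,\mk)$ recalled in \S\ref{ex:herscovich}. Since $A$ is $\mathfrak{S}$-multi-Koszul, we have $\Tor_{3,s+1}^{A}(\mk,\mk)\cong (R^s\otimes V)\cap (V\otimes R^s)$ for $s\in\mathfrak{S}$, and $\Tor_{3,n}^{A}(\mk,\mk)=0$ otherwise. In particular the internal grading of $\Tor_3^{A}(\mk,\mk)$ is concentrated in the finite set $\{s+1 \mid s\in\mathfrak{S}\}$. Applying the formula \eqref{eq:lbound_c(A)} for the homological complexity yields
\[
c(A)\,=\,\sup\{n-1\mid \Tor_{3,n}^{A}(\mk,\mk)\neq 0\}\,\leq\,(N+1)-1\,=\,N,
\]
with $c(A)=-1$ precisely when $\Tor_3^{A}(\mk,\mk)=0$.

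Finally, I would apply Theorem \ref{te:PBW} to conclude: $U(P)$ is a PBW-deformation of $A$ if and only if $\langle R\rangle=\langle R_P\rangle$, there exists a morphism of filtered vector spaces $\alpha\colon R\to T_\mk(A^1)$ with $\alpha(R)\subseteq P\subseteq\langle\alpha(R)\rangle$, and either $c(A)=-1$ or $\alpha(R)$ satisfies $(\cJ_1)$--$(\cJ_{c(A)})$. The only thing left to check is that, under the other hypotheses, the condition ``$\alpha(R)$ satisfies $(\cJ_1)$--$(\cJ_{c(A)})$'' can be equivalently replaced by ``$\alpha(R)$ satisfies $(\cJ_1)$--$(\cJ_N)$''. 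The implication $(\cJ_1)$--$(\cJ_N)\Rightarrow(\cJ_1)$--$(\cJ_{c(A)})$ is immediate since $c(A)\leq N$. For the converse, if $U(P)$ is a PBW-deformation then $\alpha(R)$ is of $PBW$-type (by Example \ref{ex:alpha} together with Remark \ref{rem:large}), so by Theorem \ref{Th:PBWJac} the Jacobi conditions $(\cJ_n)$ hold for \emph{every} $n\in\N$, and in particular $(\cJ_1)$--$(\cJ_N)$ are satisfied. No genuine obstacle is expected here; the only subtlety is the cosmetic reconciliation between the sharp bound $c(A)$ appearing in Theorem \ref{te:PBW} and the combinatorial bound $N=\max\mathfrak{S}$ that is natural from the multi-Koszul viewpoint.
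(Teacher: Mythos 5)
Your proposal is correct and follows essentially the same route as the paper: both reduce the statement to Theorem \ref{te:PBW} via the bound $c(A)\leq N$ from the multi-Koszul description of $\Tor_3^A(\mk,\mk)$, and both close the gap between $(\cJ_1)$--$(\cJ_{c(A)})$ and $(\cJ_1)$--$(\cJ_N)$ by observing that once the deformation is PBW (equivalently $\alpha(R)$ is of PBW-type, or $z$ is regular) all Jacobi conditions hold by Theorem \ref{Th:PBWJac}. Your remark that the projectivity hypotheses are automatic over the field $\mk$ is also exactly what the paper takes for granted.
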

	
\begin{proof}
	We may assume that $c(A)\neq -1$. Recall that $c(A)\leq N$, so $P$ satisfies $(\cJ_1)-(\cJ_{c(A)})$  if $(\cJ_1)-(\cJ_{N})$ hold true. On the other hand, under the assumption that the Jacobi conditions $(\cJ_1)-(\cJ_{c(A)})$ are fulfilled, it follows that $(\cJ_n)$ is true for all $n$, see the proof of Theorem \ref{th:PBW}.
\end{proof}	
	
\subsection{Changing the presentation by generators and relations.}\hfill\vspace*{1.3ex}\label{fa:notation_tensor}

	Let $S$ be a separable $\Bbbk$-algebra. We fix a surjective graded homomorphism of  strongly graded connected $S$-rings $\varfun{\psi}{\CT}{T}$ so that its  component of degree $0$ is an isomorphism. We also assume that $\psi^1$ is bijective. Thus, if we let $\CK:=\ker(\psi)$, we get $\CK^0=\CK^1=0$.
	
	As in Section \ref{sec:1}, we fix an $S$-subbimodule $P$ of $T$ such that $P\bgi T^{\leq 1}=0$. We define  $R:=R_P$. Note that $R^0=R^1=0$. Let $I:=\ig{R}$ and $J=\ig{{P}}$. We set $\CI:=\psi^{-1}(I)$ and  $\CJ=\psi^{-1}(J)$. We now define the strongly graded connected $S$-rings: $A:=T/I$,  $\CA:=\CT/\CI$, $U=T/J$ and $\CU:=\CT/\CJ$. For the corresponding projection maps we will use the notation $\pi_A$, $\pi_\CA$, $\pi_U$ and $\pi_\CU$, respectively.
	
	Note that $\psi$ induces an isomorphism  of graded $S$-rings 	${\psi_A}:\CA\to A$ and an isomorphism of filtered $S$-rings  ${\psi_U}:\CU\to U$. By construction, $\psi_A\pi_\CA=\pi _A\psi$ and $\psi_U\pi_\CU=\pi _U\psi$.  Also by construction and the above assumptions, the ideals $I$ and $\CI$ are trivial in degree $0$ and $1$.
	
	Since $S$ is separable, there is a graded $S$-bilinear section $\varsigma:T\to \CT$. By the same hypothesis, we can pick a bimodule of relations $\CK_0$ of $T=\CT/\CK$. Now we can define $\CR:=\varsigma(R)+ \CK_0$ and $\CP:=\varsigma(P)+\CK_0$. Let us prove that $\CR$ and $\CP$ generate the ideals $\CI$ and $\CJ$, respectively. Indeed, since $\varsigma$ is a section of $\psi$, for every subbimodule $X\subseteq T$ we have the relation:
	\begin{equation}\label{eq:psiinv}
	\psi^{-1}(X)={\varsigma}(X)+ \CK
	\end{equation}
	and the sum is direct, that is ${\varsigma}(X)\bgi  \CK=0$. On the other hand, because $R$ and $\CK_0$ generate $I$ and $\CK$, using \eqref{eq:psiinv} we get:
	\[
	\CI=\psi^{-1}(I)=\psi^{-1}(\ig{R})=\ig{\psi^{-1}(R)}=\ig{{\varsigma}(R)+\CK}=\ig{{\varsigma}(R)+\CK_0}.
	\]	
	The fact that $\CP$ generates $\CJ$ can be proved in a similar way.
	
	By Lemma \ref{lem:grsum}, we have that $R_{\varsigma(P)}=\varsigma (R_P)$. Since $R_{\CK_0}=\CK_0$, we get $R_{\varsigma(P)}\cap R_{\CK_0}\subseteq\im(\varsigma)\cap \CK=0.$ By the same lemma we deduce that $R_{\varsigma(P)}+ R_{\CK_0}=R_{\varsigma(P)+\CK_0}$ which means $\CR=R_\CP$.
	
	By Lemma \ref{le:Phi_functorial} we get that the following commutative diagram:
	\begin{equation*}
		\xymatrix{
			\CA \ar[r]^-{\Phi_{\CP}} \ar@{->}[d]_-{\psi_A} & \mathrm{gr}\CU \ar@{->}[d]^-{\mathrm{gr} {\psi_U}} \\
			A \ar[r]_-{\Phi_P} & \mathrm{gr}U
		}
	\end{equation*}
	Since ${\psi_A}$ and ${\psi_U}$ are isomorphisms, we conclude that $P\subseteq T$ is of PBW-type if and only if $\CP\subseteq \CT$ is so.
	
	As in \S\ref{fa:P_z}, let $D:={T[z] }/{\langle P^*\rangle}$ and $\CD:=\CT[z]/{\langle \CP^*\rangle}$ be the corresponding central extensions of $A$ and $\CA$, respectively. 	Let $\psi_z:\CT[z]\rightarrow T[z] $ denote the unique morphism of graded $S$-rings that lifts $\psi$ and maps $z$ to $z$. Note that  $\mathrm{ev}_1  \psi_z=\psi  \mathrm{ev}_1$.
	
	To state and prove the last application of our main results, we need one more fact, namely that  $\psi_z$ induces an isomorphism of graded $S$-rings ${\psi}_D:\CD\rightarrow D$. Of course, in order to do that, it is enough to show that $\psi_z(\ig{\CP^*})=\ig{P^*}$ and that the kernel of $\psi_z$ is included into $\ig{\CP^*}$.
	
	We first note that, for an element $t=\sum_{i=0}^nt_i$ in $\CT$ with $t_n\neq 0$, we have
	\begin{equation}\label{eq:psizh}
	\psi_z(t^*)=\psi_z\bigg(\sum_{i=0}^nt_iz^{n-i}\bigg)=\sum_{i=0}^n\psi\left(t_i\right)z^{n-i}=z^{n-m}\psi(t)^*.
	\end{equation}
	In the above relation, the number $m\leq n$ is chosen such that  $t_{i}\in\ker(\psi)$ for all $i\geq m$, but $\psi(t_{m-1})\neq 0$. Therefore, $\psi_z(\CP^*)\subseteq \ig{P^*}$. Thus we get the inclusion  $\psi_z\left(\ig{\CP^*}\right)\subseteq \ig{P^*}$.
	
	To prove the other inclusion, we pick $x\in P$. Then $x^*=(\psi\varsigma(x))^*=\psi_z(\varsigma(x)^*)$, where for the last identity we used \eqref{eq:psizh}.  It follows that $x^*\in\psi_z(\CP^*)$. In conclusion, $\psi_z(\ig{\CP^*})=\ig{P^*}$.
	
	It remains to show that $\CK_z:=\ker(\psi_z)$ is included into the ideal generated by $\CP^*$. Let  $y=\sum_{i=0}^ny_iz^{n-i}$ be an element in $\CK_z^n$. Then each $y_i$ belongs to $\CK\subseteq \ig{\CK_0}$ and $\CK_0=\CK_0^*\subseteq\CP^*$ so $y\in \ig{\CP^*}$.
	The proof of the existence of the isomorphism $\psi_D:\CD\rightarrow D$ is complete now.
	
	Since $S$ is separable, it is semisimple too. Thus there exists a minimal projective resolution $A\ot V_*$ of $S$ as left $A$-module.

\begin{theorem}\label{te:PBW_change_presentation}
	We keep the above notation and we assume further that $\CR=\varsigma(R)+ \CK_0$ is a bimodule of relations for $\CA$. The bimodule  $P$ is of PBW-type if and only if $H^1(^1M_*)=0$ and either $c(A)=-1$ or $P$ satisfies the Jacobi conditions $(\cJ_1)-(\cJ_{c(A)})$, where $(^1M_*, \delta_*)$ is the complex associated to the central extension $\CD$. If $\CT=T_S(T^1)$ and $\psi$ is the canonical map which lifts $\id_{T^{\leq 1}}$, then the condition $H^1(^1M_*)=0$ can be dropped.
\end{theorem}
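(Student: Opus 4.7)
The plan is to reduce everything to applying Theorem \ref{th:PBW} to $\CP\subseteq\CT$ and then transferring the resulting conditions back to $P$ via the graded $S$-ring isomorphism $\psi_D\colon\CD\to D$ built just before the theorem statement. First I would check that $\CP\subseteq\CT$ satisfies the hypotheses of Theorem \ref{th:PBW}. The condition $\CP\cap S=0$ is immediate: since $P\cap T^{\leq 1}=0$ one has $P^0=0$, and $\CK^0=0$, so $\CP^0\subseteq\varsigma(P^0)+\CK_0^0=0$. Separability of $S$ guarantees that every $S$-bimodule is left $S$-projective, hence in particular $\CA^1$ is. Finally, by Remark \ref{re:c(AxV*)}, since $S$ is semisimple there exists a minimal projective resolution of $S$ as a left $\CA$-module for which $c_{V_3}=c(\CA)$; as $\psi_A\colon\CA\to A$ is an isomorphism of graded $S$-rings, $\tor_3^\CA(S,S)\cong\tor_3^A(S,S)$, so $c(\CA)=c(A)$.

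Next, I would apply Theorem \ref{th:PBW} to $\CP\subseteq\CT$ using this resolution: $\CP$ is of $PBW$-type if and only if $H_1({}^1M_*)=0$ for the sequence associated to the central extension $\CD$, and either $c(A)=-1$ or $\CP$ satisfies $(\cJ_1)$--$(\cJ_{c(A)})$. By the preamble to the theorem, $P$ is of $PBW$-type if and only if $\CP$ is. It remains to show that the Jacobi condition $(\cJ_n)$ for $\CP$ is equivalent to $(\cJ_n)$ for $P$. I would do this via Theorem \ref{Th:JacAnn}, which re-expresses these conditions respectively as $\mathrm{ann}_{\CD}(z)^n=0$ and $\mathrm{ann}_{D}(z)^n=0$. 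Since $\psi_D$ is a graded $S$-ring isomorphism sending the central element $z$ of $\CD$ to the central element $z$ of $D$, it restricts to an isomorphism of graded $S$-subbimodules between $\mathrm{ann}_{\CD}(z)$ and $\mathrm{ann}_D(z)$, whose homogeneous components vanish simultaneously. This yields the first assertion.

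For the second assertion, assume $\CT=T_S(T^1)$ and $\psi$ is the canonical lift of $\id_{T^{\leq 1}}$. Because $S$ is separable, $\gr\CP\cong R_\CP=\CR$ is a projective $S$-bimodule, so Theorem \ref{te:alpha} produces a morphism $\alpha\colon\CR\to\CT$ of filtered $S$-bimodules with $\CP=\alpha(\CR)$. Consequently $\CD=\CT[z]/\langle\alpha_z(\CR)\rangle$ is precisely the central extension of $\CA$ associated to $\alpha$ in the sense of \S\ref{fa:extension}. Since $\CR$ is a bimodule of relations for $\CA$ by hypothesis, and since both $\CR$ and $\CA^1$ are left $S$-projective (again by separability), Theorem \ref{prop:H10} applies and yields $H_1({}^1M_*)=0$ automatically, so this requirement can be dropped from the criterion.

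The step requiring the most care is the degree-by-degree transfer of the Jacobi conditions through $\psi_D$; however, once one passes to the annihilator formulation of Theorem \ref{Th:JacAnn}, the fact that $\psi_D$ is a graded ring isomorphism fixing $z$ makes the translation essentially formal. The remaining steps are a bookkeeping chain of previously proved results, with separability of $S$ supplying the projectivity and minimal-resolution inputs that are needed throughout.
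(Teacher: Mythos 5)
Your proof is correct and follows essentially the same route as the paper: apply Theorem \ref{th:PBW} to $\CP\subseteq\CT$ with a minimal resolution so that $c_{V_3}=c(\CA)=c(A)$, use the preamble's diagram to identify PBW-typeness of $P$ and $\CP$, and transfer the Jacobi conditions across the isomorphism $\psi_D$ by passing through the annihilator reformulation of Theorem \ref{Th:JacAnn}. The only stylistic difference is in the second assertion, where you invoke Theorem \ref{prop:H10} directly while the paper cites Theorem \ref{co:PBW}~(2), but the latter is itself proved from the former, so the two arguments coincide.
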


\begin{proof}
	We can suppose that $c(A)\neq -1$. Since $\psi_A$ is an isomorphism of connected $S$-rings,  $c(\CA)=c(A)$. We know that $P\subseteq T$ is of $PBW$-type if and only if $\CP\subseteq \CT$ is of $PBW$-type. According to Theorem \ref{th:PBW}, the latter bimodule is of $PBW$-type if and only if $H^1(^1M_*)=0$ and either $c(A)=-1$ or $\CP$ satisfies the Jacobi conditions $(\cJ_1)-(\cJ_{c_{V_3}})$. By \ref{fa:notation_PBW} we can choose the resolution in such a way that $c_{V_3}=c(A)$.
	
	By Theorem \ref{Th:JacAnn}, this is equivalent to the fact that $z$ is $c(A)$-regular in $\CD$. We have seen that $\psi_D:\CD\to D$ is an isomorphism and, by construction, $\psi_D(z)=z$. Thus $z$ is $c(A)$-regular in $\CD$ if and only if $z$ is $c(A)$-regular in $D$. By applying Theorem \ref{Th:JacAnn} once again, this is equivalent to require that $P$ satisfies the Jacobi conditions $(\cJ_1)-(\cJ_{c(A)})$.
	
	Let $\CT=T_S(T^1)$. Since $S$ is separable, there exists a morphism of filtered bimodules $\alpha:\CR\to \CT$ such that $\CP=\alpha(\CR)$. We conclude the proof using Theorem \ref{co:PBW} (2).
\end{proof}

\begin{example}
	The bimodule $\CR=\varsigma(R)+\CK_0$ is not a bimodule of relations for $\CA$ in general. To see that, we reconsider the Example \ref{ex:X^3}. Thus, $T=\Bbbk[X]/\ig{X^3}$ and $A=T/\ig{x^2}\cong \Bbbk[X]/\ig{X^2}$, where $x$ is the class of $X$ in $T$. Let $\CT:= \Bbbk[X]$. Of course, with the notation of the subsection \ref{fa:notation_tensor}, we have $\CK=\ig{X^3}$. By definition, $R$  and $\CK_0$ are the one dimensional subspaces generated by $x^2$ and $X^3$, respectively. Clearly, they are spaces of relations for $A$ and $T$. The unique $\mk$-linear map $\zeta:T\to\CT$, given by  $x^i\mapsto X^i$ for $i\leq 2$, is a section of $\psi:\CT\to T$.
	
	Note that $X^2\in\CI$ and $X^3\in \CK_0$, so $X^3$ is also an element of $\CK_0\cap\wt{\mathcal{I}}$. Thus $\CR$ cannot be  a space of relations for $\CA$, see the next remark.
\end{example}

\begin{remark}
	The relation $\CK_0\bgi \wt{\CI}=0$ is a necessary condition for $\CR$ being a bimodule of relations for $\CA$, as $\CK_0\subseteq \CR$. Suppose that $R$ is a bimodule of relations for $A$ and let us check that the condition above is also sufficient. We pick  $x\in \CR\bgi \wt \CI$. Since $\psi( \CR)=R$, $\psi(\CT^1)=T^1$ and $\psi(\CI)=I$ it follows that  $\psi(x)$ is an element in $R\cap (IT^1+T^1I)$. As $R$ is a bimodule of relations for $A$, we get $\psi(x)=0$,  so $x\in \CK\bgi \CR=\CK_0$. Thus $x\in \CK_0\bgi \wt{\CI}$. This proves that $\CR\bgi \wt \CI=\CK_0\bgi \wt{\CI}$ whence the conclusion follows.
	
	Assume further that there exists some $n$ such that $\CK_0$ is $n$-pure and $R\subseteq T^{\geq n}$. Since $\CR=\varsigma(R)\oplus \CK_0$, it follows that $\CI\subseteq\CT^{\geq n}$, so $\wt \CI\subseteq \CT^{\geq n+1}$. Thus $\CK_0\cap \wt \CI=0$.
	
	For instance,  if $T$ is quadratic, that is $\CK_0\subseteq \CT^2$, then the condition $\CK_0\bgi \wt \CI=0$ is automatically verified for any $R \subseteq T^{\geq 2}$. Hence, taking $\CT:=T_S(T^1)$ in the preceding theorem we obtain the following result.
\end{remark}

\begin{theorem}\label{te:PBW_quadratic}
	Let $S$ be a separable $\mk$-algebra and let $A$ denote a graded quotient of a quadratic $S$-ring $T$. We assume that  $R\subseteq T^{\geq 2}$ is a bimodule of relations for $A$ and that $R_P=R$, for some subbimodule $P\subseteq T$. Then $P$ is of PBW-type if and only if either $c(A)=-1$ or  $P$ satisfies $(\cJ_1)-(\cJ_{c(A)})$.
\end{theorem}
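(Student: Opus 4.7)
The plan is to reduce this to Theorem \ref{te:PBW_change_presentation} applied with $\CT := T_S(T^1)$ and $\psi:\CT\to T$ the canonical morphism of strongly graded $S$-rings which lifts $\id_{T^{\leq 1}}$. In that setup, $\psi$ is graded with $\psi^0$ and $\psi^1$ isomorphisms, so the framework of \S\ref{fa:notation_tensor} is in force, and the theorem gives us exactly the conclusion we want (with no $H^1$-hypothesis, since $\CT$ is the tensor $S$-ring over $T^1$). Thus the work consists in checking the hypotheses of Theorem \ref{te:PBW_change_presentation}, namely that $P\bgi T^{\leq 1}=0$ and that $\CR:=\varsigma(R)+\CK_0$ is a bimodule of relations for $\CA$, where $\varsigma$ is an $S$-bilinear graded section of $\psi$ and $\CK_0\subseteq\CK=\ker(\psi)$ is a bimodule of relations for $T=\CT/\CK$.

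First I would argue that $P\bgi T^{\leq 1}=0$. By hypothesis $R_P=R\subseteq T^{\geq 2}$, so $R_P^0=R_P^1=0$. Since $R_P^n=p^n(P^{\leq n})$, any $x\in P^{\leq 1}$ satisfies $p^1(x)\in R_P^1=0$, hence $x\in T^0\cap P=P^{\leq 0}$, and then $p^0(x)\in R_P^0=0$, so $x=0$. Next, because $S$ is separable, every $S$-bimodule is projective, so by Proposition \ref{pr:minimal}~(1) a bimodule of relations $\CK_0$ for $T=\CT/\CK$ exists; and because $T$ is quadratic (i.e.\ $\CK$ is generated in degree $2$), we may take $\CK_0$ to be $2$-pure, that is $\CK_0\subseteq\CT^2$.

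The key step, to which the hypothesis $R\subseteq T^{\geq 2}$ and the quadraticity of $T$ are tailored, is verifying that $\CR=\varsigma(R)+\CK_0$ is a bimodule of relations for $\CA$. By the Remark just before the statement, it suffices to show $\CK_0\cap\wt{\CI}=0$, where $\CI=\psi^{-1}(I)$ and $\wt{\CI}=\CT^1\CI+\CI\CT^1$. Since $\varsigma$ is graded, $\varsigma(R)\subseteq\CT^{\geq 2}$, and also $\CK_0\subseteq\CT^2$, so $\CI=\ig{\varsigma(R)+\CK_0}\subseteq\CT^{\geq 2}$; consequently $\wt{\CI}\subseteq\CT^{\geq 3}$, which together with $\CK_0\subseteq\CT^2$ forces $\CK_0\cap\wt{\CI}=0$. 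Combined with the assumption that $R$ is a bimodule of relations for $A$, the remark then yields that $\CR$ is a bimodule of relations for $\CA$.

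With all hypotheses in place, Theorem \ref{te:PBW_change_presentation} in the special case $\CT=T_S(T^1)$ applies and gives: $P$ is of $PBW$-type if and only if $c(A)=-1$ or $P$ satisfies $(\cJ_1)$--$(\cJ_{c(A)})$, which is exactly the statement of Theorem \ref{te:PBW_quadratic}. The only conceptual subtlety is the verification of $\CK_0\cap\wt{\CI}=0$, but it is essentially forced by the combination of ``$T$ quadratic'' (keeps $\CK_0$ in degree $2$) and ``$R\subseteq T^{\geq 2}$'' (pushes $\wt{\CI}$ into degree $\geq 3$); no homological calculation is needed beyond what is already encoded in Theorem \ref{te:PBW_change_presentation}.
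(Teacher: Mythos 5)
Your proposal is correct and follows essentially the same route as the paper: the paper derives Theorem \ref{te:PBW_quadratic} exactly by taking $\CT=T_S(T^1)$ in Theorem \ref{te:PBW_change_presentation} (where the $H_1$-condition can be dropped) and by using the degree argument of the preceding remark — $\CK_0\subseteq\CT^2$ and $R\subseteq T^{\geq 2}$ force $\wt{\CI}\subseteq\CT^{\geq 3}$, hence $\CK_0\cap\wt{\CI}=0$ and $\CR=\varsigma(R)+\CK_0$ is a bimodule of relations for $\CA$. Your extra verification that $R_P=R\subseteq T^{\geq 2}$ implies $P\cap T^{\leq 1}=0$ (needed for the setup of that section) is a small but welcome addition that the paper leaves implicit.
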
	

Note that the polynomial ring $T:=S[X_1,\dots,X_n]$ can be seen as a quadratic $S$-ring $T=T_S(S^n)/\ig{\CK_0}$. If $\{e_i\}_{i\leq n}$ denotes the canonical basis of the left $S$-module $S^n$, then $\CK_0$ is the  $S$-bimodule spanned by $\{e_i\ot e_j-e_j\ot e_i\}_{1\leq j<i\leq n}$. Therefore, we also have the following.

\begin{corollary}\label{co:PBW_quadratic}
	Let $S$ be a separable $\mk$-algebra and let $A$ denote a graded quotient of the polynomial ring $T:=S[X_1,\dots,X_n]$. We assume that  $R\subseteq T^{\geq 2}$ is a bimodule of relations for $A$ and that $R_P=R$, for some subbimodule $P\subseteq T$. Then $P$ is of PBW-type if and only if either $c(A)=-1$ or  $P$ satisfies $(\cJ_1)-(\cJ_{c(A)})$.
\end{corollary}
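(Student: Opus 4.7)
The plan is to recognize that Corollary \ref{co:PBW_quadratic} is a direct specialization of Theorem \ref{te:PBW_quadratic} to the case when $T$ is the polynomial ring, so the proof amounts to verifying that the polynomial ring falls within the quadratic $S$-rings covered by that theorem. Concretely, as noted in the paragraph immediately preceding the statement, $T = S[X_1,\dots,X_n]$ admits the presentation $T = T_S(S^n)/\langle \CK_0\rangle$, where $\{e_1,\dots,e_n\}$ is the canonical basis of the free $S$-bimodule $S^n$ (with $e_i s = s e_i$), and $\CK_0$ is the $S$-subbimodule of $T_S(S^n)^2$ spanned by the commutators $\{e_i\otimes e_j - e_j\otimes e_i \mid 1\leq j<i\leq n\}$.

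First I would record that, since $\CK_0 \subseteq T_S(S^n)^2$, the ring $T$ is quadratic in the sense of Theorem \ref{te:PBW_quadratic}: that is, $T$ fits in a presentation $T = \CT/\langle\CK_0\rangle$ with $\CT = T_S(S^n)$ a free tensor $S$-ring, $\CT^1 = T^1 = S^n$, and $\CK_0 \subseteq \CT^2$. Next I would check that all remaining hypotheses of Theorem \ref{te:PBW_quadratic} are satisfied verbatim: $S$ is separable by assumption, $A$ is a graded quotient of the quadratic $S$-ring $T$, the subbimodule $R$ of relations for $A$ lies in $T^{\geq 2}$, and $R_P = R$.

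With these identifications in place, Theorem \ref{te:PBW_quadratic} applies and yields exactly the dichotomy of the corollary: $P$ is of $PBW$-type if and only if either $c(A) = -1$ or the Jacobi conditions $(\cJ_1)$--$(\cJ_{c(A)})$ hold. Since no additional verification is required once the quadratic presentation is fixed, there is no genuine obstacle in the proof; the only point that would merit a single line of comment is the observation that the specific quadratic presentation of the polynomial ring makes the hypothesis $\CK_0 \subseteq \CT^2$ transparent, so that the automatic verification of $\CK_0 \cap \widetilde{\CI} = 0$ carried out in the remark preceding Theorem \ref{te:PBW_quadratic} applies here too.
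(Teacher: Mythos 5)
Your proposal is correct and follows exactly the paper's route: the paper gives no separate proof, but derives the corollary by observing (in the sentence immediately preceding it) that $S[X_1,\dots,X_n]=T_S(S^n)/\ig{\CK_0}$ with $\CK_0$ spanned by the commutators $e_i\ot e_j-e_j\ot e_i$, so that Theorem \ref{te:PBW_quadratic} applies verbatim. Your additional remark that $R\subseteq T^{\geq 2}$ makes the condition $\CK_0\bgi\wt\CI=0$ automatic matches the paper's remark preceding that theorem, so nothing is missing.
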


\subsection{Examples.}\label{ssec:examples}\hfill\vspace*{1.3ex}

We end this paper by presenting some examples. We start by estimating the homological complexity of some classes of strongly graded connected $S$-rings.

\begin{fact}[An upper bound for $c(A)$.]\label{co:3n(A)}
	Let $A$ be a strongly graded connected $S$-ring. Assume that there exists a bimodule of relations $R\subseteq T_A$ for $A$ and that $A^1$ and $R$ are projective as left $S$-modules. By \S\ref{fa:special_resolution}, we know that $S$ has a resolution $(A\ot V_*,d_*)$ in $\mathfrak{P}$ such that $V_2=R$, $S\ot_A d_3=0$ and $c(A)=c_{V_3}$.
	
	Let $c_A$ be the complexity of $A$ as a graded $S$-module,  $c_A=\sup\{n-1\mid A^n\neq 0\}$. We will show that  $c(A)\leq c_A+2$. Note that $c_A$ is finite, provided that $A$ is a finitely generated left or right $S$-module. Indeed, in this case, we can choose a finite set of homogeneous elements $a_1,\dots,a_n$ of $A$ as a left $S$-module. If we assume that $a_n$ is the element of highest degree, say $d$, then $c_A=d-1$.	

	We have $R\oplus \A^+K_1=K_1$, where $K_1:=\ker(d_1)$. Let us pick $k\geq c_A+3$. Thus  $R^k =0$, since $K_1^k \subseteq\left(\A\otimes A^1\right)^k=A^{k-1}\ot A^1=0$. Therefore,
\begin{equation*}
	K_2^k\subseteq\left(A\otimes R\right)^k  =  \sum_{p=0}^{c_A+2}A^{k-p}\otimes R^p
	=\sum_{p=0}^{c_A)+2}A^{k-c_A-3}A^{c_A+3-p}\otimes R^p
	= \A^{k-c_A-3}\left(A\otimes R\right)^{c_A+3}.
\end{equation*}
	Since $d_2$ is a graded map,  $d_2^{c_A+3}:\left(A\otimes R\right)^{c_A+3}\to \left(A\otimes A^1\right)^{c_A+3}$ is zero. Thus $K_2^{c_A+3}$ coincides with $\left(A\otimes R\right)^{c_A+3}$. Henceforth,  for $k\geq c_A+4$, we have $K_2^k\subseteq \A^{k-c_A-3}K_2^{c_A+3}$. Therefore, $K_2^k=A^1K_2^{k-1}$ and, as noticed in \S\ref{fa:special_resolution}, we get $\tor^A_{3,k}(S,S)=0$. Thus $c(A)\leq c_A+2$.

	Let $n$ be a positive integer. The $\Bbbk$-algebra $A:=\Bbbk[X]/\ig{X^n}$  is $n$-Koszul and $c_A=n-2$. On the other hand, it is well-known that the homological complexity of an $n$-Koszul algebra is $n$, so the upper bound of $c(A)$ is reached.
\end{fact}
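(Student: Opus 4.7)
The plan is to exploit the special projective resolution of $S$ constructed in \S\ref{fa:special_resolution}. Under the hypotheses, we have a resolution $(A\otimes V_\ast,d_\ast)\in\mathfrak{P}$ with $V_2=R$, $S\otimes_A d_3=0$, and $c(A)=c_{V_3}$. From that construction we also know $V_3^n\neq 0$ if and only if $\Tor_{3,n}^A(S,S)\neq 0$, and that $\Tor_{3,n}^A(S,S)=0$ is equivalent to $K_2^n=A^1 K_2^{n-1}$, where $K_2:=\ker(d_2)\subseteq A^+\otimes R$. Hence it suffices to show $K_2^k=A^1 K_2^{k-1}$ for all $k\geq c_A+4$, which will give $c(A)\leq c_A+2$.

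The first step I would carry out is to pin down where $R$ lives degree-wise. Because $R\subseteq K_1\subseteq A\otimes A^1$, the inclusion $R^k\subseteq K_1^k\subseteq A^{k-1}\otimes A^1$ combined with $A^{k-1}=0$ for $k-1>c_A$ forces $R^k=0$ whenever $k\geq c_A+3$. Consequently the decomposition $(A\otimes R)^k=\bigoplus_{p\geq 0} A^{k-p}\otimes R^p$ involves only indices $0\leq p\leq c_A+2$. Since $A$ is strongly graded, each factor $A^{k-p}$ with $k\geq c_A+3$ factors as $A^{k-c_A-3}\cdot A^{c_A+3-p}$, which yields the identity
\[
(A\otimes R)^k \;=\; A^{k-c_A-3}\cdot (A\otimes R)^{c_A+3}.
\]

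The pivotal step is then to identify $K_2^{c_A+3}$ with the full $(A\otimes R)^{c_A+3}$. Since $d_2$ is graded, its degree $c_A+3$ component lands in $(A\otimes A^1)^{c_A+3}=A^{c_A+2}\otimes A^1$, and the latter is zero because $A^{c_A+2}=0$. Thus $d_2^{c_A+3}=0$, forcing $K_2^{c_A+3}=(A\otimes R)^{c_A+3}$. Combining the two identities, for every $k\geq c_A+4$,
\[
K_2^k\;\subseteq\;(A\otimes R)^k\;=\;A^{k-c_A-3}K_2^{c_A+3}\;=\;A^1\cdot A^{k-c_A-4}\cdot K_2^{c_A+3}\;\subseteq\;A^1 K_2^{k-1},
\]
where the middle equality is again an application of strong grading. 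The reverse inclusion $A^1 K_2^{k-1}\subseteq K_2^k$ is automatic, so $K_2^k=A^1 K_2^{k-1}$ as desired, which completes the argument.

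The only genuinely delicate point is organizing the bookkeeping: one must simultaneously use that $R$ generates an ideal concentrated in degrees $\geq 2$ together with the vanishing of $A^{c_A+2}$ to produce the ``degree shift of $3$'' (and hence the bound $c_A+2$), and then invoke strong grading twice—once to write $(A\otimes R)^k$ as $A^{k-c_A-3}(A\otimes R)^{c_A+3}$, and once more to split off a single factor of $A^1$. The tightness of the bound can be illustrated, as usual, by $A=\Bbbk[X]/\langle X^n\rangle$ (an $n$-Koszul algebra with $c_A=n-2$ and $c(A)=n$), but this is an example rather than part of the proof.
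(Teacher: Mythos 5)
Your proposal is correct and follows essentially the same route as the paper: the vanishing $R^k=0$ for $k\geq c_A+3$ via $R\subseteq K_1\subseteq A\otimes A^1$, the identification $K_2^{c_A+3}=(A\otimes R)^{c_A+3}$ from $d_2^{c_A+3}=0$, the strong-grading factorization $(A\otimes R)^k=A^{k-c_A-3}(A\otimes R)^{c_A+3}$, and the criterion $K_2^k=A^1K_2^{k-1}\iff\tor_{3,k}^A(S,S)=0$ from \S\ref{fa:special_resolution}. The only (harmless) difference is that you spell out the step $A^{k-c_A-4}K_2^{c_A+3}\subseteq K_2^{k-1}$ and the trivial reverse inclusion, which the paper leaves implicit.
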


\begin{fact}[Split central extensions.] \label{rem:trivcomplex}
	Let $D$ be a central extension of a strongly graded connected $S$-ring $A$, as in Section \ref{sec:3}. We assume that $A^+$ is projective as an $S$-bimodule, so  the projection $\pi:D\to A$  has an $S$-bilinear section $\sigma:=\id_S\oplus\sigma^+$, where $\sigma^+$ is a section of $\pi^+:D^+\to A^+$. We fix a resolution as in \eqref{eq:special_resolution}. Hence, the maps $f_n$ satisfying \eqref{eq:partial_n}  can be defined in an algorithmic way as follows.
	
	Since, for $a,b\in A$, the element $\sigma(ab)-\sigma(a)\sigma(b)$ is in $\ker(\pi)=zD$, we can consider the diagram:
\begin{equation*}
	\xymatrix @C=25pt @R=20pt{& A \otimes A \ar[d]^-{\sigma  m_A - m_D  (\sigma \otimes\sigma)} \ar@{.>}[dl]_-{\omega} \\
		D(-1) \ar[r]^-{z\cdot } & zD \ar[r] & 0 	}
\end{equation*}
	Note that $A^+\ot A^+$ is projective as an object in the category of graded $(S,S)$-bimodules. Thus there is a graded $S$-bilinear map $\varfun{\omega^+}{A^+ \otimes A^+}{D(-1)}$ such that the restriction of $\sigma  m_A - m_D  (\sigma \otimes\sigma)$ to $A^+\ot A^+$ is $z\cdot\omega^+$. This map can be extended to an $S$-bilinear map which vanishes on $S\ot A+A\ot S$ and that makes commutative the diagram above.
	
	We use the notation introduced in \S\ref{ssec:u-v}. Then $\wh{\partial}_n=(\sigma\ot V_{n-1})\wh{d_n}$. In view of relation \eqref{eq:f_hat} we get:
\begin{align*}
(\partial _{n-1}\partial _{n})^{\wh{}}& =(m_D\ot V_{n-2})(D\ot \wh{\partial}_{n-1})\wh{\partial}_n\\&=(m_D\ot V_{n-2})\big[D\ot(\sigma \ot V_{n-2})\wh{d}_{n-1}\big](\sigma \ot V_{n-1})\wh{d}_{n}\\
	&=\big[m_D(\sigma\ot\sigma)\ot V_{n-2}\big](A\ot\wh{d}_{n-1})\wh{d}_n	\\
	&=(\sigma\ot V_{n-2})( m_A\ot V_{n-2})(A\ot\wh{d}_{n-1})\wh{d}_n-(z\cdot\omega\ot V_{n-2})(A\ot\wh{d}_{n-1})\wh{d}_n\\
	&=(\sigma\ot V_{n-2})[d _{n-1}d_{n}]^{\;\wh{}}-(z\cdot\omega\ot V_{n-2})(A\ot\wh{d}_{n-1})\wh{d}_n\\
	& =-(z\cdot\omega\ot V_{n-2})(A\ot\wh{d}_{n-1})\wh{d}_n.
\end{align*}
	Note that the last relation follows by the fact that the maps $d_*$ are the differentials of a complex.
	Now we can define $f_n$ in a unique unique way such that
\begin{equation}\label{eq:deffn}
	\wh{f}_{n} :=(-1)^n (\omega\ot V_{n-2})(A\ot\wh{d}_{n-1})\wh{d}_n.
\end{equation}
	Clearly $f_n$ satisfies the relation \eqref{eq:partial_n},  since $[ \partial _{n-1}\partial _{n}]^{\;\wh{}}=[(-1)^{n-1}z\cdot f_n]^{\;\wh{}}$.
	
	For proving that $(^2M_\ast, \delta_\ast)$ is a complex, it is sufficient to check that  \eqref{eq:f_n} holds for $n=2$. Using a section $\sigma$ and the corresponding map $\omega$ as above, this can be done proceeding as follows. We choose the map $f_n$ as in  \eqref{eq:deffn}, for all $n\geq 2$. Thus,
\begin{align*}
(\partial_{1} f_{3})^{\wh{}} =(m_D\otimes V_{1})(D\otimes \wh{\partial}_1) \wh{f}_3 	& =  -(m_D\otimes S)\big[D\otimes (\sigma\otimes S) \wh{d}_1)\big](\omega\otimes V_1)(A\otimes \wh{d}_{n}) \wh{d}_3\\
	& = -\big[m_D(\omega\otimes \sigma )\otimes S\big](A\otimes A\otimes \wh{d}_1)(A\otimes \wh{d}_2) \wh{d}_3.
\end{align*}
	In a similar way, we get:
\begin{align*}
	(f_2\partial_{3})^{\wh{}}&=(m_D\otimes A)(D\otimes \wh{f}_2) \wh{\partial}_3 = (m_D\otimes S)\big[D\otimes (\omega\otimes S)(A\otimes \wh{d}_1)\wh{d}_n)\big](\sigma\otimes V_2)\wh{d}_3 \\
	& = \big[m_D(\sigma\otimes \omega)\otimes V_1\big](A\otimes A\otimes \wh{d}_1)(A\otimes \wh{d}_2)\wh{d}_3.
\end{align*}
	It follows that $(^2M_*,\delta_*)$ is a complex if and only if
\begin{equation}\label{eq:complex}
	\big[m_D
	(\sigma\otimes \omega-\omega\otimes \sigma)\otimes S\big]
	\varrho=0,
\end{equation}
	where $\varrho:= (A\otimes A\otimes \wh{d}_1)(A\otimes \wh{d}_2)\wh{d}_3$. On the other hand, if we assume that
\begin{equation}\label{eq:complex'}
	m_D (\sigma\otimes \omega-\omega\otimes \sigma)=\omega (m_A\otimes A-A\otimes m_A),
\end{equation}
	then we get
\begin{align*}
	\big(m_D
	(\sigma\otimes \omega-\omega\otimes \sigma)\otimes S\big)\varrho&=
	\big(\omega (m_A\otimes A-A\otimes m_A)\otimes S\big)\varrho \\
	&=(\omega \otimes S)\big[(A\otimes \wh{d}_1)(d_2d_3)^{\wh{}}-\big(A\otimes (d_1d_2)^{\wh{}}\;{}\big)\wh{d}_3\big]=0.
\end{align*}
	In conclusion, the relation \eqref{eq:complex'} is a sufficient condition for $(M_*,\delta_*)$ to be a complex. For instance, if $\pi$ has a section which is a morphism of graded $S$-rings, then we can choose $\omega=0$, so \eqref{eq:complex'} holds and, consequently, $(M_*,\delta_*)$ is a complex.
	
	As a final remark we mention that $\im(\varrho)\subseteq (A^+)^{\ot 3}$,  provided that the image of  $ d_n$ is included into $A^+\ot V_{n-1}$ for $n\leq 3$.  In this case, for proving that $(M^2_*,\delta_*)$ is a complex, it is sufficient to check that  \eqref{eq:complex'} holds as a relations between two maps defined on $(A^+)^{\ot 3}$.

\end{fact}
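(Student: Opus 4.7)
The plan is to split the closing remark into two pieces: the image-containment $\im(\varrho)\subseteq (A^+)^{\otimes 3}$, and its consequence for checking that $(^2M_*,\delta_*)$ is a complex.

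First I would unwind $\varrho=(A\otimes A\otimes \wh{d}_1)(A\otimes \wh{d}_2)\wh{d}_3$ factor by factor, using the standing hypothesis $\im(d_n)\subseteq A^+\otimes V_{n-1}$ for $n\leq 3$. Because $\wh{d}_n(v)=d_n(1\otimes v)$, the hypothesis at $n=3$ gives $\wh{d}_3(V_3)\subseteq A^+\otimes V_2$. Tensoring on the left with $A$ preserves this, and then the hypothesis at $n=2$ promotes the middle slot to $A^+$, so $(A\otimes \wh{d}_2)\wh{d}_3(V_3)\subseteq A^+\otimes A^+\otimes A^1$. Finally $\wh{d}_1(A^1)=A^1\subseteq A^+$ (recall $V_1=A^1$ and $d_1$ is induced by the multiplication of $A$), so the last step lands inside $A^+\otimes A^+\otimes A^+$, as required.

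For the second part, I would combine this containment with the equivalence already established in the body of the fact. By Lemma \ref{lemma:deltacomplex} the sequence $(^2M_*,\delta_*)$ is a complex if and only if $\partial_1 f_3+f_2\partial_3=0$, and the identities $(\partial_1 f_3)^{\wh{\ }}$ and $(f_2\partial_3)^{\wh{\ }}$ computed via \eqref{eq:f_hat} exhibit this condition as $[m_D(\sigma\otimes\omega-\omega\otimes\sigma)\otimes S]\varrho=0$, i.e.\ as \eqref{eq:complex}. Assuming \eqref{eq:complex'}, substitution turns this into $[\omega(m_A\otimes A-A\otimes m_A)\otimes S]\varrho$, which vanishes thanks to $d_1d_2=0$ and $d_2d_3=0$. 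The key observation is that the substitution of \eqref{eq:complex'} is only ever applied to elements of $\im(\varrho)$; since the first paragraph has placed this image inside $(A^+)^{\otimes 3}$, it suffices to check \eqref{eq:complex'} as an identity of $S$-bimodule maps on the restricted domain $(A^+)^{\otimes 3}$ rather than on all of $A^{\otimes 3}$.

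The only real obstacle is clerical: one must verify via the adjunction \eqref{eq:f_hat} that both $(\partial_1 f_3)^{\wh{\ }}$ and $(f_2\partial_3)^{\wh{\ }}$ genuinely factor through the same map $\varrho$ before $m_D(\sigma\otimes\omega-\omega\otimes\sigma)\otimes S$ is applied, so that all the image-tracking occurs on a single composite. Once that factorisation is explicit, the chase through $\wh{d}_3$, $A\otimes\wh{d}_2$, $A\otimes A\otimes\wh{d}_1$ is routine, and the restriction of \eqref{eq:complex'} to $(A^+)^{\otimes 3}$ drops out for free.
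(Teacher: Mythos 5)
Your proposal is correct and takes essentially the paper's route: the paper states $\im(\varrho)\subseteq (A^+)^{\ot 3}$ without proof, and your factor-by-factor chase through $\wh{d}_3$, $A\ot\wh{d}_2$, $A\ot A\ot\wh{d}_1$ (using the hypothesis on $d_2,d_3$ together with $\wh{d}_1\vert_{A^1}=\mathrm{id}$) is exactly the intended justification. Your reduction of the complex condition for $(^2M_*,\delta_*)$ to \eqref{eq:complex} via Lemma \ref{lemma:deltacomplex}, and hence to checking \eqref{eq:complex'} only on $\im(\varrho)\subseteq(A^+)^{\ot 3}$, coincides with the computations already carried out in the body of the fact.
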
\vspace*{1ex}

Let us conclude this section with a couple of examples to show that the vanishing condition on $\h_1(^1M_\ast)$ cannot be dropped in this general framework.

\begin{example}[Trivial central extensions]\label{ex:trivial_extension}
	Let $A$ denote a strongly graded connected $S$-ring and let  $V$ be a strongly graded $A$-bimodule (by definition, $V$ is called \textit{strongly graded} if  $V^{n+1}=A^1V^n+V^nA^1$ for all $n>0$).  On  the graded   $A$-bimodule $D=A\oplus V$ we define  the product:
	\[
	(a,v)\cdot (a',v')=(aa', av'+va'),
	\]
	for all $a,a'\in A$ and $v,v'\in V$. Therefore, $D$ is the \textit{graded trivial extension of $A$ with kernel $V$}. We regard $D$ as an $S$-ring via the restriction of scalars. Clearly, $D$ is strongly graded as $A$ and $V$ are so, and $D$ is connected because $V^0=0$.
	
	We assume that there exists $z_V\in V^1$ such that $az_V=z_Va$, for all $a\in A$. Thus the couple $z:=(0,z_V) $ is a central element of degree $1$ in $D$. The ideal generated by $z$ in $D$ coincides with $Az_V=z_VA$, so $D$ is a central extension of $\wt A:=A\oplus (V/Az_V)$. Henceforth, if the canonical projection $V\to V/(Az_V)$ has an $A$-bilinear  section $\sigma$, then  $\wt \sigma:\wt A\to D$ is a morphism of graded $S$-rings  and a section of $\wt \pi:D\to \wt A$.
	
	In view of Remark \ref{rem:trivcomplex}, for the above central extension, we can take $\omega$ to be the trivial, so $(^2M_\ast)$ is a complex. However,  $z$ is regular in $D$ if and only if $\h_1(^1M_\ast)=0$, cf. Theorem \ref{th:mainth}.
	
	Let us consider the particular case when $A$ is itself a trivial extension of $S$ with kernel $W$, where $W$ is a graded $S$-bimodule such that $W=W^1$. We take $V:=A(-1)$ and $z_V=1\in A(-1)^1$. Obviously, $Az_V=z_VA=V$. Thus $V/Az_V=0$, so $\wt A=A$ and the canonical projection from $V$ to $V/(Az_V)$ trivially has an $A$-bilinear  section. On the other hand, in this case, we have $z^2=0$, by the definition of multiplication in $D$. Hence $z$ is not regular. By Theorem \ref{th:mainth}, the module $\h_1(^1M_\ast)$ does not vanish.
\end{example}

\begin{example}\label{ex:X^3}
	We now take $T:=\Bbbk[X]/\ig{X^3}$. Let  $x$ and $R$ denote the class of $X$ in $T$ and the linear space $\Bbbk x^2$, respectively. Clearly, $R$ is a minimal subspace of generators $\ig{x^2}\subseteq T$. Equivalently, $R$ is a space of relations for this algebra, as $S=\Bbbk$ is separable as an algebra over itself. Let $A:=T/\ig{x^2}$ and $D:=T[z]/\ig{x^2+z^2}$.
	Note that
	\[
	A\cong \Bbbk[X]/\ig{X^2}\quad\text{and}\quad
	D=\Bbbk[X,Z]/\ig{X^3,X^2+Z^2}.
	\]
	Note also that in $D$ we have $xz^2=- x^3=0$, so $z$ is not regular.
	
	Furthermore, the unique linear map $\sigma:A\to D$ satisfying the relations $\sigma(1)=1$ and $\sigma(x)=x$ is a section of the canonical projection $\pi:D\to A$ that maps $z$ to $0$. It is easy to see that $\sigma m_A-m_D(\sigma\otimes \sigma)$ maps  the element $x\otimes x$, which forms a basis for $A^+\otimes A^+$, to $-x^2=z^2$. Hence the map $\omega$ constructed in \S\ref{rem:trivcomplex} may be chosen such that
	$ \omega(x\otimes x)=z$ and it vanishes on $S\ot A+A\ot S$. Since
	\begin{equation*}
	\sigma(x)\omega(x\otimes x)-\omega(x\otimes x)\sigma(x)=0=\omega(x^2\otimes x-x\otimes x^2),
	\end{equation*}
	we get that \eqref{eq:complex'} holds on $\left(A^+\right)^{\otimes 3}$. According to \S\ref{rem:trivcomplex}, we conclude that $(^2M_\ast,\delta_*)$ is a complex.
	
	Remark that $D$ is the central extension associated  to $\alpha:R\to T$ as in \S\ref{fa:extension}, where $\alpha$ is the unique linear map such that $\alpha(x^2)=x^2+1$. In conclusion, the condition $\h_1(^1M_\ast)=0$   cannot be dropped in Theorem  \ref{th:mainth}, if $T$ is not a tensor $S$-ring,  even for central extensions associated to filtered maps.
	
	The same central extension yields us another counterexample. We first note that $A$ and $D$ are isomorphic to $A':=\mk[X]/\ig{X^2,X^3}$ and $D':=\mk[X,Z]/\ig{ X^3, X^2+Z^2}$, respectively. We denote the corresponding canonical  isomorphisms by $\eta_A:A\to A'$ and $\eta_D:D\to D'$.  We can now regard $D'$ as the central extension of $A'$ associated to the filtered map $\alpha':R' \to \mk[X]$, where $R'$ is spanned by $\{X^2,X^3\}$,  and  $\alpha'$ is the unique filtered morphism such that $\alpha'(X^3)=X^3$ and $\alpha'(X^2)=X^2+1$. Clearly, $D'$ is a split central extension of $A'$ with respect to the $S$-bimodule section $\sigma':=\eta_D\sigma\eta_A^{-1}$  of the projection $\pi':D'\to A'$.  The map $\omega':A'\ot A'\to D'(-1)$, associated to $\sigma'$ as in \S\ref{rem:trivcomplex} may be chosen to be $\eta_D\omega(\eta_A^{-1}\ot\eta_A^{-1}) $. Thus, it satisfies the relation \eqref{eq:complex'}, since the above $\omega$ does it. Proceeding as above, we can show that   $(^2M'_*,\delta'_*)$  is a complex. Nevertheless,  $H_1(M'_*)\neq 0$. This conclusion does not contradict Theorem \ref{prop:H10} which cannot be applied as $R'$ is not a space (bimodule) of relations for $A'$.\vspace*{1ex}
	
	\noindent\textbf{Acknowledgments.}  This article was written while A. Ardizzoni and P. Saracco were members of the "National Group for Algebraic and Geometric Structures, and their Applications" (GNSAGA-INdAM). P. Saracco was partially supported by the Erasmus program while he was visiting the University of Bucharest in 2016. He would like to heartily thank the members of the Faculty of Mathematics and Computer Science of the University of Bucharest for their warm hospitality and their friendship during his stay. D. \c{S}tefan was partially supported by INdAM, while he was visiting professor at University of Torino in 2015.
	
	\end{example}

\end{document}